\newsavebox{\foobox}
\newcommand{\slantbox}[2][0]{\mbox{%
        \sbox{\foobox}{#2}%
        \hskip\wd\foobox
        \pdfsave
        \pdfsetmatrix{1 0 #1 1}%
        \llap{\usebox{\foobox}}%
        \pdfrestore
}}
\newcommand\unslant[2][-.25]{\slantbox[#1]{$#2$}}
\newcommand{\B}{\mathfrak{B}}
\newcommand{\BB}{\mathbb{B}}
\newcommand{\CC}{\mathbb{C}}
\newcommand{\dd}{\,\mathrm{d}}
\newcommand{\dom}{\mathfrak{D}}
\newcommand{\ee}{\mathrm{e}}
\newcommand{\eucmet}{\unslant\delta}
\newcommand{\FT}{{\mathcal{F}}}
\newcommand{\g}{\unslant g}
\newcommand{\HH}{\mathfrak{H}}
\newcommand{\I}{\operatorname{I}}
\newcommand{\ii}{\mathrm{i}}
\newcommand{\metdet}[1]{\sqrt{|\,{#1}\,|}\,}
\newcommand{\minkmet}{\unslant\eta}
\newcommand{\NN}{\mathbb{N}}
\newcommand{\od}{\mathrm{d}}
\newcommand{\odd}{\mathrm{odd}}
\newcommand{\pd}{\partial}
\newcommand{\rad}{\mathrm{rad}}
\newcommand{\ran}{\operatorname{ran}}
\renewcommand{\restriction}{\mathord{\upharpoonright}}
\newcommand{\RR}{\mathbb{R}}
\renewcommand{\Re}{\operatorname{Re}}
\newcommand{\supp}{\operatorname{supp}}
\newcommand{\X}{\mathfrak{X}}
\theoremstyle{definition}
\newtheorem{definition}{Definition}[section]
\theoremstyle{remark}
\newtheorem{remark}{Remark}[section]
\theoremstyle{plain}
\newtheorem{theorem}{Theorem}[section]
\theoremstyle{plain}
\newtheorem{proposition}{Proposition}[section]
\theoremstyle{plain}
\newtheorem{lemma}{Lemma}[section]
\numberwithin{equation}{section}
    \crefname{equation}{eq.}{eqs.}
    \Crefname{equation}{Eq.}{Eqs.}
\title{A Globally Stable Self-Similar Blowup Profile in Energy Supercritical Yang-Mills Theory}
\author{Roland Donninger}
\address{University of Vienna, Faculty of Mathematics, Oskar-Morgenstern-Platz 1, 1090 Vienna, Austria}
\email{roland.donninger@univie.ac.at}
\author{Matthias Ostermann}
\address{University of Vienna, Faculty of Mathematics, Oskar-Morgenstern-Platz 1, 1090 Vienna, Austria}
\email{matthias.ostermann@univie.ac.at}
\begin{document}
\begin{abstract}
This paper is concerned with the Cauchy problem for an energy-supercritical nonlinear wave equation in odd space dimensions that arises in equivariant Yang-Mills theory. In each dimension, there is a self-similar finite-time blowup solution to this equation known in closed form. It will be proved that this profile is stable in the whole space under small perturbations of the initial data. The blowup analysis is based on a recently developed coordinate system called \textit{hyperboloidal similarity coordinates} and depends crucially on growth estimates for the free wave evolution, which will be constructed systematically for odd space dimensions in the first part of this paper. This allows to develop a nonlinear stability theory beyond the singularity.
\end{abstract}
\maketitle
\tableofcontents
\newpage
\section{Introduction}
In this work, we study blowup formation from smooth data in the Cauchy problem for the equivariant Yang-Mills equation in all odd supercritical dimensions and develop a nonlinear perturbation theory for an explicit self-similar blowup profile in the whole space.
\subsection{Motivation and related research}
We are interested in a spherically symmetric model in Yang-Mills theory, i.e. the gauge theory that describes the strong interaction within the Standard Model of Particle Physics. More precisely, let us consider Lie-algebra valued one-forms $A_{\mu}: \RR^{1,n} \rightarrow \mathfrak{so}(n)$ over $(1+n)$-dimensional Minkowski space-time as a model for the underlying \emph{gauge potential}. The corresponding action functional is given by
\begin{equation*}
\int_{\RR^{1,n}} \mathrm{tr} \left( F_{\mu\nu}F^{\mu\nu} \right) \,,
\end{equation*}
where the connection two-form $F_{\mu\nu} = \pd_{\mu} A_{\nu} - \pd_{\nu} A_{\mu} + [A_{\mu},A_{\nu}] : \RR^{1,n} \rightarrow \mathfrak{so}(n)$ is the antisymmetric \emph{field strength}. Here and in the following, we use summation convention and let Greek indices $\mu,\nu$ range over $\{0,1,\ldots,n\}$ and Latin indices $i,j,k,\ell$ range over $\{1,\ldots,n\}$. The Euler-Lagrange equations associated to this action are provided by the Yang-Mills equation
\begin{equation}
\pd_{\mu} F^{\mu\nu} + [A_{\mu},F^{\mu\nu}] = 0 \,,
\end{equation}
see \cite{MR3475668}, \cite{MR1923684}. This equation is gauge invariant and assuming spherical symmetry \cite{MR682374}
\begin{equation}
\label{temporalgauge}
{A_{\mu}}^{ij}(t,x) = - \big( x^i {\eucmet_{\mu}}^{j} - x^{j} {\eucmet_{\mu}}^i \big) |x|^{-2} \psi(t,|x|)
\end{equation}
with a field $\psi: \RR\times[0,\infty) \rightarrow \RR$ fixes the gauge. In this case, we find that
\begin{align*}
{F_{00}}^{ij} &= 0 \,, \\
{F_{0k}}^{ij} &= - {F_{k0}}^{ij} = - r^{-2} \big( x^i \eucmet_k^{j} - x^{j} \eucmet_k^i \big) \psi_{t} \,, \\
{F_{k\ell}}^{ij} &= -r^{-2} \big( \eucmet_k^i \eucmet_\ell^{j} - \eucmet_k^{j} \eucmet_\ell^i \big) \big( 2\psi + \psi^{2} \big) \\&\quad-
r^{-2}\big( x^i x_k \eucmet_\ell^{j} - x^i x_\ell \eucmet_k^{j} + x^{j} x_\ell \eucmet_k^i - x^{j} x_k \eucmet_\ell^i \big) \big( r^{-1} \psi_{r} - r^{-2}(2\psi + \psi^{2}) \big) \,,
\end{align*}
which yields
\begin{align*}
\pd^\mu {F_{\mu 0}}^{ij} &= 0 \,, \\
\pd^\mu {F_{\mu \ell}}^{ij} &= r^{-2} \big( x^i \eucmet^{j}_\ell - x^{j} \eucmet^i_\ell \big) \big( \psi_{tt} - \psi_{rr} - \frac{n-3}{r} \psi_{r} + \frac{n-2}{r^{2}} (2\psi + \psi^{2}) \big) \,, \\
[A^\mu, F_{\mu0}]^{ij} &= 0 \,, \\
[A^\mu, F_{\mu\ell}]^{ij} &= r^{-2} \big( x^i \eucmet_\ell^{j} - x^{j} \eucmet^i_\ell \big) \frac{n-2}{r^{2}} \big( 2\psi^{2} + \psi^3 \big) \,,
\end{align*}
where we denote $r = |x|$ for brevity. The Euler-Lagrange equations reduce to the \emph{equivariant Yang-Mills equation}
\begin{equation}
\label{YMEq}
\left( - \pd_{t}^{2} + \pd_{r}^{2} + \frac{n-3}{r} \pd_{r} \right) \psi(t,r) - \frac{n-2}{r^{2}} \psi(t,r)\big( \psi(t,r)+1 \big)\big( \psi(t,r)+2 \big) = 0 \,,
\end{equation}
which is a radial semilinear wave equation. This model admits a positive definite conserved energy
\begin{equation}
E_\psi(t) = \int_{0}^\infty \left( \pd_{t}\psi(t,r)^{2} + \pd_{r}\psi(t,r)^{2} + \frac{n-2}{2} r^{-2} \psi(t,r)^{2} \big( \psi(t,r) + 1 \big)^{2} \right) r^{n-3} \dd r \,,
\end{equation}
which may be produced by testing the Yang-Mills equation with $\pd_{t} \psi(t,r)$ over the whole space and integrating by parts. Moreover, \Cref{YMEq} is invariant under the scaling $\psi^{\lambda}(t,r) = \psi(t/\lambda,r/\lambda)$, for $\lambda>0$, and we have for the energy
\begin{equation*}
E_{\psi^{\lambda}}(t) = \lambda^{n-4} E_\psi(t/\lambda) \,.
\end{equation*}
Thus, the equation is subcritical if $n\leq 3$, critical if $n=4$ and supercritical for $n\geq 5$. According to classification into criticality class \cite{MR1826256}, the global behaviour of solutions to the Cauchy problem for \Cref{YMEq} is expected to depend on the space dimension $n$.
\par\medskip
In the physical dimension $n=3$, the Yang-Mills equation exhibits no singularities. An early global existence result in the temporal gauge was shown by D. Eardley and V. Moncrief \cite{MR649158}, \cite{MR649159} for data in Sobolev spaces without symmetry or smallness assumptions. An alternative approach of S. Klainerman and M. Machedon \cite{MR1338675} allowed them to lower regularity and prove this result in the energy space. Local well-posedness exclusively in the temporal gauge for small data below the energy norm was given by T. Tao \cite{MR1964470}. For related results in the Lorenz gauge see e.g. A. Tesfahun \cite{MR3385623}, \cite{MR3439099}, S. Selberg and A. Tesfahun \cite{MR3519539}. A novel proof for local and global well-posedness in the energy norm that pursues a robust approach to the gauge choice was presented in a work by S.-J. Oh \cite{MR3190112}, \cite{MR3357182}. Lately, the techniques of S.-J. Oh and D. Tataru \cite{MR3907955} allowed them to give an alternative proof of local well-posedness in the temporal gauge without smallness assumptions.
\par\medskip
In the critical dimension $n=4$, S. Klainerman and D. Tataru \cite{MR1626261} considered local well-posedness in the Coulomb gauge at optimal regularity. Global well-posedness in the Coulomb gauge for small energy data was proved by J. Krieger and D. Tataru \cite{MR3664812}. Earlier, J. Sterbenz \cite{MR2325100} proved global existence for small data in Besov spaces. Recently, S.-J. Oh and D. Tataru \cite{MR3907955} obtained in combination with their previous paper \cite{MR4113787} local well-posedness in the temporal gauge at optimal regularity. Moreover, the Yang-Mills equation develops singularities in dimension $n=4$ which is closely connected to the existence of a static finite-energy solution called the \emph{instanton}. For the equivariant Yang-Mills equation, R. C\^{o}te, C. Kenig and F. Merle \cite{MR2443303} proved global existence and scattering, either to zero or to a rescaling of the instanton, for data with energy less or equal to the energy of the instanton. Based on numerical studies of radially symmetric solutions, existence of blowup was conjectured by P. Bizo\'{n} \cite{MR1923684} and presented numerically in \cite{MR1923684}, \cite{MR1888849}. The blowup rate for radial solutions was derived by P. Bizo\'{n}, Y. Ovchinnikov and I. Sigal \cite{MR2069700}. The proof for the existence of blowup solutions to the equivariant Yang-Mills equation is due to J. Krieger, W. Schlag and D. Tataru \cite{MR2522426} and P. Rapha\"{e}l and I. Rodnianski \cite{MR2929728}, the latter also obtained the stable blowup rate. One of the latest results for the $(4+1)$-dimensional Yang-Mills equation concerns a proof of the \emph{Threshold Conjecture} and the \emph{Dichotomy Theorem} in the significant line of papers \cite{MR4518477}, \cite{MR4113787}, \cite{MR3907955}, \cite{MR4298746}, \cite{MR3923343} by S.-J. Oh and D. Tataru.
\par\medskip
The supercritical setting $n\geq 5$, which shall concern us, is much less understood. Global existence holds for small data, see A. Stefanov \cite{MR2735817} for $n=5$ in the Coulomb gauge and \cite{MR3087010} for results concerning $n\geq 6$. Local well-posedness at optimal regularity in the temporal gauge obtained in \cite{MR3907955} holds indeed in all dimenions $n\geq 5$ as well. However, the occurrence of finite time blowup obstructs global existence for \emph{large} data, as has been demonstrated in \cite{MR1622539}. Apart from the relevance for our understanding of large data evolution in supercritical evolution equations, singularity formation from smooth data is also of physical interest because for $n=5$, the Yang-Mills equation enjoys the same scaling behaviour as Einstein's equations in $(3+1)$ dimensions and is thus proposed to serve as a toy model for gravitational collapse in General Relativity, see \cite{MR1923684}, \cite{MR3475668}. Breakdown of solutions has been demonstrated by P. Bizo\'{n} \cite{MR1923684} for $n=5$ who constructed a countable family of self-similar solutions. The ground state of this family is known in closed form and given by
\begin{equation}
\label{BizonBlowup}
\psi_{T}(t,r) = f_{0}(\tfrac{r}{T-t})\,, \quad f_{0}(\rho) = -\frac{8\rho^{2}}{5 + 3\rho^{2}} \,, \quad (n=5)
\end{equation}
where the \emph{blowup time} $T\in\RR$ is a real parameter due to time translation invariance. By now, P. Biernat and P. Bizo\'{n} \cite{MR3355819} discovered that this profile is also present in all supercritical dimensions,
\begin{equation}
\label{BiernatBizonBlowup}
\psi_{T}(t,r) = f_{0}(\tfrac{r}{T-t})\,, \quad f_{0}(\rho) = -\frac{a(n) \rho^{2}}{b(n) + \rho^{2}} \,, \quad (n\geq 5)
\end{equation}
where
\begin{equation}
\label{BiernatBizonBlowupCoeff}
a(n) = 2\left( 1 + \sqrt{\frac{n-4}{3(n-2)}} \right) \,, \qquad
b(n) = \frac{1}{3} \left( 2(n-4) + \sqrt{3(n-2)(n-4)} \right) \,.
\end{equation}
These solutions are smooth for all $t<T$ but blow up in finite time at $t=T$. Our present paper ties in with \cite{MR3278903}, where the first author proved the nonlinear stability of $\psi_{T}$ in the backward light cone of the singularity, which stresses the relevance of this solution for our understanding of the dynamics of the Yang-Mills equation. The proof assumed \emph{mode stability} of the profile, which had been established numerically in \cite{bizon2005convergence} and later proved to hold in \cite{MR3475668}. Most recently, I. Glogi\'{c} \cite{MR4469070} extended this result to all odd dimensions by proving that the profile \eqref{BiernatBizonBlowup} is stable in lightcones in the sense that the Cauchy evolution of initial data close to that profile leads to finite time blowup described by $\psi_{T}$. His work also resolves rigorously the higher dimensional mode stability problem with improved methods.
\par\medskip
At this stage, two major questions arise. Firstly, the present results leave open how such solutions behave outside the backward light cone. Do they still approach the blowup profile or are there other effects to be expected? Secondly, although $\psi_{T}$ is only defined for times $t<T$, it has a natural extension
\begin{equation*}
\label{BizonBlowupExtended}
\psi_{T}^{*}(t,r) = - \frac{a(n) r^{2}}{b(n)(T-t)^{2} + r^{2}}
\end{equation*}
beyond the blowup, which is a smooth solution to \Cref{YMEq} in dimensions $n\geq 5$ for all $(t,r)\in\RR\times(0,\infty)$ with the boundary condition $\psi_{T}^{*}(t,0) = 0$ for all $t<T$. The blowup of $\psi_{T}$ at $(T,0)$ does not change this boundary condition at later times $t>T$. After the blowup we have $\lim_{t\to\infty} \psi_{T}^{*}(t,r) = 0$ for all $r>0$. So, is it possible to continue blowup solutions beyond the singularity in a well defined way?
\par\medskip
To tackle these questions, we implement the programme suggested by P. Biernat, R. Donninger and B. Sch\"{o}rkhuber \cite{MR4338226}, who treated the analogous questions for three-dimensional wave maps. There, the key insight was the construction of a novel coordinate system called \emph{hyperboloidal similarity coordinates} that is adapted to self-similarity and covers a larger portion of space-time than the so-called \emph{standard similarity coordinates} that were applied in \cite{MR3278903}, see the discussion below.
\subsection{Statement of the main result}
\begin{figure}
\centering
\includegraphics[width=0.8\textwidth]{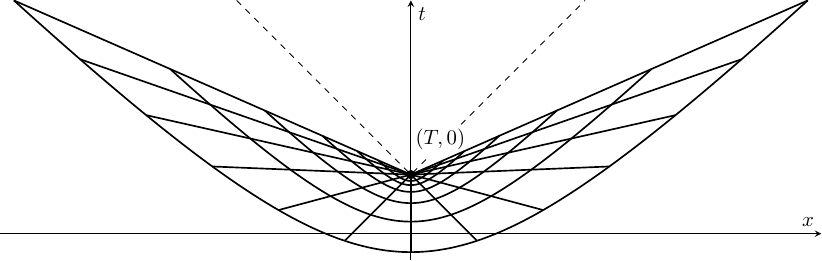}
\caption{Hyperboloidal similarity coordinates on $\RR^{1,1}$. The hyperboloids correspond to the level sets of $s$. The radial lines are level sets of $y$. These coordinates cover the whole complement of the future light cone at $(T,0)$.}
\label{Fig_HSC}
\end{figure}
\begin{figure}
\centering
\includegraphics[width=0.8\textwidth]{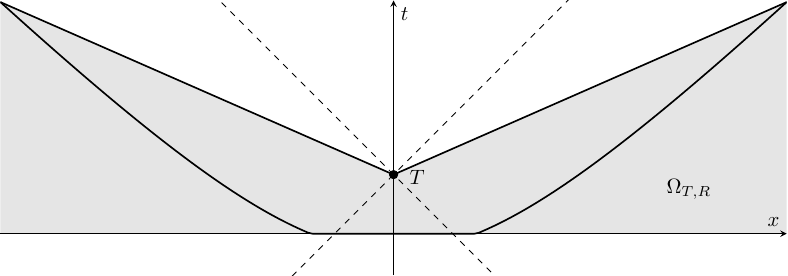}
\caption{The solutions constructed in \Cref{YMglobThm} are smoothly defined on the grey shaded region $\Omega_{T,R}$. The solid lines have slope $b$ and may be adjusted to come arbitrarily close to the dashed boundary of the future light cone at $(T,0)$. Together with the curved lines they bound the region $\Omega_{T,R} \setminus \eta_{T}\big( [s_{0},\infty)\times\BB_{R} \big)$.}
\label{Fig_OmegaTb}
\end{figure}
Introduction of the new field $\widehat{u}(t,r) = r^{-2} \psi(t,r)$ removes the singularity in the nonlinearity of \Cref{YMEq} and we infer for $n=d-2$
\begin{equation*}
\left( - \pd_{t}^{2} + \pd_{r}^{2} + \frac{d-1}{r} \pd_{r} \right) \widehat{u}(t,r) = (d-4)\big(r^{2} \widehat{u}(t,r)^3 + 3\widehat{u}(t,r)^{2} \big) \,.
\end{equation*}
Via $u(t,x) = \widehat{u}(t,|x|)$ for $(t,x)\in\RR^{1,d}$ we have the radial wave equation
\begin{equation}
\label{YMwave}
- \pd_{t}^{2} u(t,x) + \Delta_{x} u(t,x) = (d-4)(|x|^{2}u(t,x)^3 + 3u(t,x)^{2})
\end{equation}
with a regular nonlinearity in effectively $d=n+2$ space dimensions. Note that by $u_{T}^{*}(t,x) = \widehat{u}_{T}^{*}(t,|x|) = |x|^{-2}\psi_{T}^{*}(t,|x|)$ the corresponding blowup solution \eqref{BizonBlowupExtended} reads explicitly
\begin{equation}
\label{selfsimilarblowupextended}
u_{T}^{*}(t,x) = - \frac{a_d}{b_d(T-t)^{2} + |x|^{2}} \,,
\end{equation}
where $a_d = a(d-2)$ and $b_d = b(d-2)$ as given in \Cref{BiernatBizonBlowupCoeff}, and is defined for all $(t,x) \in \RR^{1,d} \setminus \{ (T,0)\}$. We present our main result to the questions posed above in the following theorem and discuss its content below.
\begin{theorem}
\label{YMglobThm}
Let $d\geq 7$ be an odd integer. Fix \textnormal{hyperboloidal similarity coordinates}
\begin{equation*}
\eta_{T}(s,y) = \big( T+\ee^{-s}h(y),\ee^{-s}y \big) \,, \quad h(y) = \sqrt{2+|y|^{2}} - 2 \,, \quad (s,y)\in\RR^{1,d} \,, \quad T\in\RR\,,
\end{equation*}
see \Cref{Fig_HSC}. For $R\geq \frac{1}{2}$ consider the region
\begin{equation*}
\Omega_{T,R} = \big\{ (t,x)\in\RR^{1,d} \mid 0\leq t < T + b|x| \big\} \,, \quad b = \frac{h(R)}{R} \in [-1,1) \,,
\end{equation*}
see \Cref{Fig_OmegaTb}. Let $k\in\NN$, $k\geq \frac{d+1}{2}$. There are positive constants $M_{0},\delta_{0},\varepsilon,\omega_{0} > 0$ such that for any $0<\delta\leq\delta_{0}$ and $M\geq M_{0}$ the following holds.
\begin{enumerate}[itemsep=1em,topsep=1em]
\item For any pair of radial functions $(f,g)\in C^\infty(\RR^d)\times C^\infty(\RR^d)$ with support in $\BB_\varepsilon^d\times\BB_\varepsilon^d$ and with
\begin{equation*}
\| (f,g) \|_{H^{k+\frac{d+1}{2}}(\RR^d)\times H^{k+\frac{d-1}{2}}(\RR^d)} \leq \frac{\delta}{M^{2}}
\end{equation*}
there exists a $T\in [1-\delta/M,1+\delta/M]$ and a unique solution $u\in C^\infty(\Omega_{T,R})$ to
\begin{equation*}
\begingroup
\renewcommand{\arraystretch}{1.2}
\left\{
\begin{array}{rcll}
(-\pd_{t}^{2} + \Delta_{x})u(t,x) &=& (d-4)(|x|^{2}u(t,x)^3 + 3u(t,x)^{2}) \,, & (t,x) \in \Omega_{T,R} \,, \\
u(0,x) &=& u_{1}^{*}(0,x) + f(x) \,, & x\in\RR^d \,,  \\
(\pd_{0}u)(0,x) &=& (\pd_{0}u_{1}^{*})(0,x) + g(x) \,, & x\in\RR^d \,,
\end{array}
\right.
\endgroup
\end{equation*}
such that $u = u_{1}^{*}$ in the domain $\Omega_{T,R} \setminus \eta_{T}\big( [s_{0},\infty) \times \BB^{d}_{R} \big)$, where $s_{0} = \log\big(-\tfrac{h(0)}{1+2\varepsilon}\big)$, and
\begin{align*}
\ee^{-2s} \left\| (u-u_{T}^{*})\circ\eta_{T}(s,\,.\,) \right\|_{H^{k}(\BB^{d}_{R})} &\leq \delta \ee^{-\omega_{0} s} \,, \\
\ee^{-2s} \left\| \pd_s (u-u_{T}^{*})\circ\eta_{T}(s,\,.\,) \right\|_{H^{k-1}(\BB^{d}_{R})} &\leq \delta \ee^{-\omega_{0} s} \,,
\end{align*}
for all $s\geq 0$.
\item In particular, within the domain $\eta_{T}\big( [s_{0},\infty) \times \BB^{d}_{R} \big) \subset \RR^{1,d}$, the solution $u$ blows up and converges to $u_{T}^{*}$ uniformly along the hyperboloids
\begin{equation*}
\mathrm{Hyp}^{1,d}_{T,R}(t) = \eta_{T}\big( \{s(t)\}\times\BB^{d}_{R} \big) \,, \qquad s(t) = \log\big(-\tfrac{h(0)}{T-t}\big) \,,
\end{equation*}
through $(t,0)\in\RR^{1,d}$ in the sense that
\begin{equation*}
\sup_{(t',x')\in\mathrm{Hyp}^{1,d}_{T,R}(t)} \frac{|u(t',x')-u_{T}^{*}(t',x')|}{|u_{T}^{*}(t',x')|} \lesssim (T-t)^{\omega_{0}}
\end{equation*}
for all $0\leq t < T$.
\end{enumerate}
\end{theorem}
\begin{remark}
\Cref{YMglobThm} is a large data result for a nonlinear wave equation in all odd energy-supercritical dimensions $d\geq 7$. It gives full information about the Cauchy evolution for data that evolve near the blowup solution $u_{1}^{*}$ at $t=0$ and shows that solutions remain smooth also outside the backward light cone of $(T,0)$ up to all times $t<T$. This result was known only inside the backward light cone. In this novel hyperboloidal formulation we are even able to go beyond the blowup time $t=T$. Events that are causally separated from the blowup $(T,0)$ do not form singularities, a result that could not be ruled out by the results of \cite{MR4469070}, \cite{MR3278903}. The module $b\in(0,1)$ allows to approach the future of the blowup arbitrarily close.
\end{remark}
\begin{remark}
The blowup occurs generically via $u_{T}^{*}$ in the full range of Sobolev spaces for large equivariant data that evolve \emph{closely} from $u_{1}^{*}$. This improves our knowledge beyond the topology used in \cite{MR3278903}.
\end{remark}
\begin{remark}
The size $\varepsilon>0$ of the support of the initial data is dictated by the local existence time of the above Cauchy problem and imposed for a geometric reason. Namely, our solution to the Cauchy problem is constructed in a hyperboloidal formulation of the equation which requires initial data on some initial hyperboloid. Local existence theory yields a local solution in a truncated light cone. By finite speed of propagation, this local solution is patched together with the blowup solution $u_{1}^{*}$ outside the domain of influence of the support of initial data, see \Cref{Fig_LocExis}. Now, the important observation is that the resulting local solution is defined in a region whose shape allows to fit an initial hyperboloid $\eta_{T}(s_{0},\RR^d)$ inside, along which we obtain initial data for the hyperboloidal evolution.
\end{remark}
\begin{remark}
The comparatively high degree of regularity $k+\frac{d+1}{2}$, where $k\geq \frac{d+1}{2}$, is owed to an algebra property of Sobolev spaces that is used to control the nonlinearity, and Sobolev embedding that is used in the construction of initial data on the hyperboloid $\eta_{T}(s_{0},\BB^{d}_{R})$.
\end{remark}
\begin{remark}
The factors $\ee^{-2s}$ appear naturally and reflect the fact that $\| (u_{1}^{*}\circ\eta_{T})(s,\,.\,) \|_{H^{k}(\BB^{d}_{R})} \simeq \ee^{2s}$ for all $s$.
\end{remark}
\begin{remark}
Our result does not depend on the specific choice of the \emph{height function} $h$. In fact, we only require some elementary conditions such as radiality and monotonicity. However, our choice leads to convenient descriptions of the domains.
\end{remark}
\subsection{Notation and coordinate systems}
The sets of natural numbers, real numbers, complex numbers are denoted by $\NN$, $\RR$, $\CC$, respectively. Open balls of radius $R>0$ centered about $x\in\RR^d$ are denoted by $\BB^{d}_{R}(x)$. If $x = 0$ we simply put $\BB^{d}_{R} \coloneqq \BB^{d}_{R}(0)$ and $\BB_{R} \coloneqq \BB_{R}^1$. Elements associated to two-component product spaces are preferably denoted by boldface symbols $\mathbf{f}$ and their components are usually denoted by $f_{1}$, $f_2$ then.
\subsubsection{Inequalities}
In order to deal with constants in inequalities we define on $\RR_{>0}$ the relation $x \lesssim y$ if there exists a constant $C>0$ such that $x \leq Cy$. As usual, $x\gtrsim y$ if $y\lesssim x$, as well as $x\simeq y$ if both $x\lesssim y$ and $x\gtrsim y$. We say $x_{\lambda} \lesssim y_{\lambda}$ for all $\lambda$, if there is a constant $C>0$, independent of $\lambda$, such that the inequality $x_{\lambda} \leq C y_{\lambda}$ holds uniformly for all $\lambda$.
\subsubsection{Derivatives}
The partial derivative with respect to the $j$-th slot of a function $f$ is denoted by $\pd_{j} f$. We use the common multi-index notation for higher partial derivatives, i.e. when $\alpha = (\alpha_{1},\ldots,a_d)\in\NN_{0}^d$ is a multi-index of length $|\alpha| := \alpha_{1}+\ldots+a_d$, then $\pd^{\alpha} f := \pd^{\alpha_{1}}_{1}\ldots\pd^{\alpha_{d}}_{d}f$. We find it convenient to denote the gradient, or in general the Jacobi matrix, of a function $f$ by $\pd f$. When dealing with functions $u$ in Minkowski space $\RR^{1,d}$, we denote the partial derivative with respect to the slot of the time variable by $\pd_{0} u$, e.g. $(\pd_{0}u)(t,x) = \pd_{t}u(t,x)$ or $(\pd_{0}v)(s,y) = \pd_{s}v(s,y)$.
\subsubsection{Function spaces}
We say $f\in C^\infty(\RR^d)$ is radial if there exists an even function $\widehat{f}\in C^\infty(\RR)$ such that $f(x) = \widehat{f}(|x|)$ for all $x\in\RR^d$. On a ball we consider the space $C^\infty_\rad(\overline{\BB^{d}_{R}})$ of radial smooth functions whose derivatives are continuous up to the boundary. We denote by $\mathcal{S}(\RR^d)$ the Schwartz space on $\RR^d$ and introduce conveniently the Fourier transform by
\begin{equation}
\label{FourierTransform}
\FT f(\xi) = \int_{\RR^d} \ee^{-\ii \xi\cdot x} f(x) \dd x
\end{equation}
for all $f\in \mathcal{S}(\RR^d)$. If $k\in\NN_{0}$ and $\Omega\subseteq\RR^d$ is open and bounded we define for $f\in C^\infty(\overline{\Omega})$ the classical Sobolev norm $\| \,.\, \|_{H^{k}(\Omega)}$ and homogeneous Sobolev norm $\| \,.\, \|_{\dot{H}^{k}(\Omega)}$ by
\begin{equation*}
\| f \|_{H^{k}(\Omega)} = \sum_{|\alpha| \leq k } \| \pd^\alpha f \|_{L^{2}(\Omega)} \,, \qquad
\| f \|_{\dot{H}^{k}(\Omega)} = \sum_{|\alpha| = k } \| \pd^\alpha f \|_{L^{2}(\Omega)} \,,
\end{equation*}
respectively. The Sobolev norms on $\RR^d$ may be equivalently introduced with the Fourier transform via
\begin{equation*}
\| f \|_{H^s(\RR^d)} = \big\| \langle\,.\,\rangle^s \FT f \big\|_{L^{2}(\RR^d)}\,,  \qquad \| f \|_{\dot{H}^s(\RR^d)} = \big\| |\,.\,|^s \FT f \big\|_{L^{2}(\RR^d)}  \,,
\end{equation*}
where $s\geq 0$ and $\langle x\rangle = \sqrt{1+|x|^{2}}$ denotes the \emph{Japanese bracket}. The corresponding Sobolev spaces $H^{k}(\Omega)$ are the completions of $C^\infty(\overline{\Omega})$ with respect to $\| \,.\, \|_{H^{k}(\Omega)}$. Moreover, we have the closed radial subspace $H^{k}_\mathrm{rad}(\BB^{d}_{R})$ that is the completion of $C^\infty_\rad(\overline{\BB^{d}_{R}})$ in the Sobolev norm. On intervals $\BB_{R}\subseteq \RR$ that are symmetric around the origin we will frequently consider the space $C^\infty_\odd(\overline{\BB_{R}})$ of odd functions. The completion of this space with respect to the Sobolev norm $\| \,.\, \|_{H^{k}(\BB_{R})}$ is given by $H^{k}_\mathrm{odd}(\BB_{R})$.
\subsubsection{Operators}
We denote by $L: \dom(L) \subset X \rightarrow Y$ a linear operator $L$, bounded or unbounded, defined on a domain $\dom(L)\subset X$ between Banach spaces $X,Y$. The space of bounded linear operators from $X$ to $Y$ is denoted by $\mathfrak{L}(X,Y)$ with $\mathfrak{L}(X) = \mathfrak{L}(X,X)$. For a closed operator $L$ on a Banach space $X$ the resolvent set is given by $\varrho(L) = \{ z\in\CC \mid z\I - L : \dom(L)\subset X \rightarrow X \text{ is a bijection}  \}$ and the spectrum is given by $\sigma(L) = \CC\setminus\varrho(L)$. By the closed graph theorem we have $(z\I - L)^{-1} \in \mathfrak{L}(X)$ for all $z\in\varrho(L)$ and we define the resolvent operator $R_L: \varrho(L) \rightarrow \mathfrak{L}(X)$ by $R_L(z) = (z\I - L)^{-1}$. We have further the point spectrum $\sigma_\mathrm{p}(L) = \{ \lambda\in\sigma(L) \mid \ker( \lambda\I - L) \neq \{ 0 \} \}$. For more details we refer to \cite{MR3243083}, \cite{teschl1998topicsFA}. Consider the case where $M,N\subset X$ are two closed complementary subspaces, i.e. $X=M\oplus N$. In this situation there corresponds a projection onto $M$ along $N$ which is the bounded linear operator $P\in\mathfrak{L}(X)$ defined by $Px = x_M$ for the unique decomposition $x=x_M+x_N$ with $x_M\in M$, $x_N\in N$ for all $x\in X$. If $L: \dom(L)\subset X \rightarrow X$ is an operator such that for all $x\in \dom(L)$ we have $Px\in \dom(L)$ and $LPx\in M$, $L(\I-P)x\in N$, then we can define the part of $L$ in $M$ as the operator $L\restriction_M : \dom(L\restriction_M) \subset M \rightarrow M$ with domain $\dom(L\restriction_M) = \dom(L)\cap M$ by $L\restriction_M x = Lx \in M$. Note that the hypothesis is equivalent to $LP$ extending $PL$, i.e. $PL \subset LP$, see \cite[p. 171 f.]{MR1335452}.
\subsubsection{Coordinate systems}
Self-similar blowup in finite time $T$ is conveniently studied in coordinates that are compatible with self-similarity, meaning that the fraction $r/(T-t)$ is independent of the new time variable in such coordinates. One such novel coordinate system are so-called \emph{hyperboloidal similarity coordinates} $(s,y)$ which have been introduced in \cite{MR4338226} as a crucial ingredient. They are defined through a change of coordinates
\begin{equation*}
(t,x) = \big( T + \ee^{-s}h(y), \ee^{-s}y \big) \,.
\end{equation*}
Their properties are extensively studied in \Cref{geomHSC}. These coordinates are capable of covering the whole exterior region of the future light cone of the blowup event $(T,0)$ and thus make it possible to study the stability of blowup solutions on the whole space, rather than only in the backward light cone.
Unless $h$ is specified, hyperboloidal similarity coordinates can be viewed as a general family of coordinate systems. Note, for example, for the choice $h = -1$ we retrieve \emph{standard similarity coordinates} $(\tau,\xi)$ via
\begin{equation*}
(t,x) = (T - \ee^{-\tau},\ee^{-\tau} \xi) \,.
\end{equation*}
They only cover the portion $\{(t,x)\in\RR^{1,d} \mid t<T\}$ of Minkowski space-time and were previously applied to study stable blowup in the Yang-Mills equation \cite{MR4469070}, \cite{MR3278903}, also see other earlier works of the first author.
\subsection{Strategy of the proof}
One of the keys in our proof is to get a solid understanding of the wave evolution in hyperboloidal similarity coordinates. After a change from Cartesian coordinates $(t,x)$ to hyperboloidal similarity coordinates $(s,y)$ via
\begin{equation*}
(t,x) = \eta_{T}(s,y) = \big( T + \ee^{-s} h(y) , \ee^{-s}y \big) \,, \qquad h(y) = \sqrt{2+|y|^{2}} - 2 \,,
\end{equation*}
and a linearization around the blowup solution $u_{1}^{*}$ we consider our Yang-Mills equation \eqref{YMwave} in terms of the variable for the rescaled field
\begin{equation*}
\Phi_{1}(s) = \ee^{-2s}((u-u_{T}^{*})\circ\eta_{T})(s,\,.\,) \,, \quad \Phi_2(s) = \ee^{-2s} \pd_s ((u-u_{T}^{*})\circ\eta_{T})(s,\,.\,) \,,
\end{equation*}
and arrive at an autonomous first order hyperboloidal formulation
\begin{equation*}
\pd_s \mathbf{\Phi}(s) = ( \mathbf{L}_d - 2 \mathbf{I} + \mathbf{L}_V ) \mathbf{\Phi}(s) + \mathbf{N}(\mathbf{\Phi}(s)) \eqqcolon
\mathbf{L} \mathbf{\Phi}(s) + \mathbf{N}(\mathbf{\Phi}(s))
\end{equation*}
on the radial Sobolev space
\begin{equation*}
\HH^{k}_\rad(\BB^{d}_{R}) = H^{k}_\rad(\BB^{d}_{R}) \times H^{k-1}_\rad(\BB^{d}_{R}) \,, \qquad k \geq \frac{d-1}{2} \,.
\end{equation*}
Here, $\mathbf{L}_d$ corresponds to the \emph{free} wave evolution, $\mathbf{L}_V$ represents a multiplicative potential term arising from the linearization and $\mathbf{N}$ is the nonlinearity. We refer to $\mathbf{L} \coloneqq \mathbf{L}_d - 2 \mathbf{I} + \mathbf{L}_V$ as the \emph{linearized} wave evolution and to the full equation as the \emph{nonlinear} wave evolution and regard it as a perturbed linear system. In this setting, the aim of the bulk of this paper is launching a perturbation theory for the equivariant Yang-Mills equation around the blowup $u_{1}^{*}$. This will be achieved in a functional analytic setting that employs semigroup methods, non-selfadjoint spectral theory and ideas from infinite-dimensional dynamical systems, similarly to the route in the recent work \cite{MR4338226}. For this approach it is essential to control the linear part of this equation, which leads us to the need of a thorough understanding of the free wave evolution in hyperboloidal similarity coordinates. In \Cref{MasterSection} we present a systematic method to control the free radial wave evolution in hyperboloidal similarity coordinates throughout all odd space dimensions.
\begin{itemize}[itemsep = 1em, topsep = 1em]
\item\textbf{\emph{Control of the \emph{free} wave evolution.}} We focus on the free wave equation
\begin{equation*}
\big( - \pd_{t}^{2} + \Delta_{x} \big) u(t,x) = 0
\end{equation*}
on $\RR^{1,d}$ and assume the radially symmetric ansatz $u(t,x) = \widehat{u}(t,|x|)$. In terms of hyperboloidal similarity coordinates we can formulate it as a first order system
\begin{equation*}
\pd_s \mathbf{v}(s, \,.\,) = \mathbf{L}_d \mathbf{v}(s,\,.\,)
\end{equation*}
for the variable $\mathbf{v}(s, \,.\,) = [\, v(s,\,.\,), \pd_s v(s,\,.\,) \,]$ with $v = u \circ \eta_{T}$ and the free radial wave evolution operator $\mathbf{L}_d$ which is a spatial differential operator.
\begin{itemize}[itemsep=1em, topsep=1em, leftmargin=0pt]
\item\emph{Energy norms.} The first and foremost issue is to come up with a suitable norm that allows us to talk about stability concepts. Such a norm is required to incorporate enough regularity and exhibit good growth estimates for the free wave evolution. We handle this by adapting previous strategies on a new ground, see \cite{MR2881965}, \cite{MR2839272}, \cite{MR3278903}, \cite{MR3152726}, \cite{MR3218816}, \cite{MR3537340}, \cite{MR3742520}, \cite{MR4338226} and start reviewing the situation in dimension $d=1$ from \cite{MR4338226}. There, the closest such quantity that we can start with is an $\dot{H}^1(\BB_{R}) \times L^{2}(\BB_{R})$-energy for $\mathbf{v}(s,\,.\,)$ derived from the free wave equation. This rests on the observation that the one-dimensional free wave equation factorizes into transport equations for the half-waves $v_\pm = u_\pm\circ\eta_{T}$ where $u_\pm(t,x) = (\pd_{t} \pm \pd_{x})u(t,x)$. Moreover, the subtle fact that there exist vector fields $D_\pm$ that commute with the half-wave equation satisfied by $v_\pm$ allow us to upgrade the energy estimates to $H^{k}(\BB_{R})\times H^{k-1}(\BB_{R})$. Based on them we construct Hilbert spaces for our analysis.
\item\emph{Half-wave evolution.} By studying the half-wave equations combined into a first order system $\pd_s \mathbf{v}_\mp(s) = \mathbf{L}_\mp \mathbf{v}_\mp(s)$ via $\mathbf{v}_\mp = [\, v_-, v_+\,]$ on the previously defined spaces, we infer from the Lumer-Philips Theorem a strongly continuous operator semigroup $\mathbf{S}_\mp$ that propagates the solution for the half-wave variable.
\item\emph{One-dimensional wave evolution.}
The transition between the variables $\mathbf{v}_\mp$, $\mathbf{v}$ is governed by operators $\mathbf{A}$, $\mathbf{A}^{\!\!\times}$ that extend to mutually inverse bounded linear operators $\mathbf{A}_\mp$, $\mathbf{A}_\mp^{-1}$. Now we are ready to solve the abstract Cauchy problem for the free one-dimensional wave equation on the half-line in hyperboloidal similarity coordinates with the rescaled similar operator semigroup $\mathbf{S}_{1}(s) = \ee^{-s} \mathbf{A}_\mp^{-1} \mathbf{S}_\mp(s) \mathbf{A}_\mp$. This operator semigroup decays sharply like $\ee^{-s/2}$ in the full range of Sobolev norms.
\item\emph{The descent method.} A strategy that has turned out successful for solving the free wave equation in higher dimensions works via the reduction to the one-dimensional wave equation. The underlying transformations manifest themselves as commutation relations for the radial Laplace operator $\Delta_{x} u(t,x) = \pd_{r}^{2} \widehat{u} (t,r) + \frac{d-1}{r} \pd_{r} \widehat{u}(t,r)$ and are well-known, provided that the underlying space dimension $d$ is odd, see e.g. \cite{MR2597943}. More concretely, the radial ansatz in the wave equation from above implies that $\widehat{u}$ solves the free radial wave equation
\begin{equation*}
\big( - \pd_{t}^{2} + \pd_{r}^{2} + \tfrac{d-1}{r} \pd_{r} \big) \widehat{u}(t,r) = 0
\end{equation*}
and we get by virtue of
\begin{equation*}
\widehat{u}_d(t,r) \coloneqq \big( r^{-1} \pd_{r} \big)^{\frac{d-3}{2}} \big( r^{d-2} \widehat{u}(t,r) \big)
\end{equation*}
a solution to the one-dimensional wave equation $\big( -\pd_{t}^{2} + \pd_{r}^{2} \big) \widehat{u}_d(t,r) = 0$ on the half-line. However, if this is transformed directly to hyperboloidal similarity coordinates, the resulting expressions are inaccessibly complicated and make a systematic analysis impossible. The key observation in overcoming this technical difficulty is that $\widehat{u}_d$ arises from repeated applications of transformations of the type
\begin{equation*}
\widehat{u}^\downarrow_d(t,r) \coloneqq r^{-(d-3)} \pd_{r} \big( r^{d-2} \widehat{u}(t,r) \big)
\end{equation*}
that map solutions of the radial wave equation in $d$ dimensions to corresponding solutions in $(d-2)$ dimensions. We will refer to this as the \emph{descent method} and study it thoroughly in hyperboloidal similarity coordinates. We show that this effect is visible as a \emph{descent operator} $\mathbf{D}_d$, which is a boundedly invertible spatial differential operator that satisfies the intertwining identity $\mathbf{D}_d \mathbf{L}_d - \mathbf{D}_d = \mathbf{L}_{1} \mathbf{D}_d$ in a rigorous sense.
\item\emph{Radial wave evolution in odd space dimensions.} Equipped with this, the further idea is to descend the wave evolution $\mathbf{L}_d$ via $\mathbf{D}_d$ to the one-dimensional wave evolution $\mathbf{L}_{1}$, evolve the solution and transport it back with $\mathbf{D}_d^{-1}$ to the original dimension. The mapping properties of the descent operator in combination with the intertwining identity allow us to provide a solution to the wave equation in hyperboloidal similarity coordinates in terms of the operator semigroup $\mathbf{S}_d(s) = \ee^s \mathbf{D}_d^{-1} \mathbf{S}_{1}(s) \mathbf{D}_d$. Combined with the decay estimates in one dimension, this leads to a systematic control of the free radial wave evolution in all odd space dimensions. This is presented as our first main result in \Cref{MScresult} which paves the way for a systematic study of blowup in nonlinear wave equations in the whole space.
\end{itemize}
\item\textbf{\emph{Control of the \emph{linearized} wave evolution.}} The next ingredient in the stability programme is the development of a well-posedness theory for the linearized wave evolution.
\begin{itemize}[leftmargin=0pt, itemsep=1em, topsep=1em]
\item\emph{Spectral analysis of the linearized evolution.} Starting off from our result about the free wave evolution, the Bounded Perturbation Theorem shows that $\mathbf{L}$ is the generator of a strongly continuous semigroup $\mathbf{S}(s)$. In order to prove stability of the blowup solution we need a decay estimate for this operator semigroup. For this, we have to acquire information about the spectrum of its generator $\mathbf{L}$. Semigroup methods allow us to prove that the spectrum is contained in a left-half plane except for finitely many eigenvalues.
\item\emph{The mode stability problem.}
However, locating those unstable eigenvalues confronts us with a highly nontrivial spectral problem, which is equivalent to the so-called \emph{mode stability} of $u_{T}^{*}$. In the setting of standard similarity coordinates, this problem arose in a numerical study by P. Bizo\'{n} and T. Chmaj \cite{bizon2005convergence} for the profile \eqref{BizonBlowup}. It took time to devise rigorous methods for such non-selfadjoint spectral problems until the paper \cite{MR3475668} by O. Costin, R. Donninger, I. Glogi\'{c} and M. Huang appeared. Recently, those techniques have been improved by I. Glogi\'{c} \cite{MR4469070} who thereby solved the mode stability problem for \eqref{BiernatBizonBlowup} in all higher space dimensions. Interestingly, the structural properties of hyperboloidal similarity coordinates let us apply this information and conclude that the only unstable eigenvalue of $\mathbf{L}$ is the isolated eigenvalue $1\in\sigma(\mathbf{L})$ to a single symmetry mode $\mathbf{f}_{1}^{*}$ that arises from time-translation invariance of the Yang-Mills equation.
\item\emph{Co-dimension one stability.} We show how the symmetry mode $\mathbf{f}_{1}^{*}$ is the only source for instabilities in the time evolution by employing the Riesz projection $\mathbf{P}$ to the isolated eigenvalue $1\in\sigma(\mathbf{L})$. The Riesz projection decomposes the space $\HH^{k}_\rad(\BB^d_{R})$ into the direct sum of the $\mathbf{L}$-invariant subspaces $\mathfrak{M}=\ran(\mathbf{P})$ and $\mathfrak{N} = \ran(\mathbf{I}-\mathbf{P})$ which split the linearized wave evolution into parts with the properties $\sigma(\mathbf{L}\restriction_\mathfrak{M}) = \{1\}$ and $\sigma(\mathbf{L}\restriction_\mathfrak{N}) = \sigma(\mathbf{L}) \setminus \{1\}$. We prove that $\mathfrak{M}$ is an unstable subspace, i.e. it is spanned by the symmetry mode $\mathbf{f}_{1}^{*}$ and the semigroup grows exponentially like $\mathbf{S}(s)\mathbf{P} = \ee^s \mathbf{P}$ on it. Once again, the underlying analysis can be traced back to standard similarity coordinates \cite{MR4469070} by exploiting curious coordinate artefact effects. After establishing resolvent estimates we apply the Gearhart-Pr\"uss Theorem and obtain that the semigroup $\mathbf{S}(s)( \mathbf{I} - \mathbf{P})$ decays like $\ee^{-\omega_{0} s}$ for some $\omega_{0}>0$. So, this fully attributes the instability to the space $\mathfrak{M}$ and anticipates exponential decay for data evolving from the complementary subspace $\mathfrak{N}$.
\end{itemize}
\item\textbf{\emph{Control of the \emph{nonlinear} wave evolution.}} Now, we are ready to carry this over to the nonlinear problem by employing Duhamel's formula
\begin{equation*}
\mathbf{\Phi}(s) = \mathbf{S}(s-s_{0})\mathbf{f} + \int_{s_{0}}^s \mathbf{S}(s-s')\mathbf{N}(\mathbf{\Phi}(s')) \dd s'
\end{equation*}
with initial data $\mathbf{f} = \mathbf{\Phi}(s_{0})$.
\begin{itemize}[leftmargin=0pt, itemsep=1em, topsep=1em]
\item\emph{Control of the nonlinearity.} The nonlinearity $\mathbf{N}$ is essentially a polynomial which is controlled by a local Lipschitz bound which is obtained through an algebra property of the underlying Sobolev spaces. The algebra property is available because our norms incorporate enough regularity.
\item\emph{Reformulation as a modified fixed point problem.} 
We consider a modified right-hand side $\mathbf{K}_\mathbf{f}$ in the Duhamel formula, with the initial data $\mathbf{f}$ replaced by $\mathbf{f} - \mathbf{C}_{s_{0}}(\mathbf{\Phi},\mathbf{f})$, where $\mathbf{C}_{s_{0}}(\mathbf{\Phi},\mathbf{f})$ is a correction term. This correction term is chosen in a way so that it suppresses the instability. The effective fixed point equation $\mathbf{\Phi} = \mathbf{K}_\mathbf{f}(\mathbf{\Phi})$ is solved by Banach's fixed point theorem and the solution $\mathbf{\Phi}(s)$ exhibits the linear decay rate $\ee^{-\omega_{0} s}$.
\item\emph{Preparation of initial data.} Before we can actually solve the Yang-Mills equation in the hyperboloidal formulation we need to prepare initial data for the hyperboloidal evolution. By local existence for the classical Cauchy evolution we can evolve a local solution from the data $f,g$ at $t=0$ and piece it together with finite speed of propagation so that it fits on some initial hyperboloid $\eta_{T}(s_{0},\RR^d)$. We evaluate the local solution on this hyperboloid and obtain initial data $\mathbf{U}(f,g,T)$ for the hyperboloidal evolution. This evaluation process is well-defined by Sobolev embedding.
\item\emph{Elimination of the correction term by adjusting the blowup time.} Note that the blowup time $T$ will only enter through the initial data operator $\mathbf{U}(f,g,T)$ in the hyperboloidal formulation of the problem. This allows us to remove the above introduced correction term. Indeed, we consider the solution $\mathbf{\Phi}_{\mathbf{U}(f,g,T)}$ that evolves from the prepared initial data $\mathbf{U}(f,g,T)$ and by adjusting a time $T$ close around $1$ we show with Brouwer's fixed point theorem that the correction term $\mathbf{C}_{s_{0}}(\mathbf{\Phi}_{\mathbf{U}(f,g,T)},\mathbf{U}(f,g,T))$ actually vanishes. Finally, this yields a solution to the full nonlinear problem and tracing its properties back culminates in the proof of \Cref{YMglobThm}.
\end{itemize}
\end{itemize}
\section{The wave equation in hyperboloidal similarity coordinates}
\label{MasterSection}
The goal of this section is to study the wave equation in hyperboloidal similarity coordinates under radial symmetry and, most notably, establish good growth estimates for the free wave evolution. In view of the perturbative non-selfadjoint character of the blowup analysis in the second part of this paper, this is done within the picture of strongly continuous semigroups. To begin with, we transform the wave equation from Cartesian coordinates to hyperboloidal similarity coordinates by considering the Laplace-Beltrami operator associated to the Minkowski metric. Geometric quantities such as the \hyperref[JacobianHSC]{Jacobian}, \hyperref[metHSC]{Minkowski metric} or \hyperref[ChristoffelHSC]{Christoffel symbols} are readily computed and provided in \Cref{geomHSC}. Suppose $u,v\in C^\infty(\RR^{1,d})$ are related via $v = u\circ\eta_{T}$. The wave operator for the Minkowski metric $\minkmet$ in Cartesian coordinates
\begin{equation*}
\Box_{\minkmet} u \coloneqq \frac{1}{\metdet{\minkmet}}\pd_{\mu}\left(\metdet{\minkmet}\minkmet^{\mu\nu}\pd_{\nu} u \right) = \left( -\pd_{0}^{2} + \pd^i\pd_{i} \right) u
\end{equation*}
transforms in hyperboloidal similarity coordinates to the Laplace-Beltrami operator $\left( \Box_{\minkmet} u \right) \circ \eta_{T} = \Box_{\g} v $, which is given by
\begin{equation*}
\Box_{\g} v \coloneqq \frac{1}{\metdet{\g}}\pd_{\mu}\left(\metdet{\g}\g^{\mu\nu}\pd_{\nu} v\right) = \g^{\mu\nu} \pd_{\mu} \pd_{\nu} v - \g^{\mu\nu} \Gamma^{\lambda}{}_{\mu\nu} \pd_{\lambda} v \,.
\end{equation*}
Explicitly we have
\begin{equation}
\label{secondorderwave}
\Box_{\g} v = \g^{00} \left( \pd_{0}^{2} - c^{ij} \pd_{i}\pd_j - c^{i0} \pd_{i}\pd_{0} - c^0 \pd_{0} - c^i \pd_{i} \right) v
\end{equation}
with coefficients
\begin{align*}
\g^{00}(s,y) &= - \ee^{2s} \frac{1 - \pd_{y^{k}}h(y)\pd_{y_k}h(y)}{\big( y^{k}\pd_{y^{k}}h(y) - h(y) \big)^{2}} \,, \\
c^{ij}(y) &= - \frac{ y^{k}\pd_{y^{k}}h(y) - h(y) }{1 - \pd_{y^{k}}h(y)\pd_{y_k}h(y)} \big( y^i \pd_{y_j} h(y) + y^{j} \pd_{y_{i}} h(y) - \big( y^{k}\pd_{y^{k}}h(y) - h(y) \big) \delta^{ij} \big) - y^iy^{j} \,, \\
c^{i0}(y) &= -2y^i - 2 \frac{ y^{k} \pd_{y^{k}} h(y) - h(y) }{1 - \pd_{y^{k}}h(y)\pd_{y_k}h(y)} \pd_{y_{i}} h(y) \,, \\
c^0(y) &= -1 - \frac{y^{k} \pd_{y^{k}}h(y) - h(y)}{1 - \pd_{y^{k}}h(y)\pd_{y_k}h(y)} \pd_{y^{k}} \pd_{y_k} h(y) + \frac{y^iy^{j}}{y^{k} \pd_{y^{k}}h(y) - h(y)}\pd_{y^i}\pd_{y^{j}} h(y) \\&\quad\,\,+
2\frac{y^i\pd_{y_j}h(y)}{1 - \pd_{y^{k}}h(y)\pd_{y_k}h(y)} \pd_{y^i}\pd_{y^{j}} h(y) \,, \\
c^i(y) &= c^{i0}(y) + (c^0(y)+1)y^i \,.
\end{align*}
In this paper, we are going to study the wave evolution as an operator defined on appropriate subspaces of the following product space.
\begin{definition}
Let $R>0$ and $d,k\in\NN$. We define the two-component Sobolev spaces
\begin{equation*}
\HH^{k}(\BB^{d}_{R}) = H^{k}(\BB^{d}_{R}) \times H^{k-1}(\BB^{d}_{R})
\end{equation*}
equipped with the product structure
\begin{equation*}
\| \mathbf{f} \|_{\HH^{k}(\BB^{d}_{R})} = \| f_{1} \|_{H^{k}(\BB^{d}_{R})} + \| f_2 \|_{H^{k-1}(\BB^{d}_{R})} \,.
\end{equation*}
\end{definition}
\subsection{Energy estimates}
Before we come to the wave equation in higher dimensions it is essential to understand the one-dimensional free wave evolution. It is worth to get started with a review of results about it from \cite{MR4338226}. The classical route for solving the wave equation begins with the observation that the wave equation in one space dimension
\begin{equation}
\label{freeWaveEq1}
\big( - \pd_{t}^{2} + \pd_{x}^{2} \big) u(t,x) = 0
\end{equation}
factorizes into a transport equation
\begin{equation*}
\big( \pd_{t} \mp \pd_{x} \big) u_\pm(t,x) = 0
\end{equation*}
for the half-waves
\begin{equation*}
u_\pm(t,x) \coloneqq \big( \pd_{t} \pm  \pd_{x} \big) u(t,x) \,.
\end{equation*}
The half-waves in hyperboloidal similarity coordinates are simply defined by
\begin{equation*}
v_\pm(s,y) \coloneqq (u_\pm\circ\eta_{T})(s,y) \,,
\end{equation*}
and by invoking the transformations \eqref{PDHSC0}, \eqref{PDHSCj} for the partial derivatives, the half-wave equation for $v_\pm$ reads
\begin{equation}
\label{half-wave-HSC}
(1\pm h'(y)) \pd_s v_\pm(s,y) + (y\pm h(y)) \pd_y v_\pm(s,y) = 0 \,.
\end{equation}
Testing with $\ee^{-s} v_\pm(s,y)$ yields the differential energy conservation
\begin{equation*}
\pd_s \big( \ee^{-s} v_\pm(s,y)^{2} (1\pm h'(y)) \big) + \pd_y \big( \ee^{-s} v_\pm(s,y)^{2} (y\pm h(y)) \big) = 0 \,,
\end{equation*}
which implies upon integration over $\BB_{R}$
\begin{equation}
\label{GronwallHSC}
\frac{\od}{\od s} \Big( \ee^{-s} \int_{-R}^R v_\pm(s,y)^{2} (1\pm h'(y)) \dd y \Big) = \Big[ - \ee^{-s} v_\pm(s,y)^{2} (y\pm h(y)) \Big]_{-R}^R \leq 0 \,,
\end{equation}
provided that $R\geq \frac{1}{2}$. This motivates to introduce the following Hilbert space structure.
\begin{definition}
\label{Hilberthpm}
Let $R>0$ and set
\begin{equation}
\label{hpm}
h_\pm: \overline{\BB_{R}} \rightarrow \RR \,, \qquad h_\pm(y) \coloneqq y \pm h(y) \,.
\end{equation}
We define inner products
\begin{equation*}
\big( f_\pm \,\big|\, g_\pm \big)_{L^{2}(\BB_{R},\,h_\pm')} = \int_{-R}^R f_\pm(y) \overline{g_\pm(y)}(1\pm h'(y)) \dd y
\end{equation*}
with induced norms $\| f_\pm \|_{L^{2}(\BB_{R},\,h_\pm')}$, for all $f_\pm,g_\pm\in C^\infty(\overline{\BB_{R}})$, respectively.
\end{definition}
After integrating \Cref{GronwallHSC} we get an energy bound for the half-waves, that is
\begin{equation}
\label{half-wave-energy}
\| v_\pm(s,\,.\,) \|_{L^{2}(\BB_{R},\,h_\pm')} \lesssim \ee^{s/2} \| v_\pm(s_{0},\,.\,) \|_{L^{2}(\BB_{R},\,h_\pm')}
\end{equation}
for all $s\geq s_{0}$ and all $R\geq\frac{1}{2}$. In order to control higher derivatives with this norm via energy conservation, we shall show that derivatives of solutions satisfy the same half-wave equation. This is indeed the case, due to the special structure of the transport equation satisfied by the half-waves. Note that $1\pm h'(y) \simeq 1$ for all $y\in \overline{\BB_{R}}$, so \Cref{half-wave-HSC} is equivalent to the evolution equation
\begin{equation}
\label{half-wave-HSC-timeEvo}
\pd_s v_\pm(s,y) = - \frac{y \pm h(y)}{1 \pm h'(y)} \pd_y v_\pm(s,y) \,,
\end{equation}
and we get for free that $-(y\pm h(y))/(1\pm h'(y))\pd_y v_\pm(s,y)$ solves an equation of the same type as \Cref{half-wave-HSC-timeEvo} again. However, the coefficient $y\pm h(y)$ has a zero at $\pm \frac{1}{2}$ and thus will spoil the equivalence to a Sobolev norm. On the other hand, $1\pm h'(y) \simeq 1$ for all $y\in\overline{\BB_{R}}$ and from the form of \Cref{half-wave-HSC} we read off $\ee^{-s}(y\pm h(y))$ as a solution. Both observations combined with the product rule show that $\ee^s/(1\pm h'(y)) \pd_y v_\pm(s,y)$ is another solution with a controlled coefficient. These effects are not inherent to the wave equation but rather a fact for the transport equation that comes from the half-wave equation in hyperboloidal similarity coordinates.
\begin{definition}
Let $R>0$. We define the vector fields
\begin{equation*}
L_\pm f_\pm(y) = - \frac{y\pm h(y)}{1\pm h'(y)} f_\pm'(y) \,, \qquad D_\pm f_\pm(y) = \frac{1}{1\pm h'(y)} f_\pm'(y) \,,
\end{equation*}
for all $f_\pm\in C^\infty(\overline{\BB_{R}})$, respectively.
\end{definition}
Note that the half-wave equations \eqref{half-wave-HSC} take the form
\begin{equation}
\label{half-wave-Lpm}
\pd_s v_\pm(s,\,.\,) = L_\pm v_\pm(s,\,.\,) \,.
\end{equation}
The point of the vector field $D_\pm$ is that it satisfies a crucial commutator relation
\begin{equation}
\label{commutator}
[D_\pm,L_\pm] = -D_\pm
\end{equation}
by the above observation, and by induction we have $[D_\pm^{j},L_\pm] = -jD_\pm^{j}$ for all $j\in\NN$. We will see soon that the commutator provides a decay inducing term in the half-wave equation. What is more, this vector field controls Sobolev norms.
\begin{lemma}
\label{halfwavenorm}
Let $R>0$ and $k\in\NN_{0}$. Then
\begin{equation*}
\sum_{j=0}^{k} \big\| D_\pm^{j} f_\pm \big\|_{L^{2}(\BB_{R},\, h_\pm')} \simeq \| f_\pm \|_{H^{k}(\BB_{R})}
\end{equation*}
for all $f_\pm\in C^\infty(\overline{\BB_{R}})$, respectively.
\end{lemma}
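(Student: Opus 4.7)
The key observation is that $1\pm h'$ is smooth and strictly positive on $\overline{\BB_R}$. Since $h$ is a height function satisfying $|h'(y)|<1$ and $\overline{\BB_R}$ is compact, there are constants $c,C>0$ with $c\leq 1\pm h'(y)\leq C$ on $\overline{\BB_R}$. This immediately yields $\|f_\pm\|_{L^2(\BB_R,\,h_\pm')}\simeq \|f_\pm\|_{L^2(\BB_R)}$, so the task reduces to showing
\begin{equation*}
\sum_{j=0}^k \big\|D_\pm^j f_\pm\big\|_{L^2(\BB_R)} \simeq \sum_{j=0}^k \big\|f_\pm^{(j)}\big\|_{L^2(\BB_R)} \,.
\end{equation*}

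Setting $a_\pm \coloneqq (1\pm h')^{-1}\in C^\infty(\overline{\BB_R})$, I have $D_\pm f_\pm = a_\pm f_\pm'$. The plan is to prove by induction on $j$ that, for $j\geq 1$,
\begin{equation*}
D_\pm^j f_\pm(y) = \sum_{i=1}^j \alpha_{i,j}^\pm(y)\, f_\pm^{(i)}(y) \,,
\end{equation*}
where $\alpha_{i,j}^\pm\in C^\infty(\overline{\BB_R})$ and the leading coefficient equals $\alpha_{j,j}^\pm = a_\pm^j$. The inductive step uses $D_\pm^{j+1}f_\pm = a_\pm(D_\pm^j f_\pm)'$ together with the product rule, which sends the leading term $a_\pm^j f_\pm^{(j)}$ to $a_\pm^{j+1}f_\pm^{(j+1)}$ plus a sum of terms of lower order in the derivative count. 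Since all coefficients $\alpha_{i,j}^\pm$ are uniformly bounded on $\overline{\BB_R}$, the triangle inequality gives the easy direction
\begin{equation*}
\big\|D_\pm^j f_\pm\big\|_{L^2(\BB_R)} \lesssim \sum_{i=1}^j \big\|f_\pm^{(i)}\big\|_{L^2(\BB_R)} \,,
\end{equation*}
and summing over $j=0,\dots,k$ (handling $j=0$ trivially since $D_\pm^0=\mathrm{Id}$) yields $\sum_{j=0}^k \|D_\pm^j f_\pm\|_{L^2(\BB_R)}\lesssim \|f_\pm\|_{H^k(\BB_R)}$.

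For the reverse direction, I would invert the triangular system. Since $a_\pm$ is bounded below by a positive constant, $a_\pm^{-j}\in C^\infty(\overline{\BB_R})$ is uniformly bounded, and solving the relation above for the top derivative gives
\begin{equation*}
f_\pm^{(j)} = a_\pm^{-j} D_\pm^j f_\pm - a_\pm^{-j} \sum_{i=1}^{j-1} \alpha_{i,j}^\pm(y)\, f_\pm^{(i)} \,.
\end{equation*}
A secondary induction on $j$ (using the already-solved lower orders) produces a representation $f_\pm^{(j)} = \sum_{i=0}^j \beta_{i,j}^\pm(y)\, D_\pm^i f_\pm$ with smooth bounded coefficients $\beta_{i,j}^\pm$, from which the reverse estimate $\|f_\pm^{(j)}\|_{L^2(\BB_R)} \lesssim \sum_{i=0}^j \|D_\pm^i f_\pm\|_{L^2(\BB_R)}$ is immediate.

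There is no substantial obstacle here: the lemma rests only on smoothness and strict positivity of $1\pm h'$ on the compact domain $\overline{\BB_R}$, together with the triangular algebraic structure of the operators $D_\pm^j$. The mildly delicate point is keeping track of the leading coefficient $a_\pm^j$ in the induction, as its non-vanishing is what allows the triangular system to be inverted; everything else is bookkeeping.
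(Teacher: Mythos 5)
Your proof is correct and follows essentially the same route as the paper: expand $D_\pm^j$ by the Leibniz rule to get a triangular expression in $f_\pm, f_\pm', \ldots, f_\pm^{(j)}$ with leading coefficient $(1\pm h')^{-j}$, observe the $L^2$-weight is equivalent to the flat one since $1\pm h'\simeq 1$ on $\overline{\BB_R}$, and then invert the triangular system by induction for the reverse bound. The only cosmetic difference is that you isolate $a_\pm=(1\pm h')^{-1}$ explicitly and record that the leading coefficient is exactly $a_\pm^j$, which the paper leaves implicit.
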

\begin{proof}
In the case $k=0$ note that $1\pm h'(y) \simeq 1$ for all $ y\in \overline{\BB_{R}}$ implies
\begin{equation*}
\|f_\pm\|_{L^{2}(\BB_{R},\,h_\pm')} \simeq \|f_\pm \|_{L^{2}(\BB_{R})}
\end{equation*}
for all $f_\pm\in C^\infty(\overline{\BB_{R}})$, respectively.
\begin{enumerate}[wide, itemsep=1em, topsep=1em]
\item[``$\lesssim$'':] For $1 \leq j\leq k$ we find by the Leibniz rule
\begin{equation*}
D_\pm^{j} f_\pm = \sum_{i=0}^{j-1} {\varphi_\pm}_{j,i} f_\pm^{(i)} + \left(\frac{1}{1\pm h'}\right)^{j} f_\pm^{(j)}
\end{equation*}
for some ${\varphi_\pm}_{j,i}\in C^\infty(\overline{\BB_{R}})$. Hence $\big\| D_\pm^{j} f_\pm \big\|_{L^{2}(\BB_{R},\, h_\pm')} \lesssim \| f_\pm \|_{H^{j}(\BB_{R})} \lesssim \| f_\pm \|_{H^{k}(\BB_{R})}$.
\item[``$\gtrsim$'':] From rearranging the Leibniz rule above we also get
\begin{equation*}
\big\| f_\pm^{(j)} \big\|_{L^{2}(\BB_{R})} \lesssim \sum_{i=0}^{j-1} \big\| f_\pm^{(i)} \big\|_{L^{2}(\RR)} + \big\| D_\pm^{j} f_\pm \big\|_{L^{2}(\BB_{R},\,h_\pm')} \lesssim \| f_\pm \|_{H^{j-1}(\BB_{R})} + \big\| D_\pm^{j} f_\pm \big\|_{L^{2}(\BB_{R},h_\pm')} \,.
\end{equation*}
From here the bound follows by induction.
\qedhere
\end{enumerate}
\end{proof}
An application of the standard energy estimate then yields together with the commutator relation corresponding estimates in higher regularity. The following result about the control of the wave evolution hints at what we can expect for well-posedness.
\begin{lemma}
\label{half-wave-energy-lemma}
Let $R,s_{0}\in\RR$, $k\in\NN$ such that $R\geq\frac{1}{2}$ and $k\geq 2$.
\begin{enumerate}[itemsep=1em, topsep=1em]
\item Suppose $v_\pm\in C^{k-1}([s_{0},\infty)\times\overline{\BB_{R}})$ solve $\pd_s v_\pm (s,\,.\,) = L_\pm v_\pm(s,\,.\,)$, respectively. Then
\begin{equation*}
\| v_\pm(s,\,.\,) \|_{H^{j-1}(\BB_{R})} \lesssim \ee^{s/2} \| v_\pm(s_{0},\,.\,) \|_{H^{j-1}(\BB_{R})}
\end{equation*}
for all $1\leq j \leq k$ and all $s\geq s_{0}$.
\item Suppose $u\in C^{k}(\RR^{1,1})$ solves $\big( - \pd_{t}^{2} + \pd_{x}^{2} \big) u(t,x) = 0$ with $u(t,0) = 0$. Let $v = u \circ \eta_{T}$. Then
\begin{equation*}
\| v(s,\,.\,) \|_{H^{j}(\BB_{R})} + \| \pd_s v(s,\,.\,) \|_{H^{j-1}(\BB_{R})} \lesssim \ee^{-s/2} \big( \| v(s_{0},\,.\,) \|_{H^{j}(\BB_{R})} + \| \pd_{0} v(s_{0},\,.\,) \|_{H^{j-1}(\BB_{R})} \big)
\end{equation*}
for all $1\leq j \leq k$ and all $s\geq s_{0}$.
\end{enumerate}
\end{lemma}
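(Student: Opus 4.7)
The plan is to derive both parts from energy identities in the weighted space $L^2(\BB_R, h_\pm')$ introduced in \autoref{Hilberthpm}, using the commutator relation $[D_\pm^i, L_\pm] = -i D_\pm^i$ to lift the estimate to higher Sobolev norms; then, for (2), to descend from the half-waves $v_\pm = u_\pm \circ \eta_T$ to $v = u \circ \eta_T$ via the chain rule and the Dirichlet condition $v(s,0) = 0$.

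For (1), I apply $D_\pm^i$ ($0 \leq i \leq j-1$) to $\pd_s v_\pm = L_\pm v_\pm$; the commutator relation gives $\pd_s(D_\pm^i v_\pm) = L_\pm(D_\pm^i v_\pm) - i\, D_\pm^i v_\pm$. Repeating the calculation leading to \eqref{GronwallHSC} for $w_i \coloneqq D_\pm^i v_\pm$ yields
\begin{equation*}
\tfrac{\od}{\od s} \|w_i(s,\,.\,)\|_{L^2(\BB_R, h_\pm')}^2 = (1 - 2i)\, \|w_i(s,\,.\,)\|_{L^2(\BB_R, h_\pm')}^2 - \big[(y \pm h(y))\, w_i(s,y)^2\big]_{-R}^R.
\end{equation*}
Since $h_\pm(-y) = -h_\mp(y)$, the boundary term equals $h_\pm(R)\, w_i(s,R)^2 + h_\mp(R)\, w_i(s,-R)^2$, which is non-negative because $h_+(R), h_-(R) \geq 0$ for $R \geq \tfrac{1}{2}$ (the fact already used at $i = 0$ in \eqref{half-wave-energy}). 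Gronwall's inequality then produces $\|D_\pm^i v_\pm(s,\,.\,)\|_{L^2(\BB_R, h_\pm')} \lesssim \ee^{(1-2i)(s-s_0)/2} \|D_\pm^i v_\pm(s_0,\,.\,)\|_{L^2(\BB_R, h_\pm')}$; only the $i = 0$ mode actually grows, at the rate $\ee^{s/2}$, while all $i \geq 1$ modes decay. Summing over $i$ and using the norm equivalence of \autoref{halfwavenorm} on both sides yields the claim.

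For (2), I set $u_\pm = (\pd_t \pm \pd_x) u$ and $v_\pm = u_\pm \circ \eta_T$; these solve \eqref{half-wave-Lpm}. A direct chain-rule computation gives
\begin{align*}
\pd_s v(s,y) &= -\tfrac{\ee^{-s}}{2}\big(h_+(y)\, v_+(s,y) - h_-(y)\, v_-(s,y)\big), \\
\pd_y v(s,y) &= \phantom{-}\tfrac{\ee^{-s}}{2}\big((1 + h'(y))\, v_+(s,y) - (1 - h'(y))\, v_-(s,y)\big),
\end{align*}
so both $\pd_s v$ and $\pd_y v$ are smooth, bounded combinations of $v_\pm$ with an overall $\ee^{-s}$. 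Since $u(t,0) = 0$ forces $v(s,0) = 0$, I recover $v$ by integrating $\pd_y v$ from the origin, which controls $\|v(s,\,.\,)\|_{L^2(\BB_R)}$ by $\|\pd_y v(s,\,.\,)\|_{L^2(\BB_R)}$ via Cauchy-Schwarz. The multiplication-boundedness of smooth coefficients on $H^{j-1}(\BB_R)$ then gives
\begin{equation*}
\|v(s,\,.\,)\|_{H^j(\BB_R)} + \|\pd_s v(s,\,.\,)\|_{H^{j-1}(\BB_R)} \lesssim \ee^{-s} \big(\|v_+(s,\,.\,)\|_{H^{j-1}(\BB_R)} + \|v_-(s,\,.\,)\|_{H^{j-1}(\BB_R)}\big)
\end{equation*}
for $1 \leq j \leq k$. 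Applying (1) to each $v_\pm$ supplies an $\ee^{s/2}$-factor, leaving the desired $\ee^{-s/2}$. To align with the right-hand side of the claim, I invert the same chain-rule relations at $s = s_0$: since $h - yh'$ is bounded away from zero on $\overline{\BB_R}$ (an elementary check for $h(y) = \sqrt{2+y^2} - 2$), one may solve pointwise for $v_\pm(s_0,\,.\,)$ in terms of $\pd_s v(s_0,\,.\,)$ and $\pd_y v(s_0,\,.\,)$ with smooth coefficients, and thereby bound $\|v_\pm(s_0,\,.\,)\|_{H^{j-1}(\BB_R)}$ by $\|v(s_0,\,.\,)\|_{H^j(\BB_R)} + \|\pd_0 v(s_0,\,.\,)\|_{H^{j-1}(\BB_R)}$ up to an $s_0$-dependent constant absorbed in $\lesssim$.

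The main difficulty will be bookkeeping: verifying that the boundary term above stays non-negative (which is precisely where the hypothesis $R \geq \tfrac{1}{2}$ enters, via $h_+(R), h_-(R) \geq 0$), and tracking the smoothness of the coefficients in the change of variables $\{v, \pd_s v\} \leftrightarrow \{v_+, v_-\}$ so that each multiplication step remains bounded on $H^{j-1}(\BB_R)$. Everything else reduces to the one-dimensional energy estimate of (1) and \autoref{halfwavenorm} for translating between $D_\pm$- and standard Sobolev norms.
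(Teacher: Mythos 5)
Your proposal is correct and follows essentially the same route as the paper: in part (1) you use the commutator relation to propagate the $L^2(\BB_R,h_\pm')$-energy estimate to $D_\pm^i v_\pm$ (the paper phrases this by noting $\ee^{is}D_\pm^i v_\pm$ solves the half-wave equation and applies \eqref{half-wave-energy} directly, while you retain the $-i\,D_\pm^i v_\pm$ drift and close with Gronwall, giving the same $\ee^{(1/2-i)(s-s_0)}$ rates), then translate to Sobolev norms via \autoref{halfwavenorm}; in part (2) you use the same chain-rule relations \eqref{v-}--\eqref{pdyv} between $\{v,\pd_s v\}$ and $\{v_-,v_+\}$, the Dirichlet condition $v(s,0)=0$ to close the $L^2$ estimate, and then apply part (1). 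The only nit is the sign slip ``$h-yh'$'' for the denominator $yh'-h$ in \eqref{v-}, \eqref{v+}, which of course is immaterial for the nondegeneracy claim.
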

\begin{proof}
\begin{enumerate}[wide, itemsep=1em, topsep=1em]
\item This is the point where we use the commutator relation \eqref{commutator} to see
\begin{equation*}
\pd_s \big( \ee^{js} D_\pm^{j} v_\pm(s,\,.\,) \big) =  L_\pm \big( \ee^{js} D_\pm^{j} v_\pm(s,\,.\,) \big) \,.
\end{equation*}
That is, given that $v_\pm (s,\,.\,)$ solves the half-wave equation \eqref{half-wave-Lpm}, $\ee^{js} D_\pm^{j}v_\pm(s,\,.\,)$ is again a solution to the half-wave equation, for any $1\leq j\leq k-1$, and we can apply the half-wave energy estimate \eqref{half-wave-energy} to get
\begin{align*}
\sum_{i=0}^{j-1} \| D_\pm^i v_\pm (s,\,.\,)\|_{L^{2}(\BB_{R},\,h_\pm')} &\lesssim \sum_{i=0}^{j-1} \ee^{(1/2 - i)s} \| D_\pm^i v_\pm (s_{0},\,.\,)\|_{L^{2}(\BB_{R},\,h_\pm')} \\&\lesssim
\ee^{s/2} \sum_{i=0}^{j-1} \| D_\pm^i v_\pm (s_{0},\,.\,)\|_{L^{2}(\BB_{R},\,h_\pm')}
\end{align*}
for all $1\leq j \leq k$ and all $s\geq s_{0}$. \Cref{halfwavenorm} yields the estimate.
\item Define the half-waves $u_\pm(t,x) = \big( \pd_{t} \pm \pd_{x} \big) u(t,x)$ and let $v_\pm = u_\pm \circ \eta_{T}$. With \eqref{PDHSC0}, \eqref{PDHSCj} we see the transformations
\begin{align}
\label{v-}
v_-(s,y) &= \frac{\ee^s}{y h'(y)-h(y)} \big( (1 + h'(y))\pd_s v(s,y) + (y + h(y)) \pd_y v(s,y) \big) \,, \\
\label{v+}
v_+(s,y) &= \frac{\ee^s}{y h'(y) - h(y)} \big( (1 - h'(y))\pd_s v(s,y) + (y - h(y)) \pd_y v(s,y) \big) \,,
\end{align}
and
\begin{align}
\label{pdsv}
\pd_s v(s,y) &= \frac{\ee^{-s}}{2} \big( (y-h(y))v_-(s,y) - (y+h(y))v_+(s,y) \big)  \,, \\
\label{pdyv}
\pd_y v(s,y) &=  \frac{\ee^{-s}}{2} \big( -(1-h'(y))v_-(s,y) + (1+h'(y))v_+(s,y) \big) \,,
\end{align}
where the occurring coefficients are smooth with all their derivatives bounded on $\overline{\BB_{R}}$. Hence
\begin{equation*}
\| \pd_s v(s,\,.\,) \|_{H^{j-1}(\BB_{R})} + \| \pd v(s,\,.\,) \|_{H^{j-1}(\BB_{R})} \simeq \ee^{-s} \big( \| v_-(s,\,.\,) \|_{H^{j-1}(\BB_{R})} + \| v_+(s,\,.\,) \|_{H^{j-1}(\BB_{R})} \big) \,.
\end{equation*}
The boundary condition $u(t,0) = 0$ implies $v(s,0) = 0$, so with Cauchy-Schwarz
\begin{equation*}
|v(s,y)| = \left| \int_{0}^y \pd_{y'} v(s,y') \dd y' \right| \lesssim \| \pd v(s,\,.\,) \|_{L^{2}(\BB_{R})} \,,
\end{equation*}
for all $y\in\overline{\BB_{R}}$, hence $\| v(s,\,.\,) \|_{L^{2}(\BB_{R})} \lesssim \| \pd v(s,\,.\,) \|_{L^{2}(\BB_{R})}$. Now $v_\pm$ satisfy the assumption of the first part and we conclude the asserted higher regularity energy estimate.
\qedhere
\end{enumerate}
\end{proof}
We complete this discussion with a presentation of a standard energy estimate in higher dimensions.
\begin{remark}
Incidentally, by considering linear combinations of the \emph{energy}- and \emph{momentum conservation}, one arrives similarly at a standard energy estimate for the radial wave equation
\begin{equation*}
-\pd_{t}^{2} \widehat{u}(t,r) + r^{-(d-1)} \pd_{r} \big( r^{d-1}\pd_{r}\widehat{u}(t,r) \big) = 0
\end{equation*}
in dimensions $d>1$, which follows from inserting the radiality condition $u(t,\,.\,) = \widehat{u}(t,|\,.\,|)$ into the free wave equation, see \Cref{SecRadWave} below. We start with observations in Cartesian coordinates and adopt them in hyperboloidal similarity coordinates. For this purpose, we consider the equation in terms of the vector fields $\pd_{t} \pm \pd_{r}$. Adding a smart zero then gives
\begin{equation*}
\big( \pd_{t} \mp \pd_{r} \big) \big( r^{d-1} ( \pd_{t} \pm \pd_{r} ) \widehat{u}(t,r) \big) \pm (d-1)r^{d-2} \pd_{t} \widehat{u}(t,r) = 0 \,.
\end{equation*}
As in the one-dimensional setting, we introduce the forward and backward half-waves
\begin{equation*}
\widehat{u}_\pm(t,r) \coloneqq \pd_{t} \widehat{u}(t,r) \pm \pd_{r} \widehat{u} (t,r) \,,
\end{equation*}
respectively, so that the radial wave equation becomes
\begin{equation*}
\big( \pd_{t} \mp \pd_{r} \big) \big( \widehat{u}_\pm(t,r) r^{d-1} \big) \pm \tfrac{d-1}{2r} \big( \widehat{u}_+(t,r) r^{d-1} + \widehat{u}_-(t,r) r^{d-1} \big) = 0 \,.
\end{equation*}
This is the starting point for the derivation of an energy estimate for which it is clear how it translates to hyperboloidal similarity coordinates. Testing with $\widehat{u}_\pm(t,r)$ yields
\begin{equation*}
\big( \pd_{t} \mp \pd_{r} \big) \big( \widehat{u}_\pm(t,r)^{2} r^{d-1} \big) \pm \tfrac{d-1}{r} \widehat{u}_+(t,r) \widehat{u}_-(t,r) r^{d-1}  = 0 \,,
\end{equation*}
which is equivalent to the differential energy conservation
\begin{equation}
\label{DifferentialEnergy}
\big( \pd_{t} - \pd_{r} \big) \big( \widehat{u}_+(t,r)^{2}r^{d-1} \big) + \big( \pd_{t} + \pd_{r} \big) \big( \widehat{u}_-(t,r)^{2}r^{d-1} \big) = 0 \,.
\end{equation}
The picture is somehow different in one space dimension, as the equations for the fields $\widehat{u}_\pm$ decouple and make a separate analysis possible. Now we transform the equations from above to hyperboloidal similarity coordinates. The half-waves are
\begin{equation*}
\widehat{v}_\pm(s,\eta) \coloneqq \widehat{u}_\pm(T+\ee^{-s}h(\eta),\ee^{-s}\eta)
\end{equation*}
and the differential energy conservation multiplied by $\ee^{-s}$ implies
\begin{align*}
0 = \pd_s & \Big( \ee^{-ds} \big( (1+h'(\eta))\widehat{v}_+(s,\eta)^{2} + (1-h'(\eta))\widehat{v}_-(s,\eta)^{2} \big) \eta^{d-1} \Big)
\\
+\,\pd_\eta&\Big( \ee^{-ds} \big( (\eta+h(\eta))\widehat{v}_+(s,\eta)^{2} + (\eta-h(\eta))\widehat{v}_-(s,\eta)^{2} \big) \eta^{d-1} \Big) \,.
\end{align*}
This gives integrated over the interval $[0,R]$
\begin{align*}
&\frac{\od}{\od s} \Big( \ee^{-ds} \int_{0}^R \big( (1+h'(\eta))\widehat{v}_+(s,\eta)^{2} + (1-h'(\eta))\widehat{v}_-(s,\eta)^{2} \big) \eta^{d-1} \dd \eta \Big) \\&=
- \Big[ \ee^{-ds} \big( (\eta+h(\eta))\widehat{v}_+(s,\eta)^{2} + (\eta-h(\eta))\widehat{v}_-(s,\eta)^{2} \big) \eta^{d-1} \Big]_{0}^R \leq 0 \,,
\end{align*}
provided that $R\geq \frac{1}{2}$. Because $1\pm h' \simeq 1$ on $\overline{\BB_{R}}$ we have $(1+h')\widehat{v}_+^{2} + (1-h')\widehat{v}_-^{2} \simeq \widehat{v}_+^{2} + \widehat{v}_-^{2}$ so
\begin{equation*}
\int_{0}^R \big( \widehat{v}_+(s,\eta)^{2} + \widehat{v}_-(s,\eta)^{2} \big) \eta^{d-1} \dd \eta \lesssim \ee^{ds} \int_{0}^R \big( \widehat{v}_+(s_{0},\eta)^{2} + \widehat{v}_-(s_{0},\eta)^{2} \big) \eta^{d-1} \dd \eta \,.
\end{equation*}
Taking the transformations \eqref{PDHSC0}, \eqref{PDHSCj} for the partial derivatives into account, we have
\begin{align*}
\widehat{v}_-(s,\eta) &= \frac{\ee^s}{\eta h'(\eta)-h(\eta)} \big( (1-h'(\eta))\pd_s\widehat{v}(s,\eta) + (\eta-h(\eta)) \pd_\eta \widehat{v}(s,\eta) \big) \,, \\
\widehat{v}_+(s,\eta) &= \frac{\ee^s}{\eta h'(\eta)-h(\eta)} \big( (1+h'(\eta))\pd_s\widehat{v}(s,\eta) + (\eta+h(\eta)) \pd_\eta \widehat{v}(s,\eta) \big) \,, \\
\pd_s \widehat{v}(s,\eta) &= \frac{\ee^{-s}}{2} \big( (\eta-h(\eta))\widehat{v}_-(s,\eta) - (\eta+h(\eta))\widehat{v}_+(s,\eta) \big)  \,, \\
\pd_\eta\widehat{v}(s,\eta) &=  \frac{\ee^{-s}}{2} \big( -(1-h'(\eta))\widehat{v}_-(s,\eta) + (1+h'(\eta))\widehat{v}_+(s,\eta) \big) \,.
\end{align*}
The occurring coefficients are smooth with all their derivatives bounded on $\overline{\BB_{R}}$, so we get immediately
\begin{equation*}
\int_{0}^R \left( \pd_s \widehat{v}(s,\eta)^{2} + \pd_\eta \widehat{v}(s,\eta)^{2} \right) \eta^{d-1} \dd \eta \simeq \ee^{-2s} \int_{0}^R \big( \widehat{v}_+(s,\eta)^{2} + \widehat{v}_-(s,\eta)^{2} \big) \eta^{d-1} \dd \eta \,.
\end{equation*}
Using that $\widehat{v}(s,\,.\,)$ is even, this implies precisely the \emph{standard energy estimate}
\begin{equation*}
\| v(s,\,.\,) \|_{\dot{H}^1(\BB^{d}_{R})} + \| \pd_s v(s,\,.\,) \|_{L^{2}(\BB^{d}_{R})} \lesssim \ee^{ \left( \frac{d}{2} - 1 \right) s } \left( \| v(s_{0},\,.\,) \|_{\dot{H}^1(\BB^{d}_{R})} + \| \pd_{0} v(s_{0},\,.\,) \|_{L^{2}(\BB^{d}_{R})}  \right) \,.
\end{equation*}
However, via this approach, it is unclear how to obtain sharp estimates that incorporate more derivatives of the solution $v(s,\,.\,)$ in hyperboloidal similarity coordinates. Additionally to that, the above quantity only provides a seminorm. We can get the above estimate for the $H^1(\BB_R^d)\times L^2 (\BB_R^d)$-norm via a Gr\"{o}nwall-type argument, with the expense that the exponential scaling factor increases by $\ee^{s/2}$, which is not desirable. This is why we resort to the \emph{descent method} below in \Cref{MethodDescent}. For a related discussion on how to derive from a conserved quantity of the wave equation a norm that is suitable for the study of stability of self-similar blowup see \cite[Section 2]{MR2881965}.
\end{remark}
\subsection{Half-waves}
Next, we digress to existence for the half-wave equation and use it later on when we draw the connection to the one-dimensional wave evolution. The symbolic notation $\mathbf{f}_\mp \coloneqq [\, f_-,f_+ \,]$ for tuples of functions $f_\pm$ will come in handy in what follows.
\begin{definition}
Let $R>0$ and $k\in\NN$. We define the \emph{parity operator} $P$ as the extension to the whole space of the bounded linear operator $P: \dom(P) \subset H^{k-1}(\BB_{R}) \rightarrow H^{k-1}(\BB_{R})$, densely defined on $\dom(P) = C^\infty(\overline{\BB_{R}})$ by $P f = f(-\,.\,)$.
\end{definition}
\begin{definition}
Let $R>0$ and $k\in\NN$. The space $\HH^{k-1}_{\mp}(\BB_{R})$ arises as the completion of
\begin{equation*}
\big\{ \mathbf{f}_\mp \in C^\infty(\overline{\BB_{R}})^{2} \mid P f_- = - f_+ \big\}
\end{equation*}
with respect to the norm $\| \,.\, \|_{\HH^{k-1}_\mp(\BB_{R})}$ induced by the inner product
\begin{equation*}
\big( \mathbf{f}_\mp \,\big|\, \mathbf{g}_\mp \big)_{\HH^{k-1}_\mp(\BB_{R})} \coloneqq \sum_{j=0}^{k-1} \left(
\big( D_-^{j} f_- \,\big|\, D_-^{j} g_- \big)_{L^{2}(\BB_{R},\,h_-')} +
\big( D_+^{j} f_+ \,\big|\, D_+^{j} g_+ \big)_{L^{2}(\BB_{R},\, h_+')}
\right) \,.
\end{equation*}
\end{definition}
\begin{remark}
\label{equivHkmp}
We see with \Cref{halfwavenorm} the equivalence
\begin{equation*}
\| \mathbf{f}_\mp \|_{\HH^{k-1}_\mp(\BB_{R})} \simeq \| \mathbf{f}_\mp \|_{H^{k-1}(\BB_{R})^{2}}
\end{equation*}
for all $\mathbf{f}_\mp \in C^\infty(\overline{\BB_{R}})^{2}$ with $P f_- = - f_+$. Hence, the space $\HH^{k-1}_{\mp}(\BB_{R})$ is isomorphic to the closed subspace $\big\{ \mathbf{f}_\mp \in H^{k-1}(\BB_{R})^{2} \mid P f_- = - f_+ \big\}$ and henceforth may be identified with it.
\end{remark}
Note the identity
\begin{equation*}
P L_- f_- = L_+ P f_- = - L_+ f_+
\end{equation*}
for all $\mathbf{f}_\mp \in C^\infty(\overline{\BB_{R}})^{2}$ with $P f_- = - f_+$ in order to give the following definition.
\begin{definition}
Let $R>0$ and $k\in\NN$. We define the unbounded linear operator $\mathbf{L}_\mp: \dom(\mathbf{L}_\mp) \subset \HH^{k-1}_\mp(\BB_{R}) \rightarrow \HH^{k-1}_\mp(\BB_{R})$, densely defined on $\dom(\mathbf{L}_\mp) = \big\{ \mathbf{f}_\mp \in C^\infty(\overline{\BB_{R}})^{2} \mid P f_- = - f_+ \big\}$ by
\begin{equation*}
\mathbf{L}_\mp \mathbf{f}_\mp =
\begin{bmatrix}
L_- f_- \\
L_+ f_+
\end{bmatrix}
\,.
\end{equation*}
\end{definition}
The half-wave equation reads
\begin{equation*}
\pd_s \mathbf{v}_\mp(s, \,.\,) = \mathbf{L}_\mp \mathbf{v}_\mp(s, \,.\,)
\end{equation*}
in terms of the variable $\mathbf{v}_\mp(s, \,.\,) \coloneqq [\, v_-(s,\,.\,) , v_+(s,\,.\,) \,]$. The following proposition gives a solution to the corresponding abstract Cauchy problem.
\begin{proposition}
\label{one-dim-propHSC}
Let $R\geq\frac{1}{2}$ and $k\in\NN$. The operator $\mathbf{L}_\mp$ is closable and its closure $\overline{\mathbf{L}_\mp}: \dom(\overline{\mathbf{L}_\mp}) \subset \HH^{k-1}_\mp(\BB_{R}) \rightarrow \HH^{k-1}_\mp(\BB_{R})$ is the generator of a strongly continuous one-parameter semigroup $\mathbf{S}_\mp : [0,\infty) \rightarrow \mathfrak{L}\big( \HH^{k-1}_\mp(\BB_{R}) \big)$ with the bound
\begin{equation}
\label{Spmbound}
\| \mathbf{S}_\mp(s) \mathbf{f}_\mp \|_{\HH^{k-1}_\mp(\BB_{R})} \lesssim \ee^{s/2} \| \mathbf{f}_\mp \|_{\HH^{k-1}_\mp(\BB_{R})}
\end{equation}
for all $\mathbf{f}_\mp\in \HH^{k-1}_\mp(\BB_{R})$ and all $s\geq 0$.
\end{proposition}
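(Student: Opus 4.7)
The plan is to apply the Lumer-Phillips theorem to the shifted operator $\mathbf{L}_\mp - \tfrac{1}{2}\mathbf{I}$. Specifically, I will verify that this operator is dissipative on its smooth core and that $\ran(\lambda\mathbf{I} - \mathbf{L}_\mp)$ is dense in $\HH^{k-1}_\mp(\BB_R)$ for some $\lambda > \tfrac{1}{2}$. The Lumer-Phillips theorem then yields that $\mathbf{L}_\mp - \tfrac{1}{2}\mathbf{I}$ is closable with closure generating a contraction semigroup, hence $\overline{\mathbf{L}_\mp}$ generates a $C_0$-semigroup $\mathbf{S}_\mp$ satisfying $\| \mathbf{S}_\mp(s) \| \lesssim \ee^{s/2}$.

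For dissipativity, I pair $L_\pm f_\pm$ with $f_\pm$ in $L^2(\BB_R,\,h_\pm')$ and integrate by parts:
\begin{equation*}
2\Re\bigl(L_\pm f_\pm \,\big|\, f_\pm\bigr)_{L^2(\BB_R,\, h_\pm')} = -\bigl[(y\pm h(y))|f_\pm(y)|^2\bigr]_{-R}^R + \int_{-R}^R (1\pm h'(y))|f_\pm(y)|^2 \dd y .
\end{equation*}
The boundary term is non-positive for $R \geq \tfrac{1}{2}$ by the same sign analysis used in the energy estimate \eqref{GronwallHSC}, so the right-hand side is bounded by $\|f_\pm\|^2_{L^2(\BB_R,\, h_\pm')}$. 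For the $j$-th summand in the $\HH^{k-1}_\mp$ inner product with $1 \leq j \leq k-1$, the commutator identity $[D_\pm^j, L_\pm] = -j D_\pm^j$ gives $(D_\pm^j L_\pm f_\pm \,|\, D_\pm^j f_\pm) = (L_\pm D_\pm^j f_\pm \,|\, D_\pm^j f_\pm) - j \|D_\pm^j f_\pm\|^2$, and the same integration by parts applied to $D_\pm^j f_\pm$ yields the bound $(\tfrac{1}{2}-j)\|D_\pm^j f_\pm\|^2$. Summing over $0 \leq j \leq k-1$ produces $\Re(\mathbf{L}_\mp \mathbf{f}_\mp \,|\, \mathbf{f}_\mp)_{\HH^{k-1}_\mp(\BB_R)} \leq \tfrac{1}{2}\|\mathbf{f}_\mp\|^2_{\HH^{k-1}_\mp(\BB_R)}$, which is precisely the dissipativity of $\mathbf{L}_\mp - \tfrac{1}{2}\mathbf{I}$.

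For the range condition, fix $\lambda > \tfrac{1}{2}$ and a smooth parity-matched datum $\mathbf{g}_\mp$; I must produce a smooth parity-matched solution $\mathbf{f}_\mp$ of $(\lambda\mathbf{I} - \mathbf{L}_\mp)\mathbf{f}_\mp = \mathbf{g}_\mp$. The two component equations decouple to
\begin{equation*}
(y\pm h(y)) f_\pm'(y) + \lambda(1\pm h'(y)) f_\pm(y) = (1\pm h'(y)) g_\pm(y) .
\end{equation*}
Here the leading coefficient has a simple interior zero at $y = \mp\tfrac{1}{2}$; under the substitution $u = y\pm h(y)$, which is a smooth diffeomorphism from $\overline{\BB_R}$ onto its image with $u'(y) = 1\pm h'(y) > 0$, the equation becomes the Euler-type ODE $u \tilde f'(u) + \lambda \tilde f(u) = \tilde g(u)$. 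An explicit smooth solution is given by the universal integral formula $\tilde f(u) = \int_0^1 t^{\lambda - 1}\tilde g(tu)\,\dd t$, which is manifestly smooth in $u \in \RR$ because $\lambda > 0$. Pulling back gives $f_\pm \in C^\infty(\overline{\BB_R})$. The parity constraint $Pf_- = -f_+$ is then automatic: the identity $PL_- = L_+P$ (easy to check from $h(-y)=h(y)$) forces $(\lambda - L_+)(Pf_- + f_+) = 0$, and the same Euler analysis shows that any kernel element would have to behave like $u^{-\lambda}$ near $u=0$, which is incompatible with smoothness. Hence $\ran(\lambda\mathbf{I} - \mathbf{L}_\mp)$ contains the smooth parity-matched pairs, which form a dense subspace.

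The main obstacle is the interior singularity of the resolvent ODE, which prevents a direct invocation of classical smooth-coefficient ODE theory and is the one step where some care is required. The remedy is the Euler-type structure revealed by the substitution $u = y\pm h(y)$, which yields a smooth solution via the universal integral representation above. Once dissipativity and the range condition are secured, Lumer-Phillips provides the closability of $\mathbf{L}_\mp - \tfrac{1}{2}\mathbf{I}$ and identifies its closure as the generator of a contraction semigroup; multiplying by $\ee^{s/2}$ produces $\mathbf{S}_\mp$ with generator $\overline{\mathbf{L}_\mp}$ and the asserted growth bound \eqref{Spmbound}.
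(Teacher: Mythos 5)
Your proof is correct and follows essentially the same route as the paper: verify dissipativity of $\mathbf{L}_\mp - \tfrac{1}{2}\mathbf{I}$ via integration by parts plus the commutator $[D_\pm^j,L_\pm]=-jD_\pm^j$, verify the range condition by explicitly solving the resolvent ODE across the interior zero of $y\pm h(y)$, and conclude by Lumer-Phillips. Your repackaging of the resolvent solve as an Euler-type equation under the diffeomorphism $u=y\pm h(y)$ (with the universal kernel $\int_0^1 t^{\lambda-1}\tilde g(tu)\,\dd t$, and the parity constraint deduced from uniqueness of smooth solutions) is a clean presentational variant of the paper's explicit formula, which instead parametrises the Duhamel integral by the affine map $y'\mapsto \pm\tfrac12+y'(y\mp\tfrac12)$ at $\lambda=1$; the two are equivalent. (Minor typo: the zero of the leading coefficient for $f_\pm$ is at $y=\pm\tfrac12$, not $\mp\tfrac12$.)
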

\begin{proof}
We have for each component
\begin{equation*}
\big(L_\pm f_\pm \,\big|\, f_\pm \big)_{L^{2}(\BB_{R},\,h_\pm')} = - \Big[ (y\pm h(y)) |f_\pm(y)|^{2} \Big]_{-R}^R - \overline{\big(L_\pm f_\pm \,\big|\, f_\pm \big)_{L^{2}(\BB_{R},\,h_\pm')}} + \| f_\pm \|_{L^{2}(\BB_{R},\,h_\pm')}^{2} \,,
\end{equation*}
so $\Re\big(L_\pm f_\pm \,\big|\, f_\pm \big)_{L^{2}(\BB_{R},\,h_\pm')} \leq \frac{1}{2} \| f_\pm \|_{L^{2}(\BB_{R},\,h_\pm')}^{2}$ for all $f_\pm\in C^\infty(\overline{\BB_{R}})$, respectively, if $R\geq \frac{1}{2}$. This implies with the commutator relation \eqref{commutator}
\begin{align*}
\Re \big( \mathbf{L}_\mp \mathbf{f}_\mp \mid  \mathbf{f}_\mp \big)_{\HH^{k-1}_\mp(\BB_{R})} &=
\sum_{j=0}^{k-1} \left(  \Re \big( D_-^{j} L_- f_- \,\big|\, D_-^{j} f_- \big)_{L^{2}(\BB_{R},\,h_-')} \right. \\&\qquad+\left.
\Re \big( D_+^{j} L_+ f_+ \,\big|\, D_+^{j} f_+ \big)_{L^{2}(\BB_{R},\,h_+')} \right) \\&=
\sum_{j=0}^{k-1} \left(
\Re \big(L_- D_-^{j} f_- \,\big|\, D_-^{j} f_- \big)_{L^{2}(\BB_{R},\,h_-')} - j \big\| D_-^{j} f_- \big\|_{L^{2}(\BB_{R},\,h_-')}^{2} \right. \\&\qquad+\left.
\Re \big(L_+ D_+^{j} f_+ \,\big|\, D_+^{j} f_+ \big)_{L^{2}(\BB_{R},\,h_+')} - j \big\| D_+^{j} f_+ \big\|_{L^{2}(\BB_{R},\,h_+')}^{2} 
\right) \\&\leq
\frac{1}{2} \| \mathbf{f}_\mp \|_{\HH^{k-1}_{\mp}(\BB_{R})}^{2}
\end{align*}
for all $\mathbf{f}_\mp \in \dom(\mathbf{L}_\mp)$. Hence $\mathbf{L}_\mp - \frac{1}{2}\mathbf{I}$ is dissipative and thus closable. Moreover, let $\mathbf{F}_\mp \in C^\infty(\overline{\BB_{R}})^{2}$ with $P F_- = - F_+$ and consider the equation $(\mathbf{I} - \mathbf{L}_\mp)\mathbf{f}_\mp = \mathbf{F}_\mp$ with $\mathbf{f}_\mp \in \dom(\mathbf{L}_\mp)$. The underlying differential equations
\begin{equation*}
\frac{y \pm h(y)}{1 \pm h'(y) } f_\pm'(y) + f_\pm(y) = F_\pm(y)
\end{equation*}
have a smooth solution $\mathbf{f}_\mp$ with components
\begin{equation*}
f_\pm(y) = \frac{y \mp \tfrac{1}{2}}{y\pm h(y)} \int_{0}^1 \big( 1 \pm h'( \pm \tfrac{1}{2} + y'(y \mp \tfrac{1}{2}) ) \big) F_\pm( \pm \tfrac{1}{2} + y'(y \mp \tfrac{1}{2}) ) \dd y' \,,
\end{equation*}
respectively. Indeed, $y\pm h(y)$ is smooth with a zero of first order at $\pm 1/2$, so smoothness of the coefficient in front of the integral follows from an application of the fundamental theorem of calculus. Thus $f_\pm \in C^\infty(\overline{\BB_{R}})$, respectively, and it is immediate that $P f_- = -f_+$. So $\mathbf{f}_\mp\in\dom(\mathbf{L}_\mp)$ and the range of $\mathbf{I} - \mathbf{L}_\mp$ is dense in $\HH^{k-1}_\mp(\BB_{R})$. Now $\mathbf{L}_\mp - \tfrac{1}{2} \mathbf{I}$ is the generator of a contraction semigroup by the Lumer-Phillips Theorem \cite[p. 83, 3.15 Theorem]{MR1721989} which yields multiplied by $\ee^{s/2}$ the desired semigroup.
\end{proof}
\begin{remark}
\label{SpmClassic}
If $k\geq 3$ set $\mathbf{v}_\mp(s,\,.\,) \coloneqq \mathbf{S}_\mp(s) \mathbf{f}_\mp$ for $\mathbf{f}_\mp\in \dom(\overline{\mathbf{L}_\mp})$ and $s\geq 0$. Then $\mathbf{v_\mp}\in C^1([0,\infty)\times\overline{\BB_{R}})^{2}$ with $ v_-(s,-\,.\,) = v_+(s,\,.\,)$ and we have classical solutions of the half-wave equations $\pd_s v_\pm(s, \,.\,) = L_\pm v_\pm(s, \,.\,)$. Indeed, if $k\geq 3$, by \Cref{equivHkmp} and Sobolev embedding $\HH^{k-1}_\mp(\BB_{R}) \subset H^{k-1}(\BB_{R})^{2} \subset C^{k-2}(\overline{\BB_{R}})^{2}$ holds. Since $\mathbf{v}_\mp(s,\,.\,)\in\dom(\overline{\mathbf{L}_\mp})\subset\HH^{k-1}_\mp(\BB_{R})$, we have $\mathbf{v}_\mp(s,\,.\,) \in C^{k-2}(\overline{\BB_{R}})^{2}$ and by definition of the closure there is a sequence $( {\mathbf{v}_\mp}_n(s,\,.\,) )_{n\in\NN}$ in $\dom(\mathbf{L}_\mp)$ such that ${\mathbf{v}_\mp}_n(s,\,.\,)\to \mathbf{v}_\mp(s,\,.\,)$ and $\mathbf{L}_\mp {\mathbf{v}_\mp}_n(s,\,.\,) \to \overline{\mathbf{L}_\mp}\mathbf{v}_\mp(s,\,.\,)$ in $\HH^{k-1}_\mp(\BB_{R})$ as $n\to\infty$. Thus
\begin{align*}
\left| L_\pm {v_\pm}_n(s,y) + \frac{y\pm h(y)}{1\pm h'(y)} v_\pm'(s,y) \right| &\lesssim
\| {v_\pm'}_n(s,\,.\,) - v_\pm'(s,\,.\,) \|_{L^\infty(\BB_{R})} \\&\leq
\| {v_\pm}_n(s,\,.\,) - v_\pm(s,\,.\,) \|_{H^{k-1}(\BB_{R})} \to 0 \quad (n\to \infty)
\end{align*}
and so the generator acts as a classical differential operator in this case. Hence $\pd_s \mathbf{v}_\mp(s, \,.\,) = \overline{\mathbf{L}_\mp} \mathbf{v}_\mp(s, \,.\,) = \mathbf{L}_\mp \mathbf{v}_\mp(s, \,.\,)$ and the half-wave equation is solved classically.
\end{remark}
\begin{remark}
Let us investigate whether the exponential growth bound \eqref{Spmbound} is sharp via considering \emph{mode solutions} for the half-wave equation in lowest regularity, i.e. suppose there is a $\lambda\in\CC$ such that
\begin{equation*}
\mathbf{S}_\mp(s)\mathbf{f}_\mp = \ee^{\lambda s} \mathbf{f}_\mp
\end{equation*}
for some nontrivial $\mathbf{f}_\mp\in\HH^0_\mp(\BB_{R})$, i.e. $f_\pm \in L^{2}(\BB_{R})$ with $P f_- = -f_+$. The half-wave equation for the mode solution is equivalent to the spectral equation $(\mathbf{L}_\mp - \lambda\mathbf{I})\mathbf{f}_\mp = \mathbf{0}$, which reads
\begin{equation*}
\frac{y \pm h(y)}{1 \pm h'(y) } f_\pm'(y) + \lambda f_\pm(y) = 0 \,,
\end{equation*}
respectively. We can regard $z\pm h(z)$ as an analytic function on a domain containing $\RR$ by choosing the principal branch for the square root in the definition of $h$. Then, we can choose a complex logarithm for $z\pm h(z)$ on a simply connected domain containing $\RR\setminus\{\pm \frac{1}{2} \}$, respectively, and get there a nontrivial analytic solution
\begin{equation*}
f_\pm(y) = c_\pm \exp\left( -\lambda \int^y \frac{1 \pm h'(z)}{z \pm h(z)} \dd z \right) \,,
\end{equation*}
where $c_\pm \in \CC\setminus\{0\}$ with $c_+ = -c_-$. Then $f_\pm$ satisfies $P f_- = -f_+$ and
\begin{equation*}
|f_\pm(y)| \simeq \big| y \mp \tfrac{1}{2} \big|^{-\Re \lambda}
\end{equation*}
for all $y$ in a neighbourhood around $\pm \frac{1}{2}$. The condition $f_\pm\in L^{2}(\BB_{R})$, where $R\geq \frac{1}{2}$, forces $\lambda$ such that a possible pole at $\pm \frac{1}{2}$ is integrable. This is the case if and only if $\Re \lambda < \frac{1}{2}$ which demonstrates that the bound is sharp. Furthermore, observe that the half-wave equation admits constant solutions, so the evolution certainly can never decay exponentially.
\end{remark}
\subsection{The wave equation on the half-line}
The previous results about the half-wave equation let us treat existence of solutions to the wave equation on the half-line in hyperboloidal similarity coordinates. For this we introduce the following subspace.
\begin{definition}
Let $R>0$ and $k\in\NN$. We define the subspace
\begin{equation*}
\HH^{k}_\odd(\BB_{R}) = H^{k}_\odd(\BB_{R}) \times H^{k-1}_\odd(\BB_{R}) \,.
\end{equation*}
\begin{remark}
Note that
\begin{equation*}
H^{k}_\odd(\BB_{R}) = \big\{ f\in H^{k}(\BB_{R}) \mid P f = -f \big\} \,.
\end{equation*}
\end{remark}
\end{definition}
The relations \eqref{v-}, \eqref{v+} and \eqref{pdsv}, \eqref{pdyv} in the proof of \Cref{half-wave-energy-lemma} suggest to study the following operators when asking how the half-wave variable $\mathbf{v}_\mp = [\, v_-,v_+ \,]$ determines the field $\mathbf{v} = [\, v,\pd_{0}v \,]$ and vice versa.
\begin{lemma}
\label{half-wave-operators}
Let $R>0$ and $k\in\NN$. Let $h_\pm$ be the functions from \Cref{Hilberthpm}. The operators $\mathbf{A}: \dom(\mathbf{A}) \subset \HH^{k}_\odd(\BB_{R}) \rightarrow \HH^{k-1}_\mp(\BB_{R})$, densely defined on $\dom(\mathbf{A}) = C^\infty_\odd(\overline{\BB_{R}})^{2}$ by
\begin{equation*}
(\mathbf{A} \mathbf{f})(y) =
\frac{2}{h_-(y)h_+'(y) - h_-'(y)h_+(y)}
\begin{bmatrix}
h_+(y) f_{1}'(y) + h_+'(y)f_2(y) \\
h_-(y)f_{1}'(y) + h_-'(y)f_2(y) 
\end{bmatrix} \,,
\end{equation*}
and $\mathbf{A}^{\!\!\times}: \dom(\mathbf{A}^{\!\!\times}) \subset \HH^{k-1}_\mp(\BB_{R}) \rightarrow \HH^{k}_\odd(\BB_{R})$, densely defined on $\dom(\mathbf{A}^{\!\!\times}) = \big\{ \mathbf{f}_\mp \in C^\infty(\overline{\BB_{R}})^{2} \mid P f_- = -f_+ \big\}$ by
\begin{equation*}
(\mathbf{A}^{\!\!\times}\mathbf{f}_\mp)(y) = \frac{1}{2}
\begin{bmatrix}
\displaystyle{\int_{0}^y} \big( - h_-'(y')f_-(y') +  h_+'(y')f_+(y') \big) \dd y' \\
h_-(y)f_-(y) - h_+(y)f_+(y)
\end{bmatrix}
\,,
\end{equation*}
are both well-defined and bounded with $\ran(\mathbf{A}) = \dom(\mathbf{A}^{\!\!\times})$, $\ran(\mathbf{A}^{\!\!\times}) = \dom(\mathbf{A})$ and satisfy
\begin{equation*}
\mathbf{A} \mathbf{A}^{\!\!\times} \mathbf{f}_\mp = \mathbf{f}_\mp \,, \qquad \mathbf{A}^{\!\!\times} \mathbf{A} \mathbf{f} = \mathbf{f} \,,
\end{equation*}
for all $\mathbf{f} \in \dom(\mathbf{A})$ and all $\mathbf{f}_\mp \in \dom(\mathbf{A}^{\!\!\times})$.
\end{lemma}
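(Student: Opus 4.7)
The plan is to verify the three claims---well-definedness with the claimed range equalities, boundedness, and the inversion identities---by direct computation, leveraging a $2{\times}2$ algebraic structure that makes $\mathbf{A}$ and $\mathbf{A}^{\!\!\times}$ mutually inverse pointwise.

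\emph{Well-definedness and smoothness.} First I would observe that the denominator factors as
\[
h_-(y)h_+'(y) - h_-'(y)h_+(y) = 2\bigl(yh'(y)-h(y)\bigr),
\]
which, for the specific choice $h(y) = \sqrt{2+y^2}-2$, evaluates to $4\bigl(1-1/\sqrt{2+y^2}\bigr)$ and is therefore smooth on $\overline{\BB_R}$ and bounded below by a positive constant. All coefficients of both operators are thus smooth and bounded. For the parity bookkeeping I would use that $h$ is even, giving $h_\pm(-y) = -h_\mp(y)$ and $h'_\pm(-y) = h'_\mp(y)$. Then for $\mathbf{f}\in C^\infty_\odd(\overline{\BB_R})^2$ the function $f_1'$ is even and $f_2$ is odd, so the denominator of $\mathbf{A}\mathbf{f}$ is even while the two numerators swap under $y\mapsto -y$ up to an overall sign, yielding $P(\mathbf{A}\mathbf{f})_1 = -(\mathbf{A}\mathbf{f})_2$. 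Conversely, if $Pf_- = -f_+$ then the integrand $-h_-'f_- + h_+'f_+$ is even (so its antiderivative from $0$ is odd) and $h_-f_- - h_+f_+$ is odd, so $\mathbf{A}^{\!\!\times}\mathbf{f}_\mp \in \dom(\mathbf{A})$.

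\emph{Boundedness.} For $\mathbf{A}$, multiplication by smooth bounded functions is bounded on each $H^j(\BB_R)$ and differentiation maps $H^k \to H^{k-1}$, hence
$\|\mathbf{A}\mathbf{f}\|_{H^{k-1}(\BB_R)^2} \lesssim \|f_1\|_{H^k} + \|f_2\|_{H^{k-1}} = \|\mathbf{f}\|_{\HH^k_\odd(\BB_R)}$;
combining this with the norm equivalence from \autoref{equivHkmp} gives boundedness into $\HH^{k-1}_\mp(\BB_R)$. For $\mathbf{A}^{\!\!\times}$ the second component is handled by the same multiplicative estimate. For the first component, its $y$-derivative is a smooth linear combination of $f_-, f_+$, giving the $\dot H^k$ bound, and the $L^2$ bound follows from Cauchy--Schwarz on the bounded interval: $\|(\mathbf{A}^{\!\!\times}\mathbf{f}_\mp)_1\|_{L^\infty(\BB_R)} \lesssim \|f_-\|_{L^2} + \|f_+\|_{L^2}$.

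\emph{Inversion identities.} Here is the crux. Write $D = h_-h_+' - h_-'h_+$ and set
\[
M = \begin{pmatrix} h_+ & h_+' \\ h_- & h_-' \end{pmatrix}, \qquad \det M = h_+h_-' - h_+'h_- = -D.
\]
Then $\mathbf{A}\mathbf{f}$ is $(2/D)\,M(f_1',f_2)^{\transpose}$, while the pair $(\partial_y,\mathrm{Id})$ applied to $\mathbf{A}^{\!\!\times}\mathbf{f}_\mp$ equals $\tfrac{1}{2}\bigl(\begin{smallmatrix} -h_-' & h_+' \\ h_- & -h_+ \end{smallmatrix}\bigr)(f_-,f_+)^{\transpose}$, and this matrix is precisely $D\,M^{-1}$. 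The composition therefore collapses pointwise. Concretely, for $\mathbf{A}\mathbf{A}^{\!\!\times}\mathbf{f}_\mp = \mathbf{f}_\mp$ the cancellation is algebraic and immediate. For $\mathbf{A}^{\!\!\times}\mathbf{A}\mathbf{f} = \mathbf{f}$ the second component reduces to $(1/D)\cdot D\cdot f_2 = f_2$, and the first component satisfies $\partial_y(\mathbf{A}^{\!\!\times}\mathbf{A}\mathbf{f})_1 = f_1'$ by the same cancellation, so the fundamental theorem of calculus together with $(\mathbf{A}^{\!\!\times}\mathbf{A}\mathbf{f})_1(0) = 0$ and $f_1(0) = 0$ (oddness) gives the claim. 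The range identities $\ran(\mathbf{A}) = \dom(\mathbf{A}^{\!\!\times})$ and $\ran(\mathbf{A}^{\!\!\times}) = \dom(\mathbf{A})$ then follow from the two inclusions of the first step combined with these inverses.

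\emph{Main obstacle.} The only subtle point is that the identity $\mathbf{A}^{\!\!\times}\mathbf{A} = \mathbf{I}$ on the first component relies decisively on $f_1(0) = 0$: if $f_1$ were not odd, we would only recover $f_1 - f_1(0)$. This is exactly what forces the choice of $\HH^k_\odd(\BB_R)$ as the domain of $\mathbf{A}$. Once the matrix identity $\bigl(\begin{smallmatrix} -h_-' & h_+' \\ h_- & -h_+ \end{smallmatrix}\bigr) = D\,M^{-1}$ is recognised, the remainder of the argument is bookkeeping.
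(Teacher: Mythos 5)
Your proposal is correct and follows essentially the same route as the paper: the parity identities $Ph_\pm=-h_\mp$, $Ph_\pm'=h_\mp'$ to match domains and ranges, boundedness from the smoothness of the coefficients and of $\bigl(h_-h_+'-h_-'h_+\bigr)^{-1}$, and a direct computation of the mutual inverses. The paper's own proof simply asserts the last two points ("boundedness is clear", "a straightforward computation shows"); your $2\times2$ matrix packaging $M$, $M^{-1}$ with $\det M=-D$ is a tidy way of presenting exactly that computation, and your remark that $f_1(0)=0$ is what closes the fundamental-theorem step for the first component is the correct observation explaining the role of $\HH^k_\odd(\BB_R)$.
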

\begin{proof}
We have $Ph_\pm = - h_\mp$, so
\begin{align*}
P \Big( \frac{2}{h_-h_+'-h_-'h_+} \big( h_\pm f_{1}' + h_\pm' f_2 \big) \Big) &=
 \frac{2}{h_-h_+'-h_-'h_+} \big( -h_\mp P (f_{1}') + h_\mp' P f_2 \big) \\&=
-\frac{2}{h_-h_+'-h_-'h_+} \big( h_\mp f_{1}' + h_\mp' f_2 \big)
\end{align*}
for all $\mathbf{f}\in\dom(\mathbf{A})$, which shows $\mathbf{A} \mathbf{f} \in \dom(\mathbf{A}^{\!\!\times})$ for all $\mathbf{f}\in\dom(\mathbf{A})$. Also,
\begin{equation*}
P \big( h_-f_- - h_+f_+ \big) = -h_+P f_- - h_+P f_+ =
- \big( h_-f_- - h_+f_+ \big)
\end{equation*}
and
\begin{equation*}
P \big( -h_-'f_- + h_+'f_+ \big) = -h_+' P f_- + h_-' P f_+ =
-h_-'f_- + h_+'f_+
\end{equation*}
for all $\mathbf{f}_\mp \in \dom(\mathbf{A}^{\!\!\times})$. This gives $\mathbf{A}^{\!\!\times} \mathbf{f}_\mp \in \dom(\mathbf{A})$ for all $\mathbf{f}_\mp \in \dom(\mathbf{A}^{\!\!\times})$. Boundedness is clear, also see the proof of \Cref{half-wave-energy-lemma}. A straightforward computation shows $\mathbf{A} \mathbf{A}^{\!\!\times} \mathbf{f}_\mp = \mathbf{f}_\mp$ and $\mathbf{A}^{\!\!\times} \mathbf{A} \mathbf{f} = \mathbf{f}$ for all $\mathbf{f} \in \dom(\mathbf{A})$ and all $\mathbf{f}_\mp \in \dom(\mathbf{A}^{\!\!\times})$.
\end{proof}
The operator $\mathbf{A}$ forms a half-wave and $\mathbf{A}^{\!\!\times}$ reverses it. Let us connect the half-wave equation with the one-dimensional wave equation. Therefore, recall from the beginning of \Cref{MasterSection} if $u,v\in C^\infty(\RR^{1,1})$ are related by $v = u\circ\eta_{T}$ and $\big( -\pd_{t}^{2} + \pd_{x}^{2} \big) u(t,x) = 0$, then
\begin{equation}
\label{wave1inHSC}
\pd_s^{2} v(s,y) = \big( c_{11}(y) \pd_y + c_{12}(y) \pd_y^{2} + c_{20}(y) \pd_s + c_{21}(y) \pd_y \pd_s \big) v(s,y)
\end{equation}
with coefficients
\begin{align*}
c_{11}(y) &= - \frac{h(y)^{2} - y^{2}}{1-h'(y)^{2}} \frac{y h''(y)}{y h'(y)-h(y)} - 2\frac{y - h(y)h'(y)}{1-h'(y)^{2}} \,, \\
c_{12}(y) &= \frac{h(y)^{2} - y^{2}}{1-h'(y)^{2}} \,, \\
c_{20}(y) &= -1 - \frac{h(y)^{2} - y^{2}}{1-h'(y)^{2}} \frac{h''(y)}{y h'(y)-h(y)} \,, \\
c_{21}(y) &= -2 \frac{y-h(y)h'(y)}{1-h'(y)^{2}} \,.
\end{align*}
Observe that $c_{11}, c_{21}$ are odd and $c_{12},c_{20}$ are even smooth functions.
\begin{definition}
Let $R>0$ and $k\in\NN$. Let $\mathbf{A}_\mp \in \mathfrak{L}\big( \HH^{k}_\odd(\BB_{R}), \HH^{k-1}_\mp(\BB_{R}) \big)$ be the boundedly invertible extension of the operator $\mathbf{A}$, see \Cref{half-wave-operators}.
\end{definition}
\begin{definition}
Let $R>0$ and $k\in\NN$. The one-dimensional wave evolution on the half-line is given by the unbounded linear operator $\mathbf{L}_{1}: \dom(\mathbf{L}_{1}) \subset \HH^{k-1}_\odd(\BB_{R}) \rightarrow \HH^{k-1}_\odd(\BB_{R})$, densely defined on $\dom(\mathbf{L}_{1}) = C^\infty_\odd(\BB_{R})^{2}$ by
\begin{equation*}
\mathbf{L}_{1} \mathbf{f} =
\begin{bmatrix}
f_2 \\
c_{11} f_{1}' + c_{12} f_{1}'' + c_{20} f_2 + c_{21} f_2'
\end{bmatrix}
\,.
\end{equation*}
\end{definition}
Then, the wave equation \eqref{wave1inHSC} reads
\begin{equation*}
\pd_s \mathbf{v}(s,\,.\,) = \mathbf{L}_{1} \mathbf{v}(s,\,.\,)
\end{equation*}
in terms of the variable $\mathbf{v} = [\, v,\pd_{0} v \,]$.
\begin{remark}
\label{WaveEq-HalfWave}
A quick computation reveals that the operators $\mathbf{A}^{\!\!\times}\mathbf{L}_\mp\mathbf{A} - \mathbf{I}$ and $\mathbf{L}_{1}$ are in fact equal, i.e. they are both densely defined with domain $C^\infty_\odd(\overline{\BB_{R}})^{2}$ and
\begin{equation*}
\mathbf{A}^{\!\!\times}\mathbf{L}_\mp\mathbf{A} \mathbf{f} - \mathbf{f} = \mathbf{L}_{1} \mathbf{f}
\end{equation*}
for all $\mathbf{f} \in C^\infty_\odd(\overline{\BB_{R}})^{2}$.
\end{remark}
The construction of the solution operator and well-posedness for the wave evolution on the half-line in hyperboloidal similarity coordinates are now an easy consequence.
\begin{proposition}
\label{one-dim-semigrp-prop}
Let $R\geq \frac{1}{2}$, $k\in\NN$. The operator $\mathbf{L}_{1}$ is closable and its closure
\begin{equation}
\label{generatorL1}
\overline{\mathbf{L}_{1}} = \mathbf{A}^{-1}_\mp \overline{\mathbf{L}_\mp}\mathbf{A}_\mp - \mathbf{I} \,, \quad \dom(\overline{\mathbf{L}_{1}}) = \big\{ \mathbf{f} \in \mathfrak{H}^{k}_\odd(\BB_{R}) \mid \mathbf{A}_\mp \mathbf{f} \in \dom(\overline{\mathbf{L}_\mp}) \big\} \,,
\end{equation}
is the generator of the rescaled similar semigroup
\begin{equation}
\label{S1}
\mathbf{S}_{1}(s) = \ee^{-s} \mathbf{A}^{-1}_\mp \mathbf{S}_\mp(s)\mathbf{A}_\mp
\end{equation}
of bounded linear operators $\mathfrak{L}\big( \HH^{k}_\mathrm{odd}(\BB_{R}) \big)$ which satisfies
\begin{equation}
\| \mathbf{S}_{1}(s)\mathbf{f} \|_{\HH^{k}(\BB_{R})} \lesssim \ee^{-s/2} \| \mathbf{f} \|_{\HH^{k}(\BB_{R})}
\end{equation}
for all $\mathbf{f}\in \HH^{k}_\mathrm{odd}(\BB_{R})$ and all $s\geq 0$.
\end{proposition}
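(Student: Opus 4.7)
The plan is to reduce everything to the already-established theory for $\mathbf{L}_\mp$ by exploiting the bounded invertibility of $\mathbf{A}_\mp$ and the algebraic identity $\mathbf{A}^{\!\!\times}\mathbf{L}_\mp\mathbf{A} = \mathbf{L}_1 + \mathbf{I}$ from \autoref{WaveEq-HalfWave}. Define, tentatively, $\widetilde{\mathbf{L}} \coloneqq \mathbf{A}_\mp^{-1}\overline{\mathbf{L}_\mp}\mathbf{A}_\mp$ on the domain $\{\mathbf{f}\in\HH^k_\odd(\BB_R) \mid \mathbf{A}_\mp \mathbf{f} \in \dom(\overline{\mathbf{L}_\mp})\}$. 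First, I would verify that $\widetilde{\mathbf{L}}$ is closed: if $\mathbf{f}_n \to \mathbf{f}$ and $\widetilde{\mathbf{L}}\mathbf{f}_n \to \mathbf{g}$ in $\HH^k_\odd(\BB_R)$, then boundedness of $\mathbf{A}_\mp$ gives $\mathbf{A}_\mp \mathbf{f}_n \to \mathbf{A}_\mp \mathbf{f}$ and $\overline{\mathbf{L}_\mp} \mathbf{A}_\mp \mathbf{f}_n = \mathbf{A}_\mp \widetilde{\mathbf{L}}\mathbf{f}_n \to \mathbf{A}_\mp \mathbf{g}$, and the closedness of $\overline{\mathbf{L}_\mp}$ closes the argument.

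Next, I would show that $\widetilde{\mathbf{L}} - \mathbf{I}$ is an extension of $\mathbf{L}_1$. For $\mathbf{f}\in \dom(\mathbf{L}_1) = C^\infty_\odd(\overline{\BB_R})^2$, \autoref{half-wave-operators} gives $\mathbf{A}\mathbf{f} \in \ran(\mathbf{A}) = \dom(\mathbf{A}^{\!\!\times})$, which coincides with $\dom(\mathbf{L}_\mp)$, so $\mathbf{A}_\mp \mathbf{f} \in \dom(\overline{\mathbf{L}_\mp})$ and $\mathbf{A}_\mp^{-1} = \mathbf{A}^{\!\!\times}$ on this range. The identity from \autoref{WaveEq-HalfWave} then yields $\widetilde{\mathbf{L}}\mathbf{f} - \mathbf{f} = \mathbf{A}^{\!\!\times}\mathbf{L}_\mp \mathbf{A}\mathbf{f} - \mathbf{f} = \mathbf{L}_1 \mathbf{f}$. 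In particular $\mathbf{L}_1$ has a closed extension, so it is closable and $\overline{\mathbf{L}_1} \subseteq \widetilde{\mathbf{L}} - \mathbf{I}$. For the reverse inclusion --- which I expect to be the only step requiring a touch of care --- take $\mathbf{f}\in\dom(\widetilde{\mathbf{L}})$ and approximate $\mathbf{A}_\mp\mathbf{f}$ by a sequence $\mathbf{g}_n\in\dom(\mathbf{L}_\mp)$ with $\mathbf{L}_\mp \mathbf{g}_n \to \overline{\mathbf{L}_\mp}\mathbf{A}_\mp \mathbf{f}$ (possible since $\overline{\mathbf{L}_\mp}$ is the closure of $\mathbf{L}_\mp$). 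The crucial point is that $\mathbf{f}_n \coloneqq \mathbf{A}^{\!\!\times}\mathbf{g}_n$ lands in $\ran(\mathbf{A}^{\!\!\times}) = C^\infty_\odd(\overline{\BB_R})^2 = \dom(\mathbf{L}_1)$, so the convergences $\mathbf{f}_n \to \mathbf{f}$ and $\mathbf{L}_1 \mathbf{f}_n = \mathbf{A}^{\!\!\times}\mathbf{L}_\mp\mathbf{g}_n - \mathbf{f}_n \to (\widetilde{\mathbf{L}} - \mathbf{I})\mathbf{f}$ place $\mathbf{f}\in\dom(\overline{\mathbf{L}_1})$. This gives the identification \eqref{generatorL1}.

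For the semigroup, I would simply observe that $\mathbf{S}_1(s) \coloneqq \ee^{-s}\mathbf{A}_\mp^{-1}\mathbf{S}_\mp(s)\mathbf{A}_\mp$ is manifestly a strongly continuous semigroup on $\HH^k_\odd(\BB_R)$: the semigroup law follows from $\mathbf{A}_\mp \mathbf{A}_\mp^{-1} = \mathbf{I}$, strong continuity from that of $\mathbf{S}_\mp$ together with the boundedness of $\mathbf{A}_\mp^{\pm 1}$, and $\mathbf{S}_1(0) = \mathbf{I}$ is immediate. Differentiating at $s=0$ using the product rule shows that the generator is precisely $\mathbf{A}_\mp^{-1}\overline{\mathbf{L}_\mp}\mathbf{A}_\mp - \mathbf{I} = \overline{\mathbf{L}_1}$ on the domain from \eqref{generatorL1}. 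Finally, the growth bound is obtained by simply chaining estimates:
\begin{equation*}
\| \mathbf{S}_1(s)\mathbf{f} \|_{\HH^k(\BB_R)} \leq \ee^{-s} \| \mathbf{A}_\mp^{-1} \|\, \| \mathbf{S}_\mp(s) \mathbf{A}_\mp \mathbf{f} \|_{\HH^{k-1}_\mp(\BB_R)} \lesssim \ee^{-s} \ee^{s/2} \| \mathbf{A}_\mp \mathbf{f} \|_{\HH^{k-1}_\mp(\BB_R)} \lesssim \ee^{-s/2} \| \mathbf{f} \|_{\HH^k(\BB_R)} \,,
\end{equation*}
where \autoref{one-dim-propHSC} supplies the exponential bound on $\mathbf{S}_\mp$. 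The only place where genuine work is hidden is the reverse inclusion $\widetilde{\mathbf{L}} - \mathbf{I} \subseteq \overline{\mathbf{L}_1}$, which hinges on the fact that the approximating sequence can be pulled back through $\mathbf{A}^{\!\!\times}$ into the desired smooth odd domain --- a property built into the ranges computed in \autoref{half-wave-operators}.
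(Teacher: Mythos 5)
Your proof is correct and follows essentially the same approach as the paper's: reducing to the half-wave semigroup via the bounded, boundedly invertible intertwiner $\mathbf{A}_\mp$, using the algebraic identity from \autoref{WaveEq-HalfWave}, invoking the standard similarity and rescaling constructions for the semigroup and its generator, and chaining norm bounds through \autoref{one-dim-propHSC}. The only difference is that you spell out in full the closure identification (in particular the reverse inclusion $\widetilde{\mathbf{L}}-\mathbf{I}\subseteq\overline{\mathbf{L}_1}$ via pulling the approximating sequence through $\mathbf{A}^{\!\!\times}$) that the paper compresses into ``the closure is easily computed''; this is a harmless and in fact helpful expansion, not a genuinely different route.
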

\begin{proof}
We see in \Cref{WaveEq-HalfWave} that $\mathbf{A}^{\!\!\times}\mathbf{L}_\mp\mathbf{A} - \mathbf{I}$ and $\mathbf{L}_{1}$ are equal as operators. The operator $\mathbf{L}_{1}$ is closable as a consequence of $\mathbf{L}_\mp$ being closable and $\mathbf{A}$, $\mathbf{A}^{\!\!\times}$ being densely defined bounded linear operators that extend to mutually inverse operators. With this at hand, the closure is easily computed. The rescaled similar semigroup is well-defined by \Cref{one-dim-propHSC} and \Cref{half-wave-operators}, also see \cite[p. 59 f.]{MR1721989} for this standard construction. The norm bound follows from \Cref{Spmbound} together with boundedness of $\mathbf{A}_\mp$, $\mathbf{A}^{-1}_\mp$.
\end{proof}
\begin{remark}
\label{S1Classic}
If $k\geq 3$ set $\mathbf{v}(s,\,.\,) \coloneqq \mathbf{S}_{1}(s) \mathbf{f}$ for $\mathbf{f}\in\dom(\overline{\mathbf{L}_{1}})$ and let $v(s,\,.\,) \coloneqq [\mathbf{v}(s,\,.\,)]_{1}$ for $s\geq 0$. Then $v\in C^{2}([0,\infty)\times\overline{\BB_{R}})$, $v(s,\,.\,)$ is odd and we have a classical solution to the wave equation in hyperboloidal similarity coordinates. Indeed, this follows from the definition of the semigroup $\mathbf{S}_{1}$ with \Cref{SpmClassic} and the form of $\mathbf{A}_\mp$.
\end{remark}
\subsection{Radial wave evolution}
\label{SecRadWave}
The next step in our programme is to understand the radial wave equation in higher dimensions $d>1$. To fix notation, if $\mathbf{f}$ is a radial function then $\mathbf{\widehat{f}}$ denotes its radial representative, that is an even function such that $\mathbf{f}(\,.\,) = \widehat{\mathbf{f}}(|\,.\,|)$. The free radial wave evolution is naturally defined in the following spaces.
\begin{definition}
Let $R>0$ and $d,k\in\NN$. We define the closed subspaces
\begin{equation*}
\HH^{k}_\rad(\BB^{d}_{R}) \coloneqq H^{k}_\rad(\BB^{d}_{R}) \times H^{k-1}_\rad(\BB^{d}_{R}) \,.
\end{equation*}
\end{definition}
Now, let $u,v\in C^\infty(\RR^{1,d})$ be related by $v = u\circ \eta_{T}$. If we suppose that $u(t,\,.\,)$ is radial then there is $\widehat{u}\in C^\infty(\RR^{1,1})$ such that $\widehat{u}(t,\,.\,)$ is even and $u(t,\,.\,) = \widehat{u}(t,|\,.\,|)$. Then $v(s,\,.\,)$ is also radial since
\begin{equation*}
v(s,y) = u(T+\ee^{-s}h(y),\ee^{-s}y) = \widehat{u}(T+\ee^{-s}h(|y|),\ee^{-s}|y|) \eqqcolon \widehat{v}(s,|y|)
\end{equation*}
yields a $\widehat{v}\in C^\infty(\RR^{1,1})$ such that $\widehat{v}(s,\,.\,)$ is even and $v(s,\,.\,) = \widehat{v}(s,|\,.\,|)$. Given that $u$ solves the wave equation
\begin{equation*}
\big( -\pd_{t}^{2} + \Delta_{x} \big) u(t,x) = 0 \,,
\end{equation*}
we get that $\widehat{u}$ solves the radial wave equation
\begin{equation*}
-\pd_{t}^{2} \widehat{u}(t,r) + \pd_{r}^{2} \widehat{u}(t,r) + \frac{d-1}{r} \pd_{r} \widehat{u}(t,r) = 0
\end{equation*}
and from \Cref{secondorderwave} we infer for $\widehat{v}$ the radial wave equation in hyperboloidal similarity coordinates,
\begin{equation}
\label{waveinHSC}
\pd_s^{2} \widehat{v}(s,\eta) = \big( c_{11}^d(\eta) \pd_\eta + c_{12}(\eta) \pd_\eta^{2} + c_{20}^d(\eta) \pd_s + c_{21}(\eta) \pd_\eta\pd_s \big) \widehat{v}(s,\eta) \,,
\end{equation}
with coefficients
\begin{align}
\label{ccoeff}
c_{11}^d(\eta) &= - (d-1)\frac{\eta h'(\eta) - h(\eta)}{1-h'(\eta)^{2}} \frac{h(\eta)}{\eta} - \frac{h(\eta)^{2} - \eta^{2}}{1-h'(\eta)^{2}} \frac{\eta h''(\eta)}{\eta h'(\eta)-h(\eta)} - 2\frac{\eta - h(\eta)h'(\eta)}{1-h'(\eta)^{2}} \,, \\
c_{12}(\eta) &= \frac{h(\eta)^{2} - \eta^{2}}{1-h'(\eta)^{2}} \,, \\
c_{20}^d(\eta) &= -1 - (d-1) \frac{\eta h'(\eta)-h(\eta)}{1-h'(\eta)^{2}} \frac{h'(\eta)}{\eta} - \frac{h(\eta)^{2} - \eta^{2}}{1-h'(\eta)^{2}} \frac{h''(\eta)}{\eta h'(\eta)-h(\eta)} \,, \\
c_{21}(\eta) &= - 2 \frac{\eta - h(\eta)h'(\eta)}{1-h'(\eta)^{2}} \,.
\end{align}
Observe that $c_{21}$ is odd, $(\,.\,) c_{11}^d$, $c_{20}^d$, $c_{12}$ are even and all of them are smooth. In order to formulate the wave equation as a linear first order system we introduce the wave operator in hyperboloidal similarity coordinates.
\begin{definition}
Let $R>0$ and $d,k\in\NN$. We define the \emph{free radial wave evolution} as the unbounded linear operator $\mathbf{L}_d: \dom(\mathbf{L}_d)\subset \HH^{k}_\rad(\BB^{d}_{R}) \rightarrow \HH^{k}_\rad(\BB^{d}_{R})$, densely defined on $\dom(\mathbf{L}_d) = C^\infty_\rad(\overline{\BB^{d}_{R}})^{2}$ by $\mathbf{L}_d \mathbf{f} \coloneqq \widehat{\mathbf{L}}_d \widehat{\mathbf{f}}(|\,.\,|)$, where
\begin{equation*}
\widehat{\mathbf{L}}_d \widehat{\mathbf{f}} \coloneqq
\begin{bmatrix}
\widehat{f}_2 \\
c_{11}^d \widehat{f}_{1}' + c_{12} \widehat{f}_{1}'' + c_{20}^d \widehat{f}_2 + c_{21} \widehat{f}_2'
\end{bmatrix}
\,.
\end{equation*}
\end{definition}
\Cref{waveinHSC} is equivalent to
\begin{equation*}
\pd_s\mathbf{v}(s,\,.\,) = \mathbf{L}_d\mathbf{v}(s,\,.\,)
\end{equation*}
where $\mathbf{v}(s,\,.\,) = [\, v(s,\,.\,), \pd_s v(s,\,.\,) \,]$.
\subsection{Descent method for the radial wave equation}
\label{MethodDescent}
The key insight for transporting the well-posedness result and the growth estimates from \Cref{one-dim-semigrp-prop} to higher dimensions are transformations that map the radial wave equation in higher dimensions to the one-dimensional wave equation. We build up our way with transformations that map between expressions that are of a type like the radial part of the Laplace operator. It suffices to consider only spatial transformations, since it is the purely spatial radial part of the Laplace operator that encodes the space dimension. For the beginning we orient ourself towards the strategy in \cite{MR2881965}, \cite{MR3278903}, \cite{MR3218816}.
\subsubsection{Descent by multiplication} Let $f_{1} \in C^\infty(\RR)$. We begin by considering the expression $\big( \pd_{r}^{2} + \tfrac{d_{1}}{r} \pd_{r} \big) f_{1}(r)$ for some constant $d_{1}\in\RR$. Introduce a function by $r^\alpha f_{1}(r) $ for some $\alpha\in\RR$. Differentiation yields
\begin{equation*}
r^{\alpha}\big( \pd_{r}^{2} + \tfrac{d_{1}}{r} \pd_{r} \big) f_{1}(r) = \big( \pd_{r}^{2} + \tfrac{d_{1} - 2\alpha}{r} \pd_{r} + \tfrac{\alpha( \alpha - d_{1} + 1)}{r^{2}} \big) \big( r^\alpha f_{1}(r) \big)
\end{equation*}
at every $r>0$. By making a choice on $\alpha$ we can decide which term we want to cancel. Since our goal is to transform between radial wave equations, let us try to make the zero order term disappear. This amounts to the choice $\alpha \in \{ 0, d_{1} - 1 \}$, where in the trivial case the transformation is just the identity and the original equation remains invariant. The function given by $r^{d_{1}-1}f_{1}(r)$ satisfies
\begin{equation}
\label{DescMult}
r^{d_{1} - 1} \big( \pd_{r}^{2} + \tfrac{d_{1}}{r} \pd_{r} \big) f_{1}(r) = \big( \pd_{r}^{2} + \tfrac{-d_{1}+2}{r} \pd_{r} \big) \big( r^{d_{1} - 1} f_{1}(r) \big)
\end{equation}
at every $r>0$.
\subsubsection{Descent by differentiation}
Let $f_2 \in C^\infty(\RR)$. Instead we may first differentiate the field and then apply a multiplication by some power. That is, consider the expression $\big( \pd_{r}^{2} + \tfrac{d_2}{r} \pd_{r} \big) f_2(r)$ and differentiate it,
\begin{equation*}
\pd_{r} \big( \pd_{r}^{2} + \tfrac{d_2}{r} \pd_{r} \big) f_2(r) = 
\big( \pd_{r}^{2} + \tfrac{d_2}{r} \pd_{r} - \tfrac{d_2}{r^{2}} \big) \pd_{r} f_2(r) \,.
\end{equation*}
The function given by $r^\beta \pd_{r} f_2(r)$ for some $\beta\in\RR$ satisfies
\begin{equation*}
r^\beta \pd_{r} \big( \pd_{r}^{2} + \tfrac{d_2}{r} \pd_{r} \big) f_2(r) = \big( \pd_{r}^{2} + \tfrac{d_2 - 2\beta}{r} \pd_{r} - \tfrac{\beta( \beta - d_2 + 1) - d_2}{r^{2}} \big) \big( r^\beta \pd_{r} f_2(r) \big)
\end{equation*}
at every $r>0$. Again, we want to preserve the radial character of the equation. So, the zero order term vanishes if $\beta \in \{ -1, d_2 \}$. This provides the identities
\begin{equation}
\label{DescDiff1}
r^{-1} \pd_{r} \big( \pd_{r}^{2} + \tfrac{d_2}{r} \pd_{r} \big) f_2(r) = \big( \pd_{r}^{2} + \tfrac{d_2 + 2}{r} \pd_{r} \big) \big( r^{-1} \pd_{r} f_2(r) \big)
\end{equation}
and
\begin{equation}
\label{DescDiffd2}
r^{d_2} \pd_{r} \big( \pd_{r}^{2} + \tfrac{d_2}{r} \pd_{r} \big) f_2(r) = \big( \pd_{r}^{2} + \tfrac{-d_2}{r} \pd_{r} \big) \big( r^{d_2} \pd_{r} f_2(r) \big)
\end{equation}
at every $r>0$. The first transformation increases the effective dimension by two. The second transformation makes a negative dimension positive and vice versa.
\subsubsection{Combined descent}
We combine both previous transformations and are lead to the \emph{descent method}. If we start with a multiplication and then combine it with differentiation, there are two types of transformations that yield the radial part of the Laplacian in a lower positive dimension. We capture the gist from the discussion above in a lemma.
\begin{lemma}
\label{2descentlemma}
Let $d\in\NN$ with $d \geq 3$. Then we have the commutation relations
\begin{align}
\label{intertwiningodd}
\big( (d-2) + r \pd_{r} \big)\big( -\pd_{t}^{2} + \pd_{r}^{2} + \tfrac{d-1}{r} \pd_{r} \big) \widehat{u}(t,r) &= \big( -\pd_{t}^{2} + \pd_{r}^{2} + \tfrac{d-3}{r} \pd_{r} \big) \big( (d-2) + r\pd_{r} \big) \widehat{u}(t,r) \,, \\
\label{intertwiningthree}
r\big( -\pd_{t}^{2} + \pd_{r}^{2} + \tfrac{2}{r} \pd_{r} \big) \widehat{u}(t,r) &= \big( -\pd_{t}^{2} + \pd_{r}^{2} \big) \big( r\widehat{u}(t,r) \big) \,,
\end{align}
for all $\widehat{u}\in C^\infty(\RR^{1,1})$ such that $\widehat{u}(t,\,.\,)$ is even.
\end{lemma}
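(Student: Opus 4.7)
The strategy is to reduce each identity to a pure commutation of spatial operators and then deduce it directly from the elementary descent transformations already derived in this subsection. Since $-\pd_t^2$ commutes with every purely spatial differential operator, the terms containing $\pd_t^2$ agree trivially on both sides of each identity, so it suffices to prove the spatial identities
\begin{equation*}
\big( (d-2) + r\pd_r \big)\big( \pd_r^2 + \tfrac{d-1}{r}\pd_r \big)\widehat{u} = \big( \pd_r^2 + \tfrac{d-3}{r}\pd_r \big)\big( (d-2) + r\pd_r \big)\widehat{u}
\end{equation*}
and $r\big( \pd_r^2 + \tfrac{2}{r}\pd_r \big)\widehat{u} = \pd_r^2(r\widehat{u})$.

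The key observation for \eqref{intertwiningodd} is the factorisation
\begin{equation*}
\big( (d-2) + r\pd_r \big) f(r) = r^{-(d-3)} \pd_r \big( r^{d-2} f(r) \big),
\end{equation*}
which is an immediate consequence of the product rule. With this in hand, I would first apply \eqref{DescMult} with $d_1 = d-1$ to commute the factor $r^{d-2}$ past the radial Laplacian $\pd_r^2 + \tfrac{d-1}{r}\pd_r$, obtaining the operator $\pd_r^2 - \tfrac{d-3}{r}\pd_r$ acting on $r^{d-2}\widehat{u}$; then I would apply \eqref{DescDiffd2} with $d_2 = -(d-3)$ to commute the remaining $r^{-(d-3)}\pd_r$ past this new Laplacian, producing precisely $\pd_r^2 + \tfrac{d-3}{r}\pd_r$ acting on $r^{-(d-3)}\pd_r(r^{d-2}\widehat{u}) = \big((d-2) + r\pd_r\big)\widehat{u}$. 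This is the spatial right-hand side of \eqref{intertwiningodd}. For \eqref{intertwiningthree}, the same factorisation specialises to $r \cdot = r^{1}\,\text{id}\,$ so a single application of \eqref{DescMult} with $d_1 = 2$ instantly gives $r(\pd_r^2 + \tfrac{2}{r}\pd_r)\widehat{u} = \pd_r^2(r\widehat{u})$.

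The only bookkeeping point is the interpretation at $r = 0$. Since $\widehat{u}(t,\,.\,)$ is even and smooth, $\pd_r\widehat{u}(t,\,.\,)$ is odd and vanishes at the origin, so $r^{-1}\pd_r\widehat{u}$ extends smoothly and evenly across $r = 0$ by Taylor expansion; the intermediate quantities appearing in the chain above inherit the same parity structure at each step, so every term remains smooth on $\RR^{1,1}$. Consequently the identities, which are routine chain-rule computations for $r > 0$, extend by continuity to all of $\RR^{1,1}$. I do not anticipate any genuine obstacle here: the lemma is essentially an algebraic repackaging of the descent transformations \eqref{DescMult} and \eqref{DescDiffd2} via the product-rule factorisation of $(d-2) + r\pd_r$.
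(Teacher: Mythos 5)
Your proof is correct and follows the paper's approach of composing \eqref{DescMult} and \eqref{DescDiffd2} through the product-rule factorisation $(d-2) + r\pd_r = r^{-(d-3)}\pd_r\big(r^{d-2}\,\cdot\,\big)$, together with the trivial commutation of $-\pd_t^2$ with purely spatial operators. One minor point in your favour: you correctly instantiate \eqref{DescDiffd2} at $d_2 = -(d-3)$ so that the intermediate operator $\pd_r^2 - \tfrac{d-3}{r}\pd_r$ produced by \eqref{DescMult} matches, whereas the paper's proof writes $d_2 = d-3$, which is a small imprecision in its wording.
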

\begin{proof}
\Cref{DescMult} for $d_{1} = d - 1$ and \Cref{DescDiffd2} for $d_2 = d - 3$ imply
\begin{equation*}
r^{-(d-3)} \pd_{r} \big( r^{d-2} \widehat{u}(t,r) \big) = \big( (d-2) + r\pd_{r} \big) \widehat{u}(t,r)
\end{equation*}
and we obtain \Cref{intertwiningodd}. If we consider \Cref{DescMult} for $d_{1} = 2$ we obtain \Cref{intertwiningthree}.
\end{proof}
These identities hold regardless whether $\widehat{u}$ solves a radial wave equation or not. Some comments about other combinations are in order.
\begin{remark}
The radial wave equation in an odd space dimension $d$ is mapped to the one-dimensional wave equation via $\big( r^{-1} \pd_{r} \big)^{\frac{d-3}{2}} \big( r^{d-2} \widehat{u}(t,r) \big)$, as is seen by starting with a descent by multiplication \eqref{DescMult} and applying $(d-3)/2$ times a descent by differentiation of the form \eqref{DescDiff1}, also see \cite{MR3742520}. Note that this also follows from repeated application of transformations of the type $r^{-(d-3)}\pd_{r} \big( r^{d-2} \widehat{u}(t,r) \big)$
from \Cref{2descentlemma}, which map solutions of the radial $d$-dimensional wave equation in any given spatial dimension $d$ into a solution in $d-2$ dimensions.
\end{remark}
\begin{remark}
When we start from a radial $d$-dimensional wave equation and apply a descent by differentiation of the form \eqref{DescDiffd2} two times in order to get from the negative effective dimension back to the original dimension, we recover the obvious fact that the radial Laplace operator $r^{-d+1}\pd_{r}(r^{d-1}\pd_{r})$ commutes with the wave equation. At first glance, it seems that we could use this to get the differential energy estimate \eqref{DifferentialEnergy} for the radial Laplace operator and derive from this a higher energy in hyperboloidal similarity coordinates. However, pursuing this idea is not practical because on bounded domains, the $L^{2}$-norm of the Laplacian does not give a full norm. Even worse, the corresponding seminorm obtained in this way contains higher $s$-derivatives, when carried over to hyperboloidal similarity coordinates, which rules it out as a candidate for an energy norm. This might be resolved by substituting higher $s$-derivatives by lower ones with the wave equation but the resulting expressions are too difficult to handle due to their complexity and elude a systematic analysis.
\end{remark}
\subsubsection{Descent operators}
Now it is important to understand the descent method in hyperboloidal similarity coordinates. The transformation given by
\begin{equation*}
\widehat{u}^\downarrow_d(t,r) \coloneqq (d-2) \widehat{u}(t,r) + r\pd_{r} \widehat{u}(t,r)
\end{equation*}
from \Cref{2descentlemma} reads in hyperboloidal similarity coordinates
\begin{align}
\nonumber
\widehat{v}^\downarrow_d(s,\eta) &\coloneqq (d-2) \widehat{v}(s,\eta) -
\frac{\eta h'(\eta)}{\eta h'(\eta) - h(\eta)} \pd_s \widehat{v}(s,\eta) -
\frac{\eta h(\eta)}{\eta h'(\eta) - h(\eta)} \pd_\eta \widehat{v}(s,\eta)  \\&=
\label{v}
\big( (d-2) + c_{1}(\eta) \pd_\eta + c_2(\eta) \pd_s \big) \widehat{v}(s,\eta)
\end{align}
with coefficients
\begin{equation*}
c_{1}(\eta) \coloneqq -\frac{\eta h(\eta)}{\eta h'(\eta) - h(\eta)} \,, \qquad c_2(\eta) \coloneqq -\frac{\eta h'(\eta)}{\eta h'(\eta) - h(\eta)} \,.
\end{equation*}
Also,
\begin{equation*}
\pd_s \widehat{v}^\downarrow_d(s,\eta) = \big( (d-2)\pd_s +
c_{1}(\eta) \pd_\eta\pd_s +
c_2(\eta) \pd_s^{2} \big) \widehat{v}(s,\eta) \,.
\end{equation*}
If we assume for the moment that $u$ solves the wave equation, then \Cref{waveinHSC} allows to substitute the second order $s$-derivatives by lower order derivatives, in particular,
\begin{align}
\label{pdsvtilde}
\begin{split}
\pd_s \widehat{v}^\downarrow_d(s,\eta) = \Big(
&c_2(\eta) c_{11}^d(\eta) \pd_\eta
+c_2(\eta )c_{12}(\eta) \pd_\eta^{2} \\&
+\big(c_2(\eta)c_{20}^d(\eta) + (d-2)\big) \pd_s
+\big(c_2(\eta)c_{21}(\eta) + c_{1}(\eta)\big) \pd_\eta\pd_s
\Big) \widehat{v}(s,\eta) \,.
\end{split}
\end{align}
In the case $d=3$ we have the transformation
\begin{equation*}
\widehat{u}^\downarrow_3(t,r) \coloneqq r\widehat{u}(t,r)
\end{equation*}
at hand that reads in hyperboloidal similarity coordinates
\begin{align}
\label{v3}
\widehat{v}^\downarrow_3(s,\eta) &= \ee^{-s} \eta \widehat{v}(s,\eta) \,, \\
\label{pdsv3}
\pd_s \widehat{v}^\downarrow_3(s,\eta) &= \ee^{-s} \big( - \eta \widehat{v}(s,\eta) + \eta \pd_s \widehat{v}(s,\eta) \big) \,.
\end{align}
To formulate the method of descent in hyperboloidal similarity coordinates we introduce the \emph{descent operators}. Note in the following that the coefficients $c_{1}$, $c_2$ are odd, even, respectively, and smooth.
\begin{definition}
\label{DescOpDef}
Let $R>0$ and $d,k\in\NN$.
\begin{enumerate}[itemsep=1em, topsep=1em]
\item If $d>3$, the \emph{descent operator} is given by the operator $\mathbf{D}^\downarrow_d : \dom(\mathbf{D}^\downarrow_d) \subset \HH^{k+1}_{\rad}(\BB^{d}_{R}) \rightarrow \HH^{k}_\rad(\BB_{R}^{d-2})$, densely defined on $\dom(\mathbf{D}^\downarrow_d) = C^\infty_\rad (\overline{\BB^d_{R}})^{2}$ by $\mathbf{D}_d^\downarrow \mathbf{f} \coloneqq \widehat{\mathbf{D}}_d^\downarrow \widehat{\mathbf{f}}(|\,.\,|)$, where
\begin{equation*}
\widehat{\mathbf{D}}_d^\downarrow \widehat{\mathbf{f}} \coloneqq
(d-2) \widehat{\mathbf{f}} + c_{1} \widehat{\mathbf{f}}' + c_2 \widehat{\mathbf{L}}_d \widehat{\mathbf{f}} \,.
\end{equation*}
\item If $d=3$, the \emph{descent operator} is given by the operator $\mathbf{D}^\downarrow_3 : \dom(\mathbf{D}^\downarrow_3) \subset \HH^{k}_{\rad}(\BB_{R}^3) \rightarrow \HH^{k}_\odd(\BB_{R})$, densely defined on $\dom(\mathbf{D}^\downarrow_3) = C^\infty_\rad (\overline{\BB^3_{R}})^{2}$ by $\mathbf{D}_3^\downarrow \mathbf{f} \coloneqq \widehat{\mathbf{D}}_3^\downarrow \widehat{\mathbf{f}}$, where
\begin{equation*}
\widehat{\mathbf{D}}_3^\downarrow \widehat{\mathbf{f}}(y) \coloneqq
y \widehat{\mathbf{f}}(y) - y \widehat{f}_{1}(y) \widehat{\mathbf{e}}_2
\end{equation*}
for all $y \in \overline{\BB_{R}}$.
\end{enumerate}
\end{definition}
That is, Eqs. \eqref{v}, \eqref{pdsvtilde} and \eqref{v3}, \eqref{pdsv3} read
\begin{equation*}
\widehat{\mathbf{v}}_d^\downarrow(s,\,.\,) = \widehat{\mathbf{D}}_d^\downarrow \widehat{\mathbf{v}}(s,\,.\,) \,, \qquad \widehat{\mathbf{v}}_3^\downarrow(s,\,.\,) = \ee^{-s} \widehat{\mathbf{D}}_3^\downarrow \widehat{\mathbf{v}}(s,\,.\,) \,,
\end{equation*}
respectively.
\subsection{The intertwining identity}
We have established transformations in \Cref{2descentlemma} that link the radial wave equation in $d$ dimensions with the one in $(d-2)$ dimensions. Now, the pressing question is to what extent this imposes relations between the operators $\mathbf{L}_d$, $\mathbf{L}_{d-2}$, $\mathbf{D}^\downarrow_d$. This can be answered by proving identities for the coefficients involved.
\begin{lemma}
\label{intertwiningCoeff}
Let $d\in \NN$, $d\geq 3$. Set
\begin{equation*}
c_3(\eta) \coloneqq 2\frac{\eta h'(\eta) - h(\eta)}{1-h'(\eta)^{2}} \frac{h(\eta)}{\eta} \,, \qquad
c_4(\eta) \coloneqq 2\frac{\eta h'(\eta) - h(\eta)}{1-h'(\eta)^{2}} \frac{h'(\eta)}{\eta} \,.
\end{equation*}
\begin{enumerate}[itemsep=1em, topsep=1em]
\item We have the identities
\begin{align}
\label{c11prime}
c_{1} {c_{11}^d\!\!}' &= (d-2) c_3 + c_{1}'' c_{12} + c_{1}' c_{11}^{d-2} + \left( c_{21} c_2' + c_2 c_4 \right) c_{11}^d \,, \\
\label{c20prime}
c_{1} {c_{20}^d\!\!}' &= (d-2) c_4 + c_2'' c_{12} + c_2' c_{11}^{d-2} + \left( c_{21} c_2' + c_2 c_4 \right) c_{20}^d \,, \\
\label{c12prime}
c_{1} c_{12}' &= 2 c_{1}' c_{12} + c_{1} c_3 + \left( c_{21} c_2' + c_2 c_4 \right) c_{12}  \,, \\
\label{c21prime}
c_{1} c_{21}' &= c_{1} c_4 + c_2 c_3 +  2 c_2' c_{12} + c_{1}' c_{21} + \left( c_{21} c_2' + c_2 c_4 \right) c_{21}  \,.
\end{align}
\item If $d=1$, then
\begin{align}
\label{c111}
0 &= c_{11}^1(\eta) - \eta - c_{12}(\eta) - \eta c_{20}^1(\eta) + c_{21}(\eta) \,, \\
\label{etac3}
\eta c_3(\eta) &= \eta c_{21}(\eta) - 2 c_{12}(\eta) \,, \\
\label{etac4}
\eta c_4(\eta) &= -c_{21}(\eta) - 2\eta(\eta) \,.
\end{align}
\end{enumerate}
\end{lemma}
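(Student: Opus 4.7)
Plan. Both parts are purely algebraic identities among the smooth functions built from $h, h', h''$ and $\eta$, so the natural approach is direct computation, organized to keep the bookkeeping manageable. I would first introduce the shorthands $A := \eta h' - h$, $B := 1 - h'^2$, $C := h^2 - \eta^2$, $D := \eta - h h'$ in which
\begin{equation*}
c_1 = -\tfrac{\eta h}{A}, \quad c_2 = -\tfrac{\eta h'}{A}, \quad c_{12} = \tfrac{C}{B}, \quad c_{21} = -\tfrac{2D}{B}, \quad c_3 = \tfrac{2Ah}{B\eta}, \quad c_4 = \tfrac{2Ah'}{B\eta},
\end{equation*}
and record the elementary derivative relations $A' = \eta h''$, $B' = -2 h' h''$, $C' = -2D$, $D' = B - h h''$. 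The key structural observation I would exploit is that the dimension enters linearly: writing $c_{11}^d = c_{11}^1 - \tfrac{d-1}{2}\, c_3$ and $c_{20}^d = c_{20}^1 - \tfrac{d-1}{2}\, c_4$, every identity in part (1) splits cleanly into a $d$-independent part and a part linear in $d$.

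For part (1), after this split, I would verify each of \eqref{c11prime}--\eqref{c21prime} by expanding both sides with the quotient and product rules, multiplying through by the least common denominator (a small power of $A$ and of $B$), and matching the resulting polynomial expressions in $h, h', h''$. The cleanest organization is to gather all $h''$-contributions (they come in via $A' = \eta h''$ and $D' = B - h h''$ only) on each side and check that their coefficients agree separately from the $h''$-free remainder; this cuts the amount of verification roughly in half. The combination $c_{21} c_2' + c_2 c_4$ in the recurrent factor on the right-hand sides simplifies noticeably after substituting the explicit formulas, and this is where the $d$-linear pieces line up with the prefactor $(d-2)c_3$ (resp.\ $(d-2)c_4$).

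For part (2), specializing $d=1$ annihilates the $(d-1)$-summands in $c_{11}^d$ and $c_{20}^d$, so
\begin{equation*}
c_{11}^1 = -\tfrac{C \eta h''}{BA} - \tfrac{2D}{B}, \qquad c_{20}^1 = -1 - \tfrac{C h''}{BA}.
\end{equation*}
Identity \eqref{c111} reduces, after multiplying through by $BA$, to a relation among $A, B, C, D$ and $h, h', \eta$ in which the $h''$-terms cancel by construction; I would verify it by grouping $c_{11}^1 + c_{21}$ and $-\eta(1+c_{20}^1)$ so the two $Ch''/(BA)$-contributions kill each other, then checking the residual expression. For \eqref{etac3} and \eqref{etac4} I would substitute the definitions directly: both become statements about $2A/(B\eta)$ times $h$ or $h'$, and a one-line simplification using $C = h^2 - \eta^2$ and $D = \eta - h h'$ reproduces the right-hand sides $\eta c_{21} - 2 c_{12}$ and $-c_{21} - 2\eta$ respectively (here the stated right-hand side $-2\eta(\eta)$ I read as $-2\eta$).

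The main obstacle is not conceptual but combinatorial: several of the products on the right-hand sides of \eqref{c11prime}--\eqref{c21prime} produce dozens of monomials in $h, h', h''$, and the simplifications rely on cancellations that use all four derivative relations above. To keep the proof transparent I would present just the final identities in the shorthand $A, B, C, D$ and outline the cancellation pattern, rather than writing out each product in full. The identities do not require any specific choice of $h$ (in particular, the singular loci of $A$ and $B$ do not enter), so the identities hold wherever the coefficients are defined and extend by continuity.
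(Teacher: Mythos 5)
Your proposal is mathematically sound but takes a genuinely different route from the paper. You propose a direct algebraic verification: introduce the shorthands $A,B,C,D$, record derivative relations, split the dimension-dependence off linearly, multiply through by common denominators, and cancel monomials in $h,h',h''$. This would work, but you rightly flag the combinatorial cost, and in practice it produces pages of bookkeeping. The paper instead \emph{derives} the identities without ever expanding a single quotient: it starts from the Cartesian-coordinate commutation relations \eqref{intertwiningodd}--\eqref{intertwiningthree} of \autoref{2descentlemma} (which are trivial one-line computations with $r\pd_r$), transforms them to hyperboloidal similarity coordinates as an identity between second-order differential operators acting on an arbitrary $\widehat v(s,\eta)$, and then reads off the coefficient identities by plugging in the probe functions $\widehat v(s,\eta)=\eta$, $s$, $\tfrac12\eta^2$, $s\eta$ (for part (1)) and $1$, $\eta$, $s$ (for part (2)). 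Each probe isolates one coefficient-by-coefficient identity. This sidesteps the polynomial cancellation entirely and makes the origin of the recurrent factor $c_{21}c_2'+c_2c_4$ transparent: it is precisely the logarithmic derivative of $\g^{00}$ along the vector field $c_1\pd_\eta+c_2\pd_s$, cf.\ the intermediate identity the paper computes for $(c_1\pd_\eta\g^{00}+c_2\pd_s\g^{00})/\g^{00}$. What your direct approach buys is self-containedness — it does not need the Cartesian commutation relations or the geometric interpretation — but what it costs is exactly the cancellation bookkeeping you note as the main obstacle; the paper's test-function device eliminates that obstacle. Your reading of the typo $-2\eta(\eta)$ as $-2\eta$ in \eqref{etac4} is correct.
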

\begin{proof}
\begin{enumerate}[wide, itemsep=1em, topsep=1em]
\item First note that
\begin{equation*}
c_{11}^{d-2} = c_{11}^d + c_3 \,, \qquad c_{20}^{d-2} = c_{20}^d + c_4 \,,
\end{equation*}
for all $d\in\NN$, $d\geq 3$. Let $u,v\in C^\infty(\RR^{1,d})$ such that $v = u\circ \eta_{T}$. The commutation relation \eqref{intertwiningodd} then reads
\begin{align*}
&\Big( (d-2) + c_{1}(\eta) \pd_\eta + c_2(\eta) \pd_s \Big) \Big( \g^{00}(s,\eta)
\\&\quad\times
\big( \pd_s^{2} - c_{11}^d(\eta) \pd_\eta - c_{12}(\eta) \pd_\eta^{2} - c_{20}^d(\eta) \pd_s - c_{21}(\eta) \pd_\eta\pd_s \big) \widehat{v}(s,\eta) \Big) \\&=
\g^{00}(s,\eta) \\&\quad\times
\Big( \pd_s^{2} - c_{11}^{d-2}(\eta) \pd_\eta - c_{12}(\eta) \pd_\eta^{2} - c_{20}^{d-2}(\eta) \pd_s - c_{21}(\eta) \pd_\eta\pd_s \Big) \Big( (d-2) + c_{1}(\eta) \pd_\eta + c_2(\eta) \pd_s \Big) \widehat{v}(s,\eta)
\end{align*}
and after computing the identity
\begin{equation}
\frac{ c_{1}(\eta) \pd_\eta \g^{00}(s,\eta) + c_2(\eta) \pd_s \g^{00}(s,\eta) }{ \g^{00}(s,\eta) } = - c_2'(\eta) c_{21}(\eta) - c_2(\eta) c_4(\eta)
\end{equation}
in order to exchange derivatives of the metric with derivatives of coefficients we see
\begin{align*}
&\Big( (d-2) + c_{1}(\eta) \pd_\eta + c_2(\eta) \pd_s
\\&\quad-
c_2'(\eta) c_{21}(\eta)
-c_2(\eta) c_4(\eta) \Big) \Big( \pd_s^{2} - c_{11}^d(\eta) \pd_\eta - c_{12}(\eta) \pd_\eta^{2} - c_{20}^d(\eta) \pd_s - c_{21}(\eta) \pd_\eta\pd_s \Big) \widehat{v}(s,\eta) \\&= \Big( \pd_s^{2} - c_{11}^{d-2}(\eta) \pd_\eta - c_{12}(\eta) \pd_\eta^{2} - c_{20}^{d-2}(\eta) \pd_s - c_{21}(\eta) \pd_\eta\pd_s \Big) \Big( (d-2) + c_{1}(\eta) \pd_\eta + c_2(\eta) \pd_s \Big) \widehat{v}(s,\eta) \,.
\end{align*}
Inserting $\eta$, $s$ for $\widehat{v}(s,\eta)$ yields \eqref{c11prime}, \eqref{c20prime}, respectively. Inserting  $\frac{1}{2} \eta^{2}$, $s\eta$ for $\widehat{v}(s,\eta)$ and exploiting the previous identities yields \eqref{c12prime}, \eqref{c21prime}, respectively.
\item The commutation relation \eqref{intertwiningthree} reads in hyperboloidal similarity coordinates directly
\begin{align*}
& \eta \ee^{-s} \g^{00}(s,\eta) \big( \pd_s^{2} - c_{11}^3(\eta) \pd_\eta - c_{12}(\eta) \pd_\eta^{2} - c_{20}^3(\eta) \pd_s - c_{21}(\eta) \pd_\eta\pd_s \big) \widehat{v}(s,\eta) \\&=
\g^{00}(s,\eta) \big( \pd_s^{2} - c_{11}^1(\eta) \pd_\eta - c_{12}(\eta) \pd_\eta^{2} - c_{20}^1(\eta) \pd_s - c_{21}(\eta) \pd_\eta\pd_s \big) \big( \eta \ee^{-s} \widehat{v}(s,\eta) \big)
\end{align*}
which yields after cancelling $\ee^{-s}\g^{00}(s,\eta)$
\begin{align*}
&\eta\big(c_{11}^3(\eta) \pd_\eta + c_{12}(\eta) \pd_\eta^{2} + c_{20}^3(\eta) \pd_s + c_{21}(\eta) \pd_\eta\pd_s \big) \widehat{v}(s,\eta) \\&=
\big( (c_{11}^1(\eta) - c_{21}(\eta)) \pd_\eta + c_{12}(\eta) \pd_\eta^{2} + (c_{20}^1(\eta) + 2) \pd_s + c_{21}(\eta) \pd_\eta\pd_s - c_{20}^1(\eta) - 1 \big) \big( \eta \widehat{v}(s,\eta) \big) \,.
\end{align*}
Inserting $\widehat{v}(s,\eta) = 1$ yields \Cref{c111}. Using this identity when $\eta$, $s$ is assigned for $\widehat{v}(s,\eta)$ yields \eqref{etac3}, \eqref{etac4}, respectively.
\qedhere
\end{enumerate}
\end{proof}
The commutation relations presented in \Cref{2descentlemma} now appear as curious \emph{intertwining identities}.
\begin{proposition}
\label{intertwiningLemma}
Let $R>0$ and $d\in\NN$, $d\geq 3$.
\begin{enumerate}[itemsep=1em, topsep=1em]
\item The commutation relation \eqref{intertwiningodd} manifests itself as
\begin{equation}
\label{intertwining-d}
\mathbf{D}_d^\downarrow \mathbf{L}_d \mathbf{f} = \mathbf{L}_{d-2} \mathbf{D}_d^\downarrow \mathbf{f}
\end{equation}
for all $\mathbf{f} \in C^\infty_\rad(\overline{\BB^{d}_{R}})^{2}$.
\item The commutation relation \eqref{intertwiningthree} manifests itself as
\begin{equation}
\label{intertwining-3}
\mathbf{D}_3^\downarrow \mathbf{L}_3 \mathbf{f} = \mathbf{L}_{1} \mathbf{D}_3^\downarrow \mathbf{f} + \mathbf{D}_3^\downarrow \mathbf{f}
\end{equation}
for all $\mathbf{f} \in C^\infty_{\mathrm{rad}} (\overline{\BB^3_{R}})^{2}$.
\end{enumerate}
\end{proposition}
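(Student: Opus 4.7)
The plan is to verify both intertwining identities by direct computation at the level of radial representatives, with Lemma \ref{intertwiningCoeff} supplying the essential algebraic input. Since both $\mathbf{L}_d$ and $\mathbf{D}_d^\downarrow$ are defined by their action on the radial representative $\widehat{\mathbf{f}}$, it suffices to verify the asserted identities for $\widehat{\mathbf{f}} \in C^\infty(\overline{\BB_R})^2$ (with $\widehat{f}_j$ even in the case of part (1) and obeying the natural parity in part (2)).

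For part (1), first unpack each side component-wise. Since the first row of $\widehat{\mathbf{L}}_{d-2}$ merely extracts the second component, both $[\widehat{\mathbf{D}}_d^\downarrow \widehat{\mathbf{L}}_d \widehat{\mathbf{f}}]_1$ and $[\widehat{\mathbf{L}}_{d-2} \widehat{\mathbf{D}}_d^\downarrow \widehat{\mathbf{f}}]_1$ collapse to the same expression $(d-2)\widehat{f}_2 + c_1 \widehat{f}_2' + c_2[\widehat{\mathbf{L}}_d \widehat{\mathbf{f}}]_2$, so all the content is in the second components. Expand
\begin{equation*}
[\widehat{\mathbf{D}}_d^\downarrow \widehat{\mathbf{L}}_d \widehat{\mathbf{f}}]_2 = (d-2)[\widehat{\mathbf{L}}_d \widehat{\mathbf{f}}]_2 + c_1 [\widehat{\mathbf{L}}_d \widehat{\mathbf{f}}]_2' + c_2 [\widehat{\mathbf{L}}_d^{\,2} \widehat{\mathbf{f}}]_2
\end{equation*}
and
\begin{equation*}
[\widehat{\mathbf{L}}_{d-2} \widehat{\mathbf{D}}_d^\downarrow \widehat{\mathbf{f}}]_2 = c_{11}^{d-2}[\widehat{\mathbf{D}}_d^\downarrow \widehat{\mathbf{f}}]_1' + c_{12}[\widehat{\mathbf{D}}_d^\downarrow \widehat{\mathbf{f}}]_1'' + c_{20}^{d-2}[\widehat{\mathbf{D}}_d^\downarrow \widehat{\mathbf{f}}]_2 + c_{21}[\widehat{\mathbf{D}}_d^\downarrow \widehat{\mathbf{f}}]_2'
\end{equation*}
and regroup each as a linear combination of $\widehat{f}_1, \widehat{f}_1', \widehat{f}_1'', \widehat{f}_1''', \widehat{f}_2, \widehat{f}_2', \widehat{f}_2''$, using the relations $c_{11}^{d-2} = c_{11}^d + c_3$ and $c_{20}^{d-2} = c_{20}^d + c_4$ from the proof of Lemma \ref{intertwiningCoeff}. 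The top-order coefficients of $\widehat{f}_1'''$ and $\widehat{f}_2''$ match automatically, the coefficients of $\widehat{f}_1'$ and $\widehat{f}_2$ match by \eqref{c11prime} and \eqref{c20prime}, the coefficients of $\widehat{f}_1''$ and $\widehat{f}_2'$ match by \eqref{c12prime} and \eqref{c21prime}, and the coefficient of $\widehat{f}_1$ vanishes on both sides.

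Part (2) follows the same strategy but is considerably simpler, because $\widehat{\mathbf{D}}_3^\downarrow$ is purely multiplicative: $[\widehat{\mathbf{D}}_3^\downarrow \widehat{\mathbf{f}}]_1 = y\widehat{f}_1$ and $[\widehat{\mathbf{D}}_3^\downarrow \widehat{\mathbf{f}}]_2 = y(\widehat{f}_2 - \widehat{f}_1)$. Writing out $\widehat{\mathbf{L}}_1 \widehat{\mathbf{D}}_3^\downarrow \widehat{\mathbf{f}} + \widehat{\mathbf{D}}_3^\downarrow \widehat{\mathbf{f}}$ and $\widehat{\mathbf{D}}_3^\downarrow \widehat{\mathbf{L}}_3 \widehat{\mathbf{f}}$ explicitly and regrouping in $\widehat{f}_1, \widehat{f}_1', \widehat{f}_1'', \widehat{f}_2, \widehat{f}_2'$, the additive correction $\widehat{\mathbf{D}}_3^\downarrow \widehat{\mathbf{f}}$ on the right absorbs precisely the zeroth-order discrepancy encoded in \eqref{c111}, while \eqref{etac3} and \eqref{etac4} account for the coefficients of $\widehat{f}_1''$ and $\widehat{f}_2'$ once the factor $\eta$ is commuted through the derivatives produced by $\widehat{\mathbf{L}}_1$.

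The main obstacle is the bookkeeping in part (1). The terms $c_2[\widehat{\mathbf{L}}_d^{\,2}\widehat{\mathbf{f}}]_2$ on the left and $c_{21}[\widehat{\mathbf{D}}_d^\downarrow \widehat{\mathbf{f}}]_2'$ on the right each produce long expressions involving all four wave-coefficient functions together with their derivatives, and the combination $c_{21} c_2' + c_2 c_4$ appears symmetrically on both sides. Conceptually, this combination mirrors the $g^{00}$-derivative correction that arose in the scalar hyperboloidal form of the commutation relation \eqref{intertwiningodd} (see the proof of Lemma \ref{intertwiningCoeff}); the coefficient identities \eqref{c11prime}--\eqref{c21prime} are engineered precisely so that, once the terms are sorted by order of derivative, this extra contribution cancels cleanly and the verification becomes mechanical.
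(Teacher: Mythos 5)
Your proposal is correct and follows essentially the same route as the paper's proof: both sides reduce to a direct expansion at the level of radial representatives, a grouping by derivative order, and an invocation of the coefficient identities \eqref{c11prime}--\eqref{c21prime} (resp.\ \eqref{c111}--\eqref{etac4}) from \autoref{intertwiningCoeff} to close the gap. The paper organizes the bookkeeping through auxiliary Leibniz-type identities for $\widehat{\mathbf{L}}_d$ rather than sorting term-by-term as you do, and it does not separately record your (correct) observation that the first components agree automatically, but these are differences in presentation only.
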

\begin{proof}
We use in the computations below
\begin{align*}
\widehat{\mathbf{L}}_d ( f \widehat{\mathbf{f}} ) &= f \widehat{\mathbf{L}}_d \widehat{\mathbf{f}} + \big( (f' c_{11}^d + f'' c_{12}) \widehat{f}_{1} + 2f'c_{12} \widehat{f}_{1}' + f'c_{21} \widehat{f}_2  \big) \widehat{\mathbf{e}}_2 \,, \\
(\widehat{\mathbf{L}}_d \widehat{\mathbf{f}})' &= \widehat{\mathbf{L}}_d \widehat{\mathbf{f}}' + ({c_{11}^d\!\!}' \widehat{f}_{1}' + c_{12}' \widehat{f}_{1}'' + {c_{20}^d\!\!}' \widehat{f}_2 + c_{21}' \widehat{f}_2') \widehat{\mathbf{e}}_2 \,, \\
\widehat{\mathbf{L}}_{d-2} \widehat{\mathbf{f}} &= \widehat{\mathbf{L}}_d \widehat{\mathbf{f}} + (c_3 \widehat{f}_{1}' + c_4 \widehat{f}_2) \widehat{\mathbf{e}}_2 \,,
\end{align*}
for all even $\widehat{\mathbf{f}} \in C^\infty(\overline{\BB_{R}})^{2}$ and $f\in C^\infty(\overline{\BB_{R}})$. The first and second identity are an application of the product rule in the definition of the wave evolution operator. The third identity follows from $c_{11}^{d-2} = c_3 + c_{11}^d$ and $c_{20}^{d-2} = c_4 + c_{20}^d$.
\begin{enumerate}[wide, itemsep=1em, topsep=1em]
\item With the definition and an application of the Leibniz rule from above we compute
\begin{align*}
\widehat{\mathbf{D}}^\downarrow_d \widehat{\mathbf{L}}_d \widehat{\mathbf{f}} &= (d-2) \widehat{\mathbf{L}}_d \widehat{\mathbf{f}} + c_{1} (\widehat{\mathbf{L}}_d \widehat{\mathbf{f}})' + c_2 \widehat{\mathbf{L}}_d \widehat{\mathbf{L}}_d \widehat{\mathbf{f}} \\&=
(d-2) \widehat{\mathbf{L}}_d \widehat{\mathbf{f}} + c_{1} \widehat{\mathbf{L}}_d \widehat{\mathbf{f}}' + c_2 \widehat{\mathbf{L}}_d \widehat{\mathbf{L}}_d  \widehat{\mathbf{f}} +
c_{1} \big(
{c^d_{11}\!\!}' \widehat{f}_{1}' + c_{12}' \widehat{f}_{1}'' + {c_{20}^d\!\!}' \widehat{f}_2 + c_{21}' \widehat{f}_2'
\big) \widehat{\mathbf{e}}_2
\end{align*}
and
\begin{align*}
\widehat{\mathbf{L}}_{d-2} \widehat{\mathbf{D}}^\downarrow_d \widehat{\mathbf{f}} &=
(d-2) \widehat{\mathbf{L}}_{d-2} \widehat{\mathbf{f}} + c_{1} \widehat{\mathbf{L}}_{d-2} \widehat{\mathbf{f}}' + c_2 \widehat{\mathbf{L}}_{d-2} \widehat{\mathbf{L}}_d \widehat{\mathbf{f}} \\&\quad+
\big( (c_{1}' c_{11}^{d-2} + c_{1}'' c_{12}) \widehat{f}_{1}' + 2c_{1}'c_{12}\widehat{f}_{1}'' + c_{1}'c_{21}\widehat{f}_2'  \big) \widehat{\mathbf{e}}_2 \\&\quad + \big( (c_2' c_{11}^{d-2} + c_2'' c_{12}) \widehat{f}_2 + 2c_2'c_{12}\widehat{f}_2' + c_2'c_{21} (c_{11}^d \widehat{f}_{1}' + c_{12} \widehat{f}_{1}'' + c_{20}^d \widehat{f}_2 + c_{21} \widehat{f}_2')  \big) \widehat{\mathbf{e}}_2 \\&=
(d-2) \widehat{\mathbf{L}}_d \widehat{\mathbf{f}} + c_{1} \widehat{\mathbf{L}}_d \widehat{\mathbf{f}}' + c_2 \widehat{\mathbf{L}}_d \widehat{\mathbf{L}}_d  \widehat{\mathbf{f}} \\&\quad+
\big(
((d-2) c_3 + c_{1}'' c_{12} + c_{1}' c_{11}^{d-2} + \left( c_{21} c_2' + c_2 c_4 \right) c_{11}^d) \widehat{f}_{1}' \\&\quad+
(2 c_{1}' c_{12} + c_{1} c_3 + \left( c_{21} c_2' + c_2 c_4 \right) c_{12}) \widehat{f}_{1}'' \\&\quad+
((d-2) c_4 + c_2'' c_{12} + c_2' c_{11}^{d-2} + \left( c_{21} c_2' + c_2 c_4 \right) c_{20}^d ) \widehat{f}_2 \\&\quad+
(c_{1} c_4 + c_2 c_3 +  2 c_2' c_{12} + c_{1}' c_{21} + \left( c_{21} c_2' + c_2 c_4 \right) c_{21}) \widehat{f}_2' \big) \widehat{\mathbf{e}}_2 \,.
\end{align*}
Now the relation holds by the first part of \Cref{intertwiningCoeff}.
\item We compute
\begin{equation*}
\widehat{\mathbf{L}}_3 \widehat{\mathbf{D}}_d^\downarrow \widehat{\mathbf{f}} = \eta \widehat{\mathbf{L}}_{1} \widehat{\mathbf{f}} - \big( \eta c_3 \widehat{f}_{1}' + \eta(c_4 + 1) \widehat{f}_2 \big) \widehat{\mathbf{e}}_2
\end{equation*}
and
\begin{align*}
\widehat{\mathbf{L}}_{1} \widehat{\mathbf{D}}_3^\downarrow \widehat{\mathbf{f}} + \widehat{\mathbf{D}}_3^\downarrow \widehat{\mathbf{f}} = \eta \widehat{\mathbf{L}}_{1} \widehat{\mathbf{f}} +
\big( (c_{11}^1 - \eta - \eta c_{20}^1 - c_{21} ) \widehat{f}_{1} + (2c_{12} - \eta c_{21}) \widehat{f}_{1}' + (c_{21} + \eta) \widehat{f}_2 \big) \widehat{\mathbf{e}}_2 \,.
\end{align*}
Comparing coefficients on both sides reveals the identity with aid of the second part of \Cref{intertwiningCoeff}.
\qedhere
\end{enumerate}
\end{proof}
\subsection{Analysis of the descent operators}
We continue with an analysis of the previously defined descent operators, where the crucial part is to understand their mapping properties.
\begin{proposition}
\label{majorresult}
Let $R>0$ and $d,k\in\NN$ such that $d\geq 5$ is odd.
\begin{enumerate}[itemsep=1em, topsep=1em]
\item The descent operator $\mathbf{D}_d^\downarrow$ is bounded and extends to a boundedly invertible operator in $\mathfrak{L}\big( \HH^{k+1}_\rad(\BB^{d}_{R}) , \HH^{k}_\rad(\BB_{R}^{d-2}) \big)$, in particular
\begin{equation}
\| \mathbf{D}_d^\downarrow\mathbf{f} \|_{\HH^{k}(\BB_{R}^{d-2})} \simeq \| \mathbf{f} \|_{\HH^{k+1}(\BB^{d}_{R})}
\end{equation}
for all $\mathbf{f} \in \HH^{k+1}_\rad(\BB^{d}_{R})$.
\item The descent operator $\mathbf{D}_3^\downarrow$ is bounded and extends to a boundedly invertible operator in $\mathfrak{L}\big( \HH^{k}_\rad(\BB_{R}^3) , \HH^{k}_\odd(\BB_{R}) \big)$, in particular
\begin{equation}
\| \mathbf{D}_3^\downarrow\mathbf{f} \|_{\HH^{k}(\BB_{R})} \simeq \| \mathbf{f} \|_{\HH^{k}(\BB_{R}^3)}
\end{equation}
for all $\mathbf{f} \in \HH^{k}_\rad(\BB_{R}^3)$.
\end{enumerate}
\end{proposition}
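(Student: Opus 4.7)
I split the proof according to the two cases. The $d = 3$ case is essentially algebraic: $\widehat{\mathbf{D}}_3^\downarrow$ acts as $(\widehat{f}_1, \widehat{f}_2) \mapsto (y\widehat{f}_1, y\widehat{f}_2 - y\widehat{f}_1)$, so the norm equivalence reduces to the classical characterization $\|f\|_{H^k(\BB_R^3)} \simeq \|(\,\cdot\,)\widehat{f}\|_{H^k(\BB_R)}$ of radial Sobolev norms on the three-ball in terms of odd one-dimensional functions. I would establish this equivalence from the volume-element representation of the radial Sobolev norm combined with a Hardy inequality to control the behavior at the origin, iterating for higher derivatives. The inverse is then explicit: $(\widehat{g}_1, \widehat{g}_2) \mapsto (\widehat{g}_1/y, (\widehat{g}_2+\widehat{g}_1)/y)$, well-defined because elements of $H^k_\odd(\BB_R)$ vanish at the origin at the required rate.

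The $d \geq 5$ case requires more work. Boundedness $\|\mathbf{D}_d^\downarrow \mathbf{f}\|_{\HH^k(\BB_R^{d-2})} \lesssim \|\mathbf{f}\|_{\HH^{k+1}(\BB_R^d)}$ would follow by direct expansion of the definition: the first component $(d-2)\widehat{f}_1 + c_1 \widehat{f}_1' + c_2 \widehat{f}_2$ costs one derivative of $\widehat{f}_1$, while the second component contains $c_2 c_{12}\widehat{f}_1''$ costing two derivatives, exactly matching the $\HH^{k+1} \to \HH^k$ loss. The shift in the radial volume weight ($\eta^{d-1}$ versus $\eta^{d-3}$) is absorbed by the linear vanishing $c_1(\eta) \simeq \eta$ at the origin together with the parities of the coefficients, which are smooth on $\overline{\BB_R}$ after accounting for the factor-of-$\eta$ convention on $c_{11}^d$.

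For the reverse inequality I plan to construct the inverse by solving the two defining equations as a coupled system. The critical building block is the scalar first-order operator $\widehat{M}\widehat{f} := (d-2)\widehat{f} + c_1(\eta)\widehat{f}'$; since $c_1(\eta)/\eta \to 1$ at zero, the ODE $\widehat{M}\widehat{f} = \widehat{g}$ admits an integrating-factor solution $\widehat{f}(\eta) = \mu(\eta)^{-1}\int_0^\eta \mu(\eta')\widehat{g}(\eta')/c_1(\eta')\,\dd \eta'$ with $\mu(\eta) \simeq \eta^{d-2}$ near the origin. This weight is precisely what converts the $(d-2)$-dimensional $L^2$ norm into the $d$-dimensional one, mirroring the Cartesian identity $r^{d-2}\widehat{u}(r) = \int_0^r s^{d-3}\widehat{u}_d^\downarrow(s)\,\dd s$ underlying \autoref{2descentlemma}. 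Applying $\widehat{M}^{-1}$ to the first-component equation (with $\widehat{f}_2$ treated as a parameter producing the forcing $\widehat{g}_1 - c_2\widehat{f}_2$) yields $\widehat{f}_1$, and the second-component equation then becomes another first-order ODE in $\widehat{f}_2$ with coefficient $c_1 + c_2 c_{21}$ in front of $\widehat{f}_2'$; a quick check shows $c_1 + c_2 c_{21} \simeq \eta$ at the origin, so the same integrating-factor technique recovers $\widehat{f}_2$.

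The principal technical obstacle will be the $H^{k+1}$-boundedness of $\widehat{M}^{-1}$ at high regularity. Differentiating the integral formula produces boundary contributions at $\eta = 0$ that must be absorbed via iterated Hardy inequalities, the cancellations relying on the evenness of the input to offset the singularity $1/c_1 \sim 1/\eta$. I expect an inductive bootstrap in $k$, exploiting that $\pd_\eta$ commutes with $\widehat{M}$ modulo multiplication by a smooth coefficient, so that each additional derivative propagates cleanly through $\widehat{M}^{-1}$. A density argument then extends the estimates from $C^\infty_\rad(\overline{\BB_R^d})^2$ to the Hilbert space completions, closing the proof of the norm equivalence in both directions.
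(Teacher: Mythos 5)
Your treatment of the $d=3$ case and of the forward ``$\lesssim$'' direction for $d\geq 5$ follows the paper's line. The gap is in the inversion of $\mathbf{D}_d^\downarrow$ for $d\geq 5$. The paper eliminates $\widehat{f}_2$, not $\widehat{f}_1$: the first-component equation gives $\widehat{f}_2$ \emph{algebraically} as $\widehat{f}_2 = c_2^{-1}\big(\widehat{g}_1 - (d-2)\widehat{f}_1 - c_1\widehat{f}_1'\big)$, and substituting this together with $\widehat{f}_2'$ into the second component produces a local second-order scalar ODE $\widehat{f}_1'' + p\widehat{f}_1' + q\widehat{f}_1 = \widehat{G}$ for $\widehat{f}_1$ alone, with explicit fundamental solutions $\widehat{\phi}_{11}(\eta) = h(\eta)/\eta^{d-2}$ and $\widehat{\phi}_{12}(\eta) = 1/\eta^{d-2}$, both singular of order $d-2$ at the origin; the smoothness requirement annihilates both free constants, and the remaining Duhamel integrals are grouped into operators $\widehat{T}_{ij}$ that are bounded by \autoref{integralSobolev}.

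Your route inverts $\widehat{M} = (d-2) + c_1\pd_\eta$ acting on $\widehat{f}_1$ first, with $\widehat{f}_2$ as forcing, so $\widehat{f}_1 = \widehat{M}^{-1}(\widehat{g}_1 - c_2\widehat{f}_2)$ is a nonlocal integral operator in $\widehat{f}_2$. When you substitute $\widehat{f}_1, \widehat{f}_1', \widehat{f}_1''$ into the second component, the terms in $\widehat{f}_1$ survive as integrals of $\widehat{f}_2$; the result is not a first-order ODE but a first-order integro-differential equation, and the proposal does not address how that nonlocal equation is closed (fixed-point argument, cancellations, or otherwise). The pointwise coefficient claim is also off: $\widehat{f}_1''$ carries $-(c_2/c_1)\widehat{f}_2'$ pointwise via the first-component ODE, so the coefficient of $\widehat{f}_2'$ becomes $c_1 + c_2 c_{21} - c_2^2 c_{12}/c_1$, not $c_1 + c_2 c_{21}$. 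The observation that the integrating factor $\mu\simeq\eta^{d-2}$ mirrors the Cartesian identity $r^{d-2}\widehat{u}(r) = \int_0^r s^{d-3}\widehat{u}_d^\downarrow(s)\,\dd s$ is sound in spirit, but as written the argument skips the step that makes the inversion local and tractable, namely the algebraic elimination of $\widehat{f}_2$ rather than the ODE inversion for $\widehat{f}_1$.
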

\begin{proof}
\begin{enumerate}[wide, itemsep=1em, topsep=1em]
\item
\begin{itemize}[wide]
\item[``$\lesssim$'':]
Note that the coefficients in the descent operator
\begin{equation*}
\widehat{\mathbf{D}}^\downarrow_d \widehat{\mathbf{f}} =
\begin{bmatrix}
(d-2)\widehat{f}_{1} + c_{1} \widehat{f}_{1}' + c_2\widehat{f}_2 \\
c_2c_{11}^d\widehat{f}_{1}'
+c_2c_{12}\widehat{f}_{1}''
+\big( c_2c_{20}^d + (d-2) \big)\widehat{f}_2
+\big( c_{1} + c_2c_{21} \big) \widehat{f}_2'
\end{bmatrix}
\end{equation*}
behave like
\begin{align*}
c_{1} &= \eta \varphi_{1}  \,,  & & &
c_2 &= \eta^{2} \varphi_2 \,, \\
c_2c_{11}^d &= \eta\varphi_{11}^d \,, &
c_2c_{12} &= \eta^{2}\varphi_{12} \,, &
c_2c_{20}^d + (d-2) &= \varphi_{20}^d \,, &
c_{1} + c_2c_{21} = \eta\varphi_{21} \,,
\end{align*}
on $\overline{\BB_{R}}$, where $\varphi_{1}, \varphi_2, \eta\varphi_{11}^d, \varphi_{12}, \varphi_{20}^d, \varphi_{21} \in C^\infty(\overline{\BB_{R}})$ are even and nonzero at the origin. As derivatives are controlled by corresponding Sobolev norms, we get
\begin{align*}
\big\| |\,.\,|^{\frac{d-3}{2}} [\widehat{\mathbf{D}}_d^\downarrow\widehat{\mathbf{f}}]_{1} \big\|_{H^{k}(\BB_{R})} &\lesssim
\big\| |\,.\,|^{\frac{d-3}{2}} \widehat{f}_{1} \big\|_{H^{k}(\BB_{R})} +
\big\| |\,.\,|^{\frac{d-3}{2}} |\,.\,| \widehat{f}_{1}' \big\|_{H^{k}(\BB_{R})} +
\big\| |\,.\,|^{\frac{d-3}{2}} |\,.\,|^{2} \widehat{f}_2 \big\|_{H^{k}(\BB_{R})}
\\&=
\big\| |\,.\,|^{\frac{d-1}{2} - 1} \widehat{f}_{1} \big\|_{H^{k}(\BB_{R})} +
\big\| |\,.\,|^{\frac{d-1}{2}} \widehat{f}_{1}' \big\|_{H^{k}(\BB_{R})} +
\big\| |\,.\,| |\,.\,|^{\frac{d-1}{2}} \widehat{f}_2 \big\|_{H^{k}(\BB_{R})}
\\&\lesssim
\big\| |\,.\,|^{\frac{d-1}{2}} \widehat{f}_{1} \big\|_{H^{k+1}(\BB_{R})} +
\big\| |\,.\,|^{\frac{d-1}{2}} \widehat{f}_2 \big\|_{H^{k}(\BB_{R})}
\end{align*}
and
\begin{align*}
\big\| |\,.\,|^{\frac{d-3}{2}} [\widehat{\mathbf{D}}_d^\downarrow\widehat{\mathbf{f}}]_2 \big\|_{H^{k-1}(\BB_{R})} &\lesssim
\big\| |\,.\,|^{\frac{d-3}{2}} |\,.\,| \widehat{f}_{1}' \big\|_{H^{k-1}(\BB_{R})} +
\big\| |\,.\,|^{\frac{d-3}{2}} |\,.\,|^{2} \widehat{f}_{1}'' \big\|_{H^{k-1}(\BB_{R})} \\&\quad+
\big\| |\,.\,|^{\frac{d-3}{2}} \widehat{f}_2 \big\|_{H^{k-1}(\BB_{R})} +
\big\| |\,.\,|^{\frac{d-3}{2}} |\,.\,| \widehat{f}_2' \big\|_{H^{k-1}(\BB_{R})} \\&=
\big\| |\,.\,|^{\frac{d-1}{2}} \widehat{f}_{1}' \big\|_{H^{k-1}(\BB_{R})} +
\big\| |\,.\,| |\,.\,|^{\frac{d-1}{2}} \widehat{f}_{1}'' \big\|_{H^{k-1}(\BB_{R})} \\&\quad+
\big\| |\,.\,|^{\frac{d-1}{2} - 1} \widehat{f}_2 \big\|_{H^{k-1}(\BB_{R})} +
\big\| |\,.\,|^{\frac{d-1}{2}} \widehat{f}_2' \big\|_{H^{k-1}(\BB_{R})} \\&\lesssim
\big\| |\,.\,|^{\frac{d-1}{2}} \widehat{f}_{1} \big\|_{H^{k+1}(\BB_{R})} +
\big\| |\,.\,|^{\frac{d-1}{2}} \widehat{f}_2 \big\|_{H^{k}(\BB_{R})} \,.
\end{align*}
In combination with \Cref{oddSobolevBalls} this implies the bound.
\item[``$\gtrsim$'':] We prove that $\mathbf{D}^\downarrow_d$ has dense range $C^\infty_\rad(\overline{\BB_{R}^{d-2}})^{2}$, is injective and conclude with this the reverse bound. Let $\widehat{\mathbf{g}} \in C^\infty(\overline{\BB_{R}})^{2}$ be even and consider solving $\widehat{\mathbf{D}}_d^\downarrow \widehat{\mathbf{f}} = \widehat{\mathbf{g}}$ for an even $\widehat{\mathbf{f}}\in C^\infty(\overline{\BB_{R}})^{2}$, i.e.
\begin{equation*}
\renewcommand{\arraystretch}{1.2}
\left\{
\begin{array}{rcl}
(d-2) \widehat{f}_{1} + c_{1} \widehat{f}_{1}' + c_2 \widehat{f}_2 &=& \widehat{g}_{1} \,, \\
c_2c_{11}^d\widehat{f}_{1}'
+c_2c_{12}\widehat{f}_{1}''
+\big( c_2c_{20}^d + (d-2) \big)\widehat{f}_2
+\big( c_{1} + c_2c_{21} \big) \widehat{f}_2' &=& \widehat{g}_2 \,.
\end{array}
\right.
\end{equation*}
Putting the first equation in the second equation, we see that the first order system is equivalent to the second order ordinary differential equation
\begin{equation*}
\renewcommand{\arraystretch}{1.2}
\left\{
\begin{array}{rcl}
\widehat{f}_{1}'' + p\widehat{f}_{1}' + q\widehat{f}_{1} &=& r_{10}^d \widehat{g}_{1} + r_{11} \widehat{g}_{1}' + r_{20} \widehat{g}_2 \eqqcolon \widehat{G} \,, \\
\widehat{f}_2 &=& \frac{1}{c_2} \widehat{g}_{1} - \frac{d-2}{c_2} \widehat{f}_{1} - \frac{c_{1}}{c_2} \widehat{f}_{1}' \,,
\end{array}
\right.
\end{equation*}
with
\begin{equation*}
p(\eta) = \frac{1}{\eta} \left( 2(d-2) - \frac{\eta}{h'(\eta)} h''(\eta) \right) \,, \qquad
q(\eta) = \frac{d-2}{\eta^{2}} \left( (d-3) - \frac{\eta}{h'(\eta)} h''(\eta) \right) \,,
\end{equation*}
and on the right-hand side
\begin{align*}
r_{10}^d(\eta) &= \frac{1}{\eta^{2}} \left( (d-3) - \frac{\eta}{h'(\eta)} h''(\eta) \right) \,, &
r_{11}(\eta) &= \frac{1}{\eta}\frac{2\eta h'(\eta) - h(\eta) - h(\eta)h'(\eta)^{2}}{ \eta h'(\eta) - h(\eta)} \,, \\
r_{20}(\eta) &= \frac{1- h'(\eta)^{2}}{\eta h'(\eta) - h(\eta)} \frac{h'(\eta)}{\eta} \,. &&
\end{align*}
This differential equation has two fundamental solutions that are given explicitly by
\begin{equation*}
\widehat{\phi}_{11}(\eta) = \frac{h(\eta)}{\eta^{d-2}} \,, \qquad
\widehat{\phi}_{12}(\eta) = \frac{1}{\eta^{d-2}} \,.
\end{equation*}
By defining the second components of the fundamental solutions,
\begin{align*}
\widehat{\phi}_{21}(\eta) &\coloneqq \frac{d-2}{c_2(\eta)} \widehat{\phi}_{11}(\eta) - \frac{c_{1}(\eta)}{c_2(\eta)} \widehat{\phi}_{11}'(\eta) & \widehat{\phi}_{22}(\eta) &\coloneqq \frac{d-2}{c_2(\eta)} \widehat{\phi}_{12}(\eta) - \frac{c_{1}(\eta)}{c_2(\eta)} \widehat{\phi}_{12}'(\eta) \\  &=
(d-3)\frac{h(\eta)}{\eta^{d-2}} \,, & &= \frac{d-2}{\eta^{d-2}} \,,
\end{align*}
the full solution to the inhomogeneous problem is given by Duhamel's formula and reads
\begin{align*}
\widehat{f}_{1}(\eta) &= \alpha_{1}  \widehat{\phi}_{11}(\eta) + \alpha_2  \widehat{\phi}_{12}(\eta) -  \widehat{\phi}_{11}(\eta) \int_{0}^\eta \frac{\widehat{\phi}_{12}(\eta')}{\widehat{W}(\eta')} \widehat{G}(\eta') \dd \eta' +  \widehat{\phi}_{12}(\eta) \int_{0}^\eta \frac{\widehat{\phi}_{11}(\eta')}{\widehat{W}(\eta')} \widehat{G}(\eta') \dd \eta' \,, \\
\widehat{f}_2(\eta) &= \alpha_{1} \widehat{\phi}_{21}(\eta) + \alpha_2 \widehat{\phi}_{22} -  \widehat{\phi}_{21}(\eta) \int_{0}^\eta \frac{\widehat{\phi}_{12}(\eta')}{\widehat{W}(\eta')} \widehat{G}(\eta') \dd \eta' + \widehat{\phi}_{22}(\eta) \int_{0}^\eta \frac{\widehat{\phi}_{11}(\eta')}{\widehat{W}(\eta')} \widehat{G}(\eta') \dd \eta' - \frac{\widehat{g}_{1}(\eta)}{c_2(\eta)} \,,
\end{align*}
where
\begin{equation*}
\widehat{W}(\eta) = \widehat{\phi}_{11}(\eta)\widehat{\phi}_{12}'(\eta) - \widehat{\phi}_{11}'(\eta)\widehat{\phi}_{12}(\eta) = - \frac{h'(\eta)}{\eta^{2(d-2)}}
\end{equation*}
is the Wronskian of $\widehat{\phi}_{11}$, $\widehat{\phi}_{12}$. The next important task is to group the solution advantageously in terms of integral operators and exploit \Cref{integralSobolev}. We have
\begin{equation*}
\widehat{f}_{1} = \alpha_{1} \widehat{\phi}_{11} + \alpha_2 \widehat{\phi}_{12} + \widehat{T}_{11} \widehat{g}_{1} + \widehat{T}_{12} \widehat{g}_2 \,,
\end{equation*}
where we set
\begin{align*}
\widehat{T}_{11} \widehat{g}_{1}(\eta) &= -\widehat{\phi}_{11}(\eta)
\int_{0}^\eta \frac{\widehat{\phi}_{12}(\eta')r_{10}^d(\eta')}{\widehat{W}(\eta')} \widehat{g}_{1}(\eta') \dd\eta' + \widehat{\phi}_{12}(\eta)
\int_{0}^\eta \frac{\widehat{\phi}_{11}(\eta')r_{10}^d(\eta')}{\widehat{W}(\eta')} \widehat{g}_{1}(\eta') \dd\eta' \\&\quad
-\widehat{\phi}_{11}(\eta)
\int_{0}^\eta \frac{\widehat{\phi}_{12}(\eta')r_{11}(\eta')}{\widehat{W}(\eta')} \widehat{g}_{1}'(\eta') \dd\eta' + \widehat{\phi}_{12}(\eta)
\int_{0}^\eta \frac{\widehat{\phi}_{11}(\eta')r_{11}(\eta')}{\widehat{W}(\eta')} \widehat{g}_{1}'(\eta') \dd\eta' \\&=
-\widehat{\phi}_{11}(\eta)
\int_{0}^\eta \left( \frac{\widehat{\phi}_{12}(\eta')r_{10}^d(\eta')}{\widehat{W}(\eta')} - \pd_{\eta'} \frac{\widehat{\phi}_{12}(\eta')r_{11}(\eta')}{\widehat{W}(\eta')} \right) \widehat{g}_{1}(\eta') \dd\eta' \\&\quad+
\widehat{\phi}_{12}(\eta)
\int_{0}^\eta \left( \frac{\widehat{\phi}_{11}(\eta')r_{10}^d(\eta')}{\widehat{W}(\eta')} - \pd_{\eta'} \frac{\widehat{\phi}_{11}(\eta')r_{11}(\eta')}{\widehat{W}(\eta')} \right)\widehat{g}_{1}(\eta') \dd\eta' \\&=
-\widehat{\phi}_{11}(\eta)
\int_{0}^\eta t_{11}^d(\eta') {\eta'}^{d-3} \widehat{g}_{1}(\eta') \dd\eta' +
\widehat{\phi}_{12}(\eta)
\int_{0}^\eta t_{12}^d(\eta') {\eta'}^{d-3} \widehat{g}_{1}(\eta') \dd\eta' \,,
\end{align*}
as it follows from integration by parts and the fact that the integrand vanishes at the origin as long as $d\geq 5$, and
\begin{align*}
\widehat{T}_{12} \widehat{g}_2(\eta) &= -\widehat{\phi}_{11}(\eta)
\int_{0}^\eta \frac{\widehat{\phi}_{12}(\eta')r_{20}(\eta')}{\widehat{W}(\eta')} \widehat{g}_2(\eta') \dd\eta' + \widehat{\phi}_{12}(\eta)
\int_{0}^\eta \frac{\widehat{\phi}_{11}(\eta')r_{20}(\eta')}{\widehat{W}(\eta')} \widehat{g}_2(\eta') \dd\eta' \\&=-
\widehat{\phi}_{11}(\eta) \int_{0}^\eta t_{21}(\eta') {\eta'}^{d-3} \widehat{g}_2(\eta') \dd\eta' + \widehat{\phi}_{12}(\eta)
\int_{0}^\eta t_{22}(\eta') {\eta'}^{d-3} \widehat{g}_2(\eta') \dd\eta' \,,
\end{align*}
with even functions $t_{11}^d, t_{12}^d, t_{21}, t_{22} \in C^\infty(\overline{\BB_{R}})$, each nonzero at the origin, and given by
\begin{align*}
t_{11}^d(\eta) &= \frac{1-h'(\eta)^{2}}{\eta h'(\eta)-h(\eta)} + (d-3) \frac{\eta-h(\eta)h'(\eta)}{ \eta h'(\eta)-h(\eta)} \frac{1}{\eta} - \frac{\eta^{2} - h(\eta)^{2}}{ \big( \eta h'(\eta)-h(\eta) \big)^{2}} h''(\eta) \,, \\
t_{12}^d(\eta) &= 2\frac{\eta - h(\eta)h'(\eta)}{\eta h'(\eta)-h(\eta)}h'(\eta) + (d-3) \frac{\eta-h(\eta)h'(\eta)}{ \eta h'(\eta)-h(\eta)} \frac{h(\eta)}{\eta} - \frac{\eta^{2} - h(\eta)^{2}}{ \big( \eta h'(\eta)-h(\eta) \big)^{2}} h(\eta) h''(\eta) \,, \\
t_{21}(\eta) &= -\frac{1-h'(\eta)^{2}}{\eta h'(\eta) - h(\eta)} \,, \\
t_{22}(\eta) &= -\frac{1-h'(\eta)^{2}}{\eta h'(\eta) - h(\eta)} h(\eta) \,.
\end{align*}
Moreover,
\begin{equation*}
\widehat{f}_2 = \alpha_{1}  \widehat{\phi}_{21} + \alpha_2  \widehat{\phi}_{22} + \widehat{T}_{21} \widehat{g}_{1} + \widehat{T}_{22} \widehat{g}_2 \,,
\end{equation*}
with
\begin{align*}
\widehat{T}_{21} \widehat{g}_{1} (\eta) &=-
\widehat{\phi}_{21}(\eta) \int_{0}^\eta t_{11}^d(\eta') {\eta'}^{d-3} \widehat{g}_{1}(\eta') \dd\eta' +
\widehat{\phi}_{22}(\eta) \int_{0}^\eta t_{12}^d(\eta') {\eta'}^{d-3} \widehat{g}_{1}(\eta') \dd\eta' \\&\quad+
\frac{h(\eta)^{2} - \eta^{2}}{\eta h'(\eta) - h(\eta)} \frac{h'(\eta)}{\eta} \widehat{g}_{1}(\eta)
\end{align*}
and
\begin{equation*}
\widehat{T}_{22} \widehat{g}_2 (\eta) =-
\widehat{\phi}_{21}(\eta) \int_{0}^\eta t_{21}(\eta') {\eta'}^{d-3} \widehat{g}_2(\eta') \dd\eta' +
\widehat{\phi}_{22}(\eta) \int_{0}^\eta t_{22}(\eta') {\eta'}^{d-3} \widehat{g}_2(\eta') \dd\eta' \,.
\end{equation*}
At this point we get from the first part of \Cref{integralSobolev} that $\widehat{T}_{11}\widehat{g}_{1}, \widehat{T}_{12}\widehat{g}_2, \widehat{T}_{21}\widehat{g}_{1}, \widehat{T}_{22}\widehat{g}_2$ define even and smooth functions on $\overline{\BB_{R}}$. Since no nontrivial linear combination of $[\, \widehat{\phi}_{11}, \widehat{\phi}_{21} \,]$, $[\, \widehat{\phi}_{12}, \widehat{\phi}_{22} \,]$ belongs to $C^\infty(\overline{\BB_{R}})^{2}$ this implies $\alpha_{1} = \alpha_2 = 0$. This already shows that $\mathbf{D}^\downarrow_d$ is injective and
\begin{equation}
\label{generalsol}
\widehat{\mathbf{f}} = \widehat{\mathbf{T}}\widehat{\mathbf{g}} \coloneqq
\begin{bmatrix}
\widehat{T}_{11}\widehat{g}_{1} + \widehat{T}_{12}\widehat{g}_2 \\
\widehat{T}_{21}\widehat{g}_{1} + \widehat{T}_{22}\widehat{g}_2
\end{bmatrix}
\,.
\end{equation}
The last step for this direction in the proof consists of proving bounds for the operators $\widehat{T}_{11}$, $\widehat{T}_{12}$, $\widehat{T}_{21}$, $\widehat{T}_{22}$ in the respective Sobolev norms, where we exploit that Sobolev norms for radial functions can be described in terms of weighted Sobolev norms for their radial representative, see \Cref{oddSobolevBalls}. According to what we found above we consider
\begin{equation*}
\big\| |\,.\,|^{\frac{d-1}{2}} \widehat{f}_{1} \big\|_{H^{k+1}(\BB_{R})} \leq  \big\| |\,.\,|^{\frac{d-1}{2}} \widehat{T}_{11} \widehat{g}_{1} \big\|_{H^{k+1}(\BB_{R})} + \big\| |\,.\,|^{\frac{d-1}{2}} \widehat{T}_{12} \widehat{g}_2 \big\|_{H^{k+1}(\BB_{R})}
\end{equation*}
and
\begin{align*}
\big\| |\,.\,|^{\frac{d-1}{2}} \widehat{f}_2 \big\|_{H^{k}(\BB_{R})} &\leq \big\| |\,.\,|^{\frac{d-1}{2}}  \widehat{T}_{21} \widehat{g}_{1} \big\|_{H^{k}(\BB_{R})} + \big\| |\,.\,|^{\frac{d-1}{2}}  \widehat{T}_{22} \widehat{g}_2 \big\|_{H^{k}(\BB_{R})} \\&\lesssim
\big\| |\,.\,|^{\frac{d-1}{2}}  \widehat{T}_{21} \widehat{g}_{1} \big\|_{H^{k}(\BB_{R})} \\&+
\big\| |\,.\,|^{\frac{d-1}{2}}  \widehat{T}_{22} \widehat{g}_2 \big\|_{L^{2}(\BB_{R})} + \big\| |\,.\,|^{\frac{d-3}{2}}  \widehat{T}_{22} \widehat{g}_2 \big\|_{H^{k-1}(\BB_{R})} + \big\| |\,.\,|^{\frac{d-1}{2}}  (\widehat{T}_{22} \widehat{g}_2)' \big\|_{H^{k-1}(\BB_{R})} \,.
\end{align*}
In order to make sense of these inequalities we bound the norms on the right-hand sides.
\begin{enumerate}[wide, itemsep=1em, topsep=1em]
\item[\textit{Bound for $\widehat{T}_{11}$.}] Consider
\begin{equation*}
\big\| |\,.\,|^{\frac{d-1}{2}} \widehat{T}_{11} \widehat{g}_{1} \big\|_{H^{k+1}(\BB_{R})} \lesssim \big\| |\,.\,|^{\frac{d-1}{2}} \widehat{T}_{11} \widehat{g}_{1} \big\|_{L^{2}(\BB_{R})} + \big\| |\,.\,|^{\frac{d-3}{2}} \widehat{T}_{11}\widehat{g}_{1} \big\|_{H^{k}(\BB_{R})} + \big\| |\,.\,|^{\frac{d-3}{2}} (\widehat{T}_{11}\widehat{g}_{1})' \big\|_{H^{k}(\BB_{R})} \,.
\end{equation*}
So,
\begin{align*}
\eta^{\frac{d-1}{2}}\widehat{T}_{11} \widehat{g}_{1}(\eta) &= \frac{h(\eta)}{\eta^{\frac{d-3}{2}}} \int_{0}^\eta {\eta'}^{\frac{d-3}{2}} t_{11}(\eta') {\eta'}^{\frac{d-3}{2}} \widehat{g}_{1} (\eta') \dd\eta' +  \frac{1}{\eta^{\frac{d-3}{2}}} \int_{0}^\eta {\eta'}^{\frac{d-3}{2}} t_{12}(\eta') {\eta'}^{\frac{d-3}{2}} \widehat{g}_{1} (\eta') \dd\eta' \,, \\
\eta^{\frac{d-3}{2}}\widehat{T}_{11} \widehat{g}_{1}(\eta) &= \frac{h(\eta)}{\eta^{\frac{d-1}{2}}} \int_{0}^\eta {\eta'}^{\frac{d-3}{2}} t_{11}(\eta') {\eta'}^{\frac{d-3}{2}} \widehat{g}_{1} (\eta') \dd\eta' + \frac{1}{\eta^{\frac{d-1}{2}}} \int_{0}^\eta {\eta'}^{\frac{d-3}{2}} t_{12}(\eta') {\eta'}^{\frac{d-3}{2}} \widehat{g}_{1} (\eta') \dd\eta' \,, \\
\eta^{\frac{d-1}{2}} (\widehat{T}_{11} \widehat{g}_{1})'(\eta) &=
-\frac{\eta h'(\eta) - (d-2)h(\eta)}{\eta^{\frac{d-1}{2}}}
\int_{0}^\eta {\eta'}^{\frac{d-3}{2}} t_{11}(\eta') {\eta'}^{\frac{d-3}{2}} \widehat{g}_{1} (\eta') \dd\eta' \\&\quad-
\frac{d-2}{\eta^{\frac{d-1}{2}}} \int_{0}^\eta {\eta'}^{\frac{d-3}{2}} t_{12}(\eta') {\eta'}^{\frac{d-3}{2}} \widehat{g}_{1} (\eta') \dd\eta' +
\eta r_{11}(\eta)\eta^{\frac{d-3}{2}} \widehat{g}_{1}(\eta) \,,
\end{align*}
and by an application of the second part of \Cref{integralSobolev} we conclude immediately
\begin{equation}
\label{T1estimate}
\big\| |\,.\,|^{\frac{d-1}{2}} \widehat{T}_{11} \widehat{g}_{1} \big\|_{H^{k+1}(\BB_{R})} \lesssim \big\| |\,.\,|^{\frac{d-3}{2}} \widehat{g}_{1} \big\|_{H^{k}(\BB_{R})} \,.
\end{equation}
\item[\textit{Bound for $\widehat{T}_{12}$.}] Now we turn to the norm
\begin{align*}
&\big\| |\,.\,|^{\frac{d-1}{2}} \widehat{T}_{12} \widehat{g}_2 \big\|_{H^{k+1}(\BB_{R})} \lesssim \big\| |\,.\,|^{\frac{d-1}{2}} \widehat{T}_{12} \widehat{g}_2 \big\|_{L^{2}(\BB_{R})} + \big\| |\,.\,|^{\frac{d-3}{2}} \widehat{T}_{12} \widehat{g}_2 \big\|_{H^{k}(\BB_{R})} + \big\| |\,.\,|^{\frac{d-1}{2}} (\widehat{T}_{12} \widehat{g}_2)' \big\|_{H^{k}(\BB_{R})} \\ &\lesssim
\big\| |\,.\,|^{\frac{d-1}{2}} \widehat{T}_{12} \widehat{g}_2 \big\|_{L^{2}(\BB_{R})} \\&+
\big\| |\,.\,|^{\frac{d-5}{2}} \widehat{T}_{12} \widehat{g}_2 \big\|_{H^{k-1}(\BB_{R})} + \big\| |\,.\,|^{\frac{d-3}{2}} (\widehat{T}_{12} \widehat{g}_2)' \big\|_{H^{k-1}(\BB_{R})} + \big\| |\,.\,|^{\frac{d-1}{2}} (\widehat{T}_{12} \widehat{g}_2)'' \big\|_{H^{k-1}(\BB_{R})} \,.
\end{align*}
An application of \Cref{integralSobolev} to
\begin{equation*}
\eta^{\frac{d-1}{2}} \widehat{T}_{12} \widehat{g}_2(\eta) = -\frac{h(\eta)}{\eta^{\frac{d-3}{2}}}
\int_{0}^\eta {\eta'}^{\frac{d-3}{2}} t_{21}(\eta') {\eta'}^{\frac{d-3}{2}} \widehat{g}_2 (\eta') \dd\eta' +
\frac{1}{\eta^{\frac{d-3}{2}}} \int_{0}^\eta {\eta'}^{\frac{d-3}{2}} t_{22}(\eta') {\eta'}^{\frac{d-3}{2}} \widehat{g}_2 (\eta') \dd\eta'
\end{equation*}
yields a weighted bound in $L^{2}(\BB_{R})$. To obtain bounds in $H^{k-1}(\BB_{R})$, we note $h(\eta) - h(0) = \eta^{2} \varphi(\eta)$ for some even $\varphi\in C^\infty(\overline{\BB_{R}})$ with $\varphi(0) \neq 0$ and since $t_{22} = h t_{21}$ we get
\begin{equation*}
\widehat{T}_{12} \widehat{g}_2(\eta) = -\frac{\varphi(\eta)}{\eta^{d-4}}
\int_{0}^\eta t_{21}(\eta') {\eta'}^{d-3} \widehat{g}_2 (\eta') \dd\eta' +
\frac{1}{\eta^{d-2}} \int_{0}^\eta \varphi(\eta') t_{21}(\eta') {\eta'}^{d-1} \widehat{g}_2 (\eta') \dd\eta' \,.
\end{equation*}
Now we have better control on the singularities and see
\begin{align*}
\eta^{\frac{d-5}{2}} \widehat{T}_{12} \widehat{g}_2(\eta) &= -\frac{\varphi(\eta)}{\eta^{\frac{d-3}{2}}}
\int_{0}^\eta {\eta'}^{\frac{d-3}{2}} t_{21}(\eta') {\eta'}^{\frac{d-3}{2}} \widehat{g}_2 (\eta') \dd\eta' \\&\quad+
\frac{1}{\eta^{\frac{d+1}{2}}} \int_{0}^\eta {\eta'}^{\frac{d+1}{2}} \varphi(\eta') t_{21}(\eta') {\eta'}^{\frac{d-3}{2}} \widehat{g}_2 (\eta') \dd\eta' \,, \\
\eta^{\frac{d-3}{2}} (\widehat{T}_{12} \widehat{g}_2)'(\eta) &= -\frac{\eta\varphi'(\eta) - (d-4)\varphi(\eta)}{\eta^{\frac{d-3}{2}}} \int_{0}^\eta {\eta'}^{\frac{d-3}{2}} t_{21}(\eta') {\eta'}^{\frac{d-3}{2}} \widehat{g}_2 (\eta') \dd\eta' \\&\quad-
\frac{d-2}{\eta^{\frac{d+1}{2}}} \int_{0}^\eta {\eta'}^{\frac{d+1}{2}} t_{22}(\eta') {\eta'}^{\frac{d-3}{2}} \widehat{g}_2 (\eta') \dd\eta' \,, \\
\eta^{\frac{d-1}{2}} (\widehat{T}_{12} \widehat{g}_2)''(\eta) &= -\frac{\eta^{2} \varphi(\eta) - (d-4)(2\eta\varphi'(\eta) + (d-3)\varphi(\eta))}{\eta^{\frac{d-3}{2}}} \int_{0}^\eta {\eta'}^{\frac{d-3}{2}} t_{21}(\eta') {\eta'}^{\frac{d-3}{2}} \widehat{g}_2 (\eta') \dd\eta' \\&\quad+  \frac{(d-2)(d-1)}{\eta^{\frac{d+1}{2}}} \int_{0}^\eta {\eta'}^{\frac{d+1}{2}} t_{22}(\eta') {\eta'}^{\frac{d-3}{2}} \widehat{g}_2 (\eta') \dd\eta' \\&\quad-
\eta(\eta\varphi'(\eta)-2h(\eta))t_{21}(\eta) \eta^{\frac{d-3}{2}} \widehat{g}_2(\eta) \,.
\end{align*}
This implies again with \Cref{integralSobolev}
\begin{equation}
\label{T2estimate}
\big\| |\,.\,|^{\frac{d-1}{2}} \widehat{T}_{12} \widehat{g}_2 \big\|_{H^{k+1}(\BB_{R})} \lesssim \big\| |\,.\,|^{\frac{d-3}{2}} \widehat{g}_2 \big\|_{H^{k-1}(\BB_{R})} \,.
\end{equation}
\item[\textit{Bound for $\widehat{T}_{21}$.}] Let us turn to the estimates for the second component. We start with
\begin{align*}
\eta^{\frac{d-1}{2}} \widehat{T}_{21}\widehat{g}_{1}(\eta) &=
-(d-3)\frac{h(\eta)}{\eta^{\frac{d-3}{2}}} \int_{0}^\eta {\eta'}^{\frac{d-3}{2}} t_{11}(\eta') {\eta'}^{\frac{d-3}{2}} \widehat{g}_{1}(\eta') \dd\eta' \\&\quad+
\frac{d-2}{\eta^{\frac{d-3}{2}}} \int_{0}^\eta {\eta'}^{\frac{d-3}{2}} t_{12}(\eta') {\eta'}^{\frac{d-3}{2}} \widehat{g}_{1}(\eta') \dd\eta' +
\eta \frac{h(\eta)^{2} - \eta^{2}}{\eta h'(\eta) - h(\eta)} \frac{h'(\eta)}{\eta} \eta^{\frac{d-3}{2}} \widehat{g}_{1}(\eta)
\end{align*}
and infer immediately from \Cref{integralSobolev}
\begin{equation}
\label{S1estimate}
\big\| |\,.\,|^{\frac{d-1}{2}} \widehat{T}_{21} \widehat{g}_{1} \big\|_{H^{k}(\BB_{R})} \lesssim \big\| |\,.\,|^{\frac{d-3}{2}} \widehat{g}_{1} \big\|_{H^{k}(\BB_{R})} \,.
\end{equation}
\item[\textit{Bound for $\widehat{T}_{22}$.}] Finally,
\begin{align*}
\eta^{\frac{d-1}{2}} \widehat{T}_{22} \widehat{g}_2(\eta) &= -(d-3)\frac{h(\eta)}{\eta^{\frac{d-3}{2}}} \int_{0}^\eta {\eta'}^{\frac{d-3}{2}} t_{21}(\eta') {\eta'}^{\frac{d-3}{2}} \widehat{g}_2(\eta') \dd \eta' \\&\quad+ \frac{d-2}{\eta^{\frac{d-3}{2}}} \int_{0}^\eta {\eta'}^{\frac{d-3}{2}} t_{22}(\eta') {\eta'}^{\frac{d-3}{2}} \widehat{g}_2(\eta') \dd \eta' \,, \\
\eta^{\frac{d-3}{2}} \widehat{T}_{22} \widehat{g}_2(\eta) &= -(d-3)\frac{h(\eta)}{\eta^{\frac{d-1}{2}}} \int_{0}^\eta {\eta'}^{\frac{d-3}{2}} t_{21}(\eta') {\eta'}^{\frac{d-3}{2}} \widehat{g}_2(\eta') \dd \eta' \\&\quad+ \frac{d-2}{\eta^{\frac{d-1}{2}}} \int_{0}^\eta {\eta'}^{\frac{d-3}{2}} t_{22}(\eta') {\eta'}^{\frac{d-3}{2}} \widehat{g}_2(\eta') \dd \eta' \,, \\
\eta^{\frac{d-1}{2}} (\widehat{T}_{22} \widehat{g}_2)'(\eta) &= -(d-3)\frac{\eta h'(\eta) - (d-2)h(\eta)}{\eta^{\frac{d-1}{2}}} \int_{0}^\eta {\eta'}^{\frac{d-3}{2}} t_{21}(\eta') {\eta'}^{\frac{d-3}{2}} \widehat{g}_2(\eta') \dd \eta' \\&\quad-
\frac{(d-2)^{2}}{\eta^{\frac{d-1}{2}}} \int_{0}^\eta {\eta'}^{\frac{d-3}{2}} t_{22}(\eta') {\eta'}^{\frac{d-3}{2}} \widehat{g}_2(\eta') \dd \eta' \\&\quad+
t_{22}(\eta) \eta^{\frac{d-3}{2}} \widehat{g}_2(\eta) \,.
\end{align*}
Altogether, this implies
\begin{equation}
\label{S2estimate}
\big\| |\,.\,|^{\frac{d-1}{2}} \widehat{T}_{22}\widehat{g}_2 \big\|_{H^{k}(\BB_{R})} \lesssim \big\| |\,.\,|^{\frac{d-3}{2}} \widehat{g}_2 \big\|_{H^{k-1}(\BB_{R})} \,.
\end{equation}
\end{enumerate}
The estimates \eqref{T1estimate} - \eqref{S2estimate} give together with \Cref{oddSobolevBalls} the desired result.
\end{itemize}
This finishes the proof of the first part.
\item
\begin{itemize}[wide, itemindent=\parindent]
\item[``$\lesssim$'':]
We get directly for the first component
\begin{equation*}
\big\| [\widehat{\mathbf{D}}_3^\downarrow\widehat{\mathbf{f}}]_{1} \big\|_{H^{k}(\BB_{R})} = \big\| |\,.\,|\widehat{f}_{1} \big\|_{H^{k}(\BB_{R})}
\end{equation*}
and for the second component
\begin{equation*}
\big\| [\widehat{\mathbf{D}}_3^\downarrow\widehat{\mathbf{f}}]_2 \big\|_{H^{k-1}(\BB_{R})} \leq \big\| |\,.\,|\widehat{f}_{1} \big\|_{H^{k-1}(\BB_{R})} + \big\| |\,.\,|\widehat{f}_2 \big\|_{H^{k-1}(\BB_{R})} \leq \big\| |\,.\,|\widehat{f}_{1} \big\|_{H^{k}(\BB_{R})} + \big\| |\,.\,|\widehat{f}_2 \big\|_{H^{k-1}(\BB_{R})}
\end{equation*}
With \Cref{oddSobolevBalls} this gives precisely
\begin{equation*}
\| \widehat{\mathbf{D}}_3^\downarrow\widehat{\mathbf{f}} \|_{\HH^{k-1}(\BB_{R}) } \lesssim \| \widehat{\mathbf{f}} \|_{\HH^{k-1}(\BB_{R}^3) } \,.
\end{equation*}
\item[``$\gtrsim$'':]
As before we invert $\mathbf{D}_3^\downarrow$. Let $\widehat{\mathbf{g}}\in C_{\mathrm{odd}}^\infty(\BB_{R})^{2}$ and consider $\widehat{\mathbf{D}}_d^\downarrow \widehat{\mathbf{f}} = \widehat{\mathbf{g}}$, that is,
\begin{equation*}
\renewcommand{\arraystretch}{1.2}
\left\{
\begin{array}{rcl}
y \widehat{f}_{1} &=& \widehat{g}_{1} \,, \\
-y \widehat{f}_{1} + y \widehat{f}_2 &=& \widehat{g}_2 \,.
\end{array}
\right.
\end{equation*}
Since $\widehat{g}_{1}$ is odd we have $\widehat{g}_{1}(0)=0$ and with the fundamental theorem of calculus we infer even functions $\widehat{f}_{1}, \widehat{f}_2 \in C^\infty(\overline{\BB_{R}})$ given by
\begin{equation}
\label{generalsol3}
\widehat{f}_{1}(\eta) = \int_{0}^1 \widehat{g}_{1}'(t\eta) \dd t \,, \qquad \widehat{f}_2(\eta) = \int_{0}^1 \big( \widehat{g}_{1}'(t\eta) + \widehat{g}_2'(t\eta) \big) \dd t \,.
\end{equation}
Now the bound follows similarly from
\begin{equation*}
\big\| |\,.\,| \widehat{f}_{1} \big\|_{H^{k}(\BB_{R})} = \| \widehat{g}_{1} \|_{H^{k}(\BB_{R})}
\end{equation*}
and
\begin{equation*}
\big\| |\,.\,| \widehat{f}_2 \big\|_{H^{k-1}(\BB_{R})} \leq \| \widehat{g}_{1} \|_{H^{k-1}(\BB_{R})} + \| \widehat{g}_2 \|_{H^{k-1}(\BB_{R})} \leq \| \widehat{g}_{1} \|_{H^{k}(\BB_{R})} + \| \widehat{g}_2 \|_{H^{k-1}(\BB_{R})} \,.
\end{equation*}
This translates with \Cref{oddSobolevBalls} to the inequality
\begin{equation*}
\| \widehat{\mathbf{f}} \|_{\HH^{k-1}(\BB_{R}^3) } \lesssim \| \widehat{\mathbf{D}}_3^\downarrow\widehat{\mathbf{f}} \|_{\HH^{k-1}(\BB_{R}) } \,.
\end{equation*}
\end{itemize}
From density and boundedness we infer the bounded and bijective extensions.
\qedhere
\end{enumerate}
\end{proof}
\subsection{Control of the free wave evolution}
With the previous result we can now treat well-posedness for the wave equation in hyperboloidal similarity coordinates in odd space dimensions. Growth estimates are in some sense derived directly from the wave equation itself, namely via the one-dimensional energy bounds at which we arrive with the descent method in steps of two space dimensions.
\begin{proposition}
\label{MScProp}
Let $R>0$, $d,k\in \NN$ such that $d\geq 3$ is odd. There is a bounded and bijective linear operator $\mathbf{D}_d: \HH^{k+\frac{d-3}{2}}_\rad(\BB^{d}_{R}) \rightarrow \HH^{k}_\odd(\BB_{R})$ with
\begin{equation}
\label{MScDescentequivalent}
\| \mathbf{D}_d\mathbf{f} \|_{\HH^{k}(\BB_{R})} \simeq \| \mathbf{f} \|_{\HH^{k+\frac{d-3}{2}}(\BB^{d}_{R})}
\end{equation}
such that
\begin{equation}
\label{MScDescentintertwining}
\mathbf{D}_d \mathbf{L}_d \mathbf{f} - \mathbf{D}_d \mathbf{f} = \mathbf{L}_{1} \mathbf{D}_d \mathbf{f}
\end{equation}
for all $\mathbf{f} \in C^\infty_\rad(\overline{\BB^{d}_{R}})^{2}$.
\end{proposition}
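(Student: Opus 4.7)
The natural approach is to build $\mathbf{D}_d$ as an iterated composition of the descent operators $\mathbf{D}_j^\downarrow$ from \autoref{DescOpDef}, stepping down from dimension $d$ to dimension $1$ two dimensions at a time, and then to lift the mapping properties and the intertwining identity from the building blocks assembled in \autoref{majorresult} and \autoref{intertwiningLemma}. Concretely, for odd $d\geq 3$ I would set
\[
\mathbf{D}_d \coloneqq \mathbf{D}_3^\downarrow \circ \mathbf{D}_5^\downarrow \circ \cdots \circ \mathbf{D}_{d-2}^\downarrow \circ \mathbf{D}_d^\downarrow
\]
initially on the dense subspace $C^\infty_\rad(\overline{\BB_R^d})^2$, with the convention that this reduces to $\mathbf{D}_3 = \mathbf{D}_3^\downarrow$ in the base case $d=3$.

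The mapping properties then follow by a finite induction on the number of factors. The first part of \autoref{majorresult} shows that each $\mathbf{D}_j^\downarrow$ with $j\geq 5$ extends to a bounded bijection $\HH^{k'+1}_\rad(\BB_R^j)\to\HH^{k'}_\rad(\BB_R^{j-2})$ with norm equivalence, losing exactly one derivative, while the second part shows that $\mathbf{D}_3^\downarrow$ extends to a bounded bijection $\HH^{k}_\rad(\BB_R^3)\to\HH^{k}_\odd(\BB_R)$ without any loss. Starting from regularity $k+\frac{d-3}{2}$ in dimension $d$ and applying the factors $\mathbf{D}_j^\downarrow$ in the order $j=d,d-2,\ldots,5$ uses up precisely $\frac{d-3}{2}$ derivatives, so that the penultimate image lies in $\HH^k_\rad(\BB_R^3)$, and the final application of $\mathbf{D}_3^\downarrow$ lands in $\HH^k_\odd(\BB_R)$ without further loss. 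Composing the norm equivalences at each step yields \eqref{MScDescentequivalent}, and composing bounded bijections gives the bounded bijectivity claim.

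For the intertwining identity \eqref{MScDescentintertwining} I would commute $\mathbf{L}_d$ through the factors of $\mathbf{D}_d$ one at a time using the relations of \autoref{intertwiningLemma}. For $\mathbf{f}\in C^\infty_\rad(\overline{\BB_R^d})^2$, smoothness (and radiality of the intermediate representatives) is preserved at each step because each $\mathbf{D}_j^\downarrow$ maps smooth radial data to smooth radial data, so the classical identities \eqref{intertwining-d}, \eqref{intertwining-3} apply pointwise at every stage. Iterating \eqref{intertwining-d} gives
\[
\mathbf{D}_5^\downarrow\circ\cdots\circ\mathbf{D}_d^\downarrow\,\mathbf{L}_d\mathbf{f} \;=\; \mathbf{L}_3\,\bigl(\mathbf{D}_5^\downarrow\circ\cdots\circ\mathbf{D}_d^\downarrow\bigr)\mathbf{f},
\]
after which a single application of \eqref{intertwining-3} introduces the crucial shift by the identity, yielding
\[
\mathbf{D}_d\mathbf{L}_d\mathbf{f} \;=\; \mathbf{D}_3^\downarrow\mathbf{L}_3\bigl(\mathbf{D}_5^\downarrow\circ\cdots\circ\mathbf{D}_d^\downarrow\bigr)\mathbf{f} \;=\; \bigl(\mathbf{L}_1+\mathbf{I}\bigr)\mathbf{D}_d\mathbf{f},
\]
which rearranges to the stated identity.

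The main obstacle is really only bookkeeping: verifying that the regularity indices compose correctly and that every intermediate application of \autoref{majorresult} and \autoref{intertwiningLemma} is legitimate on the correct radial subspace. There is no genuinely new analytic content beyond the two preceding propositions; the base case $d=3$ needs to be handled separately (where the composition degenerates to $\mathbf{D}_3^\downarrow$ and \eqref{intertwining-3} gives the identity in one shot), but this is immediate.
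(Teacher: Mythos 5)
Your proposal is correct and takes essentially the same route as the paper: both define $\mathbf{D}_d = \mathbf{D}_3^\downarrow\circ\cdots\circ\mathbf{D}_d^\downarrow$, deduce boundedness, bijectivity and the norm equivalence \eqref{MScDescentequivalent} by composing the mapping properties from \autoref{majorresult}, and obtain \eqref{MScDescentintertwining} by iterating \eqref{intertwining-d} followed by a single application of \eqref{intertwining-3}. You have actually supplied more of the intermediate bookkeeping (derivative counting and the explicit intertwining chain) than the paper's brief proof does.
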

\begin{proof}
By successive application of the descent operators we obtain an operator
\begin{equation*}
\mathbf{D}_d: \HH^{k+\frac{d-3}{2}}_\rad(\BB^{d}_{R}) \rightarrow \HH^{k}_\odd(\BB_{R}) \,, \qquad
\mathbf{D}_d \mathbf{f} \coloneqq  (\mathbf{D}_{3}^\downarrow \circ \ldots \circ \mathbf{D}_{d}^\downarrow) \mathbf{f} \,.
\end{equation*}
From the bounds proved in \Cref{majorresult} we infer \eqref{MScDescentequivalent}. Repeated application of the intertwining identities from \Cref{intertwiningLemma} yields \eqref{MScDescentintertwining}.
\end{proof}
This result enables us to reduce problems in odd dimensions to the one-dimensional wave equation and transfer the solutions back. For instance, the equivalence of Sobolev norms \eqref{MScDescentequivalent} will yield energy bounds and behind the intertwining identity \eqref{MScDescentintertwining} is the reduction of the radial wave equation from $d$ dimensions to one dimension from which we will obtain existence of solutions. With this we arrive at the main result of this section which is a systematic generalization of \cite[Theorem 3.12]{MR4338226} to odd space dimensions.
\begin{theorem}
\label{MScresult}
Let $R\geq \frac{1}{2}$, $d,k\in\NN_{0}$ such that $d\geq 3$ is odd and $k\geq\frac{d-1}{2}$. The operator $\mathbf{L}_d$ is closable and its closure
\begin{equation}
\label{MScgenerator}
\overline{\mathbf{L}_d} = \mathbf{D}_d^{-1}( \overline{\mathbf{L}_{1}} + \mathbf{I})\mathbf{D}_d \,, \quad \dom(\overline{\mathbf{L}_d}) = \big\{ \mathbf{f} \in \HH^{k}_\rad(\BB^{d}_{R}) \mid \mathbf{D}_d\mathbf{f} \in \dom(\overline{\mathbf{L}_{1}}) \} \,,
\end{equation}
is the generator of the rescaled similar semigroup
\begin{equation}
\label{SdSGRP}
\mathbf{S}_d(s) = \ee^s \mathbf{D}_d^{-1} \mathbf{S}_{1}(s) \mathbf{D}_d
\end{equation}
of bounded linear operators $\mathfrak{L} \big( \HH^{k}_\rad(\BB^{d}_{R}) \big)$ such that
\begin{equation}
\label{MScgrowthbound}
\| \mathbf{S}_d(s)\mathbf{f} \|_{\HH^{k}(\BB^{d}_{R})} \lesssim \ee^{s/2} \| \mathbf{f} \|_{\HH^{k}(\BB^{d}_{R})}
\end{equation}
for all $\mathbf{f} \in \HH^{k}_{\mathrm{rad}}(\BB^{d}_{R})$ and all $s\geq 0$.
\end{theorem}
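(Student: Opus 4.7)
My plan is to transport the semigroup $\mathbf{S}_1$ from \autoref{one-dim-semigrp-prop} to the higher-dimensional setting via the similarity transform provided by $\mathbf{D}_d$ in \autoref{MScProp}, and then extract both the generator description and the growth bound from this transport. First, by the norm equivalence \eqref{MScDescentequivalent}, the operator $\mathbf{D}_d$ restricts to a bounded bijection from $\HH^k_\rad(\BB_R^d)$ onto $\HH^{k-(d-3)/2}_\odd(\BB_R)$; the hypothesis $k \geq \tfrac{d-1}{2}$ guarantees that the target Sobolev index is $\geq 1$, so that \autoref{one-dim-semigrp-prop} applies to $\mathbf{L}_1$ on precisely that target space. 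The intertwining identity \eqref{MScDescentintertwining} can then be rewritten as $\mathbf{L}_d = \mathbf{D}_d^{-1}(\mathbf{L}_1 + \mathbf{I})\mathbf{D}_d$ on the dense domain $C^\infty_\rad(\overline{\BB_R^d})^2$.

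Next I define the candidate semigroup by \eqref{SdSGRP}, namely $\mathbf{S}_d(s) = \ee^s \mathbf{D}_d^{-1} \mathbf{S}_1(s) \mathbf{D}_d$. Since $\mathbf{D}_d$ and $\mathbf{D}_d^{-1}$ are bounded and $\mathbf{S}_1$ is strongly continuous on $\HH^{k-(d-3)/2}_\odd(\BB_R)$, a direct check shows that $\mathbf{S}_d$ is a strongly continuous one-parameter semigroup on $\HH^k_\rad(\BB_R^d)$. The growth estimate \eqref{MScgrowthbound} then follows by combining the bound $\|\mathbf{S}_1(s)\mathbf{g}\| \lesssim \ee^{-s/2}\|\mathbf{g}\|$ from \autoref{one-dim-semigrp-prop} with the norm equivalence \eqref{MScDescentequivalent} and the factor $\ee^s$ in the definition of $\mathbf{S}_d$.

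Let $A$ denote the generator of $\mathbf{S}_d$. For $\mathbf{f} \in C^\infty_\rad(\overline{\BB_R^d})^2$ the element $\mathbf{g} := \mathbf{D}_d \mathbf{f}$ lies in $C^\infty_\odd(\overline{\BB_R})^2 = \dom(\mathbf{L}_1) \subset \dom(\overline{\mathbf{L}_1})$, so the difference quotient
\begin{equation*}
\tfrac{1}{s}\big(\mathbf{S}_d(s)\mathbf{f} - \mathbf{f}\big) = \tfrac{\ee^s - 1}{s}\,\mathbf{D}_d^{-1}\mathbf{S}_1(s)\mathbf{D}_d\mathbf{f} + \mathbf{D}_d^{-1}\,\tfrac{1}{s}\big(\mathbf{S}_1(s)\mathbf{g} - \mathbf{g}\big)
\end{equation*}
converges to $\mathbf{D}_d^{-1}(\overline{\mathbf{L}_1} + \mathbf{I})\mathbf{D}_d \mathbf{f} = \mathbf{L}_d \mathbf{f}$ as $s \to 0^+$. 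Hence $A$ is a closed extension of $\mathbf{L}_d$, so $\mathbf{L}_d$ is closable with $\overline{\mathbf{L}_d} \subset A$. On the other hand, by the similarity structure one verifies directly that $A = \mathbf{D}_d^{-1}(\overline{\mathbf{L}_1} + \mathbf{I})\mathbf{D}_d$ with the domain stated in \eqref{MScgenerator}, simply because $\mathbf{D}_d^{-1} \oplus \mathbf{D}_d^{-1}$ is a homeomorphism between graphs.

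The one subtle step — and the place I expect to need care — is the reverse inclusion $A \subset \overline{\mathbf{L}_d}$, which amounts to showing that $\dom(\mathbf{L}_d) = C^\infty_\rad(\overline{\BB_R^d})^2$ is a core for $A$. By the similarity transform this is equivalent to $\mathbf{D}_d\big(C^\infty_\rad(\overline{\BB_R^d})^2\big)$ being a core for $\overline{\mathbf{L}_1}$. To establish this, I will use the explicit inversion carried out in the proof of \autoref{majorresult}: the formulas \eqref{generalsol} and \eqref{generalsol3} show that each $\mathbf{D}_d^\downarrow$ maps smooth radial pairs \emph{onto} smooth (even, respectively odd) pairs, and composing these bijections yields $\mathbf{D}_d\big(C^\infty_\rad(\overline{\BB_R^d})^2\big) = C^\infty_\odd(\overline{\BB_R})^2 = \dom(\mathbf{L}_1)$, which is a core for $\overline{\mathbf{L}_1}$ by definition. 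This pins down $\overline{\mathbf{L}_d} = A = \mathbf{D}_d^{-1}(\overline{\mathbf{L}_1} + \mathbf{I})\mathbf{D}_d$ and completes the proof.
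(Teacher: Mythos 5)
Your proposal is correct and follows the same route as the paper, which also transports $\mathbf{S}_1$ through the bounded invertible descent operator $\mathbf{D}_d$ together with a rescaling by $\ee^s$, exactly mirroring how \autoref{one-dim-semigrp-prop} was derived from the half-wave semigroup via $\mathbf{A}_\mp$. The core argument you flag as the subtle step — that the explicit inversion formulas \eqref{generalsol}, \eqref{generalsol3} show $\mathbf{D}_d$ carries $C^\infty_\rad(\overline{\BB_R^d})^2$ bijectively onto $\dom(\mathbf{L}_1) = C^\infty_\odd(\overline{\BB_R})^2$, which is what pins down $\overline{\mathbf{L}_d} = \mathbf{D}_d^{-1}(\overline{\mathbf{L}_1}+\mathbf{I})\mathbf{D}_d$ rather than some larger closed extension — is precisely the content the paper's one-line proof delegates to the standard similar-semigroup construction, so you have simply made explicit what the paper treats as implicit.
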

\begin{proof}
Let $\mathbf{S}_{1}$ be the strongly continuous semigroup on $\mathfrak{L}\big( \HH^{k-\frac{d-3}{2}}_{\odd}(\BB_{R}) \big)$ from \Cref{one-dim-semigrp-prop}. Then $\mathbf{S}_d$ forms a strongly continuous semigroup on $\mathfrak{L}\big( \HH^{k}_\mathrm{rad}(\BB^{d}_{R}) \big)$ with the growth bound \eqref{MScgrowthbound} and generator \eqref{MScgenerator}.
\end{proof}
\begin{remark}
If $k\geq \frac{d+5}{2}$ set $\mathbf{v}(s,\,.\,) \coloneqq \mathbf{S}_d(s) \mathbf{f}$ for $\mathbf{f}\in\dom(\overline{\mathbf{L}_d})$ and let $v(s,\,.\,) \coloneqq [\mathbf{v}(s,\,.\,)]_{1}$ for $s\geq 0$. Then $v\in C^{2}([0,\infty)\times\overline{\BB^d_{R}})$, $v(s,\,.\,)$ is radial and a classical solution to the wave equation in hyperboloidal similarity coordinates. Indeed, this is immediate from the Sobolev embedding $H^{k}(\BB^d_{R}) \subset C^{2}(\overline{\BB^d_{R}})$ where $k - \frac{d+1}{2} \geq 2$. Also, this complies with what one would expect from constructing classical solutions in higher dimensions from classical solutions in one dimension via \Cref{SdSGRP} and the involved mapping properties of the descent operators. When a classical solution in one dimension is mapped back with the inverse descent operator, note with Eqs. \eqref{generalsol3}, \eqref{generalsol} that this costs one derivative at the origin. So, if we want to exploit a classical solution from \Cref{S1Classic} we shall provide $k - \frac{d-3}{2} \geq 4$ in order to end up with a classical solution in $d$ dimensions. This is the above condition for Sobolev embedding.
\end{remark}
\section{Hyperboloidal formulation of the Yang-Mills equation}
In this short section we give a formulation of the Yang-Mills equation
\begin{equation}
\label{YM2}
\big( - \pd_{t}^{2} + \Delta_{x} \big) u(t,x) = (d-4)(|x|^{2}u(t,x)^3 + 3u(t,x)^{2}) \,, \quad (t,x) \in \RR^{1,d} \,,
\end{equation}
as an autonomous first-order evolution equation in hyperboloidal similarity coordinates, which will be adapted to a stability analysis of the extended blowup
\begin{equation}
u_{T}^{*}(t,x) = - \frac{a_d}{b_d(T-t)^{2} + |x|^{2}} \,, \quad T\in\RR \,.
\end{equation}
With our methods we are able to treat the stability problem for all supercritical odd space dimensions, so from now on let $d \geq 7$ be an arbitrary but fixed odd integer. To begin with, we insert $u = u_{T}^{*} + \widetilde{u}$ in \Cref{YM2}, where $\widetilde{u}$ is viewed as a perturbation around the blowup. This produces the equation
\begin{equation}
\big( - \pd_{t}^{2} + \Delta_{x} + V_{T}(t,x) \big) \widetilde{u}(t,x) = F_{T}(t,x,\widetilde{u}(t,x))
\end{equation}
satisfied by the perturbation, where
\begin{equation*}
V_{T}(t,x) = - (d-4)\big( 3|x|^{2} u_{T}^{*}(t,x)^{2} + 6u_{T}^{*}(t,x) \big)
\end{equation*}
is a potential and the nonlinearity is given by
\begin{equation*}
F_{T}(t,x,\widetilde{u}) = (d-4)\big( 3\big(
|x|^{2} u_{T}^{*}(t,x) + 1 \big) \widetilde{u}^{2}
+ |x|^{2} \widetilde{u}^3\big) \,.
\end{equation*}
If we transform this to hyperboloidal similarity coordinates via $\widetilde{v} = \widetilde{u}\circ\eta_{T}$ we get
\begin{equation}
\label{HSCYM}
\Box_{\g} \widetilde{v}(s,y) + V_{T}(\eta_{T}(s,y)) \widetilde{v}(s,y) = F_{T}(\eta_{T}(s,y),\widetilde{v}(s,y)) \,.
\end{equation}
To hark back on the wave operator in hyperboloidal similarity coordinates \eqref{secondorderwave}, this may be formulated as
\begin{equation*}
\pd_{0}^{2} \widetilde{v} = \left( c^{ij} \pd_{i}\pd_j + c^{i0} \pd_{i}\pd_{0} + c^0 \pd_{0} + c^i \pd_{i} \right) \widetilde{v} -
\frac{V_{T}(\eta_{T})}{\g^{00}} \widetilde{v} + \frac{F_{T}(\eta_{T},\widetilde{v})}{\g^{00}} \,.
\end{equation*}
Note that
\begin{equation*}
u_{T}^{*}(\eta_{T}(s,y)) = - \ee^{2s} \frac{a_d}{b_d h(y)^{2} + |y|^{2}} \,.
\end{equation*}
Then we observe that
\begin{equation*}
-\frac{V_{T}(\eta_{T}(s,y))}{\g^{00}(s,y)} = V(y)
\end{equation*}
is independent of $s$, where
\begin{equation}
\label{Vpotential}
V(y) = - 3a_d(d-4) \frac{\left( |y|h'(|y|) - h(|y|) \right)^{2}}{1 - h'(|y|)^{2}} \frac{ (a_d-2)|y|^{2} - 2b_d h(|y|)^{2}}{(b_d h(|y|)^{2} + |y|^{2})^{2}}
\end{equation}
is smooth and radial, and
\begin{equation*}
\frac{F_{T}(\eta_{T}(s,y),\widetilde{v}(s,y))}{\g^{00}(s,y)} =
\ee^{2s} N\big(y, \ee^{-2s}\widetilde{v}(s,y)\big)
\end{equation*}
with
\begin{equation}
\label{nonlinYM}
N(y,\alpha) \coloneqq - (d-4) \frac{\left( |y|h'(|y|) - h(|y|) \right)^{2}}{1 - h'(|y|)^{2}} \left( 3\frac{(1-a_d)|y|^{2} + b_d h(|y|)^{2}}{b_d h(|y|)^{2} + |y|^{2}} \alpha^{2} + |y|^{2} \alpha^3 \right) \,.
\end{equation}
With this, we can formulate our Yang-Mills equation equivalently as a first order system
\begin{equation}
\label{YMSG}
\pd_s\widetilde{\mathbf{v}}(s,\,.\,) = (\mathbf{L}_d + \mathbf{L}_V)\widetilde{\mathbf{v}}(s,\,.\,) + \ee^{2s} \mathbf{N}(\ee^{-2s}\widetilde{\mathbf{v}}(s,\,.\,)) \,,
\end{equation}
where $\widetilde{\mathbf{v}}(s,\,.\,) = [\, \widetilde{v}(s,\,.\,), \pd_s \widetilde{v}(s,\,.\,) \,]$ and
\begin{equation}
\label{linnonlin}
\mathbf{L}_V \widetilde{\mathbf{v}}(s,\,.\,) =
V \widetilde{v}(s,\,.\,) \mathbf{e}_2
\,,
\qquad
\mathbf{N}(\widetilde{\mathbf{v}}(s,\,.\,)) =
N(\,.\,,\widetilde{v}(s,\,.\,)) \mathbf{e}_2
\,.
\end{equation}
Hence, in terms of the variable $\mathbf{\Phi}(s) = \ee^{-2s} \widetilde{\mathbf{v}}(s,\,.\,)$ we have a desired autonomous system
\begin{equation}
\label{YMnewvariable}
\pd_s \mathbf{\Phi}(s) = ( \mathbf{L}_d - 2 \mathbf{I} + \mathbf{L}_V ) \mathbf{\Phi}(s) + \mathbf{N}(\mathbf{\Phi}(s)) \,.
\end{equation}
The Yang-Mills flow constitutes a perturbed linear first order system where the blowup time $T\in\RR$ will only enter later through the initial data.
\section{Spectral analysis of the linearized wave evolution}
The first step in the perturbative approach to \Cref{YMnewvariable} is to study the \emph{linearized wave evolution}
\begin{equation*}
\mathbf{L} = \overline{\mathbf{L}_d} - 2 \mathbf{I} + \mathbf{L}_V \,, \qquad \dom(\mathbf{L}) = \dom(\overline{\mathbf{L}_d}) \,,
\end{equation*}
where $\overline{\mathbf{L}_d}$ is the closure of the free wave evolution as presented in \Cref{MScresult} and $\mathbf{L}_V$ is the multiplicative potential from \Cref{linnonlin} that clearly extends from
\begin{equation*}
\mathbf{L}_V\mathbf{f} \coloneqq V f_{1} \mathbf{e}_2
\end{equation*}
for all $\mathbf{f} \in C^\infty_\rad(\overline{\BB^d_{R}})^{2}$ to a bounded linear operator on $\HH^{k}_\rad(\BB^d_{R})$. In the above setting for the Yang-Mills flow, stability of the blowup will emerge from decay of the solution propagator. The latter property hinges on a precise description of the spectrum of the linearized wave evolution $\mathbf{L}$, which shall concern us in this section.
\begin{lemma}
\label{LinEv}
Let $R\geq\frac{1}{2}$ and $k\in\NN$ with $k\geq\frac{d-1}{2}$. The operator $\mathbf{L}$ is the generator of a strongly continuous semigroup
\begin{equation*}
\mathbf{S}: [0,\infty) \rightarrow \mathfrak{L}\big(\HH^{k}_\rad(\BB^{d}_{R})\big) \,.
\end{equation*}
\end{lemma}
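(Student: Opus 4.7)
The plan is to obtain $\mathbf{S}$ as a bounded perturbation of the semigroup generated by $\overline{\mathbf{L}_d}-2\mathbf{I}$. The two ingredients are already almost in place: \autoref{MScresult} provides that $\overline{\mathbf{L}_d}$ is the generator of a strongly continuous semigroup $\mathbf{S}_d$ on $\HH^k_\rad(\BB^d_R)$ with the growth bound $\|\mathbf{S}_d(s)\mathbf{f}\|_{\HH^k(\BB^d_R)} \lesssim \ee^{s/2}\|\mathbf{f}\|_{\HH^k(\BB^d_R)}$, and the operator $\mathbf{L}_V$ is, by its very form, only a multiplication by a smooth radial potential placed in the second component.

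First I would note that the rescaled family $[0,\infty)\ni s\mapsto \ee^{-2s}\mathbf{S}_d(s)$ is again strongly continuous on $\HH^k_\rad(\BB^d_R)$ and has generator $\overline{\mathbf{L}_d}-2\mathbf{I}$ with the same domain $\dom(\overline{\mathbf{L}_d})$; this is the standard rescaling argument for generators.

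Second, I would verify that $\mathbf{L}_V \in \mathfrak{L}\big(\HH^k_\rad(\BB^d_R)\big)$. Inspecting the explicit formula \eqref{Vpotential}, one checks that on $\overline{\BB^d_R}$ the two denominators $1-h'(|y|)^2$ and $b_d h(|y|)^2+|y|^2$ are strictly positive: indeed $h'(r)^2 = r^2/(2+r^2) < 1$, and since $h(0) = \sqrt{2}-2 \neq 0$ the second denominator is bounded below by a positive constant. Hence $V$ is a smooth radial function on $\overline{\BB^d_R}$ with all derivatives uniformly bounded, so multiplication by $V$ is a bounded endomorphism of $H^{k-1}_\rad(\BB^d_R)$. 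Combined with the continuous embedding $H^k_\rad(\BB^d_R)\hookrightarrow H^{k-1}_\rad(\BB^d_R)$, the map $\mathbf{f}\mapsto V f_1\mathbf{e}_2$ is a bounded operator on $\HH^k_\rad(\BB^d_R)$.

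Third, the Bounded Perturbation Theorem (e.g.\ \cite[III.1.3]{engel2006one}) applied to the generator $\overline{\mathbf{L}_d}-2\mathbf{I}$ and the bounded perturbation $\mathbf{L}_V$ yields that $\mathbf{L}=(\overline{\mathbf{L}_d}-2\mathbf{I})+\mathbf{L}_V$ with domain $\dom(\overline{\mathbf{L}_d})$ generates a strongly continuous semigroup $\mathbf{S}$ on $\HH^k_\rad(\BB^d_R)$. No step here is a genuine obstacle; the only piece of content beyond invoking standard semigroup theory is the pointwise smoothness check for $V$, which is immediate from the explicit form of the height function $h$.
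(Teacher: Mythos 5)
Your proposal is correct and follows essentially the same route as the paper: both invoke \autoref{MScresult} for the free semigroup and then apply the Bounded Perturbation Theorem to absorb the bounded operator $-2\mathbf{I}+\mathbf{L}_V$ (you merely split off the $-2\mathbf{I}$ as a rescaling of $\mathbf{S}_d$ first, which is cosmetic). Your explicit check that $V$ is smooth on $\overline{\BB^d_R}$ supplies a detail the paper treats as obvious, but the substance of the argument is the same.
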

\begin{proof}
As $\overline{\mathbf{L}_d}$ is the generator of a strongly continuous semigroup by \Cref{MScresult} and $-2\mathbf{I} + \mathbf{L}_V\in\mathfrak{L}(\HH^{k}_\rad\big(\BB^{d}_{R})\big)$, the bounded perturbation theorem \cite[p. 158]{MR1721989} applies and we obtain that the operator $\mathbf{L}$ is the generator of a strongly continuous semigroup $\mathbf{S}$ of bounded linear operators in $\mathfrak{L}\big(\HH^{k}_\rad(\BB^{d}_{R})\big)$.
\end{proof}
To get suitable growth estimates for this semigroup, a detailed spectral analysis of the generator is in order. Due to its structure and a compact embedding, the potential term turns out to be compact operator.
\begin{lemma}
Let $R>0$ and $k\in\NN$. The operator $\mathbf{L}_V \in \mathfrak{L}\big(\HH^{k}_\rad(\BB^{d}_{R})\big)$ is compact.
\end{lemma}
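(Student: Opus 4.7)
The plan is to exhibit $\mathbf{L}_V$ as a composition of a compact embedding with a bounded multiplication operator. Concretely, observe that $\mathbf{L}_V \mathbf{f} = (0, V f_1)\transpose$ depends only on the first component $f_1$, and the potential $V$ from Eq.~\eqref{Vpotential} is smooth and radial on $\overline{\BB_R^d}$ (the denominator $b_d h(|y|)^2 + |y|^2$ is bounded away from zero since $b_d>0$). Thus the mapping $f_1\mapsto Vf_1$ is bounded on every $H^j_{\rad}(\BB_R^d)$, $j\in\NN_0$.

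I would then factor the action of $\mathbf{L}_V$ as the chain
\begin{equation*}
\HH^k_{\rad}(\BB_R^d) \xrightarrow{\;\pi_1\;} H^k_{\rad}(\BB_R^d) \xhookrightarrow{\;\iota\;} H^{k-1}_{\rad}(\BB_R^d) \xrightarrow{\;M_V\;} H^{k-1}_{\rad}(\BB_R^d) \xrightarrow{\;\cdot\,\mathbf{e}_2\;} \HH^k_{\rad}(\BB_R^d),
\end{equation*}
where $\pi_1$ is the projection onto the first component, $\iota$ is the inclusion, and $M_V$ is multiplication by $V$. The first, third and fourth maps are clearly bounded, so compactness of $\mathbf{L}_V$ reduces to compactness of the embedding $\iota: H^k_{\rad}(\BB_R^d) \hookrightarrow H^{k-1}_{\rad}(\BB_R^d)$.

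The compactness of this inclusion is the Rellich-Kondrachov theorem on the bounded smooth domain $\BB_R^d$: any bounded sequence in $H^k(\BB_R^d)$ has a subsequence converging in $H^{k-1}(\BB_R^d)$, and since $H^k_{\rad}(\BB_R^d)$ is closed in $H^k(\BB_R^d)$ the restriction to radial functions inherits this property. Concretely, given a bounded sequence $(\mathbf{f}_n)$ in $\HH^k_{\rad}(\BB_R^d)$, the first components $(f_{1,n})$ are bounded in $H^k_{\rad}(\BB_R^d)$, hence admit a subsequence $(f_{1,n_j})$ convergent in $H^{k-1}_{\rad}(\BB_R^d)$; multiplication by the smooth $V$ preserves convergence in $H^{k-1}$, so $(V f_{1,n_j})$ converges in $H^{k-1}_{\rad}(\BB_R^d)$ and therefore $\mathbf{L}_V \mathbf{f}_{n_j} = (0, V f_{1,n_j})\transpose$ converges in $\HH^k_{\rad}(\BB_R^d)$, establishing compactness. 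There is no real obstacle here; the only thing to be mildly careful about is verifying smoothness of $V$ (which follows from $h(|y|)^2+|y|^2>0$ for $|y|\neq 0$ together with the even polynomial structure of $b_dh(r)^2+r^2$ near the origin).
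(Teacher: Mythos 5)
Your proof is correct and follows essentially the same route as the paper: both reduce to the observation that $\mathbf{L}_V\mathbf{f}$ depends only on the first component $f_1$, that multiplication by the smooth bounded potential $V$ is bounded on $H^{k-1}(\BB_R^d)$, and that the compact embedding $H^k_\rad(\BB_R^d)\hookrightarrow H^{k-1}_\rad(\BB_R^d)$ supplies the compactness. Your explicit factorization of $\mathbf{L}_V$ as a chain through the compact inclusion is a minor stylistic repackaging of the paper's sequence-extraction argument, not a different proof.
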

\begin{proof}
Let $(\mathbf{f}_{i})_{i\in\NN}$ be a bounded sequence in $\HH^{k}_\rad(\BB^{d}_{R})$. Note that
\begin{equation*}
\| \mathbf{L}_V\mathbf{f}_{i} \|_{\HH^{k}(\BB^{d}_{R})} = \| V f_{i,1} \|_{H^{k-1}(\BB^{d}_{R})} \lesssim \| f_{i,1} \|_{H^{k-1}(\BB^{d}_{R})} \,.
\end{equation*}
Since $(f_{i,1})_{i\in\NN}$ is a bounded sequence in $H^{k}_\mathrm{rad}(\BB^{d}_{R})$ we can extract by virtue of the compact embedding $H^{k}_\mathrm{rad}(\BB^{d}_{R}) \hookrightarrow H^{k-1}_\mathrm{rad}(\BB^{d}_{R})$ a convergent subsequence in $H^{k-1}_\mathrm{rad}(\BB^{d}_{R})$. After passing to this subsequence, we may as well assume that $(f_{i,1})_{i\in\NN}$ converges in $H^{k-1}_\mathrm{rad}(\BB^{d}_{R})$. Now the above estimate shows that $\mathbf{L}_V$ is compact.
\end{proof}
The exponential growth bound of the free wave evolution and compactness of the potential yield in combination with abstract operator theory a primary picture of the resolvent and spectrum of the linearized wave evolution.
\begin{lemma}
\label{LemmaLinEvEV}
Let $R\geq \frac{1}{2}$ and $k\in\NN$ such that $k\geq \frac{d-1}{2}$.
\begin{enumerate}[itemsep=1em, topsep=1em]
\item There is an $r_{0} > 0$ such that
\begin{equation*}
\big\{ z\in\CC \mid \Re(z) > -\tfrac{3}{2},\, |z|> r_{0} \big\} \subseteq \varrho(\mathbf{L})
\end{equation*}
and for any arbitrary but fixed $0 < \omega_{0} < 3/2$ the resolvent estimate
\begin{equation*}
\|\mathbf{R}_\mathbf{L}(z) \mathbf{f}\|_{\HH^{k}(\BB^{d}_{R})} \lesssim \| \mathbf{f} \|_{\HH^{k}(\BB^{d}_{R})}
\end{equation*}
holds for all $\mathbf{f}\in \HH^{k}_\rad(\BB^{d}_{R})$ and all $z\in\CC$ with $\Re(z) \geq -\omega_{0}$, $|z|\geq r_{0}$.
\item The part of the spectrum
\begin{equation*}
\sigma(\mathbf{L}) \cap \big\{ \lambda\in\CC \mid \Re(\lambda) > -\tfrac{3}{2} \big\}
\end{equation*}
is finite and consists of isolated eigenvalues with finite algebraic multiplicity.
\end{enumerate}
\end{lemma}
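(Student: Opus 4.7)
The plan is to treat $\mathbf{L}$ as a relatively compact perturbation of $\mathbf{L}_d - 2\mathbf{I}$ and read off the spectral structure from compactness of $\mathbf{L}_V$. First I would factorize, for any $z$ in the resolvent set of $\mathbf{L}_d - 2\mathbf{I}$,
\begin{equation*}
z\mathbf{I} - \mathbf{L} = \big(z\mathbf{I} - (\overline{\mathbf{L}_d} - 2\mathbf{I})\big)\big(\mathbf{I} - \mathbf{K}(z)\big), \qquad \mathbf{K}(z) := \mathbf{R}_{\overline{\mathbf{L}_d} - 2\mathbf{I}}(z)\,\mathbf{L}_V.
\end{equation*}
By \autoref{MScresult}, the semigroup generated by $\overline{\mathbf{L}_d} - 2\mathbf{I}$ obeys $\|\ee^{-2s}\mathbf{S}_d(s)\| \lesssim \ee^{-3s/2}$, so $\{\Re z > -\tfrac{3}{2}\} \subseteq \varrho(\overline{\mathbf{L}_d} - 2\mathbf{I})$ and $\|\mathbf{R}_{\overline{\mathbf{L}_d} - 2\mathbf{I}}(z)\| \lesssim (\Re z + \tfrac{3}{2})^{-1}$, uniformly bounded on any half-plane $\{\Re z \geq -\omega_0\}$ with $\omega_0 < \tfrac{3}{2}$. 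Since $\mathbf{L}_V$ is compact, each $\mathbf{K}(z)$ is compact and $z \mapsto \mathbf{K}(z)$ is holomorphic there.

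The decisive estimate is, for each fixed $\omega_0 \in (0, \tfrac{3}{2})$,
\begin{equation*}
\lim_{\rho \to \infty}\, \sup\big\{\| \mathbf{K}(z)\| : \Re z \geq -\omega_0,\ |z| \geq \rho\big\} = 0.
\end{equation*}
I would argue by contradiction: if it fails, pick $\delta > 0$, points $z_n$ with $|z_n|\to\infty$, $\Re z_n \geq -\omega_0$, and unit $\mathbf{f}_n$ with $\| \mathbf{K}(z_n)\mathbf{f}_n\| \geq \delta$. Compactness of $\mathbf{L}_V$ yields a subsequence with $\mathbf{L}_V\mathbf{f}_n \to \mathbf{g}$, and uniform boundedness of $\mathbf{R}_{\overline{\mathbf{L}_d} - 2\mathbf{I}}(z)$ on the half-plane then forces $\| \mathbf{R}_{\overline{\mathbf{L}_d} - 2\mathbf{I}}(z_n)\mathbf{g}\| \geq \delta/2$ eventually. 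If $\Re z_n \to \infty$, the resolvent norm tends to $0$, contradiction. Otherwise $\Re z_n$ stays bounded and $|\Im z_n|\to\infty$, and after extraction $\Re z_n \to \sigma_\infty \in [-\omega_0, \infty)$. Using the Laplace representation
\begin{equation*}
\mathbf{R}_{\overline{\mathbf{L}_d} - 2\mathbf{I}}(z_n)\mathbf{g} = \int_0^\infty \ee^{-\ii (\Im z_n) s}\,\phi_n(s)\,\dd s,\qquad \phi_n(s) = \ee^{-(\Re z_n + 2)s}\mathbf{S}_d(s)\mathbf{g},
\end{equation*}
dominated convergence gives $\phi_n \to \phi_\infty$ in $L^1((0,\infty); \HH^k_\rad(\BB_R^d))$ (the bound $\|\mathbf{S}_d(s)\mathbf{g}\| \lesssim \ee^{s/2}\|\mathbf{g}\|$ with $\Re z_n + \tfrac{3}{2} > \tfrac{3}{2} - \omega_0 > 0$ furnishes an integrable dominant), and a vector-valued Riemann--Lebesgue lemma then shows $\mathbf{R}_{\overline{\mathbf{L}_d} - 2\mathbf{I}}(z_n)\mathbf{g} \to 0$, the final contradiction.

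Given this decay estimate, for each $\omega_0 < \tfrac{3}{2}$ I pick $r_0 = r_0(\omega_0) > 0$ so that $\|\mathbf{K}(z)\| \leq \tfrac{1}{2}$ on $\{\Re z \geq -\omega_0,\ |z| \geq r_0\}$; Neumann series inverts $\mathbf{I} - \mathbf{K}(z)$ with norm at most $2$, hence $\mathbf{R}_\mathbf{L}(z) = (\mathbf{I} - \mathbf{K}(z))^{-1}\mathbf{R}_{\overline{\mathbf{L}_d} - 2\mathbf{I}}(z)$ exists and satisfies the claimed uniform bound. Letting $\omega_0$ approach $\tfrac{3}{2}$ yields the stated inclusion in part~(a). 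For part~(b), apply the analytic Fredholm theorem to the compact-operator-valued analytic map $z \mapsto \mathbf{K}(z)$ on the connected domain $\{\Re z > -\tfrac{3}{2}\}$: the set where $\mathbf{I} - \mathbf{K}(z)$ fails to be invertible is either the entire region or a discrete subset, and part~(a) rules out the former; the standard Fredholm structure yields that each such $z$ is an eigenvalue of $\mathbf{L}$ of finite algebraic multiplicity. Combined with part~(a), these eigenvalues are confined to a bounded region and are therefore finitely many.

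I expect the main obstacle to be the quantitative decay estimate for $\|\mathbf{K}(z)\|$: the resolvent $\mathbf{R}_{\overline{\mathbf{L}_d} - 2\mathbf{I}}(z)$ does not decay in operator norm as $|\Im z| \to \infty$, so one genuinely has to combine compactness of $\mathbf{L}_V$ with a Riemann--Lebesgue-type argument applied to a parameter-dependent Bochner integral; control of the dominant near the spectral boundary $\Re z = -\tfrac{3}{2}$ dictates why the bound degrades as $\omega_0 \to \tfrac{3}{2}$.
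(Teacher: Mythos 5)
Your overall skeleton coincides with the paper's: factorize $z\mathbf{I}-\mathbf{L}$ through the free resolvent and a compact-operator-valued term, invert by Neumann series for $|z|$ large, and invoke the analytic Fredholm theorem for part (2). The genuine difference lies in the decisive estimate that the compact-times-resolvent factor tends to zero in operator norm. The paper exploits the concrete structure of $\mathbf{L}_V$ (it acts only through the first component) together with the componentwise form of the resolvent equation for $\overline{\mathbf{L}_d}-2\mathbf{I}$; this yields the quantitative bound $\|\mathbf{L}_V\mathbf{R}_{\overline{\mathbf{L}_d}-2\mathbf{I}}(z)\|\lesssim |z|^{-1}$ on each half-plane $\{\Re z\geq -\omega_0\}$ by a short algebraic manipulation. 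You instead set $\mathbf{K}(z)=\mathbf{R}_{\overline{\mathbf{L}_d}-2\mathbf{I}}(z)\,\mathbf{L}_V$ (the order matters: putting $\mathbf{L}_V$ first lets compactness bite on a bounded input sequence) and argue by contradiction, combining compactness of $\mathbf{L}_V$ with a vector-valued Riemann--Lebesgue lemma applied to the Laplace-transform representation of the resolvent. This route is more abstract and is insensitive to the internal structure of the perturbation: it works verbatim for any compact perturbation of a $C_0$-semigroup generator, at the cost of producing only the qualitative decay $\|\mathbf{K}(z)\|\to 0$ rather than a rate. Either version suffices for the Neumann series, and part (2) is essentially identical in both treatments: discreteness from the analytic Fredholm theorem, boundedness from part (1), and the conversion of a nontrivial kernel of $\mathbf{I}-\mathbf{K}(\lambda)$ into an eigenvector of $\mathbf{L}$ lying in $\dom(\mathbf{L})$. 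One small caution: the constant in the free resolvent bound degrades like $(\tfrac{3}{2}-\omega_0)^{-1}$, so the radius $r_0$ genuinely depends on $\omega_0$; the phrase ``letting $\omega_0$ approach $3/2$'' does not by itself produce a single $r_0$ valid across the whole open half-plane $\{\Re z>-3/2\}$. This ambiguity is, however, already present in the lemma statement and does not affect the rest of your argument.
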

\begin{proof}
\begin{enumerate}[wide, itemsep=1em, topsep=1em]
\item Observe that the growth bound \eqref{MScgrowthbound} in \Cref{MScresult} together with \cite[p. 55, 1.10 Theorem]{MR1721989} implies
\begin{equation}
\label{resshift}
\big\{ z\in\mathbb{C} \mid \Re(z) > -\tfrac{3}{2} \big\} \subseteq \varrho(\overline{\mathbf{L}_d} - 2\mathbf{I}) \,.
\end{equation}
Suppose that $\Re(z) > -\frac{3}{2}$. In this case the important identity
\begin{equation}
\label{LBirmannSchwinger}
z\mathbf{I} - \mathbf{L} = \big( \mathbf{I} - \mathbf{L}_V\mathbf{R}_{\overline{\mathbf{L}_d} - 2\mathbf{I}}(z) \big)\big( z\mathbf{I} - \overline{\mathbf{L}_d} + 2\mathbf{I} \big)
\end{equation}
holds and shows that $z\in\varrho(\mathbf{L})$ if and only if $\mathbf{I} - \mathbf{L}_V\mathbf{R}_{\overline{\mathbf{L}_d} - 2\mathbf{I}}(z)$ is boundedly invertible. The latter is the case if the Neumann series of $\mathbf{L}_V\mathbf{R}_{\overline{\mathbf{L}_d} - 2\mathbf{I}}(z)$ converges. To achieve this, recall that
\begin{equation*}
\mathbf{L}_V\mathbf{R}_{\overline{\mathbf{L}_d} - 2\mathbf{I}}(z))\mathbf{f} =
V[\mathbf{R}_{\overline{\mathbf{L}_d} - 2\mathbf{I}}(z)\mathbf{f}]_{1} \mathbf{e}_2
\end{equation*}
and from the first component of the identity $(z\mathbf{I}-\overline{\mathbf{L}_d} + 2\mathbf{I})\mathbf{R}_{\overline{\mathbf{L}_d} - 2\mathbf{I}}(z)\mathbf{f} = \mathbf{f}$ we get
\begin{equation*}
(z+2)[\mathbf{R}_{\overline{\mathbf{L}_d} - 2\mathbf{I}}(z)\mathbf{f}]_{1} - [\mathbf{R}_{\overline{\mathbf{L}_d} - 2\mathbf{I}}(z)\mathbf{f}]_2 = f_{1} \,.
\end{equation*}
Also recall from \cite[p. 55, 1.10 Theorem]{MR1721989} the resolvent estimate
\begin{equation*}
\| \mathbf{R}_{\overline{\mathbf{L}_d} - 2\mathbf{I}}(z) \|_{\mathfrak{L}\big(\HH^{k}_\rad(\BB^{d}_{R})\big)} \lesssim \frac{1}{\Re(z) + \frac{3}{2}}
\end{equation*}
for all $z\in\CC$ with $\Re(z)>-\frac{3}{2}$. Now, since $V\in C^\infty(\overline{\BB^{d}_{R}})$ we obtain with this
\begin{align*}
\| \mathbf{L}_V\mathbf{R}_{\overline{\mathbf{L}_d} - 2\mathbf{I}}(z)\mathbf{f} \|_{\HH^{k}(\BB^{d}_{R})} &\lesssim
\| [\mathbf{R}_{\overline{\mathbf{L}_d} - 2\mathbf{I}}(z)\mathbf{f}]_{1} \|_{H^{k}(\BB^{d}_{R})} \\&\leq
\frac{1}{|z+2|} \big(
\| f_{1} \|_{H^{k}(\BB^{d}_{R})} + \| [\mathbf{R}_{\overline{\mathbf{L}_d} - 2\mathbf{I}}(z)\mathbf{f}]_2 \|_{H^{k}(\BB^{d}_{R})} \big) \\&\lesssim
\frac{1}{|z|} \|\mathbf{f}\|_{\HH^{k}(\BB^{d}_{R})}
\end{align*}
for all $\mathbf{f}\in\HH^{k}_\rad(\BB^{d}_{R})$ and all $\Re(z)\geq -\omega_{0}$. Hence, if $|z|$ is sufficiently large, then
\begin{equation*}
\|\mathbf{L}_V\mathbf{R}_{\overline{\mathbf{L}_d} - 2\mathbf{I}}(z)\|_{\mathfrak{L}\big(\HH^{k}_\rad(\BB^{d}_{R})\big)} < 1
\end{equation*}
and the Neumann series of $\mathbf{L}_V\mathbf{R}_{\overline{\mathbf{L}_d} - 2\mathbf{I}}(z)$ converges. This yields the first part of the lemma and the resolvent estimate follows from identity \eqref{LBirmannSchwinger}.
\item Suppose $\lambda\in\sigma(\mathbf{L})$ and $\Re(\lambda) > -\frac{3}{2}$. Then identity \eqref{LBirmannSchwinger} shows $1\in \sigma(\mathbf{L}_V\mathbf{R}_{\overline{\mathbf{L}_d} - 2\mathbf{I}}(\lambda))$. Also,
\begin{equation*}
\Omega \coloneqq \big\{ z\in\CC \mid \Re(z) > -\tfrac{3}{2} \big\} \rightarrow \mathfrak{L}\big( \HH^{k}_\rad(\BB^{d}_{R}) \big) \,, \quad
z\mapsto \mathbf{L}_V\mathbf{R}_{\overline{\mathbf{L}_d} - 2\mathbf{I}}(z) \,,
\end{equation*}
is an analytic map whose image is contained in the ideal of compact linear operators, since $\mathbf{L}_V$ is compact. Now though, the analytic Fredholm theorem \cite[p. 194, Theorem 3.14.3]{MR3364494} asserts that there is a discrete subset $S\subset\Omega$ such that $1\in \sigma(\mathbf{L}_V\mathbf{R}_{\overline{\mathbf{L}_d} - 2\mathbf{I}}(\lambda))$, if and only if $\lambda\in S$, and the residues of 
\begin{equation*}
\Omega \setminus S \rightarrow \mathfrak{L}\big( \HH^{k}_\rad(\BB^{d}_{R}) \big) \,, \quad z \mapsto \mathbf{R}_{\mathbf{L}_V\mathbf{R}_{\overline{\mathbf{L}_d} - 2\mathbf{I}}(z)}(1) = \big( \mathbf{I} - \mathbf{L}_V\mathbf{R}_{\overline{\mathbf{L}_d} - 2\mathbf{I}}(z) \big)^{-1} \,,
\end{equation*}
at $\lambda\in S$, which coincide with the spectral projection associated to $1\in \sigma(\mathbf{L}_V\mathbf{R}_{\overline{\mathbf{L}_d} - 2\mathbf{I}}(\lambda))$, have finite rank. Because $S$ is contained in a compact set by the first part of the lemma, it is finite. Moreover, by compactness we have in fact $1\in\sigma_\mathrm{p}(\mathbf{L}_V\mathbf{R}_{\overline{\mathbf{L}_d} - 2\mathbf{I}}(\lambda))$, to which there is an eigenvector $\mathbf{g}\in\HH^{k}_\rad(\BB^{d}_{R})\setminus\{ \mathbf{0} \}$. This implies $\mathbf{f} \coloneqq \mathbf{R}_{\overline{\mathbf{L}_d} - 2\mathbf{I}}(\lambda)\mathbf{g} \in \dom(\mathbf{L}) \setminus\{ \mathbf{0} \}$ and
\begin{equation*}
(\lambda\mathbf{I} - \mathbf{L})\mathbf{f} = \big( \mathbf{I} - \mathbf{L}_V\mathbf{R}_{\overline{\mathbf{L}_d} - 2\mathbf{I}}(\lambda) \big)\big( \lambda\mathbf{I} - \overline{\mathbf{L}_d} + 2\mathbf{I} \big) \mathbf{R}_{\overline{\mathbf{L}_d} - 2\mathbf{I}}(\lambda)\mathbf{g} = \big( \mathbf{I} - \mathbf{L}_V\mathbf{R}_{\overline{\mathbf{L}_d} - 2\mathbf{I}}(\lambda) \big)\mathbf{g} = \mathbf{0} \,,
\end{equation*}
that is, $\mathbf{f}$ is an eigenvector of $\mathbf{L}$ with eigenvalue $\lambda$.
\qedhere
\end{enumerate}
\end{proof}
\subsection{The mode stability problem in hyperboloidal similarity coordinates}
We know from \Cref{LemmaLinEvEV} that the unstable part of the spectrum of $\mathbf{L}$ is confined to a compact region and, even better, consists of only finitely many eigenvalues. However, characterizing those eigenvalues in detail leads to a rather challenging problem known as the \emph{mode stability problem}, on which we elaborate below. The following lemma provides the link between the spectral problem for $\mathbf{L}$ and the equation governing the mode stability problem.
\begin{lemma}
\label{ModeStabilityYM}
Let $R\geq\frac{1}{2}$ and $k\in\NN$ such that $k\geq \frac{d-1}{2}$. Let $\Re(\lambda) \geq 0$. Then $\lambda\in\sigma(\mathbf{L})$ if and only if there exists a nonzero $f_{\lambda} \in C^\infty_\rad(\overline{\BB^d_{R}})$ such that
\begin{equation}
\label{ModeLambda}
\widetilde{v}_{\lambda}(s,y) = \ee^{(\lambda+2)s} f_{\lambda}(y)
\end{equation}
is a \emph{mode solution} to the equation
\begin{equation}
\label{ModeEq}
\left( \Box_{\g} + V_{T}\circ\eta_{T} \right) \widetilde{v}_{\lambda} = 0 \quad \text{on } \RR\times\BB^d_{R} \setminus\{0\} \,.
\end{equation}
If such a \emph{mode} $f_{\lambda}$ exists, it is unique up to a multiplicative constant and
\begin{equation}
\label{MultFundSol}
\ker(\lambda\mathbf{I} - \mathbf{L}) = \langle \mathbf{f}_{\lambda} \rangle \quad \text{where} \quad \mathbf{f}_{\lambda} = f_{\lambda}
\begin{bmatrix}
1 \\
\lambda + 2
\end{bmatrix}
\in C^\infty_\rad(\overline{\BB^{d}_{R}})^{2}
\,.
\end{equation}
\end{lemma}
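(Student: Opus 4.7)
The plan is to reduce both directions of the equivalence to the \emph{same} radial spectral ODE. By \autoref{LinEv} and part (2) of \autoref{LemmaLinEvEV}, every $\lambda\in\sigma(\mathbf{L})$ with $\Re\lambda\geq 0$ in fact lies in $\sigma_\mathrm{p}(\mathbf{L})$ and has finite algebraic multiplicity, so the whole question reduces to analysing the kernel of $\lambda\mathbf{I}-\mathbf{L}$.

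I would first write out $(\lambda\mathbf{I}-\mathbf{L})\mathbf{f} = \mathbf{0}$ in components, using $\mathbf{L} = \overline{\mathbf{L}_d} - 2\mathbf{I} + \mathbf{L}_V$. The first component reads $f_2 - 2f_1 = \lambda f_1$ and forces $f_2 = (\lambda+2)f_1$, collapsing the unknown to the single radial profile $\widehat f_1$. Substituting into the second component reduces the eigenvalue equation to the linear second-order ODE
\[
c_{12}\widehat f_1'' + \bigl(c_{11}^d + (\lambda+2)c_{21}\bigr)\widehat f_1' + \bigl(V + (\lambda+2)c_{20}^d - (\lambda+2)^2\bigr)\widehat f_1 = 0,
\]
with the smooth coefficients from \eqref{ccoeff} and \eqref{Vpotential}. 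On the other hand, plugging the ansatz $\widetilde v_\lambda(s,y) = \ee^{(\lambda+2)s} f_\lambda(y)$ into $(\Box_{\g} + V_T\circ\eta_T)\widetilde v_\lambda = 0$ via the coordinate form \eqref{secondorderwave} of the Laplace-Beltrami operator, together with the identity $-V_T\circ\eta_T/\g^{00} = V$, produces the very same ODE for $f_\lambda$. This common reduction handles both implications once compatible regularity and domain conditions are arranged.

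For the forward direction, given an eigenvector $\mathbf{f}\in\dom(\mathbf{L})\subset\HH^k_\rad(\BB_R^d)$, the spectral ODE holds weakly for $\widehat f_1\in H^k(\BB_R)$ with smooth coefficients. On any subinterval where the leading coefficient $c_{12}$ does not vanish, standard ODE regularity immediately yields $\widehat f_1\in C^\infty$; evenness at $\eta = 0$ together with a Frobenius/indicial analysis at the light-cone points $\eta = \pm\tfrac{1}{2}$ where $c_{12}$ degenerates then upgrades this to $\widehat f_1\in C^\infty(\overline{\BB_R})$, producing the smooth radial mode $f_\lambda$. The converse is immediate: any smooth mode $f_\lambda$ yields $\mathbf{f}_\lambda = f_\lambda[1,\lambda+2]^\transpose\in C^\infty_\rad(\overline{\BB_R^d})^2 = \dom(\mathbf{L}_d)\subset\dom(\mathbf{L})$, and running the component computation in reverse gives $(\lambda\mathbf{I}-\mathbf{L})\mathbf{f}_\lambda = \mathbf{0}$. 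Uniqueness of $f_\lambda$ up to a scalar, and hence the description \eqref{MultFundSol} of the kernel, follows because the spectral ODE is second order while the radial-smoothness condition at the origin excludes the Frobenius branch with a $|y|^{-(d-2)}$ pole, cutting the solution space down to one dimension.

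The hardest step I anticipate is the smoothness bootstrap at the interior singular point $\eta = \tfrac{1}{2}$, where $c_{12}$ vanishes to first order: one must argue that within hyperboloidal similarity coordinates the Frobenius exponents of the spectral ODE at the light cone admit a smooth branch, so that an $H^k$ eigenvector does not blow up there. This is structurally the same mechanism exploited in the mode-stability analyses \cite{MR3475668, glogic2021stable} and is precisely what makes the hyperboloidal formulation well adapted to covering the exterior region.
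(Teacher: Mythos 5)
Your proposal follows essentially the same approach as the paper: reduce the eigenvalue equation via $f_2 = (\lambda+2)f_1$ to a single second-order radial ODE, observe that the mode ansatz in the PDE produces the same ODE, and then use a Frobenius analysis at the singular points $\eta = 0$ and $\eta = \tfrac12$ together with the $H^k$ membership to force smoothness and one-dimensionality of the kernel. The paper makes the indicial data explicit (exponents $\{0,\tfrac{d-5}{2}-\lambda\}$ at $\eta=\tfrac12$, $\{0,-d+2\}$ at $\eta=0$) and handles the resonant case $\tfrac{d-5}{2}-\lambda\in\NN_0$ separately, but the structure of your argument matches.
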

\begin{proof}
Suppose $\lambda\in\sigma(\mathbf{L})$ and $\Re(\lambda) \geq 0$. We get from \Cref{LemmaLinEvEV} that $\lambda\in\sigma_\mathrm{p}(\mathbf{L})$, so there is an eigenvector $\mathbf{f}_{\lambda} \in \ker(\lambda\mathbf{I} - \mathbf{L})$. The components $f_{\lambda,1}$, $f_{\lambda,2}$ of $\mathbf{f}_{\lambda} \in \dom(\mathbf{L}) \subset \HH^{k}_\rad(\BB^{d}_{R})$ are radial, satisfy $f_{\lambda,2} = (\lambda+2)f_{\lambda,1}$ and because $\mathbf{f}_{\lambda} \neq \mathbf{0}$ we have $f_{\lambda} \coloneqq f_{\lambda,1} \neq 0$. By \Cref{oddSobolevBalls} we infer for the radial representative that $|\,.\,|^{\frac{d-1}{2}}\widehat{f}_{\lambda} \in H^{k}(\BB_{R})$. From one-dimensional Sobolev embedding we conclude $\widehat{f}_{\lambda} \in C^{k-1}((0,R]) \subset C^{2}((0,R])$ and as the condition
\begin{equation*}
(\lambda\mathbf{I} - \mathbf{L})\mathbf{f}_{\lambda} = \mathbf{0}
\end{equation*}
is equivalent to
\begin{equation*}
\pd_s \left( \ee^{(\lambda+2)s} \mathbf{f}_{\lambda} \right) = \overline{\mathbf{L}_d} \left( \ee^{(\lambda+2)s} \mathbf{f}_{\lambda} \right) + \mathbf{L}_{V} \left( \ee^{(\lambda+2)s} \mathbf{f}_{\lambda} \right) \,,
\end{equation*}
we have that the first component $\widetilde{v}_{\lambda}(s,y) \coloneqq \ee^{(\lambda+2)s} f_{\lambda}(y)$ is a classical solution to the radial linear wave equation
\begin{equation*}
\left( \Box_{\g} + V_{T}\circ\eta_{T} \right) \widetilde{v}_{\lambda} = 0 \quad \text{on } \RR\times\BB^d_{R} \setminus\{0\} \,.
\end{equation*}
The radial mode ansatz yields the differential equation
\begin{equation}
\label{ModeEqHSC}
\widehat{f}_{\lambda}'' + p \widehat{f}_{\lambda}' + q \widehat{f}_{\lambda} = 0 \quad \text{on } (0,R)
\end{equation}
with coefficients
\begin{align*}
p(\eta) &= \frac{d-1}{\eta} + 2\left( \lambda - \frac{d-7}{2} \right) \frac{h(\eta)h'(\eta) - \eta}{h(\eta)^{2} - \eta^{2}} - \frac{\eta h''(\eta)}{\eta h'(\eta) - h(\eta)}  \,, \\
q(\eta) &=
-\frac{1-h'(\eta)^{2}}{h(\eta)^{2} - \eta^{2}} \left(
\widehat{V}(\eta) +
(\lambda+3)(d-2) +
\frac{(\lambda+3)(d-1)}{1 - h'(\eta)^{2}} \frac{\eta - h(\eta)h'(\eta)}{\eta}
\right) -
\frac{(\lambda+3)\eta h''(\eta)}{\eta h'(\eta) - h(\eta)} \,.
\end{align*}
The relevant poles are at $\eta = \frac{1}{2}$ for both coefficients $p$, $q$ and at $\eta = 0$ for $p$. The Frobenius indices are
\begin{equation*}
\left\{ 0, \tfrac{d-5}{2} - \lambda \right\} \text{ at } \eta = \tfrac{1}{2} \quad \text{and} \quad \{ 0, -d+2 \} \text{ at } \eta = 0
\end{equation*}
and in fact independent of the specific height function $h$. To conclude smoothness of $\widehat{f}_{\lambda}$ we investigate the behaviour at the singular points in analogy to \cite{MR4469070}.
\par\medskip
At the singular point $\eta=\frac{1}{2}$ we have to distinguish two cases.
\begin{enumerate}[wide, itemsep=1em, topsep=1em]
\item[\textit{Case $\frac{d-5}{2} - \lambda \in \NN_{0}$.}] Then the fundamental solutions to \Cref{ModeEqHSC} exhibit the behaviour
\begin{equation}
\label{Frob12in}
\widehat{\phi}_{1}(\eta) = \left( \tfrac{1}{2}-\eta \right)^{\frac{d-5}{2} - \lambda} \varphi_{1}(\eta) \,, \quad
\widehat{\phi}_2(\eta) = \varphi_2(\eta) + \alpha\left( \tfrac{1}{2}-\eta \right)^{\frac{d-5}{2} - \lambda} \log\left( \tfrac{1}{2}-\eta \right) \varphi_{1}(\eta) \,,
\end{equation}
where $\varphi_{1}$, $\varphi_2$ are analytic in a neighbourhood around $\eta = \frac{1}{2}$ and satisfy $\varphi_{1}(\frac{1}{2}) = \varphi_2(\frac{1}{2}) = 1$ and $\alpha\in\CC$. However, $|\,.\,|^{\frac{d-1}{2}}\widehat{\phi}_2$ belongs to $H^{k}(\BB_{R})$ only if $\alpha = 0$, in which case both fundamental solutions are analytic at $\eta = \frac{1}{2}$. If otherwise $\alpha\neq 0$ then $\widehat{f}_{\lambda}$ has to be a multiple of $\widehat{\phi}_{1}$ and will be again analytic at $\eta = \frac{1}{2}$.
\item[\textit{Case $\frac{d-5}{2} - \lambda \notin \NN_{0}$.}] We get from the Frobenius method $\varphi_{1}$, $\varphi_2$ that are analytic in a neighbourhood around $\eta = \frac{1}{2}$ and satisfy $\varphi_{1}(\frac{1}{2}) = \varphi_2(\frac{1}{2}) = 1$ such that
\begin{equation}
\label{Frob12notin}
\widehat{\phi}_{1}(\eta) = \varphi_{1}(\eta) \,, \quad
\widehat{\phi}_2(\eta) = \left( \tfrac{1}{2}-\eta \right)^{\frac{d-5}{2} - \lambda} \varphi_2(\eta) \,.
\end{equation}
As $k\geq \frac{d-1}{2}$, it follows that $|\,.\,|^{\frac{d-1}{2}} \widehat{\phi}_2 \notin H^{k}(\BB_{R})$, no matter the location of $\lambda\in \CC$, in each dimension $d\geq 7$. So $\widehat{f}_{\lambda}$ is a multiple of $\widehat{\phi}_{1}$ and smooth at $\eta = \frac{1}{2}$.
\end{enumerate}
\par\medskip
At the remaining singular point $\eta = 0$ the fundamental solutions are of the form
\begin{equation}
\label{Frob0}
\widehat{\phi}_{1}(\eta) = \varphi_{1}(\eta)\,, \qquad
\widehat{\phi}_2(\eta) = \eta^{-d+2}\varphi_2(\eta) +  \alpha \log(\eta)\varphi_{1}(\eta) \,,
\end{equation}
where $\varphi_{1}$, $\varphi_2$ are analytic in a neighbourhood around $\eta = 0$ with $\varphi_{1}(0) = \varphi_2(0) = 1$ and $\alpha \in \CC$. However, observe that one fundamental solution is highly singular at the origin with $|\,.\,|^{\frac{d-1}{2}} \widehat{\phi}_2 \notin H^{k}(\BB_{R})$ in each dimension, no matter the value of $\alpha$. As a consequence, $\widehat{f}_{\lambda}$ is a multiple of the fundamental solution $\widehat{\phi}_{1}$ and thus analytic also at the origin $\eta = 0$. This part also shows that in any case, $\widehat{f}_{\lambda}$ is a constant nonzero multiple of one fundamental solution.
\par\medskip
We conclude that $f_{\lambda}$ belongs to $C^\infty_\rad(\overline{\BB^d_{R}})$, is a solution to \Cref{ModeEqHSC} and that \eqref{MultFundSol} holds.
\end{proof}
In accordance to the literature \cite{MR3538419}, \cite{MR3623242}, \cite{MR3278903}, \cite{MR3475668}, \cite{MR4469070}, a nonzero smooth solution of the form \eqref{ModeLambda} to \Cref{ModeEq} is called a \emph{mode solution}. It is called an \emph{unstable mode solution} if $\Re(\lambda)\geq 0$ and the corresponding $\lambda$ is called an \emph{unstable eigenvalue}. We will see in the proof of \Cref{LemmaKerLin} without effort that time translation invariance of the Yang-Mills equation induces the unstable eigenvalue $\lambda=1$. The \emph{mode stability problem} claims that this is in fact the only unstable eigenvalue and, in this respect, we say that the self-similar solution $u_{T}^{*}$ is \emph{mode stable} if unstable mode solutions exist only for $\lambda=1$. Posed this way, the mode stability problem lays the focus on the study of solutions to the spectral ODE \eqref{ModeEqHSC}, which does not require any knowledge of the concrete operator $\mathbf{L}$, its spectral properties nor the precise functional setting. This is convenient for disclosing the core of the analysis behind the mode stability problem. Later, when we identify $\lambda=1$ as the only source for instabilities for the linearized wave evolution, we need the additional information that the geometric and algebraic multiplicities of $\lambda=1$ as an eigenvalue of $\mathbf{L}$ coincide. This is addressed in \Cref{SpecProjProp}.
\par\medskip
We continue the spectral analysis and prove mode stability of $u_{T}^{*}$. Interestingly, the structure of the linear wave equation \eqref{ModeEq} enables us to reduce the core of this difficult problem to standard similarity coordinates where it has been solved recently by I. Glogi\'{c} \cite{MR4469070} in all dimensions.
\begin{proposition}
\label{LemmaKerLin}
Let $R\geq \frac{1}{2}$, $k\in\NN$ such that $k\geq \frac{d-1}{2}$. We have
\begin{equation*}
\sigma(\mathbf{L})\cap \{\lambda\in\CC\mid \Re(\lambda)\geq 0 \} = \{1\}
\end{equation*}
and
\begin{equation*}
\ker(\mathbf{I}-\mathbf{L}) = \langle\mathbf{f}_{1}^{*}\rangle
\end{equation*}
with
\begin{equation}
\label{symmetrymode}
\mathbf{f}_{1}^{*}(y) = \frac{h(y)}{( b_d h(y)^{2} + |y|^{2} )^{2}}
\begin{bmatrix}
1\\3
\end{bmatrix}
\,.
\end{equation}
\end{proposition}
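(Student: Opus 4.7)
The plan is to combine \autoref{LemmaLinEvEV} and \autoref{ModeStabilityYM} with (a) a symmetry calculation producing an explicit eigenvector at $\lambda = 1$, and (b) a coordinate transformation to standard similarity coordinates which allows to invoke Glogi\'{c}'s mode stability result \cite{glogic2021stable}. By \autoref{ModeStabilityYM} every $\lambda\in\sigma(\mathbf{L})$ with $\Re(\lambda)\geq 0$ must correspond to a classical radial mode $\widetilde{v}_\lambda = \ee^{(\lambda+2)s}f_\lambda$ of $(\Box_{\g}+V_T\circ\eta_T)\widetilde{v}_\lambda = 0$, and by \eqref{MultFundSol} the eigenspace is one-dimensional with the prescribed shape $f_\lambda [1,\lambda+2]^{\transpose}$. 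So it suffices to (i) exhibit such a mode at $\lambda = 1$ whose profile reduces, after rescaling, to the stated $\mathbf{f}_1^*$, and (ii) rule out smooth modes for every other $\lambda$ with $\Re(\lambda)\geq 0$.

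For (i) I would differentiate the defining identity $(-\pd_t^2+\Delta_x)u_T^* = (d-4)(|x|^2(u_T^*)^3+3(u_T^*)^2)$ in the blowup parameter $T$; this produces a solution $\pd_T u_T^*$ of the linearised equation $(-\pd_t^2+\Delta_x+V_T)\pd_T u_T^* = 0$ on $\RR^{1,d}\setminus\{(T,0)\}$. Pulling $\pd_T u_T^*$ back to hyperboloidal similarity coordinates through the substitutions $T-t = -\ee^{-s}h(|y|)$ and $|x|^2 = \ee^{-2s}|y|^2$ produces a factor $\ee^{3s}$ multiplied by a radial profile proportional to $h(|y|)(b_d h(|y|)^2+|y|^2)^{-2}$. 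Matching against the mode ansatz $\ee^{(\lambda+2)s}f_\lambda$ identifies $\lambda = 1$ and $f_1$, and \eqref{MultFundSol} then produces the two-component eigenvector with multiplier $[1,3]^{\transpose}$, which is precisely $\mathbf{f}_1^*$ up to the non-zero normalisation constant $-2a_d b_d$.

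For (ii) the key observation is that any radial mode $\widetilde{v}_\lambda$ lifts to a self-similar solution $\widetilde{u}_\lambda$ of the linearised equation on Minkowski space, which can be re-expressed in standard similarity coordinates $(\tau,\xi)$ with $(t,x) = (T-\ee^{-\tau},\ee^{-\tau}\xi)$ via the change of radial variable $|\xi| = -|y|/h(|y|)$. A direct computation gives $\widetilde{u}_\lambda(t,x) = (T-t)^{-(\lambda+2)}g_\lambda(\xi)$ with a radial $g_\lambda$. The map $\eta\mapsto -\eta/h(\eta)$ has derivative $(\eta h'(\eta)-h(\eta))/h(\eta)^2 > 0$ on $[0,\tfrac{1}{2}]$ and sends $0\to 0$, $\tfrac{1}{2}\to 1$, hence is a smooth diffeomorphism there. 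Thus smoothness of $f_\lambda$ on $\overline{\BB_R^d}$ transfers to smoothness of $g_\lambda$ on $\overline{\BB_1^d}$, so that $g_\lambda$ is a classical radial mode in standard similarity coordinates of exponent $\lambda$. The mode stability theorem of \cite{glogic2021stable} forces $g_\lambda \equiv 0$ for $\lambda\neq 1$ with $\Re(\lambda)\geq 0$; unique continuation for the mode ODE \eqref{ModeEqHSC}, which is analytic on $(0,R)$ off the regular singular point $\eta = \tfrac{1}{2}$, extends this to $f_\lambda\equiv 0$ on $[0,R]$, contradicting $\mathbf{f}_\lambda\neq\mathbf{0}$. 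The principal obstacle is verifying that the coordinate-change artefact does not introduce new regularity conditions at $\eta = \tfrac{1}{2}$; this reduces to the non-vanishing of $1+h'(\tfrac{1}{2})$ and to compatibility of the Frobenius exponents in \eqref{Frob12in}--\eqref{Frob12notin} with those analysed in standard similarity coordinates in \cite{glogic2021stable}.
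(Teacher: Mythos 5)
Your proposal is correct and mirrors the paper's proof: the $\lambda=1$ eigenvector is obtained by differentiating $u_T^*$ in $T$ and pulling back to hyperboloidal similarity coordinates, and all other $\lambda$ with $\Re\lambda\geq 0$ are ruled out by transporting the mode equation to standard similarity coordinates via the transition diffeomorphism \eqref{SSCtoHSC} and invoking Glogi\'c's mode stability theorem. The only wrinkle is your concluding contradiction: ordinary unique continuation for the ODE \eqref{ModeEqHSC} does not cross the regular singular point $\eta=\tfrac12$, but the Frobenius analysis already carried out in the proof of \autoref{ModeStabilityYM} shows that $\widehat f_\lambda$ is a nonzero constant multiple of an analytic fundamental solution that does not vanish at the origin, so in fact $g_\lambda\neq 0$ to begin with and the case $g_\lambda\equiv 0$ never occurs.
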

\begin{proof}
Suppose $\lambda\in\sigma(\mathbf{L})$ and $\Re(\lambda)\geq 0$. By \Cref{ModeStabilityYM} there is an eigenvector $\mathbf{f}_{\lambda} \in \ker( \lambda\mathbf{I} - \mathbf{L} )$ whose first component $f_{\lambda,1} \in C^\infty_\rad(\overline{\BB^d_{R}})$ yields a mode solution $\widetilde{v}_{\lambda}(s,y) \coloneqq \ee^{(\lambda+2)s} f_{\lambda,1}(y)$ to \Cref{ModeEq}. We show that this already implies that $\lambda$ is constrained to \cite[Eq. (3.10)]{MR4469070} by changing via the transition diffeomorphism
\begin{equation}
\label{SSCtoHSC}
\RR\times\overline{\BB_{1}^d} \rightarrow \RR\times\overline{\BB_{\frac{1}{2}}^d} \,, \quad (\tau,\xi) \mapsto \eta_{T}^{-1}(T-\ee^{-\tau}, \ee^{-\tau} \xi) = \big( \tau - \log h_{1}(\xi), h_{1}(\xi)^{-1}\xi \big) \,,
\end{equation}
where
\begin{equation*}
h_{1}(\xi) = 1 + \frac{1}{2} \sqrt{2(1+|\xi|^{2})} \,,
\end{equation*}
see \Cref{geomHSC}, to standard similarity coordinates and then draw the conclusion. Now set
\begin{equation*}
\widetilde{w}_{\lambda}(\tau,\xi) = (\widetilde{v}_{\lambda}\circ\eta_{T}^{-1})(T - \ee^{-\tau},\ee^{-\tau}\xi) = \ee^{(\lambda+2)\tau} g_{\lambda}(\xi)
\end{equation*}
where $g_{\lambda} \in C^\infty(\overline{\BB_{1}^d})$ is given by
\begin{equation*}
g_{\lambda}(\xi) \coloneqq \widehat{g}_{\lambda}(|\xi|) \coloneqq \frac{\widehat{f}_{\lambda,1}(h_{1}(|\xi|)^{-1}|\xi|)}{h_{1}(|\xi|)^{\lambda+2}} \,.
\end{equation*}
This transforms the homogeneous linear wave equation \eqref{ModeEq} via
\begin{align*}
&(\Box_{\g}\widetilde{v}_{\lambda}) \circ\eta_{T}^{-1}(T - \ee^{-\tau},\ee^{-\tau}\xi) \\&=
\ee^{2\tau} \left( - \pd_\tau^{2} - \pd_\tau - 2\xi^i\pd_{\xi^i}\pd_\tau - 2\xi^i\pd_{\xi^i} + (\delta^{ij}-\xi^i\xi^{j})\pd_{\xi^i}\pd_{\xi^{j}} \right)\widetilde{w}_{\lambda}(\tau,\xi) \\&=
\ee^{(\lambda+4)\tau} \left( (\delta^{ij} - \xi^i\xi^{j})\pd_{\xi^i}\pd_{\xi^{j}} - 2(\lambda+3)\xi^i\pd_{\xi^i} - (\lambda+2)(\lambda+3) \right) g_{\lambda}(\xi)
\end{align*}
and $V_{T}(T-\ee^{-\tau}, \ee^{-\tau}\xi) = \ee^{2\tau} V_d(\xi) = \ee^{2\tau} \widehat{V}_d(|\xi|)$ with
\begin{equation*}
\widehat{V}_d(\rho) = - 3(d-4)a_d \frac{(a_d - 2)\rho^{2} - 2b_d}{(b_d + \rho^{2})^{2}}
\end{equation*}
into the equation
\begin{equation*}
\big(1-\rho^{2}\big) \widehat{g}_{\lambda}''(\rho)
+\frac{d-1}{\rho} \widehat{g}_{\lambda}'(\rho)
-2(\lambda+3)\rho\widehat{g}_{\lambda}'(\rho)
-(\lambda+2)(\lambda+3)\widehat{g}_{\lambda}(\rho)
+\widehat{V}_d(\rho)\widehat{g}_{\lambda}(\rho) = 0 \,.
\end{equation*}
This is precisely \cite[Eq. (3.10)]{MR4469070} and, at this point, according to the established claim \cite[p. 22]{MR4469070}, there is no $\lambda\neq 1$ with $\Re(\lambda)\geq 0$ such that this equation admits a nonzero solution $\widehat{g}_{\lambda} \in C^\infty([0,1])$.
\par\medskip
For $\lambda = 1$ we observe that the blowup solution $u_{T}^{*}$ given in \Cref{selfsimilarblowupextended} satisfies
\begin{equation*}
-\pd_{t}^{2} u_{T}^{*}(t,x) + \Delta_{x} u_{T}^{*}(t,x) = (d-4)(|x|^{2}u_{T}^{*}(t,x)^3 + 3u_{T}^{*}(t,x)^{2}) \,.
\end{equation*}
Since $u_{T}^{*}$ depends smoothly on the parameter $T\in\RR$ we can differentiate this equation with respect to $T$ and obtain
\begin{align*}
(-\pd_{t}^{2} + \Delta_{x}) (\pd_{T} u_{T}^{*})(t,x) &= (d-4) \big( 3|x|^{2} u_{T}^{*}(t,x)^{2} + 6u_{T}^{*}(t,x) \big)(\pd_{T} u_{T}^{*})(t,x) \\&=
-V_{T}(t,x) (\pd_{T} u_{T}^{*})(t,x) \,.
\end{align*}
Now
\begin{equation*}
(\pd_{T} u_{T}^{*})\circ\eta_{T}(s,y) = - 2 a_d b_d \ee^{3s} \frac{h(y)}{( |y|^{2} + b_d h(y)^{2} )^{2}}
\end{equation*}
is the smooth solution to \Cref{ModeEq} for $\lambda=1$ and \Cref{ModeStabilityYM} yields $\langle \mathbf{f}_{1}^{*} \rangle = \ker(\mathbf{I}-\mathbf{L})$ since the other fundamental solution does not belong to $H^{k}(\BB_{R})$ according to the Frobenius method \eqref{Frob12in}.
\end{proof}
\subsection{Properties of the spectral projection}
So far, we have determined the eigenvalue $1\in\sigma(\mathbf{L})$ as an instability in the linearized wave evolution. Hence we are interested in decomposing the time evolution into a stable and unstable part. For this, the method of choice is the \emph{Riesz projection}, which is a projection that allows us to remove the unstable eigenvalue. \Cref{LemmaLinEvEV} and \Cref{LemmaKerLin} show that $1\in\sigma(\mathbf{L})$ is an isolated eigenvalue encircled by $\gamma: [0,2\pi]\rightarrow\CC$, $t\mapsto 1 + \ee^{\ii t}$, so there corresponds a spectral projection
\begin{equation}
\label{RieszProjection}
\mathbf{P} \coloneqq \frac{1}{2\pi\ii}\int_\gamma \mathbf{R}_\mathbf{L}(z) \dd z
\end{equation}
with the following important properties.
\begin{proposition}
\label{SpecProjProp}
Let $R\geq \frac{1}{2}$, $k\in\NN$ such that $k\geq \frac{d-1}{2}$.
\begin{enumerate}[itemsep=1em, topsep=1em]
\item The Riesz projection $\mathbf{P} \in \mathfrak{L}\big( \HH^{k}_\rad(\BB^{d}_{R}) \big)$ has the properties
\begin{equation*}
\mathbf{P}\mathbf{L} \subset \mathbf{L}\mathbf{P} \,, \qquad
\sigma(\mathbf{L}\restriction_{\ran(\mathbf{P})}) = \{1 \} \,, \qquad
\sigma(\mathbf{L}\restriction_{\ran(\mathbf{I}-\mathbf{P})}) = \sigma(\mathbf{L})\setminus\{1 \} \,. 
\end{equation*}
\item The range of $\mathbf{P}$ is one-dimensional and spanned by the symmetry mode from \Cref{symmetrymode},
\begin{equation*}
\ran(\mathbf{P}) = \langle \mathbf{f}_{1}^{*} \rangle \,.
\end{equation*}
\end{enumerate}
\end{proposition}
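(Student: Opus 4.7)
For part (1), the three properties follow from the standard theory of Riesz projections associated to an isolated spectral point. Since $1 \in \sigma(\mathbf{L})$ is isolated by \autoref{LemmaLinEvEV} and \autoref{LemmaKerLin}, the contour $\gamma$ lies in $\varrho(\mathbf{L})$ and $\mathbf{P}$ is a bounded projection. The inclusion $\mathbf{P}\mathbf{L} \subset \mathbf{L}\mathbf{P}$ is a direct consequence of the identity $\mathbf{L}\mathbf{R}_{\mathbf{L}}(z) \supset \mathbf{R}_{\mathbf{L}}(z)\mathbf{L}$ for $z \in \varrho(\mathbf{L})$ after passing the closed operator $\mathbf{L}$ under the contour integral. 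The spectral decomposition $\sigma(\mathbf{L}\restriction_{\ran(\mathbf{P})}) = \{1\}$ and $\sigma(\mathbf{L}\restriction_{\ran(\mathbf{I} - \mathbf{P})}) = \sigma(\mathbf{L}) \setminus \{1\}$ is the classical statement from \cite[p.\ 178]{kato2013perturbation}, applicable because the setting of \autoref{LemmaLinEvEV} guarantees that $1$ is an isolated eigenvalue of finite algebraic multiplicity, hence $\ran(\mathbf{P})$ is finite-dimensional.

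For part (2), the inclusion $\langle \mathbf{f}_1^* \rangle \subset \ran(\mathbf{P})$ is immediate from $\ker(\mathbf{I} - \mathbf{L}) \subset \ran(\mathbf{P})$ combined with \autoref{LemmaKerLin}. For the reverse inclusion, the plan is to show that $\mathbf{L}\restriction_{\ran(\mathbf{P})}$ has no non-trivial Jordan structure at the eigenvalue~$1$. Since $\ran(\mathbf{P})$ is finite-dimensional and $\sigma(\mathbf{L}\restriction_{\ran(\mathbf{P})}) = \{1\}$, the operator $(\mathbf{I} - \mathbf{L})\restriction_{\ran(\mathbf{P})}$ is nilpotent. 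If $\ran(\mathbf{P}) \neq \langle \mathbf{f}_1^* \rangle$, then there would exist $\mathbf{g} \in \ran(\mathbf{P}) \cap \dom(\mathbf{L})$ with $(\mathbf{I} - \mathbf{L})\mathbf{g} = \mathbf{f}_1^*$. It therefore suffices to show that the equation
\begin{equation*}
(\mathbf{I} - \mathbf{L})\mathbf{g} = \mathbf{f}_1^*
\end{equation*}
admits no solution in $\dom(\mathbf{L}) \subset \HH^k_\rad(\BB^d_R)$.

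To analyse this equation, I would mimic the reduction used in the proof of \autoref{LemmaKerLin}. Writing $\mathbf{g} = [\,g_1, g_2\,]$, the first component of the identity forces $g_2 = -g_1 + f_{1,1}^*$, whereupon the second component collapses to a linear inhomogeneous second-order ODE in the radial representative $\widehat g_1$, with the same singular points $\eta = 0$ and $\eta = \tfrac12$ and the same Frobenius indices as in the proof of \autoref{LemmaKerLin} for $\lambda = 1$. Applying the transition diffeomorphism \eqref{SSCtoHSC} then maps this inhomogeneous problem to the analogous generalised eigenvector equation at $\lambda = 1$ in standard similarity coordinates on $\overline{\BB_1^d}$, which is exactly the form treated in \cite{glogic2021stable}. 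The content of mode stability there not only excludes a second eigenmode but also rules out a Jordan chain at $\lambda = 1$: a generalised eigenvector smooth up to the boundary $\rho = 1$ would contradict the Frobenius analysis of the inhomogeneous equation at that regular-singular point. Hence no such $\mathbf{g}$ exists and $\ran(\mathbf{P}) = \langle \mathbf{f}_1^* \rangle$.

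The main obstacle is the last step: tracking the inhomogeneous right-hand side through the coordinate change \eqref{SSCtoHSC} with enough care that the resulting ODE can be identified precisely with the generalised mode equation studied in \cite{glogic2021stable}, and then extracting the exclusion of a Jordan chain from the Frobenius analysis at $\rho = 1$, rather than merely the absence of an additional eigenvalue. Once this reduction is performed cleanly, the non-existence of a regular solution to the resulting inhomogeneous ODE follows from the same mechanism that rules out the logarithmic branch in \eqref{Frob12in} with $\alpha \neq 0$.
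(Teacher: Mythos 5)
Part (1) of your proposal is fine and matches the paper exactly: standard Riesz projection theory from Kato, applied because \autoref{LemmaLinEvEV} gives finite algebraic multiplicity of the isolated eigenvalue $1$.

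For part (2) your overall strategy is the same as the paper's (contradiction via a putative generalised eigenvector, reduction to a second-order radial ODE, change of coordinates via \eqref{SSCtoHSC}, appeal to Glogi\'{c}'s analysis), but there is a genuine gap at the central step. You assert that the transition to standard similarity coordinates ``maps this inhomogeneous problem to the analogous generalised eigenvector equation at $\lambda=1$ in standard similarity coordinates on $\overline{\BB_1^d}$, which is exactly the form treated in \cite{glogic2021stable}.'' This is false: the right-hand side coming from $\mathbf{f}_1^*$ involves the operators $c_{20}^d - 6$ and $c_{21}\pd_\eta$, whose pullback to standard similarity coordinates produces an inhomogeneity that splits as $G = G^* + G^h$, where $G^*$ is the term matching \cite[Eq.\ (3.37)]{glogic2021stable} but $G^h$ is a coordinate artefact depending on the height function $h$ that is absent in the standard-coordinate problem. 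The paper must explicitly integrate out the $G^h$-contribution (using $\psi_1 = \widehat g_{1,1}^*$ as a homogeneous solution and the special reduction-of-order structure) before the remaining Duhamel integral can be identified with the one in \cite{glogic2021stable}. Your proposal does not notice this extra term and therefore the intended citation does not directly apply. You also do not observe that the case $d=7$ requires a separate explicit integration (the Wronskian exponent $(1-\rho^2)^{(d-9)/2}$ degenerates) rather than a direct appeal to \cite[Eq.\ (3.41)]{glogic2021stable}, which only covers $d \geq 9$. You correctly flagged ``tracking the inhomogeneous right-hand side through the coordinate change'' as the main obstacle, but you mischaracterised the difficulty as one of bookkeeping rather than the appearance of a genuinely new term that must be cancelled by an exact computation before the cited result can be invoked.
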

\begin{proof}
\begin{enumerate}[wide, itemsep=1em, topsep=1em]
\item This construction is standard in spectral theory and the asserted properties are basic consequences, see \cite[p. 178, Theorem 6.17]{MR1335452}.
\item By construction $\sigma(\mathbf{L}\restriction_{\ran(\mathbf{P})}) = \{1\}$ holds with eigenvector $\mathbf{f}_{1}^{*}$ so we have $\mathbf{f}_{1}^{*} \in \ran(\mathbf{P})$. To see the other inclusion, first recall from \Cref{LemmaLinEvEV} that $1\in\sigma_\mathrm{p}(\mathbf{L})$ has finite algebraic multiplicity, i.e. the subspace $\ran(\mathbf{P}) \subset \HH^{k}_\rad(\BB^d_{R})$ is finite dimensional. So, the part $\mathbf{L}\restriction_{\ran(\mathbf{P})}$ of $\mathbf{L}$ in $\ran(\mathbf{P})$ is an operator defined on $\ran(\mathbf{P})$ with spectrum $\sigma(\mathbf{L}\restriction_{\ran(\mathbf{P})}) = \{1\}$. As an operator on a finite dimensional space with only $0$ as eigenvalue we get that $\mathbf{I} - \mathbf{L}\restriction_{\ran(\mathbf{P})}$ is nilpotent. We begin by assuming that the corresponding nilpotence index is strictly larger than one, in which case
\begin{equation}
\label{nilpot}
\mathbf{I} - \mathbf{L}\restriction_{\ran(\mathbf{P})} \neq \mathbf{0} \,.
\end{equation}
Then $\ran(\mathbf{I} - \mathbf{L}\restriction_{\ran(\mathbf{P})}) \neq \{\mathbf{0}\}$ and since $\mathbf{I} - \mathbf{L}\restriction_{\ran(\mathbf{P})}$ is nilpotent,
\begin{equation*}
\ran(\mathbf{I} - \mathbf{L}\restriction_{\ran(\mathbf{P})}) \cap \ker(\mathbf{I} - \mathbf{L}\restriction_{\ran(\mathbf{P})}) \neq \{ \mathbf{0} \} \,.
\end{equation*}
Together with $\ker(\mathbf{I} - \mathbf{L}) = \langle\mathbf{f}_{1}^{*}\rangle$ this implies that there exists $\mathbf{0} \neq \mathbf{f} \in \ran(\mathbf{P})$ such that
\begin{equation*}
(\mathbf{I} - \mathbf{L})\mathbf{f} = \mathbf{f}_{1}^{*} \,.
\end{equation*}
The components $f_{1}$, $f_2$ of $\mathbf{f}\in\HH^{k}_\rad(\BB_{R})$ are radial and we have by our characterization of Sobolev norms for radial functions in \Cref{oddSobolevBalls} that $|\,.\,|^{\frac{d-1}{2}} \widehat{f}_{1} \in H^{k}(\BB_{R})$ and thus, by one-dimensional Sobolev embedding $\widehat{f}_{1} \in C^{k-1}((0,R]) \subset C^{2}((0,R])$. That is, $\widehat{f}_{1}$ satisfies
\begin{equation*}
\renewcommand{\arraystretch}{1.2}
\left\{
\begin{array}{rcl}
\widehat{f}_{1,1}^{*} &=& 3 \widehat{f}_{1} - \widehat{f}_2 \,, \\
\widehat{f}_{1,2}^{*} &=& 3 \widehat{f}_2 - \big( c_{11}^d \widehat{f}_{1}' + c_{12} \widehat{f}_{1}'' + c_{20}^d \widehat{f}_2 + c_{21} \widehat{f}_2' \big) - \widehat{V}\widehat{f}_{1} \,,
\end{array}
\right.
\end{equation*}
classically. Inserting the first equation in the second yields
\begin{equation*}
-9  \widehat{f}_{1} + c_{11}^d \widehat{f}_{1}' + c_{12} \widehat{f}_{1}'' + 3 c_{20}^d \widehat{f}_{1} + 3 c_{21} \widehat{f}_{1}' + \widehat{V}\widehat{f}_{1} = (c_{20}^d - 6)\widehat{f}_{1,1}^{*} + c_{21} \widehat{f}_{1,1}^{*'} \,.
\end{equation*}
Multiplying through with $\ee^{3s}$, we get
\begin{align*}
&
\big( - \pd_s^{2} + c_{11}^d(\eta) \pd_\eta + c_{12}(\eta) \pd_\eta^{2} + c_{20}^d(\eta) \pd_s + c_{21}(\eta) \pd_\eta \pd_s + \widehat{V}(\eta) \big) \big( \ee^{3s} \widehat{f}_{1}(\eta) \big) \\&=  \ee^{3s} \big( (c_{20}^d(\eta) - 6) \widehat{f}_{1,1}^{*}(\eta) + c_{21}(\eta) \widehat{f}_{1,1}^{*'}(\eta) \big) \,.
\end{align*}
Multiplying by $\g^{00}(s,\eta)$, and setting $\widetilde{v}(s,y) = \ee^{3s} \widehat{f}_{1}(|y|)$ we see that this is an inhomogeneous linear wave equation,
\begin{equation}
\label{LinInhomWaveEq}
\big( \Box_{\g} + V_{T}(\eta_{T}(s,y)) \big)\widetilde{v}(s,y) =
\g^{00}(s,y) \ee^{3s} \big( (c_{20}^d(|y|) - 6) \widehat{f}_{1,1}^{*}(|y|) + c_{21}(|y|) \widehat{f}_{1,1}^{*'}(|y|) \big) \,.
\end{equation}
We restrict ourselves to $y\in\BB^d_{\frac{1}{2}}$ and consider the change of variables \eqref{SSCtoHSC}. We first compute on the right-hand side
\begin{align*}
\g^{00}(\tau - \log h_{1}(\xi), h_{1}(\xi)^{-1} \xi) &= \ee^{2\tau} \left( (1-|\xi|^{2}) H(|\xi|)^{2} + 2 |\xi| H(|\xi|) - 1 \right) \,, \\
c_{20}^d(| h_{1}(\xi)^{-1} \xi |) &= \frac{1 - 2|\xi|H(|\xi|) + (d-1)|\xi|^{-1}H(|\xi|) + (1-|\xi|^{2}) H'(|\xi|) }{ (1-|\xi|^{2}) H(|\xi|)^{2} + 2 |\xi| H(|\xi|) - 1 } \,, \\
c_{21}(| h_{1}(\xi)^{-1} \xi |) &= \frac{2(1-|\xi|^{2})H(|\xi|) + 2|\xi|}{ (1-|\xi|^{2}) H(|\xi|)^{2} + 2 |\xi| H(|\xi|) - 1 } \left( - \frac{|\xi|h_{1}'(|\xi|) - h_{1}(|\xi|)}{h_{1}(|\xi|)^{2}} \right) \,,
\end{align*}
and
\begin{align*}
\widehat{f}_{1,1}^{*}(| h_{1}(\xi)^{-1} \xi |) &= -h_{1}(|\xi|)^3 \widehat{g}_{1,1}^{*}(|\xi|) \,, \\
\widehat{f}_{1,1}^{*'}(| h_{1}(\xi)^{-1} \xi |) &= - h_{1}(|\xi|)^3 \left( 3H(|\xi|)\widehat{g}_{1,1}^{*}(|\xi|) + \widehat{g}_{1,1}^{*'}(|\xi|) \right) \left( -\frac{h_{1}(|\xi|)^{2}}{|\xi|h_{1}'(|\xi|) - h_{1}(|\xi|)} \right) \,,
\end{align*}
where
\begin{equation*}
\widehat{g}_{1,1}^{*}(\rho) = \frac{1}{(b_d + \rho^{2})^{2}} \quad \text{and} \quad H(\rho) = \pd_\rho \log h_{1}(\rho) \,.
\end{equation*}
Then we set
\begin{equation*}
\widetilde{w}(\tau,\xi) = (\widetilde{v}\circ\eta_{T}^{-1})(T - \ee^{-\tau},\ee^{-\tau}\xi) = \ee^{3\tau} g_{1}(\xi)
\end{equation*}
where
\begin{equation}
\label{SSCrelHSC}
g_{1}(\xi) = \widehat{g}_{1}(|\xi|) = \frac{\widehat{f}_{1}(h_{1}(|\xi|)^{-1}|\xi|)}{h_{1}(|\xi|)^3} \quad \text{resp.} \quad f_{1}(y) = \widehat{f}_{1}(|y|) = - \frac{\widehat{g}_{1}( - h(|y|)^{-1} |y| )}{h(|y|)^3} \,.
\end{equation}
This transforms \Cref{LinInhomWaveEq} via
\begin{align*}
&\left( \Box_{\g}\widetilde{v} + (V_{T}\circ\eta_{T})\widetilde{v} \right) \circ\eta_{T}^{-1}(T - \ee^{-\tau},\ee^{-\tau}\xi) \\&=
\ee^{2\tau} \left( - \pd_\tau^{2} - \pd_\tau - 2\xi^i\pd_{\xi^i}\pd_\tau - 2\xi^i\pd_{\xi^i} + (\delta^{ij}-\xi^i\xi^{j})\pd_{\xi^i}\pd_{\xi^{j}} + V_d(\xi) \right)\widetilde{w}(\tau,\xi) \\&=
\ee^{5\tau} \left( (\delta^{ij}-\xi^i\xi^{j})\pd_{\xi^i}\pd_{\xi^{j}} - 8\xi^i\pd_{\xi^i} - 12 + V_d(\xi) \right) g_{1}(\xi)
\end{align*}
into the equation
\begin{equation}
\label{InhomLinWave}
(1-\rho^{2})\widehat{g}_{1}''(\rho) + \big( (d-1)\rho^{-1} - 8\rho \big) \widehat{g}_{1}'(\rho) - \big( 12 - \widehat{V}_d(\rho) \big) \widehat{g}_{1}(\rho) = G(\rho)
\end{equation}
with the inhomogeneity $G = G^{*} + G^h$ given by
\begin{align}
\nonumber
G^{*}(\rho) &= - \left( 7 \widehat{g}_{1,1}^{*}(\rho) + 2\rho \widehat{g}_{1,1}^{*'}(\rho) \right) \\&=-
\frac{1}{\rho^6 \psi(\rho)} G_7^{*'}(\rho) \quad \text{where} \quad G_7^{*}(\rho) = \rho^7 \psi_{1}(\rho)^{2} \,, \\
\nonumber
G^h(\rho) &= - \frac{1}{\widehat{g}_{1,1}^{*}(\rho)} \left( \big( (d-1) \rho^{-1} - 8\rho \big) \big( H(\rho)\widehat{g}_{1,1}^{*}(\rho)^{2} \big) + (1-\rho^{2}) \pd_\rho \big( H(\rho) \widehat{g}_{1,1}^{*}(\rho)^{2} \big) \right) \\&=-
\frac{1 - \rho^{2}}{\widehat{g}_{1,1}^{*}(\rho)} \frac{\left( 1 - \rho^{2} \right)^{\frac{d-9}{2}}}{\rho^{d-1}} G_d'(\rho) \quad \text{where} \quad G_d(\rho) = \frac{\rho^{d-1}H(\rho)\widehat{g}_{1,1}^{*}(\rho)^{2}}{\left( 1 - \rho^{2} \right)^{\frac{d-9}{2}}}  \,,
\end{align}
as it follows from a curious simplification. The term $G^h$ is an artefact of hyperboloidal similarity coordinates and depends on the choice of the height function $h$. By comparing this with \cite[Eq. (3.37)]{MR4469070} we see that $G^h$ does not appear in the corresponding problem in standard similarity coordinates and this subtlety will demand extra care in our analysis below. Note that $\widehat{g}_{1,1}^{*}$ is just the symmetry mode in standard similarity coordinates, hence a solution to the homogeneous version of \Cref{InhomLinWave}. So via reduction of order \cite{MR2961944} we obtain two linearly independent solutions
\begin{align}
\psi_{1}(\rho) &= \widehat{g}_{1,1}^{*}(\rho) = \frac{1}{(b_d + \rho^{2})^{2}} \,, \\
\psi_2(\rho) &= \psi_{1}(\rho)I_d(\rho) \quad \text{with} \quad I_d'(\rho) = \frac{(1-\rho^{2})^{\frac{d-9}{2}}}{\rho^{d-1}} \frac{1}{\psi_{1}(\rho)^{2}} \,.
\end{align}
of the homogeneous problem associated to \Cref{InhomLinWave}. The Wronskian of $\psi_{1}$, $\psi_2$ is given by
\begin{equation}
\label{WronskianSSC}
W(\rho) = \psi_{1}(\rho)\psi_2'(\rho) - \psi_{1}'(\rho)\psi_2(\rho) = \frac{(1-\rho^{2})^{\frac{d-9}{2}}}{\rho^{d-1}}
\end{equation}
and the variation of constants formula yields $\alpha_{1},\alpha_2 \in \CC$ such that the general solution is given by
\begin{equation*}
\widehat{g}_{1}(\rho) = \alpha_{1} \psi_{1}(\rho) + \alpha_2\psi_2(\rho) - \psi_{1}(\rho) \int_{0}^\rho \frac{\psi_2(\rho')}{W(\rho')}\frac{G(\rho')}{1-{\rho'}^{2}}\dd\rho' + \psi_2(\rho) \int_{0}^\rho \frac{\psi_{1}(\rho')}{W(\rho')}\frac{G(\rho')}{1-{\rho'}^{2}}\dd\rho' \,.
\end{equation*}
Clearly, $\psi_{1} \in C^\infty([0,1])$ and $\psi_2$ has a pole of order $(d-2)$ at $\rho = 0$ and $f_{1}\in H^{k}(\BB^d_{R})$ forces $\alpha_2 = 0$ through the relation \eqref{SSCrelHSC}. First we investigate the contribution of the coordinate artefact. That is, we compute
\begin{align*}
\int_{0}^\rho \frac{\psi_{1}(\rho')}{W(\rho')} \frac{G^h(\rho')}{1-{\rho'}^{2}} \dd\rho' &= - G_d(\rho) \,, \\
\int_{0}^\rho \frac{\psi_2(\rho')}{W(\rho')} \frac{G^h(\rho')}{1-{\rho'}^{2}} \dd\rho' &= - G_d(\rho) I_d(\rho) + \int_{0}^\rho G_d(\rho') I_d'(\rho') \dd\rho' \,,
\end{align*}
where we integrated by parts. The latter integral is easily computed so that the simplified solution reads
\begin{equation*}
\widehat{g}_{1}(\rho) = \psi_{1}(\rho) \left( \alpha - \log h_{1}(\rho) \right)  - \psi_{1}(\rho) \int_{0}^\rho \frac{\psi_2(\rho')}{W(\rho')}\frac{G^{*}(\rho')}{1-{\rho'}^{2}}\dd\rho' + \psi_2(\rho) \int_{0}^\rho \frac{\psi_{1}(\rho')}{W(\rho')}\frac{G^{*}(\rho')}{1-{\rho'}^{2}}\dd\rho' \,.
\end{equation*}
The behaviour of the solution $\widehat{f}_{1}$ depends on the space dimension $d$ and is analysed as follows.
\begin{enumerate}[wide, itemsep=1em, topsep=1em]
\item[\textit{Case $d=7$.}] As above, we compute
\begin{align*}
\int_{0}^\rho \frac{\psi_{1}(\rho')}{W(\rho')} \frac{G^{*}(\rho')}{1-{\rho'}^{2}} \dd\rho' &= - G^{*}_7(\rho) \,, \\
\int_{0}^\rho \frac{\psi_2(\rho')}{W(\rho')} \frac{G^{*}(\rho')}{1-{\rho'}^{2}} \dd\rho' &= - G^{*}_7(\rho) I_7(\rho) + \int_{0}^\rho G^{*}_7(\rho') I_7'(\rho') \dd\rho' \,.
\end{align*}
Hence
\begin{align*}
- \psi_{1}(\rho) \int_{0}^\rho \frac{\psi_2(\rho')}{W(\rho')}\frac{G^{*}(\rho')}{1-{\rho'}^{2}}\dd\rho' + \psi_2(\rho) \int_{0}^\rho \frac{\psi_{1}(\rho')}{W(\rho')}\frac{G^{*}(\rho')}{1-{\rho'}^{2}}\dd\rho' = \psi_{1}(\rho) \int_{0}^\rho G_7^{*}(\rho') I_7'(\rho') \dd\rho'
\end{align*}
which is easily integrated so that the general solution to \Cref{InhomLinWave} for $d=7$ reads
\begin{equation*}
\widehat{g}_{1}(\rho) = \left( \frac{5}{3} + \rho^{2} \right)^{-2} \left( \alpha + \frac{1}{2} \log \frac{1 - \rho^{2}}{h_{1}(\rho)^{2}} \right) \,.
\end{equation*}
But the relation \eqref{SSCrelHSC} would imply for our solution $f_{1} \notin H^{k}(\BB^d_{R})$ due to the logarithmic singularity, no matter the value of $\alpha\in\CC$, which is a contradiction.
\item[\textit{Case $d\geq 9$ odd.}] We set
\begin{equation}
\label{FundSoldgeq9}
\psi_2(\rho) = - \psi_{1}(\rho) \int_\rho^1 \frac{(1-{\rho'}^{2})^{\frac{d-9}{2}}}{{\rho'}^{d-1}} \frac{1}{\psi_{1}(\rho')^{2}} \dd\rho' \,.
\end{equation}
Then $\psi_2\in C^\infty((0,1])$ has a zero of order $\frac{d-7}{2}$ at $\rho = 1$ and a pole of order $(d-2)$ at $\rho = 0$. From \Cref{WronskianSSC} we see that the Wronskian $W$ has a zero of order $\frac{d-9}{2}$ at $\rho = 1$ and a pole of order $(d-1)$ at $\rho = 0$. Hence it follows that
\begin{equation*}
\psi_{1}(\rho) \int_{0}^\rho \frac{\psi_2(\rho')}{W(\rho')} \frac{G^{*}(\rho')}{1-{\rho'}^{2}} \dd\rho'
\end{equation*}
belongs to $C^\infty([0,1])$, as the integrand defines a smooth function on $[0,1]$. However, as the remaining integral is precisely the Duhamel term for the corresponding problem in standard similarity coordinates, we infer from \cite[Eq. (3.41)]{MR4469070}
\begin{equation*}
\psi_2(\rho) \int_{0}^\rho \frac{\psi_{1}(\rho')}{W(\rho')} \frac{G^{*}(\rho')}{1-{\rho'}^{2}} \dd\rho' = \varphi_{1}(\rho) + \left( 1 - \rho \right)^{\frac{d-7}{2}} \log \left( 1 - \rho \right) \varphi_2(\rho)
\end{equation*}
where $\varphi_{1}$, $\varphi_2$ are analytic near $\rho = 1$ with $\varphi_{1}(1), \varphi_2(1) \neq 0$. Consequently $f_{1}\notin H^{k}(\BB^{d}_{R})$ by the relation \eqref{SSCrelHSC} which is a contradiction.
\end{enumerate}
\par\medskip
Thus, the assumption \eqref{nilpot} cannot be true and we conclude $\mathbf{I} - \mathbf{L}\restriction_{\ran(\mathbf{P})} = \mathbf{0}$. So $(\mathbf{I} - \mathbf{L})\mathbf{P}\mathbf{f} = \mathbf{0}$ for all $\mathbf{f}\in \HH^{k}_\rad(\BB^{d}_{R})$ and hence the other inclusion $\ran(\mathbf{P}) \subseteq \ker(\mathbf{I}-\mathbf{L}) = \langle\mathbf{f}_{1}^{*}\rangle$.
\qedhere
\end{enumerate}
\end{proof}
\subsection{Control of the linearized wave evolution}
We obtain with the projection $\mathbf{P}$ a decomposition of the Hilbert space $\HH^{k}_\rad(\BB^{d}_{R})$ into complementary closed subspaces,
\begin{equation*}
\HH^{k}_\rad(\BB^{d}_{R}) = \mathfrak{M}\oplus\mathfrak{N} \,, \qquad \mathfrak{M}=\ran(\mathbf{P}) \,, \quad \mathfrak{N}=\ran(\mathbf{I} - \mathbf{P}) \,.
\end{equation*}
In particular, from the fact $\mathbf{L}\mathbf{S}(s) \subset \mathbf{S}(s)\mathbf{L}$ it follows that $\mathbf{P}$ commutes with the semigroup $\mathbf{S}(s)$, so the subspace semigroups
\begin{equation*}
[0,\infty) \rightarrow \mathfrak{L}(\mathfrak{M}) \,, \quad s\mapsto \mathbf{S}(s)\restriction_\mathfrak{M} \,, \quad \text{and} \quad
[0,\infty) \rightarrow \mathfrak{L}(\mathfrak{N}) \,, \quad s\mapsto \mathbf{S}(s)\restriction_\mathfrak{N} \,,
\end{equation*}
with generators $\mathbf{L}\restriction_\mathfrak{M}$, $\mathbf{L}\restriction_\mathfrak{N}$ on $\mathfrak{M}$, $\mathfrak{N}$ are well-defined, respectively, see \cite[p. 60]{MR1721989}. The following main result about the linearized flow gives us exponential growth on the one-dimensional subspace $\mathfrak{M}$ on the one hand, and control via exponential decay on the infinite dimensional complementary subspace $\mathfrak{N}$.
\begin{theorem}
\label{stableunstable}
Let $R\geq \frac{1}{2}$, $k\in\NN$ such that $k\geq \frac{d-1}{2}$. Let $\mathbf{S}$ be the semigroup generated by the linearized wave evolution $\mathbf{L}$, see \Cref{LinEv}. Let $\mathbf{P}$ be the spectral projection associated to the isolated eigenvalue $1\in\sigma(\mathbf{L})$, see \Cref{SpecProjProp}.
\begin{enumerate}[itemsep=1em, topsep=1em]
\item We have
\begin{equation*}
\mathbf{S}(s)\mathbf{P}\mathbf{f} = \ee^s \mathbf{P}\mathbf{f}
\end{equation*}
for all $\mathbf{f}\in\HH^{k}_\rad(\BB^{d}_{R})$ and all $s\geq 0$.
\item \label{parttwo} There is a positive constant $\omega_{0}>0$ such that
\begin{equation*}
\| \mathbf{S}(s)(\mathbf{I} - \mathbf{P})\mathbf{f} \|_{\HH^{k}(\BB^{d}_{R})} \lesssim \ee^{-\omega_{0} s} \| (\mathbf{I} - \mathbf{P})\mathbf{f} \|_{\HH^{k}(\BB^{d}_{R})} \lesssim \ee^{-\omega_{0} s} \| \mathbf{f} \|_{\HH^{k}(\BB^{d}_{R})}
\end{equation*}
for all $\mathbf{f}\in\HH^{k}_\rad(\BB^{d}_{R})$ and all $s\geq 0$.
\end{enumerate}
\end{theorem}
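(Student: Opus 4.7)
The plan is to exploit the invariant splitting $\HH^k_\rad(\BB_R^d) = \mathfrak{M} \oplus \mathfrak{N}$ furnished by the Riesz projection $\mathbf{P}$ from \autoref{SpecProjProp}. Since $\mathbf{P}\mathbf{L} \subset \mathbf{L}\mathbf{P}$, $\mathbf{P}$ commutes with the resolvent $\mathbf{R}_\mathbf{L}(z)$ on $\varrho(\mathbf{L})$, and therefore with $\mathbf{S}(s)$ as well. Thus $\mathbf{S}(s)$ restricts to strongly continuous subspace semigroups on $\mathfrak{M}$ and $\mathfrak{N}$ with generators $\mathbf{L}\restriction_\mathfrak{M}$ and $\mathbf{L}\restriction_\mathfrak{N}$, respectively, and both parts of the theorem reduce to a statement about these individual restrictions.

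For the first assertion, $\mathfrak{M} = \langle \mathbf{f}_1^* \rangle$ is one-dimensional by \autoref{SpecProjProp} and $\mathbf{L}\mathbf{f}_1^* = \mathbf{f}_1^*$ by \autoref{LemmaKerLin}, so $\mathbf{L}\restriction_\mathfrak{M}$ acts as multiplication by $1$ on $\mathfrak{M}$ and the subspace semigroup it generates is $s \mapsto \ee^s \mathbf{I}\restriction_\mathfrak{M}$. Since $\mathbf{P}\mathbf{f} \in \mathfrak{M}$ for every $\mathbf{f} \in \HH^k_\rad(\BB_R^d)$, this yields $\mathbf{S}(s)\mathbf{P}\mathbf{f} = \mathbf{S}(s)\restriction_\mathfrak{M} \mathbf{P}\mathbf{f} = \ee^s \mathbf{P}\mathbf{f}$.

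For the second assertion I would invoke the Gearhart-Pr\"uss Theorem on the Hilbert space $\mathfrak{N}$, which requires a uniform resolvent bound for $\mathbf{L}\restriction_\mathfrak{N}$ on a right half-plane $\{\Re z \geq -\omega_0\}$. The first step is to choose $\omega_0 > 0$ suitably: by \autoref{SpecProjProp} we have $\sigma(\mathbf{L}\restriction_\mathfrak{N}) = \sigma(\mathbf{L}) \setminus \{1\}$; by \autoref{LemmaLinEvEV} the portion of $\sigma(\mathbf{L})$ contained in $\{\Re z > -\tfrac{3}{2}\}$ consists of at most finitely many isolated eigenvalues of finite algebraic multiplicity; and by \autoref{LemmaKerLin} the only one of these eigenvalues with nonnegative real part is $1$. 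Consequently, the portion of $\sigma(\mathbf{L}) \setminus \{1\}$ inside $\{\Re z > -\tfrac{3}{2}\}$ is a finite set of points strictly in the open left half-plane, and I can fix $\omega_0 \in (0,\tfrac{3}{2})$ smaller than the modulus of the real part of each of these points, guaranteeing $\{\Re z \geq -\omega_0\} \subset \varrho(\mathbf{L}\restriction_\mathfrak{N})$.

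The remaining task is the uniform resolvent bound on this half-plane. For $|z| \geq r_0$ with $\Re z \geq -\omega_0$, the estimate from \autoref{LemmaLinEvEV} applied to $\mathbf{L}$ transfers to the restriction via $\mathbf{R}_{\mathbf{L}\restriction_\mathfrak{N}}(z) = \mathbf{R}_\mathbf{L}(z)\restriction_\mathfrak{N}$, which follows from the commutation $\mathbf{P}\mathbf{R}_\mathbf{L}(z) = \mathbf{R}_\mathbf{L}(z)\mathbf{P}$. On the complementary compact set $\{\Re z \geq -\omega_0,\ |z| \leq r_0\} \subset \varrho(\mathbf{L}\restriction_\mathfrak{N})$ the resolvent is holomorphic hence continuous, so bounded by compactness. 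Combining the two regions gives a uniform resolvent bound on the full half-plane, and Gearhart-Pr\"uss then yields $\|\mathbf{S}(s)\restriction_\mathfrak{N}\|_{\mathfrak{L}(\mathfrak{N})} \lesssim \ee^{-\omega_0 s}$ (possibly after replacing $\omega_0$ by a marginally smaller constant to pass from an open to a closed half-plane). The final inequality in the theorem is immediate from boundedness of $\mathbf{I}-\mathbf{P}$. The main technical point I expect to require care is the clean transfer of the resolvent estimate from $\mathbf{L}$ to $\mathbf{L}\restriction_\mathfrak{N}$ near the excluded eigenvalue $z=1$, where the full resolvent $\mathbf{R}_\mathbf{L}$ blows up but $\mathbf{R}_{\mathbf{L}\restriction_\mathfrak{N}}$ remains bounded; this is handled by writing $\mathbf{R}_\mathbf{L}(z) = \mathbf{R}_\mathbf{L}(z)\mathbf{P} + \mathbf{R}_\mathbf{L}(z)(\mathbf{I}-\mathbf{P})$ and noting that the first summand disappears upon restriction to $\mathfrak{N}$.
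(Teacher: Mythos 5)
Your proof is correct and follows essentially the same route as the paper: for part (1) the one-dimensionality of $\mathfrak{M} = \langle \mathbf{f}_1^* \rangle$ together with $\mathbf{L}\mathbf{f}_1^* = \mathbf{f}_1^*$ forces the subspace semigroup to be $\ee^s \mathbf{I}\restriction_\mathfrak{M}$, and for part (2) the paper also picks $\omega_0$ from the spectral gap, transfers the resolvent bound of \autoref{LemmaLinEvEV} to $\mathbf{L}\restriction_\mathfrak{N}$ via $\mathbf{R}_{\mathbf{L}\restriction_\mathfrak{N}}(z) = \mathbf{R}_\mathbf{L}(z)\restriction_\mathfrak{N}$ for $|z| \geq r_0$, invokes analyticity/compactness on $\{\Re z \geq -\omega_0,\ |z| \leq r_0\}$, and concludes by Gearhart--Pr\"uss. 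The concern you raise about the blow-up of $\mathbf{R}_\mathbf{L}$ near $z = 1$ is legitimate but dissolves automatically: since $r_0$ is large, the point $z = 1$ lies in the compact region where one argues purely from holomorphy of $\mathbf{R}_{\mathbf{L}\restriction_\mathfrak{N}}$ on $\varrho(\mathbf{L}\restriction_\mathfrak{N})$, never needing to evaluate $\mathbf{R}_\mathbf{L}$ there at all.
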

\begin{proof}
\begin{enumerate}[wide, itemsep=1em, topsep=1em]
\item We see with \Cref{SpecProjProp} that the subspace semigroup $\mathbf{S}(s)\restriction_{\mathfrak{M}}$ acts on a one-dimensional space and is thus given by the exponential function.
\item The idea is to project away the unstable eigenspace and exploit the spectral gap. Since spectra are closed and $\sigma(\mathbf{L}\restriction_{\mathfrak{N}})$ is contained in the negative half-plane by \Cref{LemmaKerLin} and \Cref{SpecProjProp} we can pick $0 < \omega_{0} \leq \frac{3}{2}$ such that $-\frac{3}{2} \leq \sup\big\{ \Re(\lambda)\in\CC \mid \lambda\in \sigma(\mathbf{L}\restriction_{\mathfrak{N}}) \big\} < - \omega_{0} < 0$, see \Cref{Fig_Spec}. Moreover, from $\mathbf{R}_\mathbf{L}(z)\restriction_{\mathfrak{N}} = \mathbf{R}_{\mathbf{L}\restriction_{\mathfrak{N}}}(z)$ and the first part of \Cref{LemmaLinEvEV} we infer
\begin{equation*}
\| \mathbf{R}_{\mathbf{L}\restriction_{\mathfrak{N}}}(z) \|_{\mathfrak{L} (\mathfrak{N})} \lesssim 1
\end{equation*}
for all $z\in\CC$ with $\Re(z) \geq \omega_{0}$ and $|z| \geq r_{0}$. Since $\{ z\in\CC \mid \Re(z) \geq -\omega_{0} \} \subset \varrho(\mathbf{L}\restriction_{\mathfrak{N}})$ and the resolvent is analytic we get the same estimate $\| \mathbf{R}_{\mathbf{L}\restriction_{\mathfrak{N}}}(z) \|_{\mathfrak{L} (\mathfrak{N})} \lesssim 1$ on the compact set $\{ z\in\CC \mid \Re(z) \geq -\omega_{0},\, |z|\leq r_{0} \}$. Hence we have a uniform bound on the resolvent
\begin{equation*}
\| \mathbf{R}_{\mathbf{L}\restriction_{\mathfrak{N}}}(z) \|_{\mathfrak{L} (\mathfrak{N})} \lesssim 1
\end{equation*}
for all $z\in\CC$ with $\Re(z)\geq-\omega_{0}$. Thus, from the Gearhart-Pr\"uss Theorem \cite[p. 302, 1.11 Theorem]{MR1721989} and \cite[p. 61, Corollary]{MR1721989} we see that the subspace semigroup $\mathbf{S}(s)\restriction_{\mathfrak{N}}$ is exponentially stable and this establishes the second part. The very last bound follows from the fact that $\mathbf{P}$ is bounded.\qedhere
\end{enumerate}
\end{proof}
\begin{figure}
\centering
\includegraphics[width=0.45\textwidth]{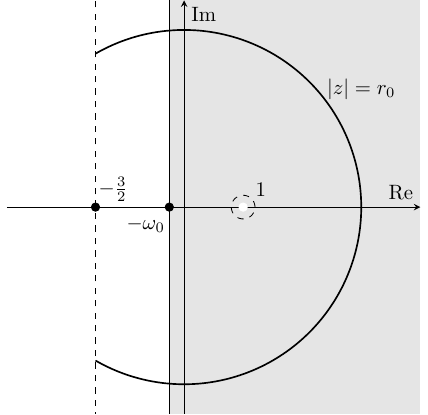}
\caption{Portrait of the spectrum for the linearized evolution $\mathbf{L}$. The shaded region is part of the resolvent set of the linearized evolution $\mathbf{L}$ and the isolated unstable eigenvalue $1$ is the only point in the spectrum contained in the positive half-plane.}
\label{Fig_Spec}
\end{figure}
\section{Stability analysis of the nonlinear wave evolution}
The full nonlinear problem outlined in \Cref{YMnewvariable} is treated in a weak formulation via Duhamel's formula,
\begin{equation}
\label{Duhamel}
\mathbf{\Phi}(s) = \mathbf{S}(s-s_{0})\mathbf{\Phi}(s_{0}) + \int_{s_{0}}^s \mathbf{S}(s-s')\mathbf{N}(\mathbf{\Phi}(s')) \dd s' \,.
\end{equation}
As this is naturally the equation for a ``small'' perturbation we can aim for a fixed point argument. For this we need to control the nonlinearity that appears in the Duhamel integral in the spaces where the linearized evolution is defined. Let us define
\begin{equation}
\mathbf{N}: C^\infty_\rad(\overline{\BB^d_{R}})^{2} \rightarrow C^\infty_\rad(\overline{\BB^d_{R}})^{2} \quad \text{by} \quad
\mathbf{N}(\mathbf{f})(y) = N(y, f_{1}(y)) \mathbf{e}_2 \,,
\end{equation}
where $N\in C^\infty(\RR^d\times\RR)$ is given in \Cref{nonlinYM} and show that this extends to $\HH^{k}_\rad(\BB^d_{R})$. The following result gives us a local Lipschitz bound on the nonlinearity.
\begin{lemma}
\label{nonlinBound}
Let $R>0$ and $k\in\NN$ such that $k\geq \frac{d+1}{2}$. Then the nonlinearity $\mathbf{N}$ extends to a map $\mathbf{N}: \HH^{k}_\rad(\BB^d_{R}) \rightarrow \HH^{k}_\rad(\BB^d_{R})$ which satisfies the local Lipschitz bound
\begin{equation*}
\| \mathbf{N}(\mathbf{f}) -  \mathbf{N}(\mathbf{g}) \|_{\HH^{k}(\BB^{d}_{R})} \lesssim \big( \|\mathbf{f}\|_{\HH^{k}(\BB^{d}_{R})} + \|\mathbf{g}\|_{\HH^{k}(\BB^{d}_{R})} \big) \| \mathbf{f} - \mathbf{g} \|_{\HH^{k}(\BB^{d}_{R})}
\end{equation*}
for all $\mathbf{f},\mathbf{g}\in\HH^{k}(\BB^{d}_{R})$ that are bounded in size by $\|\mathbf{f}\|_{\HH^{k}(\BB^{d}_{R})},\|\mathbf{g}\|_{\HH^{k}(\BB^{d}_{R})} \lesssim 1$.
\end{lemma}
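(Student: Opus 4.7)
The plan is to exploit the polynomial structure of $\mathbf{N}$ in the field variable together with the Sobolev algebra property. Inspecting Eq.~\eqref{nonlinYM}, the non-linearity has the shape
\begin{equation*}
N(y,\alpha) = V_2(y)\alpha^2 + V_3(y)\alpha^3,
\end{equation*}
where $V_2, V_3$ are radial functions on $\overline{\BB_R^d}$ built from $h$ and the blowup parameters $a_d, b_d$. First I would verify that $V_2, V_3 \in C^\infty(\overline{\BB_R^d})$. The denominators $1 - h'(|y|)^2 = 2/(2+|y|^2)$ and $b_d h(|y|)^2 + |y|^2$ are bounded away from zero on $\overline{\BB_R^d}$ (using $h(0) = \sqrt{2} - 2 \neq 0$ and $b_d > 0$), and all radial profiles involved are even in the radial variable, so smoothness of $V_2, V_3$ as functions of $y$ follows.

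Next, for $\mathbf{f}, \mathbf{g} \in C^\infty_\rad(\overline{\BB_R^d})^2$ the algebraic identity
\begin{equation*}
N(\,.\,, f_1) - N(\,.\,, g_1) = V_2(f_1 - g_1)(f_1 + g_1) + V_3(f_1 - g_1)(f_1^2 + f_1 g_1 + g_1^2)
\end{equation*}
reduces the task to estimating products of at most three $H^k$ factors in $H^{k-1}$. The hypothesis $k \geq \frac{d+1}{2}$ guarantees $k > d/2$, so that $H^k(\BB_R^d)$ is a Banach algebra embedding continuously into $L^\infty(\BB_R^d)$; moreover, the standard multiplier bound
\begin{equation*}
\|uv\|_{H^{k-1}(\BB_R^d)} \lesssim \|u\|_{H^k(\BB_R^d)}\|v\|_{H^{k-1}(\BB_R^d)}
\end{equation*}
holds. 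Multiplication by the smooth coefficients $V_2, V_3$ is bounded on $H^{k-1}(\BB_R^d)$. Since the first component of $\mathbf{N}(\mathbf{f})$ vanishes and the second component is measured in $H^{k-1}$, combining the algebra and multiplier estimates with the identity above yields
\begin{equation*}
\|\mathbf{N}(\mathbf{f}) - \mathbf{N}(\mathbf{g})\|_{\HH^k(\BB_R^d)} \lesssim \|f_1 - g_1\|_{H^k}\bigl(\|f_1\|_{H^k} + \|g_1\|_{H^k} + \|f_1\|_{H^k}^2 + \|f_1\|_{H^k}\|g_1\|_{H^k} + \|g_1\|_{H^k}^2\bigr)
\end{equation*}
for smooth $\mathbf{f}, \mathbf{g}$, which under the size assumption $\|\mathbf{f}\|_{\HH^k}, \|\mathbf{g}\|_{\HH^k} \lesssim 1$ reduces to the claimed local Lipschitz bound.

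Finally, since $C^\infty_\rad(\overline{\BB_R^d})^2$ is dense in $\HH^k_\rad(\BB_R^d)$ and the local Lipschitz bound is uniform on bounded sets, $\mathbf{N}$ extends by continuity to a locally Lipschitz map on $\HH^k_\rad(\BB_R^d)$; setting $\mathbf{g} = \mathbf{0}$ (noting $N(\,.\,, 0) = 0$) verifies in addition that $\mathbf{N}(\mathbf{f}) \in \HH^k_\rad(\BB_R^d)$ with $\|\mathbf{N}(\mathbf{f})\|_{\HH^k} \lesssim \|\mathbf{f}\|_{\HH^k}^2$. The proof presents no serious obstacle; the only point requiring care is matching regularity indices so that the multiplier estimate is applied at the exact level $H^k \cdot H^{k-1} \hookrightarrow H^{k-1}$, for which the threshold $k \geq \frac{d+1}{2}$ is precisely what is needed to guarantee $k > d/2$.
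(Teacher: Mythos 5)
Your proof is correct and takes essentially the same route as the paper: write out the polynomial difference $N(\,.\,,\alpha)-N(\,.\,,\beta)$, observe the smooth coefficients, invoke the Sobolev algebra property at the level $k>\frac{d}{2}$, and extend by density. The only superficial difference is that you invoke the sharper multiplier bound $\|uv\|_{H^{k-1}}\lesssim\|u\|_{H^k}\|v\|_{H^{k-1}}$, whereas the paper simply estimates $\|\cdot\|_{H^{k-1}}\lesssim\|\cdot\|_{H^k}$ and then uses the $H^k$ algebra throughout; both give the same final bound.
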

\begin{proof}
Recall $N \in C^\infty(\RR^d\times\RR)$ from \Cref{nonlinYM} and note
\begin{align*}
N(y,\alpha) - N(y,\beta) =
- (d-4) \frac{\left( |y|h'(|y|) - h(|y|) \right)^{2}}{1 - h'(|y|)^{2}} &\Bigg( 3\frac{(1-a_d)|y|^{2} + b_d h(y)^{2}}{|y|^{2} + b_dh(y)^{2}} (\alpha + \beta) \\& - 3|y|^{2} (\alpha^{2} + \alpha\beta + \beta^{2}) \Bigg) (\alpha - \beta)
\end{align*}
for all $y\in\RR^d$ and all $\alpha,\beta\in\RR$. Since $k \geq \frac{d+1}{2}> \frac{d}{2}$ we can use the algebra property of $H^{k}(\BB^{d}_{R}) \hookrightarrow C(\overline{\BB^{d}_{R}})$ to estimate
\begin{align*}
\| \mathbf{N}(\mathbf{f}) -  \mathbf{N}(\mathbf{g}) \|_{\HH^{k}(\BB^{d}_{R})} &= \| N(\,.\,, f_{1}) - N(\,.\,,g_{1}) \|_{H^{k-1}(\BB^d_{R})} \\&\lesssim
\| N(\,.\,, f_{1}) - N(\,.\,,g_{1}) \|_{H^{k}(\BB^d_{R})} \\&\lesssim
\big( \|f_{1}\|_{H^{k}(\BB^d_{R})} +  \|g_{1}\|_{H^{k}(\BB^d_{R})} \big) \|f_{1} - g_{1} \|_{H^{k}(\BB^d_{R})} \\&\lesssim
\big( \|\mathbf{f}\|_{\HH^{k}(\BB^{d}_{R})} + \|\mathbf{g}\|_{\HH^{k}(\BB^{d}_{R})} \big) \| \mathbf{f} - \mathbf{g} \|_{\HH^{k}(\BB^{d}_{R})}
\end{align*}
if $\|\mathbf{f}\|_{\HH^{k}(\BB^{d}_{R})},\|\mathbf{g}\|_{\HH^{k}(\BB^{d}_{R})} \lesssim 1$. The remaining extension argument is standard.
\end{proof}
\subsection{nonlinear wave evolution on the co-dimension one stable subspace}
We run the fixed point argument in a Banach space that is adapted to the exponential decay of the linearized evolution on the stable subspace. Henceforth, fix $0 < \omega_{0} < \frac{3}{2}$ from \Cref{stableunstable} and define for $R,s_{0}\in\RR$, $k\in\NN$ such that $R\geq \frac{1}{2}$, $k\geq \frac{d+1}{2}$, a Banach space
\begin{equation*}
\mathfrak{X}^{k}_{R}(s_{0}) \coloneqq \left\{ \mathbf{\Phi}\in C \big( [s_{0},\infty),\HH^{k}_\rad(\BB^{d}_{R}) \big) \mid \| \mathbf{\Phi} \|_{\mathfrak{X}^{k}_{R}(s_{0})} \coloneqq \sup_{s\geq s_{0}} \left( \ee^{\omega_{0} s} \| \mathbf{\Phi}(s) \|_{\HH^{k}(\BB^{d}_{R})} \right) < \infty \right\} \,.
\end{equation*}
There comes an obstruction to general global in time existence for \Cref{Duhamel} that is caused by the one-dimensional instability due to the symmetry mode $\mathbf{f}_{1}^{*}$. To overcome this, one considers the projection of the equation on the stable subspace and notes that this corresponds to subtracting a term from the initial data. A slight modification leads to a right-hand side
\begin{equation*}
\mathbf{K}_\mathbf{f}(\mathbf{\Phi})(s) \coloneqq \mathbf{S}(s-s_{0}) \big( \mathbf{f} - \mathbf{C}_{s_{0}}(\mathbf{\Phi},\mathbf{f}) \big) + \int_{s_{0}}^s  \mathbf{S}(s-s') \mathbf{N}(\mathbf{\Phi(s')}) \dd s'
\end{equation*}
with a correction $\mathbf{C}_{s_{0}}: \mathfrak{X}^{k}_{R}(s_{0})\times\HH^{k}_\rad(\BB^{d}_{R}) \rightarrow \HH^{k}_\rad(\BB^{d}_{R})$ given by
\begin{equation*}
\mathbf{C}_{s_{0}}(\mathbf{\Phi},\mathbf{f}) = \mathbf{P}\left( \mathbf{f} + \int_{s_{0}}^\infty \ee^{s_{0}-s'}\mathbf{N}(\mathbf{\Phi}(s')) \dd s' \right) \,.
\end{equation*}
This is known as the \emph{Lyapunov-Perron method} and has been implemented in \cite{MR4338226}, \cite{MR3680948}, \cite{MR3742520}. Now, global existence is anticipated for the modified equation.
\begin{proposition}
\label{stableEvo}
Let $R,s_{0}\in\RR$, $k\in\NN$ such that $R\geq \frac{1}{2}$ and $k\geq \frac{d+1}{2}$.
\begin{enumerate}[itemsep=1em, topsep=1em]
\item There are positive constants $C,\delta>0$ such that for all $\mathbf{f}\in\HH^{k}_\rad(\BB^{d}_{R})$ with $\| \mathbf{f} \|_{\HH^{k}(\BB^{d}_{R})} \leq \delta/C$ there exists a unique solution $\mathbf{\Phi}_\mathbf{f} \in \mathfrak{X}^{k}_{R}(s_{0})$ with $\| \mathbf{\Phi}_\mathbf{f} \|_{\mathfrak{X}^{k}_{R}(s_{0})} \leq \delta$ to the fixed point problem
\begin{equation*}
\mathbf{K}_\mathbf{f}(\mathbf{\Phi}) = \mathbf{\Phi} \,.
\end{equation*}
\item The problem is well-posed in the sense that $\mathbf{f}\mapsto \mathbf{\Phi}_\mathbf{f}$ is Lipschitz as a function from a small ball in $\HH^{k}_\rad(\BB^{d}_{R})$ to $\mathfrak{X}^{k}_{R}(s_{0})$.
\end{enumerate}
\end{proposition}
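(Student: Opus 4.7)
The plan is to solve $\mathbf{K}_\mathbf{f}(\mathbf{\Phi}) = \mathbf{\Phi}$ by Banach's fixed point theorem on the closed ball $\mathfrak{X}^k_{R,\delta}(s_0) = \{ \mathbf{\Phi}\in\mathfrak{X}^k_R(s_0) \mid \| \mathbf{\Phi} \|_{\mathfrak{X}^k_R(s_0)} \leq \delta \}$. First I would rewrite $\mathbf{K}_\mathbf{f}$ so that the action of the unstable part $\mathbf{P}$ of the linearised semigroup is manifestly harmless. Decomposing via $\mathbf{I} = \mathbf{P} + (\mathbf{I}-\mathbf{P})$ and exploiting $\mathbf{S}(s-s_0)\mathbf{P} = \ee^{s-s_0}\mathbf{P}$ together with the very definition of $\mathbf{C}_{s_0}$, a telescoping computation yields
\begin{equation*}
\mathbf{K}_\mathbf{f}(\mathbf{\Phi})(s) = \mathbf{S}(s-s_0)(\mathbf{I}-\mathbf{P})\mathbf{f} + \int_{s_0}^s \mathbf{S}(s-s')(\mathbf{I}-\mathbf{P})\mathbf{N}(\mathbf{\Phi}(s')) \dd s' - \int_s^\infty \ee^{s-s'} \mathbf{P}\mathbf{N}(\mathbf{\Phi}(s')) \dd s' \,.
\end{equation*}
This is the decisive identity: the first two terms are controlled by the exponential decay estimate $\|\mathbf{S}(\sigma)(\mathbf{I}-\mathbf{P})\|_{\mathfrak{L}} \lesssim \ee^{-\omega_0 \sigma}$ of \autoref{stableunstable}, while the third term trades the exponential growth of $\mathbf{S}(s-s_0)\mathbf{P}$ for a convergent improper integral (the tail beyond $s$).

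For the self-mapping property, I would take $\mathbf{\Phi}\in\mathfrak{X}^k_{R,\delta}(s_0)$ and use the local Lipschitz bound of \autoref{nonlinBound} with $\mathbf{g}=\mathbf{0}$ to get $\|\mathbf{N}(\mathbf{\Phi}(s'))\|_{\HH^k(\BB_R^d)} \lesssim \|\mathbf{\Phi}(s')\|_{\HH^k(\BB_R^d)}^2 \lesssim \delta^2 \ee^{-2\omega_0 s'}$, provided $\delta$ is small. Multiplying the above identity by $\ee^{\omega_0 s}$, taking $\HH^k(\BB_R^d)$-norms, and using the boundedness of $\mathbf{P}$ together with the elementary integrals $\int_{s_0}^s \ee^{-\omega_0(s-s')}\ee^{-2\omega_0 s'} \dd s'$ and $\int_s^\infty \ee^{(1+2\omega_0)(s-s')} \cdot \ee^{-\omega_0 s}\dd s'$, I obtain
\begin{equation*}
\| \mathbf{K}_\mathbf{f}(\mathbf{\Phi}) \|_{\mathfrak{X}^k_R(s_0)} \leq C' \| \mathbf{f} \|_{\HH^k(\BB_R^d)} + C'' \delta^2 \,,
\end{equation*}
for some absolute constants $C',C''$. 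Choosing $\delta$ small enough that $C''\delta < 1/2$ and then fixing $C \geq 2 C'$ ensures $\mathbf{K}_\mathbf{f}(\mathfrak{X}^k_{R,\delta}(s_0)) \subset \mathfrak{X}^k_{R,\delta}(s_0)$ whenever $\|\mathbf{f}\|_{\HH^k(\BB_R^d)} \leq \delta/C$.

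For the contraction property, given $\mathbf{\Phi},\mathbf{\Psi} \in \mathfrak{X}^k_{R,\delta}(s_0)$ the difference $\mathbf{K}_\mathbf{f}(\mathbf{\Phi}) - \mathbf{K}_\mathbf{f}(\mathbf{\Psi})$ does not involve $\mathbf{f}$ at all, only integrals of $\mathbf{N}(\mathbf{\Phi}(s')) - \mathbf{N}(\mathbf{\Psi}(s'))$. Applying \autoref{nonlinBound} yields the pointwise bound $\|\mathbf{N}(\mathbf{\Phi}(s')) - \mathbf{N}(\mathbf{\Psi}(s'))\|_{\HH^k(\BB_R^d)} \lesssim \delta \ee^{-\omega_0 s'} \|\mathbf{\Phi}-\mathbf{\Psi}\|_{\mathfrak{X}^k_R(s_0)}$, and the same integral estimates as above then give $\| \mathbf{K}_\mathbf{f}(\mathbf{\Phi}) - \mathbf{K}_\mathbf{f}(\mathbf{\Psi}) \|_{\mathfrak{X}^k_R(s_0)} \lesssim \delta \|\mathbf{\Phi}-\mathbf{\Psi}\|_{\mathfrak{X}^k_R(s_0)}$. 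Shrinking $\delta$ once more renders $\mathbf{K}_\mathbf{f}$ a strict contraction, and Banach's fixed point theorem produces the unique solution $\mathbf{\Phi}_\mathbf{f}$.

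For the second part, I would estimate $\mathbf{\Phi}_\mathbf{f} - \mathbf{\Phi}_\mathbf{g} = \mathbf{K}_\mathbf{f}(\mathbf{\Phi}_\mathbf{f}) - \mathbf{K}_\mathbf{g}(\mathbf{\Phi}_\mathbf{g})$ by splitting off the linear data piece and using the Lipschitz bound on the nonlinearity: the difference of the data contributions is bounded by $C' \|\mathbf{f}-\mathbf{g}\|_{\HH^k(\BB_R^d)}$ and the difference of the nonlinear contributions is bounded by $C''\delta \|\mathbf{\Phi}_\mathbf{f}-\mathbf{\Phi}_\mathbf{g}\|_{\mathfrak{X}^k_R(s_0)}$. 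Absorbing the latter on the left yields $\|\mathbf{\Phi}_\mathbf{f} - \mathbf{\Phi}_\mathbf{g}\|_{\mathfrak{X}^k_R(s_0)} \lesssim \|\mathbf{f}-\mathbf{g}\|_{\HH^k(\BB_R^d)}$. There is no conceptual obstacle here since \autoref{stableunstable} has already done the hard spectral work; the main bookkeeping is ensuring the improper integral defining $\mathbf{C}_{s_0}(\mathbf{\Phi},\mathbf{f})$ converges absolutely in $\HH^k_\rad(\BB_R^d)$, which is immediate from the quadratic decay $\|\mathbf{N}(\mathbf{\Phi}(s'))\|_{\HH^k(\BB_R^d)} \lesssim \ee^{-2\omega_0 s'}$.
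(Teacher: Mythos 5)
Your proposal is correct and follows essentially the same route as the paper: the paper likewise applies $\mathbf{P}$ and $\mathbf{I}-\mathbf{P}$ to $\mathbf{K}_\mathbf{f}$ and arrives at exactly the decomposition you display (it writes the two projected pieces separately rather than recombining them into one line), then runs the self-map, contraction, and Lipschitz-dependence estimates in the same way via \autoref{nonlinBound} and \autoref{stableunstable}. The only slip is cosmetic: your rewriting $\ee^{s-s'}\ee^{-2\omega_0 s'} = \ee^{(1+2\omega_0)(s-s')}\ee^{-2\omega_0 s}$ should carry the factor $\ee^{-2\omega_0 s}$ rather than $\ee^{-\omega_0 s}$, but since $\ee^{-2\omega_0 s}\le\ee^{-\omega_0 s}$ this does not affect the conclusion.
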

\begin{proof}
\begin{enumerate}[wide, itemsep=1em, topsep=1em]
\item We define $\B_\delta = \{ \mathbf{\Phi}\in\mathfrak{X}^{k}_{R}(s_{0}) \mid \| \mathbf{\Phi} \|_{\mathfrak{X}^{k}_{R}(s_{0})} \leq \delta \}$ as a closed ball in our Banach space. We decompose $\mathbf{K}_\mathbf{f}(\mathbf{\Phi})$ according to
\begin{align*}
\mathbf{P}\mathbf{K}_\mathbf{f}(\mathbf{\Phi})(s) &= -\int_s^\infty \ee^{s-s'} \mathbf{P}\mathbf{N}(\mathbf{\Phi}(s')) \dd s' \,,  \\
(\mathbf{I} - \mathbf{P})\mathbf{K}_\mathbf{f}(\mathbf{\Phi})(s) &= \mathbf{S}(s - s_{0}) (\mathbf{I} - \mathbf{P})\mathbf{f} + \int_{s_{0}}^s \mathbf{S}(s - s') (\mathbf{I} - \mathbf{P})\mathbf{N}(\mathbf{\Phi}(s')) \dd s' \,.
\end{align*}
Suppose $\mathbf{\Phi} \in \B_\delta$. Using boundedness of $\mathbf{P}$ and \Cref{nonlinBound} with $\mathbf{N}(\mathbf{0}) = \mathbf{0}$ we estimate
\begin{equation*}
\| \mathbf{P}\mathbf{K}_\mathbf{f}(\mathbf{\Phi})(s) \|_{\HH^{k}(\BB^{d}_{R})} \lesssim
\int_s^\infty \ee^{s-s'} \| \mathbf{\Phi}(s') \|_{\HH^{k}(\BB^{d}_{R})}^{2} \dd s' \lesssim
\int_s^\infty \ee^{s-s'} \ee^{-2\omega_{0} s'} \| \mathbf{\Phi} \|_{\mathfrak{X}^{k}_{R}(s_{0})}^{2} \dd s' \lesssim
\delta^{2} \ee^{-2\omega_{0} s} \,,
\end{equation*}
and with the bound from \Cref{stableunstable}
\begin{align*}
\| (\mathbf{I} - \mathbf{P})\mathbf{K}_\mathbf{f}(\mathbf{\Phi})(s) \|_{\HH^{k}(\BB^{d}_{R})} &\lesssim
\ee^{-\omega_{0} (s-s_{0})} \| \mathbf{f}\|_{\HH^{k}(\BB^{d}_{R})} + \int_{s_{0}}^s \ee^{-\omega_{0}(s-s')}  \| \mathbf{\Phi}(s') \|_{\HH^{k}(\BB^{d}_{R})}^{2} \dd s' \\&\lesssim
\frac{\delta}{C} \ee^{-\omega_{0}s} + \int_{s_{0}}^s \ee^{-\omega_{0}(s-s')} \ee^{-2\omega_{0}s'}  \| \mathbf{\Phi} \|_{\mathfrak{X}^{k}_{R}(s_{0})}^{2} \dd s' \\&\lesssim
\frac{\delta}{C} \ee^{-\omega_{0}s} + \delta^{2} \ee^{-\omega_{0} s} \,.
\end{align*}
This shows $\mathbf{K}_\mathbf{f}(\mathbf{\Phi}) \in \B_\delta$ if $C>0$ is large and $\delta>0$ small enough.
\par\medskip
Next, in order to prove that $\mathbf{K}_\mathbf{f}$ is a contraction let $\mathbf{\Phi}, \mathbf{\Psi} \in \B_\delta$ and consider
\begin{align*}
\mathbf{P} \big( \mathbf{K}_\mathbf{f}(\mathbf{\Phi})(s) - \mathbf{K}_\mathbf{f}(\mathbf{\Psi})(s) \big) &=
-\int_s^\infty \ee^{s-s'} \mathbf{P} \big( \mathbf{N}(\mathbf{\Phi}(s')) - \mathbf{N}(\mathbf{\Psi}(s')) \big) \dd s' \,, \\
\big(\mathbf{I} - \mathbf{P}\big) \big( \mathbf{K}_\mathbf{f}(\mathbf{\Phi})(s) - \mathbf{K}_\mathbf{f}(\mathbf{\Psi})(s) \big) &=
\int_{s_{0}}^s \mathbf{S}(s - s') \big(\mathbf{I} - \mathbf{P} \big) \big( \mathbf{N}(\mathbf{\Phi}(s')) - \mathbf{N}(\mathbf{\Psi}(s')) \big) \dd s' \,.
\end{align*}
Indeed, as above
\begin{align*}
&\| \mathbf{P} \big( \mathbf{K}_\mathbf{f}(\mathbf{\Phi})(s) - \mathbf{K}_\mathbf{f}(\mathbf{\Psi})(s) \big) \|_{\HH^{k}(\BB^{d}_{R})} \\&\lesssim
\int_s^\infty \ee^{s-s'} \big( \| \mathbf{\Phi}(s') \|_{\HH^{k}(\BB^{d}_{R})} + \| \mathbf{\Psi}(s') \|_{\HH^{k}(\BB^{d}_{R})} \big) \| \mathbf{\Phi}(s') - \mathbf{\Psi}(s') \|_{\HH^{k}(\BB^{d}_{R})} \dd s' \\&\lesssim
\int_s^\infty \ee^{s-s'} \ee^{-2\omega_{0} s'} \big( \| \mathbf{\Phi} \|_{\mathfrak{X}^{k}_{R}(s_{0})} + \| \mathbf{\Psi} \|_{\mathfrak{X}^{k}_{R}(s_{0})} \big) \| \mathbf{\Phi} - \mathbf{\Psi} \|_{\mathfrak{X}^{k}_{R}(s_{0})} \dd s' \\&\lesssim \delta \ee^{-2\omega_{0}s} \| \mathbf{\Phi} - \mathbf{\Psi} \|_{\mathfrak{X}^{k}_{R}(s_{0})} \,,
\end{align*}
and
\begin{align*}
&\| \big(\mathbf{I} - \mathbf{P}\big) \big(\mathbf{K}_\mathbf{f}(\mathbf{\Phi})(s) - \mathbf{K}_\mathbf{f}(\mathbf{\Psi})(s) \big) \|_{\HH^{k}(\BB^{d}_{R})} \\&\lesssim
\int_{s_{0}}^s \ee^{-\omega_{0}(s-s')} \big( \| \mathbf{\Phi}(s') \|_{\HH^{k}(\BB^{d}_{R})} + \| \mathbf{\Psi}(s') \|_{\HH^{k}(\BB^{d}_{R})} \big) \| \mathbf{\Phi}(s') - \mathbf{\Psi}(s') \|_{\HH^{k}(\BB^{d}_{R})} \dd s' \\&\lesssim
\int_{s_{0}}^s \ee^{-\omega_{0}(s-s')} \ee^{-2\omega_{0} s'} \big( \| \mathbf{\Phi} \|_{\mathfrak{X}^{k}_{R}(s_{0})} + \| \mathbf{\Psi} \|_{\mathfrak{X}^{k}_{R}(s_{0})} \big) \| \mathbf{\Phi} - \mathbf{\Psi} \|_{\mathfrak{X}^{k}_{R}(s_{0})} \dd s' \\&\lesssim
\delta \ee^{-\omega_{0} s} \| \mathbf{\Phi} - \mathbf{\Psi} \|_{\mathfrak{X}^{k}_{R}(s_{0})} \,.
\end{align*}
Thus
\begin{equation*}
\| \mathbf{K}_\mathbf{f}(\mathbf{\Phi}) - \mathbf{K}_\mathbf{f}(\mathbf{\Psi}) \|_{\mathfrak{X}^{k}_{R}(s_{0})} \lesssim \delta \| \mathbf{\Phi} - \mathbf{\Psi} \|_{\mathfrak{X}^{k}_{R}(s_{0})}
\end{equation*}
for all $\mathbf{\Phi}, \mathbf{\Psi} \in \B_\delta$. It follows that $\mathbf{K}_\mathbf{f}: \B_\delta \rightarrow \B_\delta$ is a contraction and Banach's fixed point theorem yields existence and uniqueness of a fixed point $\mathbf{\Phi}_\mathbf{f} \in \B_\delta$.
\item To see Lipschitz continuous dependence on the initial data, note with the triangle inequality
\begin{align*}
\| \mathbf{\Phi}_\mathbf{f} - \mathbf{\Phi}_\mathbf{g} \|_{\mathfrak{X}^{k}_{R}(s_{0})} &= \| \mathbf{K}_\mathbf{f}(\mathbf{\Phi}_\mathbf{f}) - \mathbf{K}_\mathbf{g}(\mathbf{\Phi}_\mathbf{g}) \|_{\mathfrak{X}^{k}_{R}(s_{0})} \\&\leq
\| \mathbf{K}_\mathbf{f}(\mathbf{\Phi}_\mathbf{f}) - \mathbf{K}_\mathbf{f}(\mathbf{\Phi}_\mathbf{g}) \|_{\mathfrak{X}^{k}_{R}(s_{0})} + \| \mathbf{K}_\mathbf{f}(\mathbf{\Phi}_\mathbf{g}) - \mathbf{K}_\mathbf{g}(\mathbf{\Phi}_\mathbf{g}) \|_{\mathfrak{X}^{k}_{R}(s_{0})} \,.
\end{align*}
Since we have $\| \mathbf{K}_\mathbf{f}(\mathbf{\Phi}_\mathbf{f}) - \mathbf{K}_\mathbf{f}(\mathbf{\Phi}_\mathbf{g}) \|_{\mathfrak{X}^{k}_{R}(s_{0})} \lesssim \delta \| \mathbf{\Phi}_\mathbf{f} - \mathbf{\Phi}_\mathbf{g} \|_{\mathfrak{X}^{k}_{R}(s_{0})}$ from the first item and
\begin{equation*}
\mathbf{K}_\mathbf{f}(\mathbf{\Phi}_\mathbf{g})(s) - \mathbf{K}_\mathbf{g}(\mathbf{\Phi}_\mathbf{g})(s) = \mathbf{S}(s-s_{0})(\mathbf{I} - \mathbf{P})(\mathbf{f} - \mathbf{g}) \,,
\end{equation*}
we obtain from the bound in \Cref{stableunstable} altogether
\begin{equation*}
\| \mathbf{\Phi}_\mathbf{f} - \mathbf{\Phi}_\mathbf{g} \|_{\mathfrak{X}^{k}_{R}(s_{0})} \lesssim \delta \| \mathbf{\Phi}_\mathbf{f} - \mathbf{\Phi}_\mathbf{g} \|_{\mathfrak{X}^{k}_{R}(s_{0})} + \| \mathbf{f} - \mathbf{g} \|_{\HH^{k}(\BB^{d}_{R})} \,.
\end{equation*}
If $\delta>0$ is small enough this implies that the dependence on the initial data is Lipschitz continuous.\qedhere
\end{enumerate}
\end{proof}
\subsection{Preparation of hyperboloidal initial data}
Finally, we begin to study the Cauchy problem for the Yang-Mills equation with prescribed initial data at time $t=0$.
\par\medskip
Let $\delta,\varepsilon>0$ and $k\in\NN_{0}$. The set of compactly supported smooth radial perturbations of the blowup is defined by
\begin{equation*}
\B^{k}_{\delta,\varepsilon} \coloneqq \left\{ \mathbf{f}\in C^\infty_\mathrm{rad}(\RR^d) \times C^\infty_\mathrm{rad}(\RR^d) \mid \supp(\mathbf{f})\subseteq\BB^d_\varepsilon\times\BB^d_\varepsilon,\, \| \mathbf{f} \|_{\HH^{k}(\RR^d)} \leq \delta \right\} \,.
\end{equation*}
The union of the complement of the domain of influence of this ball with a truncated light cone gives rise to the domain
\begin{equation*}
\Lambda_\varepsilon \coloneqq \left\{ (t,x) \in \RR^{1,d} \mid |x|\geq r_\varepsilon(t) \right\} \,, \qquad r_\varepsilon(t) =
\begin{cases}
|t| + \varepsilon \,, & |t| > 4\varepsilon \,, \\
0 \,, & |t| \leq 4\varepsilon \,,
\end{cases}
\end{equation*}
see \Cref{Fig_LocExis}. The initial data for the hyperboloidal evolution set out in \Cref{YMnewvariable} are going to be prepared by evolving the data locally around $t=0$ and then finding a suitable hyperboloid where we can evaluate the local solution. For this to work out, it is crucial that the resulting shape of $\Lambda_\varepsilon$ allows to fit a family of initial hyperboloids inside, which is seen as follows.
\begin{lemma}
\label{FitHyp}
Let $\varepsilon>0$ and put $s_{0} = \log\big( - \tfrac{h(0)}{1+2\varepsilon}
\big)$. For each $T\in\overline{\BB_\varepsilon(1)}$,
the hyperboloid $\eta_{T}\big( s_{0}, \RR^d \big)$ is contained in the interior of $\Lambda_\varepsilon$.
\end{lemma}
\begin{proof}
Each hyperboloid $\eta_{T}(s,\RR^d)$ that has its tip in $[-3\varepsilon,-\varepsilon]\times\{0\} \subset \Lambda_\varepsilon$ is ensured to stay in the interior of $\Lambda_\varepsilon$ simply because it has slope less than one and starts below the hypersurface $\{ (t,x)\in\RR^{1,d} \mid |x| = t+\varepsilon \}$. We get for each $T\in\overline{\BB_\varepsilon(1)}$ a hyperboloid whose tip satisfies this condition by putting the initial hyperboloidal time to $s=s_{0}$, where $1 + \ee^{-s_{0}}h(0) = -2\varepsilon$, i.e. $s_{0} = \log\big( - \tfrac{h(0)}{1+2\varepsilon}
\big)$. Thus $\eta_{T}\big( s_{0}, \RR^d \big) \subset \Lambda_\varepsilon$ for all $T\in\overline{\BB_\varepsilon(1)}$.
\end{proof}
We turn to a local existence result for the classical Cauchy evolution in order to construct initial data for the hyperboloidal evolution.
\begin{lemma}
\label{locCauchy}
Let $k\in\NN_{0}$.
\begin{enumerate}[itemsep=1em, topsep=1em]
\item There exists an $\varepsilon>0$ such that for any pair of initial data $(f,g)\in\B^{k}_{1,\varepsilon}$ there exists a unique radial solution $u_{f,g}\in C^\infty(\Lambda_\varepsilon)$ to the Cauchy problem
\begin{equation}
\label{YMCauchy}
\begingroup
\renewcommand{\arraystretch}{1.3}
\left\{
\begin{array}{rcll}
(-\pd_{t}^{2} + \Delta_{x})u(t,x) &=& (d-4)(|x|^{2}u(t,x)^3 + 3u(t,x)^{2}) \,, & (t,x) \in \Lambda_\varepsilon \,, \\
u(0,x) &=& u_{1}^{*}(0,x) + f(x) \,, & x\in\RR^d \,,  \\
\pd_{0}u(0,x) &=& \pd_{0}u_{1}^{*}(0,x) + g(x) \,, & x\in\RR^d \,,
\end{array}
\right.
\endgroup
\end{equation}
satisfying $u_{f,g}(t,x) = u_{1}^{*}(t,x)$ when $|x| \geq |t|+\varepsilon$.
\item For any multi-index $\alpha\in\NN_{0}^{1+d}$ of length $|\alpha| \leq k$ we have the estimate
\begin{equation}
\label{Lipbound}
\sup_{(t,x)\in\Lambda_\varepsilon} | \pd^\alpha u_{f,g}(t,x) - \pd^\alpha u_{1}^{*}(t,x) | \lesssim \| (f,g) \|_{\HH^{k+\frac{d+1}{2}}(\RR^d)} \,.
\end{equation}
\end{enumerate}
\end{lemma}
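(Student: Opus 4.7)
The plan is to reduce the problem to a standard local well-posedness exercise for a semilinear wave equation on $\RR^{1,d}$ and then exploit finite speed of propagation to glue the resulting local solution against the background $u_1^*$ outside a controlled region.

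First, set $\tilde u \coloneqq u - u_1^*$. The Cauchy problem \eqref{YMCauchy} becomes
\[
(-\pd_t^2 + \Delta_x)\tilde u + V_1(t,x)\tilde u = F_1(t,x,\tilde u), \qquad \tilde u(0,\,.\,) = f, \quad \pd_0\tilde u(0,\,.\,) = g,
\]
where $V_1$ and $F_1$ are the smooth potential and polynomial nonlinearity already isolated in the hyperboloidal reformulation of Section 4. Because $u_1^*$ is smooth on $\RR^{1,d}\setminus\{(1,0)\}$, these coefficients are smooth and enjoy uniform bounds on compact $t$-intervals contained in $(-1,1)$. Standard energy methods for semilinear wave equations yield, for smooth compactly supported data with $\|(f,g)\|_{\HH^{k_0}(\RR^d)} \leq 1$ at a fixed threshold $k_0 > d/2 + 1$, a unique smooth solution $\tilde u \in C^\infty([-t_*,t_*]\times\RR^d)$ whose lifespan $t_*$ is bounded below by a constant $t_0 > 0$ depending only on that bound. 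We set $\varepsilon \coloneqq \min\{t_0/4, \tfrac{1}{5}\}$, so that $4\varepsilon \leq t_*$ and $(1,0) \notin \Lambda_\varepsilon$.

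Next, finite speed of propagation handles the extension to $\Lambda_\varepsilon$. For $(t,x)$ with $|t| \leq t_*$ and $|x| \geq |t|+\varepsilon$, the backward light cone from $(t,x)$ meets $\{t=0\}$ in $\overline{\BB_{|t|}^d(x)}$, which is disjoint from $\overline{\BB_\varepsilon^d} \supseteq \supp(f) \cup \supp(g)$; the zero function solves the perturbation equation with vanishing data on this cone, so uniqueness for the semilinear problem forces $\tilde u(t,x) = 0$. We therefore extend $\tilde u$ by zero to the full exterior $\{|x| \geq |t|+\varepsilon\}$; this extension agrees with the local solution on the overlap and is manifestly $C^\infty$. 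Since $\Lambda_\varepsilon = \bigl([-4\varepsilon,4\varepsilon]\times\RR^d\bigr) \cup \{|x| \geq |t|+\varepsilon\}$ and $u_1^*$ is smooth on $\Lambda_\varepsilon$, we conclude that $u_{f,g} \coloneqq u_1^* + \tilde u$ belongs to $C^\infty(\Lambda_\varepsilon)$ and coincides with $u_1^*$ wherever $|x| \geq |t|+\varepsilon$.

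For the Lipschitz bound, iterated energy estimates with the small-data assumption propagate the Sobolev estimate
\[
\sup_{|t|\leq t_*}\bigl( \|\tilde u(t,\,.\,)\|_{H^K(\RR^d)} + \|\pd_0\tilde u(t,\,.\,)\|_{H^{K-1}(\RR^d)} \bigr) \lesssim \|(f,g)\|_{\HH^K(\RR^d)}
\]
for $K = k + \tfrac{d+1}{2}$. The Sobolev embedding $H^{(d+1)/2}(\RR^d) \hookrightarrow L^\infty(\RR^d)$ converts this into the uniform $C^k_x$ bound, and time derivatives $\pd_0^j\tilde u$ for $j \leq k$ are recovered by inductively using the equation to express them through spatial derivatives and lower-order time derivatives. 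On $\Lambda_\varepsilon \setminus ([-t_*,t_*]\times\RR^d)$ the bound is trivial since $\tilde u \equiv 0$ there. The principal technical obstacle is closing the nonlinear $H^K$ propagation in spite of the $|x|^2$-factors in $V_1$ and $F_1$, which a priori grow at spatial infinity; however, finite speed of propagation confines $\tilde u$ to $\{|x| \leq t_* + \varepsilon\}$, rendering all coefficients effectively bounded, and the estimate closes by textbook applications of Gronwall's inequality together with the Sobolev algebra property of $H^K$ for $K > d/2$.
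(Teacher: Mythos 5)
Your overall strategy — peel off $u_1^*$, solve the perturbation equation locally, then use finite speed of propagation to glue against $u_1^*$ outside the domain of influence, and finally extract the pointwise bound via Sobolev embedding — is the same as the paper's. The paper just delegates the local theory to earlier results (and crucially solves in a \emph{truncated light cone}, a bounded region), whereas you work directly on the full slab $[-t_*,t_*]\times\RR^d$.

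That choice introduces a genuine gap in your first step. You open by claiming the coefficients of the perturbation equation ``enjoy uniform bounds on compact $t$-intervals contained in $(-1,1)$'' and then invoke standard local energy theory on all of $\RR^d$; but two paragraphs later you correctly note that the cubic term carries a factor of $|x|^2$ that is unbounded at spatial infinity. These two statements contradict each other, and the second one is right: the nonlinearity $(d-4)|x|^2\tilde u^3$ is not uniformly Lipschitz on $H^K(\RR^d)$-balls, so textbook local well-posedness does not apply as stated. You then argue that finite speed of propagation confines $\tilde u$ to a ball, ``rendering all coefficients effectively bounded'' — but this is circular: finite speed of propagation is a \emph{consequence} of having a solution, which is exactly what you have not yet established. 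The fix is standard and cheap, but it has to precede the existence claim: replace $|x|^2$ in the nonlinearity by $\chi(x)|x|^2$ with $\chi$ a cutoff equal to $1$ on a ball of radius $\geq t_*+\varepsilon$, apply local well-posedness and finite propagation speed to the \emph{modified} equation (now with bounded smooth coefficients), observe that the modified solution is supported where $\chi\equiv 1$, and conclude it also solves the original equation. Alternatively, do as the paper does and work in a truncated light cone from the outset, where the spatial variable is automatically confined and the issue never arises. Either way, the order of your argument needs to be reversed; as written, part (1) of your proof is not justified. The rest of the argument (gluing, $C^\infty$ regularity, the Sobolev-counting $K=k+\tfrac{d+1}{2}$ and the recovery of time derivatives from the equation) is correct.
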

\begin{proof}
\begin{enumerate}[wide, itemsep=1em, topsep=1em]
\item The classical Cauchy theory applies to \Cref{YMCauchy}, cf.  \cite[Section 2]{MR4338226} as well as the proof of \cite[Lemma 5.3]{MR4338226}. That is, by \cite[Theorem 2.11]{MR4338226} we get an $\varepsilon>0$ with $8\varepsilon\in(0,1)$ and the existence of a solution in the truncated light cone $ \bigcup_{t\in[-4\varepsilon,4\varepsilon]} \{t\}\times\BB_{1-|t|}^d =  \left\{ (t,x)\in\RR^{1,d} \mid |t| \leq 4\varepsilon,\, |x| < 1-|t| \right\}$ for any $(f,g)\in \B^{k}_{1,1}$. Fix this $\varepsilon>0$ and consider the solution $u$ for data $(f,g)\in \B^{k}_{1,\varepsilon}\subset\B^{k}_{1,1}$. Since $\supp(f,g) \subset \BB_\varepsilon^d\times\BB_\varepsilon^d$, finite speed of propagation \cite[Theorem 2.12]{MR4338226} implies that $u_{1}^{*}$ is the unique solution in the domain $\left\{ (t,x)\in\RR^{1,d} \mid |x| \geq |t|+\varepsilon \right\}$. By construction, this domain intersects with the domain of $u$ so that $u$ equals $u_{1}^{*}$ on the overlap, see \Cref{Fig_LocExis}. Also note that the union of the truncated light cone with the complement of the domain of influence of the balls where the initial data are supported is precisely $\Lambda_\varepsilon$. Using \cite[Theorem 2.14]{MR4338226}, we constructed a unique solution $u\in C^\infty(\Lambda_\varepsilon)$.
\item The estimate is a result of the Lipschitz continuous dependence on the initial data, which follows from the classical theory and Sobolev embedding, see \cite[Theorem 2.11, Lemma 5.3]{MR4338226}.
\qedhere
\end{enumerate}
\end{proof}
Now, according to \Cref{FitHyp} and the first part of \Cref{locCauchy}, there is an $\varepsilon>0$ so that the radial solution $u_\mathbf{f}\in C^\infty(\Lambda_\varepsilon)$ to the Cauchy problem \eqref{YMCauchy} can be evaluated pointwise along the hyperboloid $\eta_{T}(s_{0},\RR^{d})$. Similarly, since for fixed $T\in\overline{\BB_{\varepsilon}(1)}$ there is wiggle room for $s$ around $s_{0}$ so that $\eta_{T}(s,\RR^{d})$ stays within $\Lambda_\varepsilon$, the derivative $\pd_{s} \big( u_{\mathbf{f}}\circ\eta_{T}(s,\,.\,) \big)|_{s=s_{0}}$ exists along the hyperboloid. Moreover, we see with the second part of \Cref{locCauchy} that in order to control the evaluation of $u_{\mathbf{f}}$ and its derivatives along $\eta_{T}(s_{0},\RR^{d})$ through $L^{\infty}$-embedding, we need sufficient regularity for the Sobolev norm of the initial data $\mathbf{f}$. This leads to a well-defined \emph{initial data operator}
\begin{equation*}
\mathbf{U}: \B^{k+\frac{d+1}{2}}_{1,\varepsilon}\times\overline{\BB_\varepsilon(1)} \rightarrow \HH^{k}_\rad(\BB^{d}_{R}) \,, \quad
\mathbf{U}(\mathbf{f},T)(y) =
\ee^{-2s_{0}}
\begin{bmatrix}
\hspace*{\fill}\big( (u_{\mathbf{f}} - u_{T}^{*})\circ\eta_{T} \big)(s_{0},y) \\
\pd_{0}\big( (u_{\mathbf{f}} - u_{T}^{*})\circ\eta_{T} \big)(s_{0},y)
\end{bmatrix}
\end{equation*}
for any initial data $\mathbf{f}\in\B^{k+\frac{d+1}{2}}_{1,\varepsilon}$ and blowup time $T\in\overline{\BB_\varepsilon(1)}$. Next, we prove mapping properties of $\mathbf{U}$.
\begin{figure}
\centering
\includegraphics[width=0.8\textwidth]{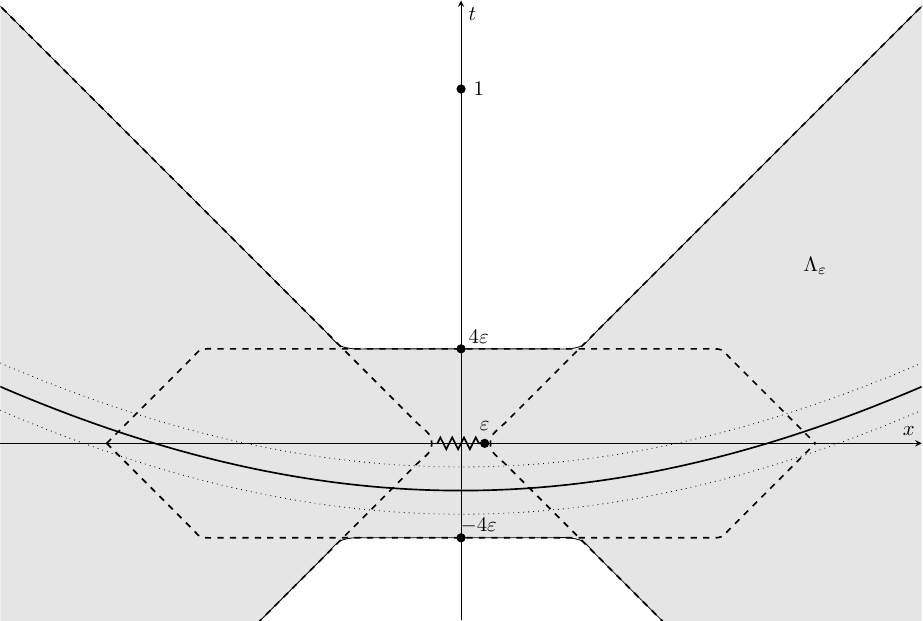}
\caption{Illustration for the preparation of initial data in the plane. The zigzag line marks the support for the perturbations of initial data, the emerging dashed lines indicate their domain of influence. The other dashed lines delimit the domain of local existence for the classical Cauchy evolution. The grey shaded region depicts the union $\Lambda_\varepsilon$ of these two domains. By construction and the choice $s_{0} = \log(-\tfrac{h(0)}{1+2\varepsilon})$, the initial hyperboloid $y\mapsto(T + \ee^{-s_{0}}h(y),\ee^{-s_{0}}y)$ lies within $\Lambda_\varepsilon$ for all $T\in\overline{\BB_\varepsilon(1)}$. The solid hyperboloid is drawn for $T=1$, the dotted hyperboloids for $T=1\pm\varepsilon$, respectively.}
\label{Fig_LocExis}
\end{figure}
\begin{lemma}
\label{InitDatOpProp}
Let $R>0$ and $k\in\NN_{0}$. Consider $\varepsilon>0$ such that the initial data operator $\mathbf{U}: \B^{k+\frac{d+1}{2}}_{1,\varepsilon}\times\overline{\BB_\varepsilon(1)} \rightarrow \HH^{k}_\rad(\BB^{d}_{R})$ is well-defined according to \Cref{locCauchy}.
\begin{enumerate}[itemsep=1em, topsep=1em]
\item The map $\mathbf{U}(\mathbf{f},\,.\,): \overline{\BB_\varepsilon(1)} \rightarrow \HH^{k}_\rad(\BB^{d}_{R})$ is continuous for every fixed $\mathbf{f}\in \B^{k+\frac{d+1}{2}}_{1,\varepsilon}$.
\item There exists a $C_\varepsilon > 0$ such that
\begin{equation}
\label{InitDatOpExp}
\mathbf{U}( \mathbf{f}, T) = C_\varepsilon(T-1)\mathbf{f}_{1}^{*} + \widetilde{\mathbf{U}}( \mathbf{f}, T) \,,
\end{equation}
where the remainder term satisfies
\begin{equation}
\label{InitDatOpEst}
\| \widetilde{\mathbf{U}}(\mathbf{f},T) \|_{\HH^{k}(\BB^{d}_{R})} \lesssim \| \mathbf{f} \|_{\HH^{k+\frac{d+1}{2}}(\RR^d)} + |T-1|^{2}
\end{equation}
for all $(\mathbf{f},T)\in \B^{k+\frac{d+1}{2}}_{1,\varepsilon}\times\overline{\BB_\varepsilon(1)}$.
\end{enumerate}
\end{lemma}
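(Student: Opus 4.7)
The plan is to decompose
\begin{equation*}
\mathbf{U}(\mathbf{f}, T) = \bigl[\mathbf{U}(\mathbf{f}, T) - \mathbf{U}(\mathbf{0}, T)\bigr] + \mathbf{U}(\mathbf{0}, T)
\end{equation*}
and handle the two summands separately: the first is a pure perturbation effect that is bounded by $\|\mathbf{f}\|_{H^{k+\frac{d+1}{2}}(\RR^d)}$ via \autoref{locCauchy}, whereas the second is a smooth one-parameter family in $T$ and a Taylor expansion around $T=1$ will produce it as $C_\varepsilon(T-1)\mathbf{f}_1^*$ plus a quadratic remainder.

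For the first bracket, notice that $\mathbf{U}(\mathbf{f},T) - \mathbf{U}(\mathbf{0},T) = \ee^{-2s_0}\bigl[\,(u_{\mathbf{f}}-u_1^*)\circ\eta_T\,,\,\pd_s((u_{\mathbf{f}}-u_1^*)\circ\eta_T)\,\bigr]\restriction_{s=s_0}$, since $u_{\mathbf{0}} = u_1^*$ and the $u_T^*$-contributions cancel. The Lipschitz bound \eqref{Lipbound} combined with Sobolev embedding yields pointwise control of all derivatives up to order $k+1$ of $u_{\mathbf{f}}-u_1^*$ by $\|\mathbf{f}\|_{H^{k+\frac{d+1}{2}}(\RR^d)}$ on $\Lambda_\varepsilon$. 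By construction of $s_0$ one has $\eta_T(s_0,\overline{\BB_R^d})\subset\Lambda_\varepsilon$ uniformly in $T\in\overline{\BB_\varepsilon(1)}$, and the smooth $T$-dependent map $\eta_T(s_0,\,\cdot\,)$ has $T$-uniform $C^k$-bounds on $\overline{\BB_R^d}$. A standard chain-rule estimate then produces
\begin{equation*}
\bigl\|\mathbf{U}(\mathbf{f},T) - \mathbf{U}(\mathbf{0},T)\bigr\|_{\HH^k(\BB_R^d)} \lesssim \|\mathbf{f}\|_{H^{k+\frac{d+1}{2}}(\RR^d)}
\end{equation*}
uniformly in $T$, as well as continuity of $T\mapsto \mathbf{U}(\mathbf{f},T) - \mathbf{U}(\mathbf{0},T)$ into $\HH^k(\BB_R^d)$.

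For the second bracket, I exploit two structural facts: time-translation invariance gives $u_T^*(t,x) = u_1^*(t-(T-1),x)$, and since $\eta_T(s_0,y) = \eta_1(s_0,y) + (T-1,0)$ as points in Minkowski space, it follows that $u_T^*\circ\eta_T$ and $(\pd_t u_T^*)\circ\eta_T$ are identically independent of $T$. Therefore $\mathbf{U}(\mathbf{0},T)$ depends on $T$ only through the $u_1^*\circ\eta_T$ pieces, vanishes at $T=1$, and is smooth in $T$ valued in $\HH^k(\BB_R^d)$ by the smoothness of $u_1^*$ on the compact set $\bigcup_T\eta_T(s_0,\overline{\BB_R^d})$. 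Taylor's theorem then yields $\mathbf{U}(\mathbf{0},T) = (T-1)\,\pd_T\mathbf{U}(\mathbf{0},T)|_{T=1} + \mathbf{R}(T)$ with $\|\mathbf{R}(T)\|_{\HH^k(\BB_R^d)}\lesssim|T-1|^2$, so everything reduces to the identification $\pd_T\mathbf{U}(\mathbf{0},T)|_{T=1} = C_\varepsilon\mathbf{f}_1^*$. The first component equals $\ee^{-2s_0}\pd_0 u_1^*(1+\ee^{-s_0}h(y),\ee^{-s_0}y)$, and a direct calculation from the closed form of $u_1^*$ gives $2a_d b_d\ee^{s_0}\,[\mathbf{f}_1^*]_1(y)$. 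The second component involves $\pd_T\pd_s$ of $u_1^*\circ\eta_T$, and the two product-rule contributions $\pd_T P^{-2}\cdot\pd_s P$ and $P^{-2}\cdot\pd_T\pd_s P$ combine in the ratio $8{:}{-2}$ to produce exactly $6a_d b_d\ee^{s_0}h(y)(b_dh(y)^2+|y|^2)^{-2}$, i.e.\ three times the first-component prefactor. Hence $\pd_T\mathbf{U}(\mathbf{0},T)|_{T=1} = C_\varepsilon\mathbf{f}_1^*$ with $C_\varepsilon = 2a_d b_d\ee^{s_0} > 0$.

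Part (1) is immediate from the continuity of both brackets in $T$. The main technical subtlety is the second-component computation above: the clean cancellation into exactly three times the first component is not accidental but reflects the content of \autoref{ModeStabilityYM}, namely that $\mathbf{f}_1^*$ is an eigenvector of $\mathbf{L}$ for the eigenvalue $\lambda=1$ and hence has second-to-first-component ratio $\lambda+2=3$; the time-translation variation of the blowup family is exactly the symmetry direction for this eigenvalue.
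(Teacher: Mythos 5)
Your proposal is correct and follows essentially the same route as the paper: the same decomposition $\mathbf{U}(\mathbf{f},T)=[\mathbf{U}(\mathbf{f},T)-\mathbf{U}(\mathbf{0},T)]+\mathbf{U}(\mathbf{0},T)$, the same use of the Lipschitz estimate from \autoref{locCauchy} together with the chain rule for the first bracket, and a Taylor expansion in $T$ around $T=1$ for the second bracket built on the time-translation identity $u_T^*\circ\eta_T = u_1^*\circ\eta_1$. The only cosmetic difference is that the paper obtains the second-component factor of $3$ directly from $\pd_s\ee^{3s}=3\ee^{3s}$ when differentiating the $T$-Taylor expansion of $u_1^*\circ\eta_T$ in $s$, whereas you reproduce it by a product-rule calculation on $-a_dP^{-1}$; both give $C_\varepsilon=2a_db_d\ee^{s_0}$.
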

\begin{proof}
\begin{enumerate}[wide, itemsep=1em, topsep=1em]
\item The solutions $u_\mathbf{f}$, $u_{T}^{*}$ are smooth in $\Lambda_\varepsilon$ and the dependence on $T$ is also smooth. As the hyperboloids $y\mapsto(T + \ee^{-s}h(y),\ee^{-s}y)$ lie within $\Lambda_\varepsilon$ for all $T\in\overline{\BB_\varepsilon(1)}$ and $s$ sufficiently close to $s_{0}$ by \Cref{FitHyp}, continuous dependence on the blowup time $T$ follows for $\mathbf{U}$ from smooth dependence of the components of the operator.
\item We decompose $\mathbf{U}(\mathbf{f},T) = \mathbf{U}(\mathbf{0},T) + \mathbf{U}(\mathbf{f},T) - \mathbf{U}(\mathbf{0},T)$ and start noting
\begin{equation*}
\mathbf{U}(\mathbf{0},T)(y) =
\ee^{-2s_{0}}
\begin{bmatrix}
\hspace*{\fill}\big( (u_{1}^{*} - u_{T}^{*})\circ\eta_{T} \big)(s_{0},y) \\
\pd_{0}\big( (u_{1}^{*} - u_{T}^{*})\circ\eta_{T} \big)(s_{0},y)
\end{bmatrix}
\,.
\end{equation*}
By time translation, our self-similar blowup satisfies $u_{T+t_{0}}^{*}(t+t_{0},x) = u_{T}^{*}(t,x)$ for all $t_{0}\in\RR$. For $s$ close to $s_{0}$, a Taylor expansion for
\begin{equation*}
u_{1}^{*}(T+\ee^{-s}h(y),\ee^{-s}y) = - \frac{a_d}{b_d \big(1-T-\ee^{-s}h(y)\big)^{2} + \ee^{-2s}|y|^{2}}
\end{equation*}
around $T=1$ shows
\begin{align*}
&
u_{1}^{*}(T+\ee^{-s}h(y),\ee^{-s}y) \\&\indent=
u_{1}^{*}(1+\ee^{-s}h(y),\ee^{-s}y) + \left.\pd_{T} u_{1}^{*}(T+\ee^{-s}h(y),\ee^{-s}y)\right|_{T=1} (T-1) + r(T,s,y) (T-1)^{2}  \\&\indent=
u_{T}^{*}(T+\ee^{-s}h(y),\ee^{-s}y) + 2 a_d b_d \ee^{3s} f_{1,1}^{*}(y) (T-1) + r(T,s,y) (T-1)^{2} \,,
\end{align*}
where
\begin{equation*}
r(T,s,y) = \int_{0}^1 \left.\pd_{T'}^{2} u_{1}^{*}(T'+\ee^{-s}h(y),\ee^{-s}y) \right|_{T' = 1+\tau(1-T)} \tfrac{(1-\tau)^{2}}{2} \dd \tau
\end{equation*}
defines a jointly smooth function for $T\in\overline{\BB_\varepsilon(1)}$ and $s$ close to $s_{0}$. This also gives
\begin{align*}
&
\pd_s u_{1}^{*}(T+\ee^{-s}h(y),\ee^{-s}y) \\&\indent=
\pd_s u_{T}^{*}(T+\ee^{-s}h(y),\ee^{-s}y) + 2 a_d b_d \ee^{3s} f_{1,2}^{*}(y) (T-1) + \pd_s r(T,s,y) (T-1)^{2}
\end{align*}
so that
\begin{equation*}
\mathbf{U}(\mathbf{0},T)(y) = 2 a_d b_d \ee^{s_{0}} \mathbf{f}_{1}^{*}(y)(T-1) + \ee^{-2s_{0}} \mathbf{r}(T,s_{0},y)(T-1)^{2}
\end{equation*}
with
\begin{equation*}
\| \mathbf{r}(T,s_{0},\,.\,) \|_{\HH^{k}(\BB^{d}_{R})} \lesssim 1
\end{equation*}
for all $T\in\overline{\BB_\varepsilon(1)}$. Finally, we estimate
\begin{equation*}
\mathbf{U}( \mathbf{f}, T) - \mathbf{U}( \mathbf{0}, T) =
\ee^{-2s_{0}}
\begin{bmatrix}
\hspace{\fill}\big( (u_\mathbf{f} - u_{1}^{*})\circ\eta_{T} \big)(s_{0},\,.\,) \\
\pd_{0}\big( (u_\mathbf{f} - u_{1}^{*})\circ\eta_{T} \big)(s_{0},\,.\,)
\end{bmatrix}
\end{equation*}
with part two of \Cref{locCauchy}. Indeed, using the chain rule and product rule and estimating products of the from $\pd^\alpha {\eta_{T}}^\mu(s_{0},\,.\,)$ in $L^{2}(\BB^{d}_{R})$, we get for the first component
\begin{align*}
\| (u_\mathbf{f} - u_{1}^{*})\circ\eta_{T}(s_{0},\,.\,) \|_{H^{k}(\BB^{d}_{R})} &\simeq
\sum_{|\alpha|\leq k} \| \pd^\alpha\big( (u_\mathbf{f} - u_{1}^{*})\circ\eta_{T} \big)(s_{0},\,.\,) \|_{L^{2}(\BB^{d}_{R})} \\&\lesssim
\sum_{|\beta|\leq k} \sup_{y\in\BB^{d}_{R}} \big| ( \pd^\beta(u_\mathbf{f} - u_{1}^{*})\circ\eta_{T})(s_{0},y)\big| \\&\lesssim
\| \mathbf{f} \|_{\HH^{k+\frac{d+1}{2}}(\RR^d)} \,,
\end{align*}
where we used the space-time bounds \eqref{Lipbound} provided in \Cref{locCauchy}. With the same estimates we infer $\| \pd_{0}\big( (u_\mathbf{f} - u_{1}^{*})\circ\eta_{T} \big)(s_{0},\,.\,) \|_{H^{k-1}(\BB^{d}_{R})} \lesssim \| \mathbf{f} \|_{\HH^{k+\frac{d+1}{2}}(\RR^d)}$ and the bound follows.\qedhere
\end{enumerate}
\end{proof}
\subsection{Hyperboloidal evolution}
In the final step we remove the correction term $\mathbf{C}_{s_{0}}(\mathbf{\Phi},\mathbf{f})$ in the right-hand side $\mathbf{K}_\mathbf{f}$ by adjusting the blowup time $T$ and thereby solve the nonlinear problem.
\begin{proposition}
\label{hypEvo}
Let $R\geq \frac{1}{2}$ and $k\in\NN$, $k\geq \frac{d+1}{2}$. There exist positive constants $M,\delta,\varepsilon>0$ such that for any $\mathbf{f}\in \B^{k+\frac{d+1}{2}}_{\delta/{M^{2}},\varepsilon}$ there exists a $T_{\mathbf{f}} \in \overline{\BB_{\delta/M}(1)}$ and a unique map $\mathbf{\Phi}_{\mathbf{f}}\in \X^{k}_{R}(s_{0})$ with $\| \mathbf{\Phi}_{\mathbf{f}} \|_{\X^{k}_{R}(s_{0})} \leq \delta$ that solves
\begin{equation*}
\mathbf{\Phi}(s) = \mathbf{S}(s-s_{0}) \mathbf{U}(\mathbf{f},T_{\mathbf{f}}) + \int_{s_{0}}^s \mathbf{S}(s-s') \mathbf{N}(\mathbf{\Phi}(s')) \dd s'
\end{equation*}
for all $s\geq s_{0}=\log\big(-\frac{h(0)}{1+2\varepsilon}\big)$.
\end{proposition}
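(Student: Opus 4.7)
The plan is to solve the modified fixed point equation of \autoref{stableEvo} for every candidate blowup time $T\in\overline{\BB_{\delta/M}(1)}$, and then eliminate the correction term $\mathbf{C}_{s_0}$ by tuning $T$ via Brouwer's fixed point theorem. The key structural input is the expansion \eqref{InitDatOpExp} in \autoref{InitDatOpProp}: varying $T$ around $1$ feeds a linear contribution $C_\varepsilon(T-1)\mathbf{f}_1^*$ precisely into the unstable direction, which is exactly what is needed to kill $\mathbf{P}$-components. First, I would fix $M,\delta,\varepsilon>0$ so that the prepared data remain in the admissible ball for \autoref{stableEvo}. For $\mathbf{f}\in\B^{k+\frac{d+1}{2}}_{\delta/M^2,\varepsilon}$ and $T\in\overline{\BB_{\delta/M}(1)}$, \autoref{InitDatOpProp} gives $\|\mathbf{U}(\mathbf{f},T)\|_{\HH^k(\BB_R^d)}\lesssim |T-1|+\|\mathbf{f}\|_{\HH^{k+\frac{d+1}{2}}(\RR^d)}+|T-1|^2\lesssim \delta/M$, so taking $M$ large enough that this is bounded by $\delta/C$ (with $C$ from \autoref{stableEvo}), the latter proposition yields a unique $\mathbf{\Phi}_{\mathbf{U}(\mathbf{f},T)}\in\X^k_R(s_0)$ with $\|\mathbf{\Phi}_{\mathbf{U}(\mathbf{f},T)}\|_{\X^k_R(s_0)}\leq \delta$ solving $\mathbf{K}_{\mathbf{U}(\mathbf{f},T)}(\mathbf{\Phi})=\mathbf{\Phi}$.

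Next I would construct a scalar obstruction. Since $\ran(\mathbf{P})=\langle\mathbf{f}_1^*\rangle$ by \autoref{SpecProjProp}, there is a bounded functional $\ell\in(\HH^k_\rad(\BB_R^d))^*$ with $\mathbf{P}\mathbf{g}=\ell(\mathbf{g})\mathbf{f}_1^*$ and $\ell(\mathbf{f}_1^*)=1$. Setting
\[
\gamma(T)\coloneqq\ell\big(\mathbf{C}_{s_0}(\mathbf{\Phi}_{\mathbf{U}(\mathbf{f},T)},\mathbf{U}(\mathbf{f},T))\big)=\ell(\mathbf{U}(\mathbf{f},T))+\int_{s_0}^\infty\ee^{s_0-s'}\ell\big(\mathbf{N}(\mathbf{\Phi}_{\mathbf{U}(\mathbf{f},T)}(s'))\big)\dd s' \,,
\]
the expansion \eqref{InitDatOpExp} together with $\ell(\mathbf{f}_1^*)=1$, the local Lipschitz bound of \autoref{nonlinBound} (using $\mathbf{N}(\mathbf{0})=\mathbf{0}$), and the decay $\|\mathbf{\Phi}_{\mathbf{U}(\mathbf{f},T)}(s')\|\leq \delta\ee^{-\omega_0 s'}$ produce the decomposition
\[
\gamma(T)=C_\varepsilon(T-1)+\widetilde\gamma(T),\qquad |\widetilde\gamma(T)|\lesssim \|\mathbf{f}\|_{\HH^{k+\frac{d+1}{2}}(\RR^d)}+|T-1|^2+\delta^2 \,.
\]
Continuity of $\widetilde\gamma$ on $\overline{\BB_{\delta/M}(1)}$ follows from continuity of $T\mapsto\mathbf{U}(\mathbf{f},T)$ in \autoref{InitDatOpProp}(1), the Lipschitz dependence $\mathbf{g}\mapsto\mathbf{\Phi}_\mathbf{g}$ in \autoref{stableEvo}(2), and dominated convergence against the uniformly integrable weight $\ee^{s_0-s'}\delta\ee^{-\omega_0 s'}$ on the Duhamel integrand.

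Finally, setting $\Psi(T)\coloneqq 1-\widetilde\gamma(T)/C_\varepsilon$, the equation $\gamma(T)=0$ is equivalent to $\Psi(T)=T$. By the estimate on $\widetilde\gamma$ and a suitable choice of $M$ large and $\delta$ small, each of the three error contributions $\delta/(M^2C_\varepsilon)$, $(\delta/M)^2/C_\varepsilon$, $\delta^2/C_\varepsilon$ is bounded by $\delta/(3M)$, so $\Psi:\overline{\BB_{\delta/M}(1)}\to\overline{\BB_{\delta/M}(1)}$ is a continuous self-map. Brouwer's fixed point theorem produces $T_\mathbf{f}\in\overline{\BB_{\delta/M}(1)}$ with $\gamma(T_\mathbf{f})=0$, which by construction of $\ell$ is equivalent to $\mathbf{C}_{s_0}(\mathbf{\Phi}_{\mathbf{U}(\mathbf{f},T_\mathbf{f})},\mathbf{U}(\mathbf{f},T_\mathbf{f}))=\mathbf{0}$. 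Then $\mathbf{\Phi}_\mathbf{f}\coloneqq\mathbf{\Phi}_{\mathbf{U}(\mathbf{f},T_\mathbf{f})}$ solves the unmodified Duhamel formula in the statement, and uniqueness in the ball of radius $\delta$ (for this $T_\mathbf{f}$) is inherited from the uniqueness in \autoref{stableEvo}.

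The main obstacle is arithmetic rather than structural: one must calibrate the three smallness scales $\|\mathbf{f}\|\sim\delta/M^2$, $|T-1|\sim\delta/M$ and $\|\mathbf{\Phi}\|\sim\delta$ so that the linear-in-$(T-1)$ term $C_\varepsilon(T-1)$ dominates the quadratic remainders from both $\widetilde{\mathbf{U}}$ and the Duhamel integral on the disk $\overline{\BB_{\delta/M}(1)}$. This is possible only thanks to the non-vanishing coefficient $C_\varepsilon\neq 0$ in \autoref{InitDatOpProp}, which reflects the time-translation symmetry mode $\mathbf{f}_1^*$ that made $1\in\sigma(\mathbf{L})$ unstable in the first place; without this non-degeneracy, no Brouwer argument on a small disk around $T=1$ would close.
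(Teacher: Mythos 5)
Your proposal is correct and follows essentially the same route as the paper: solve the modified fixed point problem for each candidate $T$, reduce the vanishing of the correction term to a scalar equation using a linear functional on $\ran(\mathbf{P})=\langle\mathbf{f}_1^*\rangle$ (you use $\ell$ with $\ell(\mathbf{f}_1^*)=1$, the paper uses the inner product with $\mathbf{f}_1^*$ — Riesz-equivalent choices), invoke the expansion \eqref{InitDatOpExp} to isolate the linear-in-$(T-1)$ term, bound the remainder, and apply Brouwer's fixed point theorem after calibrating the three smallness scales. The argument is sound and complete.
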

\begin{proof}
Pick $\varepsilon>0$ as in \Cref{InitDatOpProp} and define the initial data operator $\mathbf{U}: \B^{k+\frac{d+1}{2}}_{1,\varepsilon} \times \overline{\BB_\varepsilon(1)} \rightarrow \HH^{k}_\rad(\BB^{d}_{R})$. Pick $C,\delta>0$ as obtained in \Cref{stableEvo}. We can then choose $M\geq 1$ with $\delta/M < \varepsilon$ and get from the second part of \Cref{InitDatOpProp}
\begin{equation*}
\| \mathbf{U}(\mathbf{f},T) \|_{\HH^{k}(\BB^{d}_{R})} \lesssim |T-1| + |T-1|^{2} + \| \mathbf{f} \|_{\HH^{k+\frac{d+1}{2}}(\RR^d)} \lesssim \frac{\delta}{M}
\end{equation*}
for all $\mathbf{f}\in\B^{k+\frac{d+1}{2}}_{\delta/{M^{2}},\varepsilon}$ and all $T\in\overline{\BB_{\delta/M}(1)}$, which implies
\begin{equation*}
\| \mathbf{U}(\mathbf{f},T) \|_{\HH^{k}(\BB^{d}_{R})} \leq \frac{\delta}{C}
\end{equation*}
for any such $\mathbf{f}$, $T$ after possibly enlarging $M$. In this situation, by the first part of \Cref{stableEvo} there is a unique solution $\mathbf{\Phi}_{\mathbf{U}(\mathbf{f},T)} \in \X^{k}_{R}(s_{0})$ with $\| \mathbf{\Phi}_{\mathbf{U}(\mathbf{f},T)} \|_{\X^{k}_{R}(s_{0})} \leq \delta$ that solves the fixed point equation $\mathbf{K}_{\mathbf{U}(\mathbf{f},T)}(\mathbf{\Phi}) = \mathbf{\Phi}$, i.e.
\begin{equation*}
\mathbf{\Phi}_{\mathbf{U}(\mathbf{f},T)}(s) = \mathbf{S}(s-s_{0}) \big( \mathbf{U}(\mathbf{f},T) - \mathbf{C}_{s_{0}}(\mathbf{\Phi}_{\mathbf{U}(\mathbf{f},T)},\mathbf{U}(\mathbf{f},T)) \big) + \int_{s_{0}}^s  \mathbf{S}(s-s') \mathbf{N}(\mathbf{\Phi}_{\mathbf{U}(\mathbf{f},T)}(s')) \dd s'
\end{equation*}
for all $s\geq s_{0} \coloneqq \log\big(-\frac{h(0)}{1+2\varepsilon} \big)$. Now the task is to show that there is a $T_\mathbf{f}$ in $\overline{\BB_{\delta/M}(1)}$ for which the correction term
\begin{equation*}
\mathbf{C}_{s_{0}}(\mathbf{\Phi}_{\mathbf{U}(\mathbf{f},T)},\mathbf{U}(\mathbf{f},T)) = \mathbf{P} \left( \mathbf{U}(\mathbf{f},T) + \int_{s_{0}}^\infty \ee^{s_{0}-s'} \mathbf{N}(\mathbf{\Phi}_{\mathbf{U}(\mathbf{f},T)}(s')) \dd s' \right)
\end{equation*}
vanishes. Since $\ran(\mathbf{C}_{s_{0}}) \subseteq \ran(\mathbf{P}) = \langle \mathbf{f}_{1}^{*} \rangle$ this is equivalent to the inner product
\begin{equation*}
\Big( \mathbf{C}_{s_{0}}(\mathbf{\Phi}_{\mathbf{U}(\mathbf{f},T)},\mathbf{U}(\mathbf{f},T)) \,\Big|\, \mathbf{f}_{1}^{*} \Big)_{\HH^{k}(\BB^{d}_{R})} =
\widetilde{C_\varepsilon} (T-1) + \widetilde{\varphi}_\mathbf{f}(T)
\end{equation*}
being zero, where we use \Cref{InitDatOpExp} to see that $\widetilde{\varphi}_\mathbf{f} : \overline{\BB_{\delta/M}(1)} \rightarrow \RR$ is given by
\begin{equation*}
\widetilde{\varphi}_\mathbf{f}(T) = \Big( \mathbf{P} \widetilde{\mathbf{U}}(\mathbf{f},T) + \int_{s_{0}}^\infty \ee^{s_{0}-s'} \mathbf{P} \mathbf{N}(\mathbf{\Phi}_{\mathbf{U}(\mathbf{f},T)}(s')) \dd s' \,\Big|\, \mathbf{f}_{1}^{*} \Big)_{\HH^{k}(\BB^{d}_{R})} \,.
\end{equation*}
The second part of \Cref{stableEvo} yields that $\mathbf{\Phi}_{\mathbf{U}(\mathbf{f},T)}$ depends Lipschitz continuously on the initial data $\mathbf{U}(\mathbf{f},T)$, which in turn depend continuously on $T$ by the first part of \Cref{InitDatOpProp}. So $\widetilde{\varphi}_\mathbf{f}$ is continuous. Finding a zero corresponds to the fixed point problem $T = 1 - \widetilde{C_\varepsilon}^{-1} \widetilde{\varphi}_\mathbf{f}(T)$. To this end, define
\begin{equation*}
\varphi_\mathbf{f} : \overline{\BB_{\delta/M}(1)} \rightarrow \RR \,, \quad \varphi_\mathbf{f}(T) = 1 - \widetilde{C_\varepsilon}^{-1} \widetilde{\varphi}_\mathbf{f}(T) \,.
\end{equation*}
Moreover, Cauchy-Schwarz together with \Cref{InitDatOpProp} and the bounds in the proof of \Cref{stableEvo} yield
\begin{equation*}
|\widetilde{\varphi}_\mathbf{f}(T)| \lesssim |T-1|^{2} + \| \mathbf{f} \|_{\HH^{k}(\BB^{d}_{R})} + \delta^{2} \lesssim \delta/{M^{2}} + \delta^{2}
\end{equation*}
for all $T\in\overline{\BB_{\delta}(1)}$. Upon shrinking $\delta$ and enlarging $M$, this implies $\varphi_\mathbf{f} \in C\big( \overline{\BB_{\delta/M}(1)}, \overline{\BB_{\delta/M}(1)})$. Brouwer's fixed point theorem implies the existence of a fixed point $T_\mathbf{f} \in\overline{\BB_{\delta/M}(1)}$ of $\varphi_\mathbf{f}$.
\end{proof}
\subsection{Proof of the main result}
\begin{figure}
\centering
\includegraphics[width=0.8\textwidth]{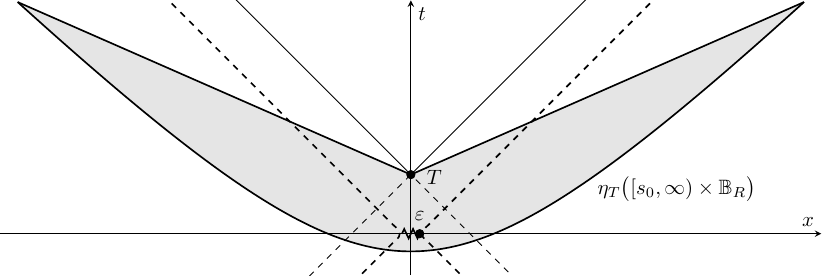}
\caption{This is a depiction of the region where we establish the stable hyperboloidal evolution.}
\end{figure}
Now we are ready to prove stability of the future development for the Cauchy problem of the Yang-Mills equation under small perturbations of the blowup.
\begin{proof}[Proof of \Cref{YMglobThm}]
\begin{enumerate}[wide, itemsep=1em, topsep=1em]
\item We have proved in \Cref{hypEvo} that there are positive constants $M,\delta,\varepsilon>0$ such that for all radial $(f,g)\in C^\infty(\BB^{d}_{R}) \times C^\infty(\BB^{d}_{R})$ with $\supp(f,g)\subseteq \BB_\varepsilon^d\times\BB_\varepsilon^d$ and $\| (f,g) \|_{\HH^{k+\frac{d+1}{2}}(\BB^{d}_{R})} \leq \delta/{M^{2}}$ there is a $T\in[1-\delta/M,1+\delta/M]$ and a unique $\mathbf{\Phi}_{f,g}\in C\big( [s_{0},\infty),\HH^{k}_\rad(\BB^{d}_{R}) \big)$ with $\| \mathbf{\Phi}_{f,g}(s) \|_{\HH^{k}(\BB^{d}_{R})} \leq \delta \ee^{-\omega_{0} s}$ and
\begin{equation*}
\mathbf{\Phi}_{f,g}(s) = \mathbf{S}(s-s_{0}) \mathbf{U}(f,g,T) + \int_{s_{0}}^s \mathbf{S}(s-s') \mathbf{N}(\mathbf{\Phi}_{f,g}(s')) \dd s'
\end{equation*}
for all $s\geq s_{0}$. Standard regularity arguments yield $\mathbf{\Phi}_{f,g} \in C^\infty([s_{0},\infty)\times\BB^{d}_{R})\times C^\infty([s_{0},\infty)\times\BB^{d}_{R})$ and a classical solution to
\begin{equation*}
\begingroup
\renewcommand{\arraystretch}{1.3}
\left\{
\begin{array}{rcl}
\pd_s \mathbf{\Phi}_{f,g}(s,\,.\,) &=& \mathbf{L} \mathbf{\Phi}_{f,g}(s,\,.\,) + \mathbf{N}(\mathbf{\Phi}_{f,g}(s,\,.\,)) \,, \\
\mathbf{\Phi}_{f,g}(s_{0},\,.\,) &=& \mathbf{U}(f,g,T) \,.
\end{array}
\right.
\endgroup
\end{equation*}
It follows that $\widetilde{v}(s,y) = \ee^{2s}\Phi_{1}(s,y)$ defines a smooth function $\widetilde{v} \in C^\infty\big( [s_{0},\infty)\times\BB^{d}_{R} \big)$ that satisfies \Cref{HSCYM}. By construction, the field $u \coloneqq u_{T}^{*} + \widetilde{v}\circ\eta_{T}^{-1}$ solves \Cref{YM2} in the domain $\eta_{T} \big( [s_{0},\infty) \times \BB^{d}_{R} \big)$. The uniqueness part in \Cref{locCauchy} implies the initial conditions $u(0,\,.\,) = u_{T}^{*} + f$ and  $\pd_{0} u(0,\,.\,) = \pd_{0} u_{T}^{*} + g$. Also, finite speed of propagation yields $u\in C^\infty(\Omega_{T,R})$ with $u = u_{1}^{*}$ in $\Omega_{T,R} \setminus \eta_{T} \big( [s_{0},\infty) \times \BB^{d}_{R} \big)$.
\item By the above construction we have
\begin{align*}
(u\circ\eta_{T})(s,\,.\,) &= (u_{T}^{*}\circ\eta_{T})(s,\,.\,) + \ee^{2s}\Phi_{1}(s,\,.\,) \,, \\
\pd_s (u\circ\eta_{T})(s,\,.\,) &= \pd_s (u_{T}^{*}\circ\eta_{T})(s,\,.\,) + \ee^{2s}\Phi_2(s,\,.\,) \,,
\end{align*}
and the estimates for $\left\| (u-u_{T}^{*})\circ\eta_{T}(s,\,.\,) \right\|_{H^{k}(\BB^{d}_{R})}$ and $\left\| \pd_s (u-u_{T}^{*})\circ\eta_{T}(s,\,.\,) \right\|_{H^{k-1}(\BB^{d}_{R})}$ follow from $\| \mathbf{\Phi}_{f,g}(s) \|_{\HH^{k}(\BB^{d}_{R})} \leq \delta \ee^{-\omega_{0} s}$. With this, we conclude from the Sobolev embedding $H^{k}(\BB^{d}_{R}) \hookrightarrow L^{\infty}(\BB^{d}_{R})$ that
\begin{align*}
\ee^{-2s(t)} \big|(u - u_{T}^{*})\circ\eta_{T}\big(s(t),y\big)\big| &\lesssim \| (u - u_{T}^{*})\circ\eta_{T}\big(s(t),\,.\,\big) \|_{H^{k}(\BB^{d}_{R})} \lesssim \ee^{-\omega_{0}s(t)} \,, \\
\big|u_{T}^{*}\circ\eta_{T}\big(s(t),y\big)\big| &= \ee^{2s(t)} \frac{a_d}{b_d h(y)^{2} + |y|^{2}} \simeq \ee^{2s(t)} \,,
\end{align*}
for all $y\in\BB^{d}_{R}$ and all $0\leq t < T$, and hence the bound along the hyperboloids.
\qedhere
\end{enumerate}
\end{proof}
\appendix
\addtocontents{toc}{\protect\setcounter{tocdepth}{1}}
\section{Geometry of hyperboloidal similarity coordinates}
\label{geomHSC}
Let $d\in\NN$ and consider $(1+d)$-dimensional Minkowski space-time $\RR^{1,d}$. Let $T\in\RR$ be a parameter and define the region
\begin{equation*}
\Omega_{T} = \big\{ (t,x)\in\RR^{1,d} \mid |x|>-(T-t) \big\} \,.
\end{equation*}
The coordinates given by the diffeomorphism
\begin{equation*}
\eta_{T}: \RR^{1+d} \rightarrow \Omega_{T}\,, \quad (s,y) \mapsto \big( T+\ee^{-s}h(y),\ee^{-s}y \big) \,,
\end{equation*}
with inverse
\begin{equation*}
\eta_{T}^{-1}: \Omega_{T} \rightarrow \RR^{1+d} \,, \quad (t,x) \mapsto \big( \log g_{T}(t,x) , g_{T}(t,x) x \big) \,,
\end{equation*}
where
\begin{equation*}
h(y) = \sqrt{2+|y|^{2}}-2
\end{equation*}
and
\begin{equation*}
g_{T}(t,x) = \frac{1}{ (T-t) + \frac{1}{2}\sqrt{2\big( (T-t)^{2} + |x|^{2} \big)} } \,,
\end{equation*}
are called \emph{hyperboloidal similarity coordinates}. In the following subsections, we use Latin indices $i,j,k \in \{ 1,\ldots,d \}$ and Greek indices $\kappa,\lambda,\mu,\nu\in\{0,1,\ldots,d\}$. The components of the Minkowski metric and its inverse, respectively, are
\begin{align*}
\minkmet_{00} &= -1 \,, & \minkmet_{0i} &= \minkmet_{i0} = 0 \,, & \minkmet_{ij} = \eucmet_{ij} \,, \\
\minkmet^{00} &= -1 \,, & \minkmet^{0i} &= \minkmet^{i0} = 0 \,, & \minkmet^{ij} = \eucmet^{ij} \,.
\end{align*}
\subsection{Jacobian of hyperboloidal similarity coordinates}
\label{JacobianHSC}
The components of the Jacobian of hyperboloidal similarity coordinates read
\begin{align}
\pd_{0}{\eta_{T}}^0(s,y) &= -\ee^{-s}h(y) \,,&
\pd_{i}{\eta_{T}}^0(s,y) &= \ee^{-s} \pd_{y^i}h(y) \,,\\
\pd_{0}{\eta_{T}}^i(s,y) &= -\ee^{-s} y^i \,,&
\pd_j{\eta_{T}}^i(s,y) &= \ee^{-s} \eucmet^i_j \,,
\end{align}
and
\begin{align}
\pd_{0}{\eta_{T}^{-1}}^0(t,x) &= \frac{\pd_{t} g_{T}(t,x)}{g_{T}(t,x)} \,,&
\pd_{i}{\eta_{T}^{-1}}^0(t,x) &= \frac{\pd_{x^i} g_{T}(t,x)}{g_{T}(t,x)} \,, \\
\pd_{0}{\eta_{T}^{-1}}^i(t,x) &= x^i\pd_{t} g_{T}(t,x) \,,&
\pd_j{\eta_{T}^{-1}}^i(t,x) &=  x^i\pd_{x^{j}} g_{T}(t,x) + g_{T}(t,x) \eucmet^i_j \,.
\end{align}
By block matrix inversion, or direct computation, the components of the inverse matrices
\begin{align}
(\pd_{0}{\eta_{T}^{-1}}^0\circ\eta_{T})(s,y) &= \frac{\ee^s}{y^{k}\pd_{y^{k}}h(y)-h(y)} \,, \\
(\pd_{i}{\eta_{T}^{-1}}^0\circ\eta_{T})(s,y) &= -\ee^s \frac{\pd_{y^i}h(y)}{y^{k}\pd_{y^{k}}h(y)-h(y)} \,, \\
(\pd_{0}{\eta_{T}^{-1}}^i\circ\eta_{T})(s,y) &= \frac{\ee^s y^i}{y^{k}\pd_{y^{k}}h(y)-h(y)} \,, \\
(\pd_j{\eta_{T}^{-1}}^i\circ\eta_{T})(s,y) &=  \ee^s \left( \eucmet_j^i - \frac{y^i\pd_{y^{j}}h(y)}{y^{k}\pd_{y^{k}}h(y)-h(y)} \right) \,,
\end{align}
and
\begin{align}
(\pd_{0}{\eta_{T}}^0\circ\eta_{T}^{-1})(t,x) &= T-t \,,&
(\pd_{i}{\eta_{T}}^0\circ\eta_{T}^{-1})(t,x) &= -\frac{1}{g_{T}(t,x)} \frac{\pd_{x^i}g_{T}(t,x)}{\pd_{t} g_{T}(t,x)} \,,\\
(\pd_{0}{\eta_{T}}^i\circ\eta_{T}^{-1})(t,x) &= -x^i \,,&
(\pd_j{\eta_{T}}^i\circ\eta_{T}^{-1})(t,x) &= \frac{1}{g_{T}(t,x)} \eucmet^i_j \,,
\end{align}
are obtained. For a first application, we obtain the transformations of partial derivatives
\begin{align}
\label{PDHSC0}
(\pd_{0} u \circ \eta_{T})(s,y) &= \frac{\ee^s}{y^{k}\pd_{y^{k}} h(y) - h(y)} \big( \pd_s + y^{k} \pd_{y^{k}} \big) v(s,y) \,, \\
\label{PDHSCj}
(\pd_j u \circ \eta_{T})(s,y) &= \ee^s \pd_{y^{j}} v(s,y) - \frac{\ee^s \pd_{y^{j}} h(y)}{y^{k}\pd_{y^{k}} h(y) - h(y)} \big( \pd_s + y^{k}\pd_{y^{k}} \big) v(s,y) \,,
\end{align}
for $u,v\in C^\infty(\RR^d)$ that are related by $v = u \circ \eta_{T}$. The determinant from the Jacobian is readily computed by adding a $y^i$-multiple of the $(i+1)$-st column to the first column,
\begin{equation*}
\det(\pd\eta_{T})(s,y) =
\ee^{-(d+1)s} \big( y^{k}\pd_{y^{k}}h(y) - h(y) \big) \,.
\end{equation*}
\subsection{Minkowski metric in hyperboloidal similarity coordinates}
\label{metHSC}
The components of the Minkowski metric and its inverse are expressed in hyperboloidal similarity coordinates by
\begin{equation*}
\g_{\mu\nu} =
\pd_{\mu} {\eta_{T}}^{\kappa} \minkmet_{\kappa\lambda} \pd_{\nu} {\eta_{T}}^{\lambda} \,, \quad \g^{\mu\nu} =
\big( \pd_{\kappa} {\eta_{T}^{-1}}^\mu \circ\eta_{T} \big) \minkmet^{\kappa\lambda} \big( \pd_{\lambda} {\eta_{T}^{-1}}^\nu \circ\eta_{T} \big)
\end{equation*}
which yields
\begin{align}
\g_{00}(s,y) &= \ee^{-2s}\left( -h(y)^{2} + |y|^{2} \right) \,, \\
\g_{i0}(s,y) &= \g_{0i}(s,y) = \ee^{-2s}\left( h(y) \pd_{y^i} h(y)- y_{i} \right) \,, \\
\g_{ij}(s,y) &= \ee^{-2s} \left( \eucmet_{ij} - \pd_{y^i}h(y)\pd_{y^{j}}h(y) \right) \,.
\end{align}
and
\begin{align}
\g^{00}(s,y) &= -\ee^{2s} \frac{1 - \pd_{y^{k}} h(y) \pd_{y_k} h(y)}{\left( y^{k}\pd_{y^{k}}h(y) - h(y)\right)^{2}} \,, \\
\g^{i0}(s,y) &= \g^{0i}(s,y) = \ee^{2s} \left( - \frac{1 - \pd_{y^{k}} h(y) \pd_{y_k} h(y)}{\left( y^{k}\pd_{y^{k}}h(y) - h(y)\right)^{2}}y^i - \frac{\pd_{y_{i}}h(y)}{y^{k}\pd_{y^{k}}h(y) - h(y)} \right) \,, \\
\g^{ij}(s,y) &= \ee^{2s} \left( \eucmet^{ij} - \frac{1 - \pd_{y^{k}} h(y) \pd_{y_k} h(y)}{\left( y^{k}\pd_{y^{k}}h(y) - h(y)\right)^{2}}y^iy^{j} - \frac{y^{j}\pd_{y_{i}}h(y)+y^i\pd_{y_j}h(y)}{y^{k}\pd_{y^{k}}h(y) - h(y)} \right) \,,
\end{align}
respectively. The metric determinant $\metdet{\g} = |\det( \pd\eta_{T} )|$ reads
\begin{equation*}
\metdet{\g}(s,y) =\ee^{-(d+1)s}\left(y^{k}\pd_{y^{k}}h(y)-h(y)\right) \,.
\end{equation*}
\subsection{Christoffel symbols of the metric}
\label{ChristoffelHSC}
The Christoffel symbols of the Minkowski metric transform in hyperboloidal similarity coordinates to
\begin{equation*}
\Gamma^{\lambda}{}_{\mu\nu}  = \frac{1}{2} \g^{\lambda\kappa} \big( \pd_{\mu}\g_{\nu\kappa} + \pd_{\nu}\g_{\kappa\mu} - \pd_{\kappa}\g_{\mu\nu} \big) = (\pd_{\mu}\pd_{\nu}{\eta_{T}}^{\kappa}) (\pd_{\kappa} {\eta_{T}^{-1}}^{\lambda} \circ\eta_{T}) \,.
\end{equation*}
This gives the expressions
\begin{align}
\Gamma^0{}_{00}(s,y) &= -1 \,,&
\Gamma^0{}_{i0}(s,y) &= \Gamma^0{}_{0i}(s,y) = 0 \,,&
\Gamma^0{}_{ij}(s,y) &= \frac{\pd_{y^i}\pd_{y^{j}}h(y)}{y^{k}\pd_{y^{k}}h(y) - h(y)} \,, \\
\Gamma^{k}{}_{00}(s,y) &= 0 \,,&
\Gamma^{k}{}_{i0}(s,y) &= \Gamma^{k}{}_{0i}(s,y) = -\eucmet_{i}^{k} \,,&
\Gamma^{k}{}_{ij}(s,y) &= \frac{\pd_{y^i}\pd_{y^{j}}h(y)}{y^{k}\pd_{y^{k}}h(y) - h(y)}y^{k} \,.
\end{align}
Also recall the identity
\begin{equation}
\label{contractedChristoffel}
\frac{1}{\metdet{\g}} \pd_{\mu} \big( \g^{\mu\nu} \metdet{\g} \big) = - \g^{\kappa\lambda} \Gamma^\nu{}_{\kappa\lambda} \,.
\end{equation}
\section{Equivalent descriptions for radial Sobolev norms}
Here, we review the auxiliary results from \cite[Appendix A]{MR4338226}. The following inequalities are versions of \emph{Hardy's inequality}.
\begin{lemma}
\label{Hardy}
\begin{enumerate}[itemsep=1em, topsep=1em]
\item\label{HardyBalls} Let $R,s\in\RR$ such that $R>0$ and $s<-1/2$. Then
\begin{equation*}
\big\| |\,.\,|^s f \big\|_{L^{2}(\BB_{R})} \lesssim \big\| |\,.\,|^{s+1} f' \big\|_{L^{2}(\BB_{R})}
\end{equation*}
for all $f\in C^\infty(\overline{\BB_{R}})$ with $\lim_{x\to 0} \left( |x|^{s+\frac{1}{2}} |f(x)| \right) = 0$.
\item\label{HardyR} Let $s\in\RR$ such that $s\neq -1/2$. Then
\begin{equation*}
\big\| |\,.\,|^s f \big\|_{L^{2}(\RR)} \lesssim \big\| |\,.\,|^{s+1} f' \big\|_{L^{2}(\RR)}
\end{equation*}
for all $f\in \mathcal{S}(\RR)$ with $\lim_{x\to 0} \left( |x|^{s+\frac{1}{2}} |f(x)| \right) = 0$.
\item\label{classicHardy} Let $d\in\NN$ and $s\in \big[ 0, \frac{d}{2} \big)$. Then
\begin{equation*}
\big\| |\,.\,|^{-s} f \big\|_{L^{2}(\RR^d)} \lesssim \big\| f \big\|_{\dot{H}^s(\RR^d)} 
\end{equation*}
for all $f\in\mathcal{S}(\RR^d)$.
\end{enumerate}
\end{lemma}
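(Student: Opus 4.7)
All three statements are classical weighted Hardy inequalities, and I would handle parts (1) and (2) by a single integration-by-parts argument, then treat part (3) separately by Fourier methods.

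For parts (1) and (2), the driving identity is $\frac{d}{dx}(x|x|^{2s}) = (2s+1)|x|^{2s}$ on $\RR\setminus\{0\}$. Multiplying by $f(x)^2$ and integrating by parts on $(\varepsilon,R)$ (respectively $(\varepsilon,M)$ with $M\to\infty$ in part (2)) gives
\begin{equation*}
(2s+1)\int_\varepsilon^R x^{2s} f(x)^2\,dx = \big[\,x^{2s+1} f(x)^2\,\big]_\varepsilon^R - 2\int_\varepsilon^R x^{2s+1} f(x)f'(x)\,dx \,.
\end{equation*}
The hypothesis $\lim_{x\to 0}|x|^{s+\frac{1}{2}}|f(x)|=0$ is precisely what kills the inner boundary term as $\varepsilon\to 0$. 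For part (1) with $s<-1/2$, dividing by the negative constant $(2s+1)$ causes the outer boundary term $R^{2s+1}f(R)^2$ to appear with a non-positive sign on the right-hand side, so it can be discarded. For part (2), the decay of Schwartz functions makes the outer boundary term vanish in either sign regime of $2s+1$, so the assumption $s\neq -1/2$ is all that is needed. The negative half-line is handled symmetrically. In either case, Cauchy--Schwarz applied to the remaining cross term,
\begin{equation*}
\Big|\int |x|^{2s+1} f(x)f'(x)\,dx\Big| \leq \big\||\,.\,|^{s}f\big\|_{L^2}\big\||\,.\,|^{s+1}f'\big\|_{L^2} \,,
\end{equation*}
lets one absorb a factor of $\big\||\,.\,|^{s}f\big\|_{L^2}$ into the left-hand side and conclude.

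For part (3), I would use the Fourier characterization $\|f\|_{\dot H^s(\RR^d)} = \big\||\,.\,|^s \FT f\big\|_{L^2(\RR^d)}$. Setting $g=(-\Delta)^{s/2}f$, the claim is equivalent to
\begin{equation*}
\big\||\,.\,|^{-s}\,I_s g\big\|_{L^2(\RR^d)} \lesssim \|g\|_{L^2(\RR^d)} \,,
\end{equation*}
where $I_s = (-\Delta)^{-s/2}$ is the Riesz potential (convolution with a constant multiple of $|\,.\,|^{-(d-s)}$). This is the diagonal case of the Stein--Weiss weighted inequality, which holds precisely in the range $0\leq s<d/2$, where the lower bound ensures non-negativity of the weight and the upper bound reflects local integrability of the kernel together with the $L^2$-mapping range of $I_s$. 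The case $s=0$ is Plancherel and trivial.

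The main obstacle is clearly part (3): the one-dimensional integration-by-parts trick used for parts (1) and (2) extends directly to classical Hardy at $s=1$ (after reduction to radial profiles via polar coordinates) but not to general fractional $s$. I would either invoke Stein--Weiss as a black box, or, for a self-contained route, reduce to radial functions using symmetric decreasing rearrangement (the left-hand side only increases since the weight $|x|^{-2s}$ is already radially decreasing, while the $\dot H^s$ seminorm decreases under rearrangement), and then bound a one-dimensional weighted Riesz potential on the radial profile using a $TT^\ast$ argument on Schur-test kernels. The one-dimensional reduction keeps the argument elementary and avoids any appeal to interpolation theory.
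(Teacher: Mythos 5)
Your argument for parts (1) and (2) --- integration by parts on $(\varepsilon,R)$ against the weighted square, sign analysis of the boundary terms at $0$ and $R$, and absorption of the cross term --- is essentially the paper's own proof, with Cauchy--Schwarz in place of the paper's Young's inequality for the final absorption step (an equivalent choice). For part (3) the paper simply cites the classical fractional Hardy inequality from Tao's textbook, so your appeal to Stein--Weiss (or the rearrangement reduction) is a sound but more elaborate version of the same move: invoke the known result rather than reprove it.
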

\begin{proof}
\begin{enumerate}[wide, itemsep=1em, topsep=1em]
\item Using integration by parts and Young's inequality, we have
\begin{align*}
&\quad\int_{\BB_{R}} |x|^{2s} f(x)^{2} \dd x \\&=
\int_{-R}^0 (-x)^{2s} f(x)^{2} \dd x + \int_{0}^R x^{2s} f(x)^{2} \dd x \\&=
-\frac{1}{2s+1} \lim_{x \nearrow 0} (-x)^{2s+1} f(x)^{2} +
\frac{1}{2s+1} R^{2s+1} f(-R)^{2} +
\int_{-R}^0 \frac{2}{2s+1} (-x)^{2s+1} f(x)f'(x) \dd x \\&\quad+
\frac{1}{2s+1} R^{2s+1} f(R)^{2} -
\frac{1}{2s+1} \lim_{x \searrow 0} x^{2s+1} f(x)^{2} -
\int_{0}^R \frac{2}{2s+1} x^{2s+1} f(x)f'(x) \dd x  \\& \leq
\frac{1}{2s+1} R^{2s+1} \left( f(R)^{2} + f(-R)^{2} \right) \\&\quad+
\int_{-R}^0 \frac{1}{2} (-x)^{2s} f(x)^{2} + \frac{2}{(2s+1)^{2}} (-x)^{2s+2} f'(x)^{2} \dd x \\&\quad+
\int_{0}^R \frac{1}{2} x^{2s} f(x)^{2} + \frac{2}{(2s+1)^{2}} x^{2s+2} f'(x)^{2} \dd x \\&=
\frac{1}{2s+1} R^{2s+1} \left( f(R)^{2} + f(-R)^{2} \right) +
\frac{1}{2} \int_{\BB_{R}} |x|^{2s} f(x)^{2} \dd x +
\frac{2}{(2s+1)^{2}} \int_{\BB_{R}} |x|^{2s+2} f'(x)^{2} \dd x \,.
\end{align*}
Since $2s+1 < 0$ we can estimate the remaining boundary term from above by $0$ and the result follows.
\item By letting $R\to \infty$ in the above calculation we infer the result.
\item This is the classical Hardy's inequality from harmonic analysis, see \cite[Lemma A.2]{MR2233925}.
\qedhere
\end{enumerate}
\end{proof}
We rely on a convenient description of the Sobolev norm for radial functions in odd space dimensions in terms of a weighted Sobolev norm for the radial representative.
\begin{lemma}
\label{oddSobolev}
Let $d,k\in\NN_{0}$, $d\geq 1$ odd. We have
\begin{equation*}
\|f\|_{H^{k}(\RR^d)} \simeq \big\| |\,.\,|^{\frac{d-1}{2}} \widehat{f} \big\|_{H^{k}(\RR)}
\end{equation*}
for all radial $f\in \mathcal{S}(\RR^d)$, where $f(\,.\,) = \widehat{f}(|\,.\,|)$.
\end{lemma}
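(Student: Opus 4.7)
The plan is to prove the equivalence by induction on $k$, with the base case $k=0$ following directly from passing to polar coordinates. Indeed,
\[
\|f\|_{L^2(\RR^d)}^2 = \omega_{d-1}\int_0^\infty\widehat{f}(r)^2 r^{d-1}\dd r = \tfrac{\omega_{d-1}}{2}\int_\RR r^{d-1}\widehat{f}(r)^2\dd r = \tfrac{\omega_{d-1}}{2}\||\,.\,|^{(d-1)/2}\widehat{f}\|_{L^2(\RR)}^2,
\]
since $r^{d-1}\widehat{f}(r)^2$ is an even function of $r$ when $d$ is odd and $\widehat{f}$ is even.

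For $k\geq 1$, my first sub-goal would be to establish the intermediate characterisation
\[
\|f\|_{H^k(\RR^d)}^2 \simeq \sum_{j=0}^k \omega_{d-1}\int_0^\infty|\widehat{f}^{(j)}(r)|^2 r^{d-1}\dd r
\]
for radial Schwartz $f$. A Cartesian partial derivative of order $j$ applied to $f = \widehat{f}(|\,.\,|)$ expands into $\widehat{f}^{(j)}(|x|)$ times an angular factor plus subordinate terms of the shape $\widehat{f}^{(\ell)}(|x|)/|x|^{j-\ell}$ with $\ell<j$. Since $\widehat{f}$ is even, its odd-order derivatives vanish at the origin and these subordinate terms extend to smooth radial Schwartz functions on $\RR^d$, bounded in $L^2(\RR^d)$ by the top derivative via iterated applications of the classical Hardy inequality (the third part of \autoref{Hardy}). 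Applying the $k=0$ base case on each summand rewrites the sum as $\tfrac{\omega_{d-1}}{2}\sum_{j=0}^k\||\,.\,|^{(d-1)/2}\widehat{f}^{(j)}\|_{L^2(\RR)}^2$.

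The second sub-goal is to identify this weighted-derivative sum with $\||\,.\,|^{(d-1)/2}\widehat{f}\|_{H^k(\RR)}^2$. Setting $g\coloneqq|\,.\,|^{(d-1)/2}\widehat{f}$, the Leibniz rule produces the truncated expansion
\[
g^{(j)} = \sum_{i=0}^{\min(j,(d-1)/2)}\binom{j}{i}\tfrac{((d-1)/2)!}{((d-1)/2-i)!}\,|\,.\,|^{(d-1)/2-i}\widehat{f}^{(j-i)},
\]
terminating because $(d-1)/2$ is a nonnegative integer. The inequality $\|g\|_{H^k(\RR)}\lesssim\sum_{j=0}^k\||\,.\,|^{(d-1)/2}\widehat{f}^{(j)}\|_{L^2(\RR)}$ then follows from the triangle inequality together with iterated applications of Hardy's inequality on $\RR$ (the second part of \autoref{Hardy}) at exponent $s=(d-1)/2-i$, which absorbs each lower-weight term into the highest one. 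The reverse direction is obtained by triangular inversion of the same Leibniz system and handled inductively on $j$.

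The main obstacle will be verifying the hypotheses of Hardy's inequality at every step. The decay condition $\lim_{r\to 0}|r|^{s+1/2}|\widehat{f}^{(\ell)}(r)|=0$ forces $s+1/2>0$, i.e.\ $i<d/2$, which is ensured by $i\leq (d-1)/2$ together with oddness of $d$; the excluded value $s=-1/2$ likewise never arises for an integer exponent of the form $(d-1)/2-i$. Combined with the automatic Schwartz decay at infinity, the hypotheses of \autoref{Hardy} are satisfied throughout the iteration, so that the remaining work is routine combinatorial bookkeeping in the Leibniz inversion.
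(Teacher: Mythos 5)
Your base case is fine, and the direction $\big\||\,.\,|^{(d-1)/2}\widehat f\big\|_{H^k(\RR)}\lesssim\|f\|_{H^k(\RR^d)}$ can indeed be closed along your lines: in the forward Leibniz expansion of $\pd_r^j\big(r^{(d-1)/2}\widehat f\big)$ the weights $r^{(d-1)/2-i}$ only occur with $0\leq i\leq(d-1)/2$, so every Hardy application has a non-negative exponent and the decay condition at the origin is automatic. The trouble lies entirely in the converse direction $\|f\|_{H^k(\RR^d)}\lesssim\big\||\,.\,|^{(d-1)/2}\widehat f\big\|_{H^k(\RR)}$ — which is exactly the direction for which the paper leaves the physical side — and there both of your sub-goals have gaps.

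In your first sub-goal, the assertion that the subordinate terms of the Cartesian expansion of $\pd^\alpha f$ "extend to smooth radial Schwartz functions" is not correct: the radial profile $\widehat f^{(\ell)}(r)/r^{|\alpha|-\ell}$ has parity $(-1)^{|\alpha|}$, and once $|\alpha|-\ell\geq d/2$ it is not even square-integrable against $r^{d-1}\,\dd r$ near the origin. For instance, in $d=3$ the $\widehat f''$-coefficient of $\pd_1^4 f$ behaves like $\widehat f''(0)/|x|^2$, whose $L^2(\RR^3)$ norm is infinite; only the full sum $\pd^\alpha f$ is in $L^2$, by cancellations a term-by-term Hardy bound cannot see. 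The range $k>(d-1)/2$ where this occurs is precisely the range the paper uses. In your second sub-goal the "triangular inversion" is circular and loses Hardy's hypothesis: with $g=r^{(d-1)/2}\widehat f$ the inverse Leibniz formula reads $r^{(d-1)/2}\widehat f^{(m)} = \sum_{i=0}^m c_i\,g^{(m-i)}r^{-i}$ with weights running all the way down to $r^{-m}$; for $m>(d-1)/2$ the derivative $g^{(m-i)}$ no longer vanishes at the origin, so the limit condition $\lim_{r\to 0}|r|^{-i+1/2}|g^{(m-i)}(r)|=0$ demanded by the second part of \autoref{Hardy} fails, and where it does hold, iterating Hardy simply reproduces $\|r^{(d-1)/2}\widehat f^{(m)}\|_{L^2}$ on the right-hand side. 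This is exactly why the paper's proof switches representations: it uses the closed-form half-integer-order Bessel kernel to write $\widehat{\FT f}$ as a finite combination of $\rho^{-(d-1)/2-j}\FT\big((\,.\,)^{(d-1)/2-j}\widehat f\big)$, after which the Hardy weights $|\rho|^{k-j}$ arising in Fourier variables have non-negative exponents, the decay conditions hold trivially, and the estimate closes with no cancellation argument needed. Some such device is essential here; the physical-side Leibniz-and-Hardy argument alone does not give the hard inequality.
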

\begin{proof}
\begin{enumerate}[wide, itemsep=1em, topsep=1em]
\item[``$\gtrsim$'':] After a change of variables we see
\begin{equation*}
\| f \|_{L^{2}(\RR^d)}^{2} = \int_{\RR^d} |f(x)|^{2} \dd x = \int_{\RR^d} |\widehat{f}(|x|)|^{2} \dd x \simeq \int_\RR |\widehat{f}(r)|^{2} |r|^{d-1} \dd r = \big\| |\,.\,|^{\frac{d-1}{2}} \widehat{f} \big\|_{L^{2}(\RR)}^{2} \,.
\end{equation*}
Now exploit radiality for $\widehat{f}(r) = f(re_{1})$ to see $\widehat{f}^{(k)}(r) = (\pd_{1}^{k} f)(re_{1})$. Since $d$ is odd, a derivative $\pd_{r}^\ell$ hitting $r^{\frac{d-1}{2}}$ contributes to the Sobolev norm only when $\ell \leq \frac{d-1}{2}$, i.e. $\ell < \frac{d}{2}$. In this case Hardy's inequality yields
\begin{equation*}
\big\|  |\,.\,|^{-\ell} |\,.\,|^{\frac{d-1}{2}} \widehat{f}^{(k-\ell)} \big\|_{L^{2}(\RR)} \simeq \big\|  |\,.\,|^{-\ell} \pd_{1}^{k-\ell} f \big\|_{L^{2}(\RR^d)} \lesssim \big\| \pd_{1}^{k-\ell} f \big\|_{\dot{H}^{\ell}(\RR^d)} \lesssim \| f \|_{\dot{H}^{k}(\RR^d)} \,.
\end{equation*}
Using this in the classic definition of the homogeneous Sobolev norm we get one inequality.
\item[``$\lesssim$'':] To see the other inequality, we introduce the Fourier transform $\FT$ on $\mathcal{S}(\RR^d)$ in \Cref{FourierTransform}. If $f$ is radial then $\FT f$ is also radial and
\begin{equation*}
\widehat{\FT f}(\rho) = (2\pi)^{\frac{d}{2}} \rho^{-\frac{d}{2}+1} \int_{0}^\infty \widehat{f}(r) J_{\frac{d}{2}-1}(\rho r) r^{\frac{d}{2}} \dd r \,,
\end{equation*}
cf. \cite{MR1970295} for a reference. The Bessel function of nonnegative half-integer order is given explicitly by
\begin{align*}
J_{\frac{d}{2}-1}(z) &= \sqrt{\frac{2}{\pi}} z^{\frac{d-3}{2} + \frac{1}{2}} \left( - \frac{1}{z} \frac{\od}{\od z}\right)^{\frac{d-3}{2}} \frac{\sin(z)}{z} \\&=
\sqrt{\frac{2}{\pi}} z^{-\frac{1}{2}} \left( P_{\frac{d-3}{2}}(z^{-1}) \sin(z) -  Q_{\frac{d-5}{2}}(z^{-1}) \cos(z) \right) \,,
\end{align*}
where $P_n$, $Q_n$ are polynomials of degree $n$ with $P_n(-z) = (-1)^n P_n(z)$ and $Q_n(-z) = (-1)^n Q_n(z)$, see \cite{teschl1998topicsRA}. It follows crucially from the parity of $P_n,Q_n$ and oddness of $d$ that
\begin{align*}
\widehat{\FT f}(\rho) &=
(2\pi)^{\frac{d-1}{2}} \rho^{-\frac{d-1}{2}} \int_\RR \ee^{-\ii\rho r} \widehat{f}(r) r^{\frac{d-1}{2}} \Big( \ii P_{\frac{d-3}{2}} \big( (\rho r)^{-1} \big) - Q_{\frac{d-5}{2}} \big( (\rho r)^{-1} \big) \Big) \dd r \\&=
(2\pi)^{\frac{d-1}{2}} \sum_{j=0}^{\frac{d-3}{2}} p_j \rho^{-\frac{d-1}{2}-j} \FT\big( (\,.\,)^{\frac{d-1}{2} - j} \widehat{f}  \big)(\rho)
\end{align*}
where $\ii P_{\frac{d-3}{2}}(z) - Q_{\frac{d-5}{2}}(z) \eqqcolon \sum_{j=0}^{\frac{d-3}{2}} p_j z^{j}$. So, placing this in the homogeneous Sobolev norm and applying \Cref{Hardy} successively gives
\begin{align*}
\| f \|_{\dot{H}^{k}(\RR^d)} &= \big\| |\,.\,|^{k} \FT f \big\|_{L^{2}(\RR^d)} \simeq
\big\| |\,.\,|^{\frac{d-1}{2}} |\,.\,|^{k} \widehat{\FT f} \big\|_{L^{2}(\RR)} \\&\lesssim
\sum_{j=0}^{\frac{d-3}{2}} \big\| |\,.\,|^{k - j} \FT \big( (\,.\,)^{\frac{d-1}{2} - j} \widehat{f} \big) \big\|_{L^{2}(\RR)} \\&\lesssim
\sum_{j=0}^{\frac{d-3}{2}} \big\| |\,.\,|^{k} \FT \big( (\,.\,)^{\frac{d-1}{2} - j} \widehat{f} \big)^{(j)} \big\|_{L^{2}(\RR)} \\&\simeq
\big\| |\,.\,|^{k} \FT \big( (\,.\,)^{\frac{d-1}{2}} \widehat{f} \big) \big\|_{L^{2}(\RR)} \\&\simeq
\big\| |\,.\,|^{\frac{d-1}{2}} \widehat{f} \big\|_{\dot{H}^{k}(\RR)} \,.
\end{align*}
This yields the other inequality.\qedhere
\end{enumerate}
\end{proof}
We construct types of Sobolev extensions in order to transfer this result to open balls.
\begin{lemma}
\label{SobolevExtension}
Let $R>0$ and $k\in\NN_{0}$. There is a map $\mathcal{E}: C^{k}(\overline{\BB_{R}}) \rightarrow C^{k}(\RR)$ such that
\begin{equation*}
(\mathcal{E} f)|_{\BB_{R}} = f \,, \qquad \supp(\mathcal{E} f) \subseteq \BB_{2R} \,, \qquad \| \mathcal{E} f \|_{H^{k}(\RR\setminus\BB_{R})} \lesssim \|f\|_{H^{k}(\BB_{R}\setminus\BB_{R/2})} \,,
\end{equation*}
for all $f\in C^{k}(\overline{\BB_{R}})$.
\end{lemma}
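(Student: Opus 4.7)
The plan is to construct $\mathcal{E}$ by a Hestenes-type reflection across each boundary point $\pm R$, followed by multiplication with a smooth cutoff. First I would fix once and for all a cutoff $\chi\in C_c^\infty(\RR)$ with $\chi\equiv 1$ on $\overline{\BB_R}$ and $\supp\chi\subseteq\BB_{2R}$. Then I would choose $k+1$ distinct numbers $\lambda_0,\ldots,\lambda_k\in(0,\tfrac12)$ and solve the Vandermonde system
\begin{equation*}
\sum_{j=0}^{k}(-\lambda_j)^{\ell}a_j=1,\qquad \ell=0,1,\ldots,k,
\end{equation*}
for coefficients $a_0,\ldots,a_k\in\RR$, which is uniquely solvable since the $\lambda_j$ are distinct. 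For $f\in C^k(\overline{\BB_R})$ I would then define the reflection across $R$ by
\begin{equation*}
\tilde f_{+}(x)=\sum_{j=0}^{k}a_j\,f\bigl(R-\lambda_j(x-R)\bigr),\qquad x\in[R,2R],
\end{equation*}
and analogously $\tilde f_{-}$ on $[-2R,-R]$ by reflection across $-R$. Finally I would set $\mathcal{E} f$ to equal $\chi\cdot f$ on $\overline{\BB_R}$, $\chi\cdot\tilde f_{\pm}$ on the collars, and $0$ outside $\BB_{2R}$.

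Next I would verify that $\mathcal{E} f\in C^k(\RR)$. Since $\lambda_j<\tfrac12$, the argument $R-\lambda_j(x-R)$ lies in $[R/2,R]$ for $x\in[R,2R]$, so $\tilde f_{+}$ is well-defined and of class $C^k$ on its domain. Differentiating termwise gives
\begin{equation*}
(\tilde f_{+})^{(\ell)}(x)=\sum_{j=0}^{k}a_j(-\lambda_j)^{\ell}f^{(\ell)}\bigl(R-\lambda_j(x-R)\bigr),
\end{equation*}
and evaluating at $x=R$ yields $(\tilde f_{+})^{(\ell)}(R)=f^{(\ell)}(R)$ for all $\ell=0,\ldots,k$ by the defining equations for the $a_j$. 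The analogous identities hold at $-R$, so the piecewise definition glues to a $C^k$ function, and multiplication by $\chi$ preserves this regularity. The support property $\supp(\mathcal{E}f)\subseteq\BB_{2R}$ is immediate from the cutoff, and $(\mathcal{E}f)|_{\BB_R}=f$ holds because $\chi\equiv 1$ there.

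For the Sobolev estimate on $\RR\setminus\BB_R$, I would use the Leibniz rule on $\chi\tilde f_{\pm}$ together with the chain rule on each summand of $\tilde f_\pm^{(\ell)}$. Since all occurring constants ($a_j$, $\lambda_j$, $R$, $\|\chi^{(m)}\|_{L^\infty}$) are fixed and depend only on $R$ and $k$, it suffices to bound each term $\|f^{(\ell)}(R-\lambda_j(\,\cdot\,-R))\|_{L^2((R,2R))}$ and its $-R$ counterpart. A change of variables $y=R-\lambda_j(x-R)$, which maps $(R,2R)$ diffeomorphically into $(R(1-\lambda_j),R)\subseteq(R/2,R)$, returns these to $\|f^{(\ell)}\|_{L^2(\BB_R\setminus\BB_{R/2})}$ up to constants depending on $\lambda_j$. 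Summing over $\ell\leq k$ then yields $\|\mathcal{E}f\|_{H^k(\RR\setminus\BB_R)}\lesssim\|f\|_{H^k(\BB_R\setminus\BB_{R/2})}$, as claimed.

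The only genuinely finicky step is the combinatorial bookkeeping in the Leibniz/chain-rule expansion, but no estimate there is non-trivial: every contribution is a fixed linear combination of quantities of the form $\|f^{(\ell)}\|_{L^2(\BB_R\setminus\BB_{R/2})}$ with $\ell\leq k$, so no delicate cancellation or regularity issue arises. The extension $\mathcal{E}$ is manifestly linear, which is all that is needed downstream.
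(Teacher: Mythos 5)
Your proof is correct, and it takes a genuinely different route from the paper's. Both constructions are reflection-type extensions, but the mechanisms differ: you use the classical Hestenes extension, defining $\tilde f_{\pm}$ as a linear combination $\sum_{j}a_j f(\pm R - \lambda_j(x\mp R))$ of rescaled reflections, where the coefficients $a_j$ solve a Vandermonde system so that all derivatives up to order $k$ match automatically at $\pm R$. The paper instead uses a single odd reflection $-f(\pm 2R - x)$ and corrects its boundary derivatives by adding the even-order Taylor polynomial $\sum_{j\leq\lfloor k/2\rfloor}\frac{2f^{(2j)}(\pm R)}{(2j)!}(x\mp R)^{2j}$; the sign structure is chosen so that odd-order derivatives match from the reflection alone, and the even-order ones are patched by the polynomial. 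Your approach has two small advantages: it avoids the Taylor polynomial entirely (hence also avoids the paper's pointwise bound $|f^{(j)}(\pm R)|\lesssim\|f\|_{H^k(\BB_R\setminus\BB_{R/2})}$), and because $\lambda_j<\tfrac12$ the construction draws on $f$ only over $(R/2,R)$ from the start, whereas the paper's reflection nominally reaches all of $(0,R)$ and relies on the cutoff to confine the $H^k$ estimate to $\BB_R\setminus\BB_{R/2}$. The paper's construction is more hands-on but requires no Vandermonde solve; yours is the textbook-standard extension operator and is slightly cleaner. Both deliver the same three properties and the same linear, $R$- and $k$-dependent bound.
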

\begin{proof}
The idea is to extend functions outside of $\BB_{R}$ by a shifted version of them that carries the mass on $\BB_{R}\setminus\BB_{R/2}$ and then add the Taylor polynomial of a controlled function that ensures differentiability at the endpoints, cf. \cite[Lemma A.4]{MR4338226}, \cite[Lemma B.2]{MR3742520}. Concretely, let $\chi\in C^\infty(\RR)$ be a smooth cutoff with $\chi = 1$ on $\overline{\BB_{1}}$ and $\chi = 0$ on $\RR\setminus\BB_{3/2}$ and put
\begin{equation*}
(\mathcal{E}f)(x) =
\begin{cases}
0\,, & x\in (-\infty,-2R) \,, \\
\displaystyle \chi(\tfrac{x}{R}) \Big( - f(-2R-x) + \sum_{j=0}^{\left\lfloor \tfrac{k}{2} \right\rfloor} \frac{2f^{(2j)}(-R)}{(2j)!} (x+R)^{2j} \Big) \,, & x\in[-2R,-R) \,, \\
f(x) \,, & x\in[-R,R] \,, \\
\displaystyle \chi(\tfrac{x}{R}) \Big( - f(2R-x) + \sum_{j=0}^{\left\lfloor \tfrac{k}{2} \right\rfloor} \frac{2f^{(2j)}(R)}{(2j)!} (x-R)^{2j} \Big) \,, & x\in(R,2R] \,, \\
0 \,, & x\in (2R,\infty) \,,
\end{cases}
\end{equation*}
for $f\in C^{k}(\overline{\BB_{R}})$. By construction we have $\lim_{x\to \pm R}(\mathcal{E}f)^{(j)}(x) = f^{(j)}(\pm R)$. Thus $\mathcal{E}f \in C^{k}(\RR)$. Moreover, $(\mathcal{E}f)|_{\overline{\BB_{R}}} = f$ and $\supp(\mathcal{E}f) \subset \BB_{2R}$ are evident. The control of the polynomial may be obtained from the fundamental theorem of calculus and the mean value theorem, so $|f^{(j)}(\pm R)| \lesssim \| f \|_{H^{k}(\BB_{R}\setminus\BB_{R/2})}$ for all $j=0,1,\ldots,k$. Hence,
\begin{align*}
\| \mathcal{E}f \|_{H^{k}(\RR\setminus\BB_{R})} &\lesssim \| \chi(\tfrac{\,.\,}{R}) f(-2R-\,.\,) \|_{H^{k}(\RR\setminus\BB_{R})} + \| \chi(\tfrac{\,.\,}{R}) f(2R-\,.\,) \|_{H^{k}(\RR\setminus\BB_{R})} + \| f \|_{H^{k}(\BB_{R}\setminus\BB_{R/2})} \\&\lesssim
\| f(-2R-\,.\,) \|_{H^{k}(\BB_{3R/2} \setminus\BB_{R})} + \| f(2R-\,.\,) \|_{H^{k}(\BB_{3R/2} \setminus\BB_{R})} + \| f \|_{H^{k}(\BB_{R}\setminus\BB_{R/2})} \\&\lesssim
\| f \|_{H^{k}(\BB_{R}\setminus\BB_{R/2})}  \,.
\qedhere
\end{align*}
\end{proof}
\Cref{oddSobolev} also holds on open balls with the aid of these extension operators.
\begin{lemma}
\label{oddSobolevBalls}
Let $R>0$, $d,k\in\NN_{0}$, $d\geq 1$ odd. We have
\begin{equation*}
\|f\|_{H^{k}(\BB^{d}_{R})} \simeq \big\| |\,.\,|^{\frac{d-1}{2}} \widehat{f} \big\|_{H^{k}(\BB_{R})}
\end{equation*}
for all $f\in C^\infty_\mathrm{rad}(\overline{\BB^{d}_{R}})$.
\end{lemma}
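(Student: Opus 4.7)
The strategy is to reduce the ball version to the whole-space equivalence \autoref{oddSobolev} via the one-dimensional Sobolev extension operator $\mathcal{E}$ of \autoref{SobolevExtension}. Given $f \in C^\infty_\rad(\overline{\BB_R^d})$ with smooth even radial representative $\widehat{f}\colon [-R,R]\to\RR$, I would extend $\widehat{f}$ to $\tilde{h} := \mathcal{E}\widehat{f}\in C^k(\RR)$, compactly supported in $\BB_{2R}$. Choosing the cutoff $\chi$ in the construction of $\mathcal{E}$ to be even -- which is permissible since the formula in \autoref{SobolevExtension} is symmetric between the two endpoints and $\widehat f$ is itself even -- guarantees that $\tilde{h}$ remains even. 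Setting $F(x) := \tilde{h}(|x|)$ then yields a compactly supported radial function on $\RR^d$ that restricts to $f$ on $\BB_R^d$. After mollifying $\tilde{h}$ by even Schwartz functions and passing to the limit, \autoref{oddSobolev} gives $F\in H^k(\RR^d)$ together with
$$
\|F\|_{H^k(\RR^d)} \simeq \big\| |\,.\,|^{\frac{d-1}{2}}\tilde{h}\,\big\|_{H^k(\RR)}.
$$

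For the direction ``$\lesssim$'', the restriction inequality $\|f\|_{H^k(\BB_R^d)}\le\|F\|_{H^k(\RR^d)}$ combined with the above equivalence reduces the task to controlling $\||\,.\,|^{\frac{d-1}{2}}\tilde{h}\|_{H^k(\RR)}$ by $\||\,.\,|^{\frac{d-1}{2}}\widehat{f}\|_{H^k(\BB_R)}$. I would split the integration region into $\BB_R$ (where the two quantities coincide by construction) and $\RR\setminus\BB_R$; on the latter $\tilde{h}$ is supported in $\BB_{2R}\setminus\BB_R$, where $|\,.\,|^{\frac{d-1}{2}}$ is smooth. This contribution is therefore dominated by $\|\tilde{h}\|_{H^k(\RR\setminus\BB_R)}\lesssim\|\widehat{f}\|_{H^k(\BB_R\setminus\BB_{R/2})}$ via \autoref{SobolevExtension}, and since $|\,.\,|^{\frac{d-1}{2}}$ is bounded below by a positive constant on the annulus $\BB_R\setminus\BB_{R/2}$, this is in turn dominated by $\||\,.\,|^{\frac{d-1}{2}}\widehat{f}\|_{H^k(\BB_R)}$.

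For the direction ``$\gtrsim$'', the argument runs in reverse: $\||\,.\,|^{\frac{d-1}{2}}\widehat{f}\|_{H^k(\BB_R)}\le\||\,.\,|^{\frac{d-1}{2}}\tilde{h}\|_{H^k(\RR)}\simeq\|F\|_{H^k(\RR^d)}$, and what remains is to bound $\|F\|_{H^k(\RR^d)}$ by $\|f\|_{H^k(\BB_R^d)}$. The split $\|F\|_{H^k(\RR^d)}^2=\|f\|_{H^k(\BB_R^d)}^2+\|F\|_{H^k(\BB_{2R}^d\setminus\BB_R^d)}^2$ leaves only the exterior piece, and on this shell polar coordinates are non-singular, so this piece is equivalent to $\|\tilde{h}\|_{H^k(\BB_{2R}\setminus\BB_R)}$, and therefore controlled by $\|\widehat{f}\|_{H^k(\BB_R\setminus\BB_{R/2})}\simeq\|f\|_{H^k(\BB_R^d\setminus\BB_{R/2}^d)}\le\|f\|_{H^k(\BB_R^d)}$. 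The main technical point is the gluing step: once $\tilde h$ is made even, one has to confirm that $F(x)=\tilde h(|x|)$ is a genuine $H^k(\RR^d)$-function so that \autoref{oddSobolev} applies; oddness of $d$ is essential here, since it is precisely this parity condition which, in \autoref{oddSobolev}, encodes compatibility of the radial extension with the behaviour of the weight $|\,.\,|^{\frac{d-1}{2}}$ across the origin.
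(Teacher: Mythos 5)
Your proof follows the paper's argument exactly: extend $\widehat{f}$ via the one-dimensional Sobolev extension operator $\mathcal{E}$ of \autoref{SobolevExtension}, apply the whole-space equivalence of \autoref{oddSobolev} to $\mathcal{E}\widehat{f}$ (equivalently, to its radialization), split the resulting norms into interior and exterior pieces, and close the estimates using the bound $\|\mathcal{E}\widehat{f}\|_{H^k(\RR\setminus\BB_R)} \lesssim \|\widehat{f}\|_{H^k(\BB_R\setminus\BB_{R/2})}$ together with the fact that the weight $|\,.\,|^{\frac{d-1}{2}}$ is bounded above and below on the relevant annuli. The only differences are cosmetic: you make explicit the choice of even cutoff for $\mathcal{E}$ and the mollification needed to invoke \autoref{oddSobolev} for a $C^k$ function, both of which the paper treats as understood.
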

\begin{proof}
Let $\mathcal{E}$ be the extension operator from \Cref{SobolevExtension}. In combination with \Cref{oddSobolev} we get
\begin{align*}
\| f \|_{H^{k}(\BB^{d}_{R})} &\leq \big\| (\mathcal{E} \widehat{f} )(|\,.\,|) \big\|_{H^{k}(\RR^d)} \simeq
\big\| |\,.\,|^{\frac{d-1}{2}} (\mathcal{E} \widehat{f} ) \big\|_{H^{k}(\RR)} \\&\simeq
\big\| |\,.\,|^{\frac{d-1}{2}} \widehat{f} \big\|_{H^{k}(\BB_{R})} + \big\| |\,.\,|^{\frac{d-1}{2}} (\mathcal{E} \widehat{f} ) \big\|_{H^{k}(\RR\setminus\BB_{R})} \\&\lesssim
\big\| |\,.\,|^{\frac{d-1}{2}} \widehat{f} \big\|_{H^{k}(\BB_{R})} + \| \widehat{f} \|_{H^{k}(\BB_{R}\setminus\BB_{R/2})}\\&\lesssim
\big\| |\,.\,|^{\frac{d-1}{2}} \widehat{f} \big\|_{H^{k}(\BB_{R})} 
\end{align*}
and
\begin{align*}
\big\| |\,.\,|^{\frac{d-1}{2}} \widehat{f} \big\|_{H^{k}(\BB_{R})} &\leq
\big\| |\,.\,|^{\frac{d-1}{2}} \mathcal{E} \widehat{f} \big\|_{H^{k}(\RR)} \simeq
\big\| (\mathcal{E} \widehat{f})(|\,.\,|) \big\|_{H^{k}(\RR^d)} \\&\simeq
\|f \|_{H^{k}(\BB^{d}_{R})} + \big\| (\mathcal{E} \widehat{f})(|\,.\,|) \big\|_{H^{k}(\RR^d\setminus\BB^{d}_{R})} \\&\lesssim
\|f \|_{H^{k}(\BB^{d}_{R})} + \| \mathcal{E} \widehat{f} \|_{H^{k}(\RR\setminus\BB_{R})} \\&\lesssim
\|f \|_{H^{k}(\BB^{d}_{R})} + \| \widehat{f} \|_{H^{k}(\BB_{R} \setminus \BB_{R/2})} \\&\lesssim
\|f \|_{H^{k}(\BB^{d}_{R})}
\end{align*}
where we exploited radiality in the classic definition of the Sobolev norm and that all powers are bounded on $\BB_{2R}^d\setminus\BB_{R}^d$.
\end{proof}
\section{Bounds on inhomogeneities}
Also, we need to collect some estimates for integral operators of the following type.
\begin{lemma}
\label{integralSobolev}
Let $R>0$, $k,m,n\in\NN_{0}$ such that $n+1-m\geq 0$. Let $\varphi\in C^\infty(\overline{\BB_{R}})$ and define
\begin{equation*}
Tf(x) = x^{-m} \int_{0}^x y^n \varphi(y) f(y) \dd y \,, \quad x\in\overline{\BB_{R}}\setminus\{0\} \,,
\end{equation*}
for $f\in C^\infty(\overline{\BB_{R}})$.
\begin{enumerate}[itemsep=1em, topsep=1em]
\item Then $Tf$ extends to an element in $C^\infty(\overline{\BB_{R}})$ for all $f\in C^\infty(\overline{\BB_{R}})$.
\item Moreover, $T: \dom(T) \subset H^{k}(\BB_{R}) \rightarrow  H^{k}(\BB_{R})$, densely defined on $\dom(T) = C^\infty(\overline{\BB_{R}})$ by $Tf$, is a linear operator with
\begin{equation*}
\|Tf\|_{H^{k}(\BB_{R})} \lesssim \|f\|_{H^{k}(\BB_{R})}
\end{equation*}
for all $f\in C^\infty(\overline{\BB_{R}})$, that extends to a bounded linear operator on $H^{k}(\BB_{R})$.
\end{enumerate}
\end{lemma}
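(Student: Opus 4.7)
The proof splits into the two assertions; both yield to the same change-of-variables trick. The plan is to substitute $y=tx$ in the defining integral to obtain the representation
\begin{equation*}
Tf(x) = x^{n+1-m} \int_0^1 t^n \varphi(tx) f(tx) \dd t \eqqcolon x^{n+1-m} G(x) \,,
\end{equation*}
where the prefactor $x^{n+1-m}$ is a smooth polynomial because $n+1-m\geq 0$. For part (1), differentiation under the integral sign shows that $G\in C^\infty(\overline{\BB_R})$ whenever $f\in C^\infty(\overline{\BB_R})$, so $Tf\in C^\infty(\overline{\BB_R})$. This removes the apparent singularity at the origin.

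For the Sobolev bound in part (2), I would apply the Leibniz rule to differentiate the product $x^{n+1-m} G(x)$. Since $\big(x^{n+1-m}\big)^{(i)}$ vanishes for $i>n+1-m$, one obtains a finite sum
\begin{equation*}
(Tf)^{(k)}(x) = \sum_{i=0}^{\min(k,\,n+1-m)} c_i \, x^{n+1-m-i} \, G^{(k-i)}(x) \,,
\end{equation*}
where the surviving powers $x^{n+1-m-i}$ are all non-negative and thus uniformly bounded on $\overline{\BB_R}$. It therefore suffices to control $\|G^{(j)}\|_{L^2(\BB_R)}$ for $0\leq j\leq k$. Differentiating under the integral and applying Leibniz again gives
\begin{equation*}
G^{(j)}(x) = \sum_{\ell=0}^j \binom{j}{\ell} \int_0^1 t^{n+j} \varphi^{(\ell)}(tx) f^{(j-\ell)}(tx) \dd t \,.
\end{equation*}

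The key estimate is then the Minkowski integral inequality combined with the scaling bound $\|f^{(j-\ell)}(t\,\cdot\,)\|_{L^2(\BB_R)} \leq t^{-1/2} \|f^{(j-\ell)}\|_{L^2(\BB_R)}$ (by the substitution $u=tx$) and the fact that $\|\varphi^{(\ell)}\|_{L^\infty(\BB_R)}$ is finite. This yields
\begin{equation*}
\|G^{(j)}\|_{L^2(\BB_R)} \lesssim \sum_\ell \|f^{(j-\ell)}\|_{L^2(\BB_R)} \int_0^1 t^{n+j-1/2} \dd t \lesssim \|f\|_{H^j(\BB_R)} \,,
\end{equation*}
where the $t$-integral converges because $n+j-1/2 > -1$ always. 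Summing over $i$ gives $\|Tf\|_{H^k(\BB_R)} \lesssim \|f\|_{H^k(\BB_R)}$, and the extension to all of $H^k(\BB_R)$ follows by density.

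The only subtle point is tracking the weights so that the $t$-integral converges; the hypothesis $n+1-m\geq 0$ feeds into the Leibniz decomposition to kill the would-be singular terms, while the factor $t^n$ in the integrand provides exactly enough integrability at $t=0$ once paired with the scaling loss $t^{-1/2}$. Everything else is bookkeeping.
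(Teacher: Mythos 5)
Your proof is correct. Part (1) is identical to the paper's argument: the change of variables $y = tx$ together with $n+1-m \ge 0$ shows smoothness at the origin.

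For part (2) you take a genuinely different route. The paper applies the Leibniz rule directly to the averaged representation to get a pointwise bound
\begin{equation*}
\big| (Tf)^{(j)}(x) \big| \lesssim \sum_{i=0}^j \int_0^1 |f^{(i)}(tx)|\dd t \lesssim \sum_{i=0}^j \frac{1}{|x|} \int_0^{|x|} |f^{(i)}| \,,
\end{equation*}
and then cites the version of Hardy's inequality on balls (with $s=-1$) proved earlier in the appendix. You instead differentiate the smooth factor $G$ under the integral sign and apply Minkowski's integral inequality combined with the dilation bound $\|g(t\,\cdot)\|_{L^2(\BB_R)} \le t^{-1/2}\|g\|_{L^2(\BB_R)}$, which makes the $t$-integral $\int_0^1 t^{n+j-1/2}\dd t$ converge since the exponent is $\ge -1/2$. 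This is in effect an inline proof of exactly the Hardy bound the paper imports as a black box, so the two arguments are mathematically twins; yours is slightly more self-contained, while the paper's factors the Hardy step into a reusable lemma that it needs anyway elsewhere. One small notational point: you display only the $k$-th derivative $(Tf)^{(k)}$, but of course the same Leibniz expansion and bound must be run for every $j \le k$ to control the full $H^k$ norm; your argument covers this since the bound $\|G^{(j)}\|_{L^2} \lesssim \|f\|_{H^j}$ is uniform in $j \le k$. Also note the hypothesis $n+1-m \ge 0$ enters your argument only through the nonnegativity of the exponents $n+1-m-i$ in the Leibniz sum, which keeps the prefactors bounded; the convergence of the $t$-integral itself needs only $n \ge 0$.
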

\begin{proof}
\begin{enumerate}[wide, itemsep=1em, topsep=1em]
\item By the fundamental theorem of calculus we have
\begin{equation*}
Tf(x) =  x^{-m} \int_{0}^x y^n \varphi(y) f(y) \dd y =  x^{n+1-m} \int_{0}^1 t^n \varphi(tx) f(tx) \dd t
\end{equation*}
for $x\in\overline{\BB_{R}}\setminus\{0\}$, which shows that $Tf\in C^\infty(\overline{\BB_{R}})$ for all $f\in C^\infty(\overline{\BB_{R}})$.
\item By the Leibniz rule
\begin{equation*}
\left| Tf^{(j)}(x) \right| \lesssim \sum_{i=0}^{j} \int_{0}^1 \left|f^{(i)}(tx) \right| \dd t \lesssim \sum_{i=0}^{j} x^{-1} \int_{0}^x \left| f^{(i)}(y) \right| \dd y
\end{equation*}
for all $j=0,1,\ldots,k$. Using \cref{HardyBalls} of \Cref{Hardy} we get
\begin{equation*}
\| (Tf)^{(j)} \|_{L^{2}(\BB_{R})} \lesssim \sum_{i=0}^{j} \| f^{(i)} \|_{L^{2}(\BB_{R})} \lesssim \| f \|_{H^{k}(\BB_{R})}
\end{equation*}
for all $j = 0,1,\ldots,k$, which proves the bound.
\qedhere
\end{enumerate}
\end{proof}

\end{document}